\title[Equivariant Vector Bundles with Connection on Drinfeld Symmetric Spaces]{Equivariant Vector Bundles with Connection \\ on Drinfeld Symmetric Spaces}
 \date{\today}
\author{James Taylor}
\email{james.taylor@math.unipd.it}
\address{\scriptsize Dipartimento di Matematica, Universit\`{a} degli Studi di Padova, Via Trieste 63, 35131 Padova, Italy}
\subjclass[2020]{14F10, 14G22, 14G32, 11S37.}
\keywords{Galois covering, $\sD$-modules, Drinfeld tower, finite bundles.}
\newcommand{\HH}{\mathrm{H}}
\newcommand{\OO}{\mathcal{O}}
\DeclareMathOperator{\Sym}{Sym}
\DeclareMathOperator{\Hom}{Hom}
\DeclareMathOperator{\Mod}{\bold{Mod}}
\DeclareMathOperator{\Vect}{\bold{Vect}}
\DeclareMathOperator{\Gal}{Gal}
\DeclareMathOperator{\Nrd}{Nrd}
\DeclareMathOperator{\GM}{GM}
\DeclareMathOperator{\Ind}{Ind}
\DeclareMathOperator{\Pic}{Pic}
\DeclareMathOperator{\cts}{cts}
\DeclareMathOperator{\op}{op}
\DeclareMathOperator{\fd}{\bold{fd}}
\DeclareMathOperator{\sm}{sm}
\DeclareMathOperator{\charfield}{char}
\DeclareMathOperator{\PicCon}{PicCon}
\DeclareMathOperator{\VectCon}{\bold{VectCon}}
\DeclareMathOperator{\Coh}{\bold{Coh}}
\DeclareMathOperator{\Con}{Con}
\DeclareMathOperator{\End}{End}
\DeclareMathOperator{\Rep}{\bold{Rep}}
\DeclareMathOperator{\rank}{rank}
\DeclareMathOperator{\equaliser}{eq}
\DeclareMathOperator{\ext}{ext}
\DeclareMathOperator{\tors}{tors}
\DeclareMathOperator{\fin}{fin}
\DeclareMathOperator{\ab}{ab}
\DeclareMathOperator{\id}{id}
\DeclareMathOperator{\la}{la}
\DeclareMathOperator{\coh}{c}
\DeclareMathOperator{\gr}{gr}
\DeclareMathOperator{\Irr}{Irr}
\DeclareMathOperator{\GL}{GL}
\DeclareMathOperator{\SL}{SL}
\DeclareMathOperator{\Aut}{Aut}
\DeclareMathOperator{\Stab}{Stab}
\DeclareMathOperator{\Sp}{Sp}
\DeclareMathOperator{\Alg}{\textbf{Alg}}
\DeclareMathOperator{\LR}{\textbf{LR}}
\DeclareMathOperator{\sLR}{\mathcal{L}\mathcal{R}}
\DeclareMathOperator{\Spec}{Spec}
\DeclareMathOperator{\Der}{Der}
\newcommand{\bC}{{\mathbb C}}
\newcommand{\bD}{{\mathbb D}}
\newcommand{\bF}{{\mathbb F}}
\newcommand{\bN}{{\mathbb N}}
\newcommand{\bP}{{\mathbb P}}
\newcommand{\bQ}{{\mathbb Q}}
\newcommand{\bZ}{{\mathbb Z}}
\newcommand{\sA}{{\mathcal A}}
\newcommand{\sB}{{\mathcal B}}
\newcommand{\sC}{{\mathcal C}}
\newcommand{\sD}{{\mathcal D}}
\newcommand{\sF}{{\mathcal F}}
\newcommand{\sG}{{\mathcal G}}
\newcommand{\sH}{{\mathcal H}}
\newcommand{\sL}{{\mathcal L}}
\newcommand{\sM}{{\mathcal M}}
\newcommand{\sN}{{\mathcal N}}
\newcommand{\sR}{{\mathcal R}}
\newcommand{\sS}{{\mathcal S}}
\newcommand{\sT}{{\mathcal T}}
\newcommand{\sU}{{\mathcal U}}
\newcommand{\sV}{{\mathcal V}}
\newcommand{\sW}{{\mathcal W}}
\newtheorem{thm}{Theorem}
\numberwithin{thm}{section}
\newtheorem{lemma}[thm]{Lemma} 
\newtheorem{prop}[thm]{Proposition} 
\newtheorem{cor}[thm]{Corollary} 
\newtheorem*{thm*}{Theorem}
\newtheorem*{theoremA}{Theorem A}
\newtheorem*{theoremB}{Theorem B}
\newtheorem*{theoremC}{Theorem C}
\newtheorem*{theoremD}{Theorem D}
\theoremstyle{definition}
\newtheorem{defn}[thm]{Definition}
\newtheorem{eg}[thm]{Example}
\newtheorem{remark}[thm]{Remark}
\newtheorem*{remark*}{Remark}
\DeclareRobustCommand{\SkipTocEntry}[5]{}
\begin{document}

\maketitle
\vspace{-0.5cm}
\begin{abstract}
For a finite extension $F$ of $\bQ_p$ and $n \geq 1$, let $D$ be the division algebra over $F$ of invariant $1/n$ and let $G^0$ be the subgroup of $\GL_n(F)$ of elements with norm $1$ determinant.

We show that the action of $D^\times$ on the Drinfeld tower induces an equivalence of categories from finite dimensional smooth representations of $D^\times$ to $G^0$-finite $\GL_n(F)$-equivariant vector bundles with connection on $\Omega$, the $(n-1)$-dimensional Drinfeld symmetric space.
\end{abstract}
\vspace{0.3cm}
\tableofcontents
\vspace{-0.7cm}
\section{Introduction}

Let $p$ be a prime, let $F$ be a finite extension of $\bQ_p$ and let $n \geq 1$. Let $K$ be a complete field extension of $L$, the completion of the maximal unramified extension of $F$. The \emph{Drinfeld tower} is a system of $(n-1)$-dimensional rigid analytic spaces over $K$,
\[
	\Omega \leftarrow \sM \leftarrow \sM_1 \leftarrow \sM_2 \leftarrow \cdots,
\]
for which each space is equipped with a compatible action of $D^\times \times \GL_{n}(F)$, where $D$ is the division algebra of invariant $1/n$ over $F$~\cite{DRI, BC, RZ}.

The space $\sM$ is a fundamental example of a Rapoport-Zink space, and the covering spaces $\sM, \sM_1, \sM_2, ...$ are indexed by the compact open subgroups $\OO_D^\times, 1 + \Pi \OO_D, 1 + \Pi^2 \OO_D^2, ...$ of $D^\times$, where $\Pi$ is a uniformiser of $\OO_D$ \cite[\S 4.25]{RAPVIE}. The action of $D^\times$ on each space comes from the action of $D^\times$ on the tower via Hecke correspondences \cite[\S 5.34]{RZ}, which stabilises each covering space because these compact open subgroups are all normal in $D^\times$.

These spaces play an important role in the representation theory of both $\GL_{n}(F)$ and $D^\times$. For example, this tower has been shown to provide a geometric realisation of both the local Langlands and Jacquet-Langlands correspondences for $\GL_{n}(F)$~\cite{CAR, HARR, BOY, HARTAY}. 

These correspondences are realised in the cohomology of the spaces $\sM_m$ for an appropriate cohomology theory. In this framework, one can also consider the coherent cohomology groups $\HH^{0}(\sM_m, \OO_{\sM_m}) = \OO(\sM_m)$, that vanish in all other degrees, for which the topological dual $\OO(\sM_m)^*$ is naturally a \emph{locally analytic} representation of $\GL_{n}(F)$. The isotypical parts of these representations lie in the image of the functor
\[
\Hom_{D^\times}(-, \OO(\sM_{\infty}))^* \colon \Rep_{\sm}^{\fd}(D^\times) \rightarrow \Rep_{\la}(\GL_n(F)),
\]
defined as the direct limit of the functors
\[
\Hom_{D^\times}(-, \OO(\sM_{m}))^* \colon \Rep^{\fd}(D^{(m)}) \rightarrow \Rep_{\la}(\GL_n(F)),
\]
where $D^{(m)} \coloneqq D^\times / (1 + \Pi^m \OO_D)$.
When $F=\bQ_p$ and $n=2$, Dospinescu and Le Bras have shown that for any irreducible (necessarily finite dimensional) smooth representation $V$ of $D^\times$ with trivial central character and dimension strictly greater than one, the corresponding locally analytic representation of $\GL_2(\bQ_p)$ is \emph{admissible} and \emph{topologically irreducible}. This is deduced as a consequence of much deeper results relating this locally analytic representation of $\GL_2(\bQ_p)$ to the Jacquet-Langlands and $p$-adic local Langlands correspondences \cite[Thm.\ 1.2]{DLB}.

One would like to deduce similar admissibility and topological irreducibility results beyond $\GL_2(\bQ_p)$, where a $p$-adic Langlands correspondence is not yet currently formulated. One natural approach is through the use of $p$-adic $\sD$-modules. The functor $\Hom_{D^\times}(-, \OO(\sM_{\infty}))^*$ above admits a natural factorisation
\[
\Rep_{\sm}^{\fd}(D^\times) \rightarrow \VectCon^{\GL_n(F)}(\Omega) \rightarrow \Rep(\GL_n(F))
\]
through the category $\VectCon^{\GL_n(F)}(\Omega)$ of $\GL_n(F)$-equivariant vector bundles with connection on $\Omega$, where the second functor is defined by taking the dual of the global sections $\Gamma(\Omega, -)^*$. In this paper we study the first functor, which we denote by
\[
\Hom_{D^\times}(-, f_* \OO_{\sM_{\infty}}) \colon \Rep_{\sm}^{\fd}(D^\times) \rightarrow \VectCon^{\GL_n(F)}(\Omega),
\]
which similarly to above is defined as the direct limit of functors
\[
\Hom_{D^\times}(-, f_{m,*} \OO_{\sM_{m}}) \colon \Rep^{\fd}(D^{(m)}) \rightarrow \VectCon^{\GL_n(F)}(\Omega),
\]
where $f_m \colon \sM_m \rightarrow \Omega$ is the Galois covering map defined by the Drinfeld tower.

This functor is also related to the Jacquet-Langlands correspondence: the composition
\[
\Rep_{\sm}^{\fd}(D^\times) \rightarrow \VectCon^{\GL_n(F)}(\Omega) \xrightarrow{\HH_{\text{dR}, c}^{n-1}(-)} \Rep(\GL_n(F))
\]
should send an irreducible representation $\rho$ of $D^\times$ with $\dim(\rho) > 1$ to the direct sum of $n$ copies of the smooth representation $\text{JL}(\rho)$. This is known for $\GL_2(F)$ \cite[Thm.\ 0.4]{CDN1}, and for certain representations of $D^\times$ corresponding to the first Drinfeld covering $\sM_1$ \cite[Thm.\ A]{JUNDR}.

Our main result is the following.
\begin{theoremA}
The functor
\[
\Hom_{D^\times}(-, f_* \OO_{\sM_{\infty}}) \colon \Rep_{\sm}^{\fd}(D^\times) \rightarrow \VectCon^{\GL_n(F)}(\Omega)
\]
is exact, monoidal, fully faithful, and the essential image is closed under sub-quotients.

The essential image is intrinsically described as the full subcategory
\[
	\VectCon^{\GL_n(F)}(\Omega)_{G^0-\fin}
\]
with objects those that are finite when viewed as $G^0$-equivariant vector bundles with connection.
\end{theoremA}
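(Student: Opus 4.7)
The plan is to prove Theorem A via Galois descent for $\sD$-modules applied to the Drinfeld tower, working first at each finite level $f_m \colon \sM_m \rightarrow \Omega$ and then passing to the direct limit over $m$. The categorical properties (exactness, monoidality, full faithfulness, closure under sub-quotients) will all follow from the finite-level descent, while the intrinsic description of the essential image via $G^0$-finiteness is a separate and more delicate assertion.

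First I would establish, for each $m$, an equivalence of categories between $\Rep^{\fd}(D^{(m)})$ and the full subcategory of $\VectCon^{\GL_n(F)}(\Omega)$ consisting of objects whose pullback along $f_m$ is a trivial $\sD$-module, with the functor given by $V \mapsto \Hom_{D^{(m)}}(V, f_{m,*}\OO_{\sM_m})$. Since $f_m$ is a finite étale Galois cover with group $D^{(m)}$, a quasi-inverse is $\sE \mapsto (f_m^*\sE)^{\nabla = 0}$, and the projection formula together with the isomorphism $f_m^*f_{m,*}\OO_{\sM_m} \cong \OO_{\sM_m} \otimes_K K[D^{(m)}]$ provides the inverse identities. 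Exactness, monoidality, and full faithfulness follow formally from these descriptions, and the $\GL_n(F)$-equivariance is preserved throughout because the $\GL_n(F)$-action on the tower commutes with the $D^\times$-action and with all transition maps. Passing to the direct limit over $m$ gives the functor on $\Rep_{\sm}^{\fd}(D^\times) = \varinjlim_m \Rep^{\fd}(D^{(m)})$, with essential image the union of the finite-level essential images; closure under sub-quotients is inherited because pullback along the étale map $f_m$ is exact and any sub-$\sD$-module of a trivial $\sD$-module on $\sM_m$ is itself trivial (its horizontal sections define the trivialisation).

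The remaining substantive step is to identify this essential image intrinsically with $\VectCon^{\GL_n(F)}(\Omega)_{G^0-\fin}$. One direction is concrete: $\Hom_{D^{(m)}}(V, f_{m,*}\OO_{\sM_m})$ is a coherent bundle of rank $\dim(V)$ whose $G^0$-equivariant structure factors through the finite-level cover, and is therefore $G^0$-finite by construction. The converse amounts to showing that any $G^0$-finite equivariant $\sE$ is trivialised by some $f_m$, i.e.\ its monodromy along the pro-étale cover $\sM_\infty \rightarrow \Omega$ factors through some quotient $D^{(m)}$ of $D^\times$. I expect this converse to be the main obstacle: one must translate a categorical finiteness condition on the $G^0$-action into a geometric trivialisation statement along the Drinfeld tower. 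The strategy should exploit the decomposition $\GL_n(F)/G^0 \cong \bZ$, generated by a central uniformizer acting as a Hecke-type operator, combined with the fact that finite-dimensional smooth representations of $D^\times$ are precisely those factoring through some $D^{(m)}$, together with a careful analysis of how equivariant connections on $\Omega$ are pinned down by their pull-back behaviour along the tower.
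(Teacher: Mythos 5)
Your overall architecture (finite-level statements, pass to the direct limit, treat the essential image separately) matches the paper's, but the finite-level descent as you state it does not work, for two reasons. First, $f_m \colon \sM_m \rightarrow \Omega$ is \emph{not} a finite \'etale Galois cover with group $D^{(m)}$: the group $D^{(m)} = D^\times/(1+\Pi^m\OO_D)$ is infinite, $\sM_m$ has infinitely many connected components over $\Omega$, and $f_{m,*}\OO_{\sM_m}$ is not coherent, so the projection-formula identity $f_m^*f_{m,*}\OO_{\sM_m} \cong \OO_{\sM_m}\otimes_K K[D^{(m)}]$ is unavailable. One must instead work with a connected component $\sN_m \rightarrow \sN \cong \Omega$ (finite Galois with group $\OO_D^{(m)}$) and handle the $\Pi^{\bZ}$-direction by the equivalence between $\Pi^{\bZ}\times\GL_n(F)$-equivariant objects on $\sM$ and $\GL_n(F)$-equivariant objects on $\sM/\Pi^{\bZ}$. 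Second, and more seriously, even on a connected component the descent fails as you describe it: $\sN_m$ is connected but \emph{not geometrically connected} (its ring of global constant functions is a nontrivial Lubin--Tate extension $L_m$ of $L$), so $V \mapsto \OO_{\sN_m}\otimes_K V$ is not fully faithful and its essential image is not closed under sub-objects --- your claim that ``any sub-$\sD$-module of a trivial $\sD$-module on $\sM_m$ is itself trivial'' is false here (the toy counterexample is $\Spec(L) \rightarrow \Spec(k)$ for a nontrivial finite Galois extension). The missing idea is that the $\GL_n(F)$- (resp.\ $G^0$-) equivariant structure must be used \emph{in the descent itself}: what saves the argument is the identity $c(\sN_m)^{G^0}=K$ on $G^0$-invariant constant functions, which is a genuine input requiring Kohlhaase's computation of the maximal subfield of $\OO(\sN_{nm})$ and the Lubin--Tate description of how $\det\cdot\Nrd^{-1}$ acts on it. Your proposal treats the equivariance as bookkeeping that is ``preserved throughout,'' which is exactly where the gap lies.

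For the essential image you correctly flag the converse as the main obstacle, but the sketch omits the two ideas that actually carry it. One needs (a) a Nori/Tannakian theorem for the rigid tensor category $\VectCon^{G^0}(\Omega)$: over an algebraically closed base, every finite object is a direct summand of $f_*\OO_Z$ for some $G^0$-equivariant finite \'etale Galois covering $Z \rightarrow \Omega$ commuting with the Galois action (obtained by applying Tannakian duality to the subcategory generated by the tensor powers of $\sV$ and reconstructing a covering from the Hopf algebra); and (b) the Scholze--Weinstein factorisation theorem, which says that any such $\GL_n(F)$-equivariant covering of $\Omega$ is dominated by some $\sM_m$ --- this, not an analysis of ``monodromy along the pro-\'etale cover,'' is what forces the trivialising cover into the Drinfeld tower. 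Finally, deducing the $D^\times/\GL_n(F)$ statement from the $\OO_D^\times/G^0$ one is not automatic: given $\sV$ that is $G^0$-finite, one must \emph{construct} the extension of the corresponding $\OO_D^\times$-representation to $D^\times$ by transporting the action of $\Pi$ on $\sV$ through full faithfulness, which is an extra argument your outline does not address.
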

Here $G^0$ is the subgroup of elements of $\GL_n(F)$ with determinant of norm $1$, and for the definition of a finite equivariant vector bundle with connection we direct the reader to Section \ref{sect:finiteVB}. For now, let us simply remark that an equivariant \emph{line bundle} with connection is \emph{finite if and only if it is torsion}, and therefore finiteness can be viewed as a natural generalisation of the notion of a torsion line bundle to vector bundles of arbitrary rank. We also remark that the functor of Theorem A \emph{preserves irreducibility}, as the essential image is closed under sub-objects.

In order to explain the appearance of the group $G^0$ in the statement of Theorem A (which is essential - see Remark \ref{rem:GvG0}), let $\sN$ be a connected component of $\sM$, and consider
\[
	\sN \leftarrow \sN_1 \leftarrow \sN_2 \leftarrow \cdots ,
\]
the induced sub-tower of $(\sM_m)_{m \geq 1}$ defined by $\sN_m = \phi^{-1}_m(\sN)$ for $m \geq 1$. This is stabilised by $\OO_D^\times \times G^0$ (see Section \ref{sect:thedrinfeldtower}), and in the same way as above one obtains a functor
\[
\Hom_{\OO_D^\times}(-, f_* \OO_{\sN_{\infty}}) \colon \Rep_{\sm}^{\fd}(\OO_D^\times) \rightarrow \VectCon^{G^0}(\Omega)
\]
which is compatible with the functor of Theorem A in the sense that the diagram
\[\begin{tikzcd}
	{\Hom_{D^\times}(-, f_* \OO_{\sM_{\infty}})} \hspace{-1.25em} & \hspace{-1em} {\Rep_{\sm}^{\fd}(D^\times)} & {\VectCon^{\GL_n(F)}(\Omega)} \\
	{\Hom_{\OO_D^\times}(-, f_* \OO_{\sN_{\infty}})} \hspace{-1.35em} & \hspace{-1em} {\Rep_{\sm}^{\fd}(\OO_D^\times)} & {\VectCon^{G^0}(\Omega)}
	\arrow["\colon"{marking, allow upside down}, draw=none, from=1-1, to=1-2]
	\arrow[from=1-2, to=1-3]
	\arrow[from=1-2, to=2-2]
	\arrow[from=1-3, to=2-3]
	\arrow["\colon"{marking, allow upside down}, draw=none, from=2-1, to=2-2]
	\arrow[from=2-2, to=2-3]
\end{tikzcd}\]
with vertical forgetful maps commutes. We deduce the essential image statement of Theorem A above from the following analogue of Theorem A for $D^\times$ replaced with $\OO_D^\times$.
\begin{theoremB}
	The functor
	\[
	\Hom_{\OO_D^\times}(-, f_* \OO_{\sN_{\infty}}) \colon \Rep_{\sm}^{\fd}(\OO_D^\times) \rightarrow \VectCon^{G^0}(\Omega)
	\]
	is exact, monoidal, fully faithful and the essential image is closed under sub-quotients.

	The essential image is intrinsically described as the full subcategory
	\[
	\VectCon^{G^0}(\Omega)_{\fin}
	\]
	of finite $G^0$-equivariant vector bundles with connection.
\end{theoremB}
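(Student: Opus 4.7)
The plan is to exploit that the tower $(\sN_m)_{m \geq 1} \to \sN$ is a pro-finite $G^0$-equivariant Galois covering with Galois group $\OO_D^\times$, and to prove the theorem one finite level at a time. Since every smooth finite-dimensional representation of $\OO_D^\times$ factors through a finite quotient $\OO_D^{(m)} \coloneqq \OO_D^\times / (1 + \Pi^m \OO_D)$, it suffices to establish the analogous statement for each functor
\[
\Hom_{\OO_D^{(m)}}(-, f_{m,*} \OO_{\sN_m}) \colon \Rep^{\fd}(\OO_D^{(m)}) \rightarrow \VectCon^{G^0}(\Omega),
\]
and then pass to the direct limit, noting that exactness, monoidality, fully faithfulness and closure of the essential image under sub-quotients are all preserved under filtered colimits.

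At a fixed level $m$, the map $f_m \colon \sN_m \to \sN$ is a finite \'etale Galois cover with group $\OO_D^{(m)}$ whose action commutes with $G^0$. Galois descent should then provide an equivalence between $\Rep^{\fd}(\OO_D^{(m)})$ and the full subcategory of $\VectCon^{G^0}(\Omega)$ consisting of objects whose pull-back to $\sN_m$ is trivial as a vector bundle with connection, with inverse sending a trivialised bundle to the $\OO_D^{(m)}$-representation on its space of flat sections on $\sN_m$. Since the invariant functor for the finite group $\OO_D^{(m)}$ is exact in characteristic zero, and tensor products are compatible with invariants after faithfully flat pull-back along $f_m$, I expect all four categorical properties to follow at this level.

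It remains to identify the essential image intrinsically. One inclusion is immediate: every object of the image is trivialised by some $\sN_m$ and hence is finite as a $G^0$-equivariant vector bundle with connection. The hard direction, and the main obstacle, is the converse: every finite $G^0$-equivariant vector bundle with connection on $\Omega$ is trivialised by some $\sN_m$. My approach would be to view the full subcategory of finite objects as a neutral Tannakian category and interpret our functor as inducing a map from $\OO_D^\times$ to its pro-finite fundamental group; the task becomes showing this map is an isomorphism, equivalently that the Drinfeld tower is cofinal among the finite $G^0$-equivariant covers of $\Omega$ that trivialise finite objects. A natural strategy is to settle the rank one case first by identifying torsion $G^0$-equivariant line bundles with connection with characters of the abelianisation of $\OO_D^\times$ via Picard-theoretic computations on $\Omega$ (using that finite line bundles are precisely the torsion ones, as remarked after Theorem A), and then to bootstrap to higher rank by Tannakian reconstruction, reducing the problem to the fact that every irreducible constituent of a finite bundle is captured, after a suitable finite pull-back, by a character that by the rank one analysis already factors through the tower.
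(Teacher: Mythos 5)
Your overall architecture (reduce to finite level $m$, treat $f_m\colon\sN_m\to\sN$ as a $G^0$-equivariant Galois cover with group $\OO_D^{(m)}$, pass to the colimit) matches the paper, and the easy inclusion of the essential image into the finite objects is fine. But there are two genuine gaps.

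First, the finite-level "Galois descent" equivalence is not automatic, because the spaces $\sN_m$ are \emph{not geometrically connected}: the ring of flat sections of $\OO_{\sN_m}$ is $c(\sN_m)$, which is a nontrivial finite field extension of $K$ (a copy of the Lubin--Tate field $L_m$, by Kohlhaase). So your proposed inverse, ``the $\OO_D^{(m)}$-representation on the space of flat sections on $\sN_m$,'' produces a $K$-vector space of the wrong dimension, and full faithfulness fails for the naive functor $V\mapsto(\OO_{\sN_m}\otimes_K V^*)^{\OO_D^{(m)}}$ in this generality (cf.\ Remark \ref{rmk:geomconnnec}). What rescues the argument is that one must take \emph{$G^0$-equivariant} flat sections and verify the nontrivial geometric input $c(\sN_m)^{G^0}=K$ (Corollary \ref{cor:drinfeldinvconstantfunctions}), which rests on the identification of $c(\sN_{nm})$ with $L_m$ and the fact that $G^0$ surjects onto $\Gal(L_m/L)$ via the determinant. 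Your proposal does not identify this hypothesis, and without it the fully faithfulness claim is false.

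Second, the hard direction of the essential image cannot be bootstrapped from the rank-one case by Tannakian formalism. You correctly reduce to showing that the induced surjection from the Tannaka group $\pi$ of $\VectCon^{G^0}(\Omega)_{\fin}$ onto the profinite completion classifying smooth representations of $\OO_D^\times$ is injective; but the rank-one computation only shows this map is an isomorphism on abelianisations, which does not imply injectivity (a surjection of profinite groups inducing an isomorphism on abelianisations can have arbitrarily large perfect kernel). The claim that every irreducible constituent of a finite bundle is ``captured, after a suitable finite pull-back, by a character'' has no proof route: irreducible representations of a non-abelian finite quotient of $\pi$ are not built from characters in the required sense. The paper instead proves a Nori-type theorem (Proposition \ref{prop:finitecomesfromcovering}: every finite object is a direct summand of $f_*\OO_Z$ for some $G^0$-equivariant finite \'etale Galois cover $Z\to\Omega$), induces $Z$ up to a $\GL_n(F)$-equivariant cover $Z\times_{G^n}\GL_n(F)$, and then invokes the Scholze--Weinstein factorisation theorem to dominate it by the Drinfeld tower; this external geometric classification of equivariant covers of $\Omega$ is the essential ingredient your argument is missing.
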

We prove the description of the essential image in Theorem B by establishing analogous results to Nori's \cite{NORI} relating finite vector bundles to Galois coverings (Section \ref{sect:finiteVB}), and applying these in conjunction with the factorisation theorem of Scholze-Weinstein \cite[Thm.\ 7.3.1]{SW}. The other properties of the functors of Theorem A and Theorem B follow from general results which we describe below regarding functors of this type associated to Galois coverings.

For now, let us describe instead how these results are related to and have the potential to lead to admissibility and topological irreducibility results. The main result of the recent work of Ardakov and Wadsley stated in our context is the following (in which $n = 2$).
\begin{thm*}[{\cite[Thm.\ A]{AW2}}]
	Suppose that $\sL \in \VectCon^{G^0}(\Omega)_{\fin}$ has rank $1$. Then the locally analytic representation $\Gamma(\Omega, \sL)^*$ of $G^0$ is admissible and has length at most $2$.
\end{thm*}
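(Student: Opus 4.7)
The plan is to invoke Theorem B to realise $\Gamma(\Omega, \sL)^*$ as a $G^0$-stable direct summand of $\OO(\sN_m)^*$ for some $m$, deduce admissibility from the Fr\'echet--Stein framework, and prove the length bound by analysing the simple constituents of the resulting coadmissible representation.

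Since $\sL$ is finite of rank $1$, Theorem B gives $\sL \cong \Hom_{\OO_D^\times}(\chi, f_* \OO_{\sN_\infty})$ for some smooth character $\chi$ of $\OO_D^\times$: the functor is monoidal and essentially rank-preserving (as $f_{m,*}\OO_{\sN_m}$ is locally the regular representation of $D^{(m)}$ tensored with $\OO_\Omega$), so a rank-one object in the essential image must come from a one-dimensional, hence character, representation. Smoothness of $\chi$ means it factors through the finite quotient $D^{(m)}$ for some $m \geq 1$, and therefore $\Gamma(\Omega, \sL) \cong \Hom_{D^{(m)}}(\chi, \OO(\sN_m))$ is the $\chi$-isotypic component of $\OO(\sN_m)$, cut out by the associated idempotent in $K[D^{(m)}]$. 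Taking strong duals yields a closed $G^0$-equivariant direct summand $\Gamma(\Omega, \sL)^* \hookrightarrow \OO(\sN_m)^*$.

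For admissibility, the rigid space $\sN_m$ is Stein (being finite \'etale over the Stein space $\sN$), so $\OO(\sN_m)$ is a nuclear Fr\'echet algebra with a canonical Fr\'echet--Stein structure. By Schneider--Teitelbaum, the strong dual is a coadmissible module over the locally analytic distribution algebra $D(G^0)$, and closed $D(G^0)$-stable direct summands preserve coadmissibility. This yields the admissibility of $\Gamma(\Omega, \sL)^*$.

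For the length bound (the main content), note that when $n = 2$ the space $\Omega$ is one-dimensional, so the flat connection $\sL$ of rank one is simple as a $\sD_\Omega$-module. Combined with the fact that $\sL$ is trivialised on the finite \'etale cover $\sN_m \to \Omega$, one expects $\Gamma(\Omega, \sL)^*$ to be an extension of at most two simple coadmissible $D(G^0)$-modules, typically a smooth Steinberg-like summand twisted by $\chi$ and a complementary locally analytic piece, by analogy with Morita duality for the trivial bundle. This would be proved by combining a $\widehat{\sD}_\Omega$-module localisation argument with the classification of simple constituents of locally analytic parabolic inductions for $\GL_2(F)$ in the style of Orlik--Strauch. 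The main obstacle is precisely this length bound: admissibility is essentially formal, but the bound of $2$ is sharp, so one must actually identify the simple subquotients and rule out further terms, which requires substantive input from the locally analytic representation theory of $\GL_2(F)$.
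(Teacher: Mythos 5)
This statement is not proved in the paper at all: it is quoted verbatim from Ardakov--Wadsley \cite[Thm.\ A]{AW2} as motivation in the introduction, so there is no internal proof to compare against. Judged on its own terms, your proposal has a genuine gap at both of its substantive steps. The reduction via Theorem B to $\sL \cong \Hom_{\OO_D^\times}(\chi, f_*\OO_{\sN_\infty})$ for a smooth character $\chi$ is fine, and realising $\Gamma(\Omega,\sL)^*$ as a direct summand of $\OO(\sN_m)^*$ is fine. But the claim that admissibility is then ``essentially formal'' from the Fr\'echet--Stein framework is false. The fact that $\OO(\sN_m)$ is a nuclear Fr\'echet algebra with a Fr\'echet--Stein structure makes its dual coadmissible \emph{over $\OO(\sN_m)$}; it says nothing about coadmissibility over the locally analytic distribution algebra $D(G^0,K)$, which is what admissibility of the $G^0$-representation means under Schneider--Teitelbaum duality. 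Establishing that $\Gamma(\Omega,\sL)^*$ is coadmissible over $D(G^0,K)$ is precisely the hard content of \cite{AW2} (via their equivariant $\wideparen{\sD}$-module localisation on the flag variety/$\Omega$), and of Dospinescu--Le Bras in the $\GL_2(\bQ_p)$ setting; it cannot be obtained by the formal argument you give.

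The length bound fares no better: you correctly identify it as ``the main content'' and then do not prove it, gesturing at a localisation argument and Orlik--Strauch constituents without carrying either out. (Two smaller points: simplicity of a rank-one integrable connection as a $\sD_\Omega$-module holds in any dimension and has nothing to do with $n=2$; and a smooth character of $\OO_D^\times$ factors through $\OO_D^{(m)}$, not $D^{(m)}$.) So the proposal correctly frames how the present paper's Theorem B feeds into the Ardakov--Wadsley result, but it does not constitute a proof of that result, whose two conclusions each require substantive external input that is neither supplied here nor available in this paper.
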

One of the main ingredients that goes into the proof of this theorem is an explicit classification of torsion $G^0$-equivariant line bundles with connection on $\Omega$: the main result of \cite{AW1} is an explicit group isomorphism
\[
	\text{PicCon}^{G^0}(\Omega)_{\tors} \xrightarrow{\sim} \Hom_{\sm}(\OO_D^\times, K^\times),
\]
where the left-hand side is the group of isomorphism classes of torsion $G^0$-equivariant line bundles with connection on $\Omega$. Therefore, in this context, Theorem A and Theorem B can be viewed as generalisations of this isomorphism in four different directions:
\begin{itemize}
	\item To representations of of arbitrary dimension and vector bundles of arbitrary rank,
	\item To Drinfeld spaces of any dimension,
	\item From the pair $(\OO_D^\times, G^0)$ to both pairs $(D^\times, \GL_n(F))$ and $(\OO_D^\times, G^0)$,
	\item From an isomorphism to a functorial correspondence.
\end{itemize}

For $\sL \in \VectCon^{G^0}(\Omega)$ which is rank $1$ and torsion, the results of \cite{AW2} regarding the length of the admissible representation $\Gamma(\Omega, \sL)^*$ are actually more precise: $\Gamma(\Omega, \sL)^*$ is also shown to be topologically irreducible if and only if $\sL$ is non-trivial when viewed as an object of $\VectCon(\Omega)$ (if $\sL \not\cong \OO_{\Omega}$ as vector bundles with connection once we forget the equivariant structure). We would like to similarly understand the restriction map 
\[
\VectCon^{G^0}(\Omega) \rightarrow \VectCon(\Omega)
\]
for objects in the image of $\Hom_{\OO_D^\times}(-, f_* \OO_{\sN_{\infty}})$.

Whenever $K$ contains $F^{\ab}$ (the maximal abelian extension of $F$) the spaces $\sM_m$ are disjoint unions of geometrically connected components, and we may fix a compatible sequence of geometrically connected components $\Sigma^m$ (see Section \ref{sect:forgettingequivstructure}) at each level to obtain a sub-tower
\[
\sN \leftarrow \Sigma^1 \leftarrow \Sigma^2 \leftarrow \cdots
\]
which similarly defines a functor
\[
\Hom_{\SL_1(D)}(-, f_* \OO_{\Sigma^{\infty}}) \colon \Rep_{\sm}^{\fd}(\SL_1(D)) \rightarrow \VectCon(\Omega)
\]
where $\SL_1(D) = \ker(\Nrd \colon D^\times \rightarrow F^\times)$. The following is the analogue of Theorem A for $\SL_1(D)$.
\begin{theoremC}
		Suppose that $K$ contains $F^{\ab}$. Then the functor
		\[
			\Hom_{\SL_1(D)}(-, f_* \OO_{\Sigma^{\infty}}) \colon \Rep_{\sm}^{\fd}(\SL_1(D)) \rightarrow \VectCon(\Omega)
		\]
		is exact, monoidal, fully faithful, the essential image is closed under sub-quotients and
\[\begin{tikzcd}
	{\Hom_{\OO_D^\times}(-, f_* \OO_{\sN_{\infty}})} \hspace{-2.3em} & \hspace{-1.5em} {\Rep_{\sm}^{\fd}(\OO_D^\times)} & {\VectCon^{G^0}(\Omega)} \\
	{\Hom_{\SL_1(D)}(-, f_* \OO_{\Sigma^{\infty}})} \hspace{-1.1em} & \hspace{-1em} {\Rep_{\sm}^{\fd}(\SL_1(D))} & {\VectCon(\Omega)}
	\arrow["\colon"{description}, draw=none, from=1-1, to=1-2]
	\arrow[from=1-2, to=1-3]
	\arrow[from=1-2, to=2-2]
	\arrow[from=1-3, to=2-3]
	\arrow["\colon"{description}, draw=none, from=2-1, to=2-2]
	\arrow[from=2-2, to=2-3]
\end{tikzcd}\]
commutes.
\end{theoremC}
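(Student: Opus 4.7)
The plan is to run the Galois-covering machinery of Theorem B for the sub-tower $\Sigma^{\bullet}\to\sN$ viewed as a tower of $\SL_1(D)$-covers, and to deduce the commutativity of the diagram from Frobenius reciprocity between the two towers.

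First I would verify that $\Sigma^{\bullet}\to\sN$ is a tower of finite Galois covers with profinite limit $\SL_1(D)$. The hypothesis $F^{\ab}\subseteq K$ ensures that each $\Sigma^m$ is a geometrically connected component of $\sN_m$, and local class field theory identifies its stabiliser inside $\OO_D^\times/(1+\Pi^m\OO_D)$ with the image of $\SL_1(D)=\SL_1(\OO_D)=\ker(\Nrd\colon\OO_D^\times\to\OO_F^\times)$. Hence $\Sigma^m\to\sN$ is Galois with group $\SL_1(D)/(\SL_1(D)\cap(1+\Pi^m\OO_D))$, and the inverse limit of these finite groups is $\SL_1(D)$. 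Combined with the identification $\sN\cong\Omega$ coming from the period map restricted to a single component, this produces a Galois tower $\Sigma^{\bullet}\to\Omega$ with profinite group $\SL_1(D)$ of exactly the type treated by the general framework developed earlier.

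The four categorical properties then follow by the same arguments as in Theorem B, in a slightly simpler form since no equivariant structure has to be tracked on the output side. For the commutativity of the diagram, the decomposition
\[
\sN_{\infty}=\bigsqcup_{g\,\in\,\OO_D^\times/\SL_1(D)} g\cdot\Sigma^{\infty}
\]
identifies $f_*\OO_{\sN_{\infty}}$ with the smoothly induced sheaf $\Ind_{\SL_1(D)}^{\OO_D^\times}f_*\OO_{\Sigma^{\infty}}$ in the category of $\OO_D^\times$-equivariant vector bundles with connection on $\Omega$; although the coset space $\OO_D^\times/\SL_1(D)\cong\OO_F^\times$ is infinite, evaluation against a given $V\in\Rep_{\sm}^{\fd}(\OO_D^\times)$ only sees finitely many cosets because $V$ is annihilated by some open subgroup, so the identification reduces to the analogous statement at each finite level. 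Frobenius reciprocity for smooth representations then gives
\[
\Hom_{\OO_D^\times}\bigl(V,f_*\OO_{\sN_{\infty}}\bigr)\cong\Hom_{\SL_1(D)}\bigl(V|_{\SL_1(D)},f_*\OO_{\Sigma^{\infty}}\bigr),
\]
which is exactly the asserted commutativity after forgetting the $G^0$-equivariant structure on the left.

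The main obstacle I expect lies in the first step: checking that $\Sigma^{\infty}\to\Omega$ satisfies the precise hypotheses of the Galois-covering formalism used for Theorem B, in particular the identity $\OO(\Sigma^{\infty})^{\SL_1(D)}=\OO(\Omega)$ at the level of global sections and the correct pro-\'etale behaviour of the tower. These should all reduce to the Scholze--Weinstein factorisation theorem together with the $F^{\ab}\subseteq K$ hypothesis, but each ingredient must be verified rather than invoked.
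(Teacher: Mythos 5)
Your proposal is correct and follows essentially the same route as the paper: the finite-level Galois coverings $\Sigma^m\to\sN\cong\Omega$ with group $\SL_1(D)/(\SL_1(D)\cap(1+\Pi^m\OO_D))$ feed into the general Galois-covering machinery (here with trivial $G$, using that each $\Sigma^m$ is geometrically connected), and the commutativity of the square is exactly the induction/Frobenius-reciprocity identification $\OO(\sN_m)\cong\Ind_{\Stab(\Sigma^m)}^{\OO_D^{(m)}}\OO(\Sigma^m)$ that the paper packages as its compatibility-with-connected-components lemma. The only small correction is that the Scholze--Weinstein factorisation theorem plays no role here (it is needed only for the essential-image statement of Theorem B); the required inputs are instead the description of the geometrically connected components, the identification of $\Stab(\Sigma^m)$ with $\ker(\Nrd_m)$, and the surjectivity $\Nrd(1+\Pi^m\OO_D)=1+\pi^{\lceil m/n\rceil}\OO_F$.
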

Therefore, for any finite dimensional smooth representation $V$ of $D^\times$, the underlying $\sD$-module structure of $\Hom_{D^\times}(V, f_* \OO_{\sM_{\infty}})$ is completely determined by the restriction of $V$ to a representation of $\SL_1(D)$. Similarly, with the results of Ardakov and Wadsley described above, this tells us that for any smooth character $\chi$ of $\OO_D^\times$, the locally analytic representation $\Hom_{\OO_D^\times}(\chi, \OO(\sN_{\infty}))^*$ of $G^0$ is topologically irreducible precisely when $\chi|_{\SL_1(D)} \neq 1$.

We also note in passing that all the functors described above send a representation $V$ to a vector bundle of rank $\dim(V)$, and commute with duals and taking the determinant representation and determinant line bundle on either side (Remark \ref{rem:dimrank}).

\addtocontents{toc}{\SkipTocEntry}
\subsection*{Functors Associated to Galois Coverings}

We now give an overview of how the functors of Theorems A, B and C are defined, and how their properties are established. Each functor is constructed as the direct limit of certain functors attached to each of the finite level Galois coverings of each respective tower. We now describe how we define each of these functors at finite level. Suppose in what follows that $k$ is any characteristic $0$ field.

Suppose first that $X$ is a smooth scheme or rigid space over $k$, with an action of an abstract group $H$. Suppose moreover that $X$ has an additional action of some abstract group $G$, and that this action commutes with the action of $H$. In this situation, we can consider the category $\VectCon^{G \times H}(X)$, and there are canonically defined functors
\begin{align*}
	\OO_X \otimes_k - &\colon \Mod_{k[H]}^{\fd} \rightarrow \VectCon^{G \times H}(X), \\
	\Hom_{G\text{-}\sD_X}(\OO_X, -) &\colon \VectCon^{G \times H}(X) \rightarrow \Mod_{k[H]}^{\fd},
\end{align*}
The following result is the main technical ingredient. To state it, let $c_X$ denote the sheaf of \emph{constant functions} (considered in more detail in Section \ref{sect:constantsheaf}), defined by
\[
c_X = \ker(d \colon \OO_X \rightarrow \Omega^1_{X/k}).
\]
\begin{theoremD}[Theorem \ref{FunctorkGGDX}]
	Suppose that $c_X(X)^G = k$. Then:
	\begin{enumerate}
		\item The functor
	\[
		\OO_X \otimes_k - \colon \Mod_{k[H]}^{\fd} \rightarrow \VectCon^{G \times H}(X)
	\]
	is exact and fully faithful.
	\item 
	For any $\sM \in \VectCon^{G \times H}(X)$, 
	\[
		\dim_k(\Hom_{G \text{-}\sD_X}(\OO_X, \sM)) \leq \rank(\sM).
	\]
	The essential image of $\OO_X \otimes_k -$ is the full subcategory with objects $\sM$ for which this is an equality. On this full subcategory the solution functor $\Hom_{G\text{-}\sD_X}(\OO_X, -)$ is a quasi-inverse for $\OO_X \otimes_k -$.
	\item The essential image of $\OO_X \otimes_k -$ is closed under sub-quotients.
	\end{enumerate}
\end{theoremD}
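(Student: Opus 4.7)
The plan is to prove the three parts in order, with the common thread that the hypothesis $c_X(X)^G = k$ forces $G$ to act transitively on the set $\pi_0(X)$ of connected components of $X$: otherwise a non-trivial $G$-stable union of components would give a non-scalar $G$-invariant locally constant function. This reduction lets us answer many global questions on a single connected component, where the usual characteristic-zero connection theory applies. For part (1), exactness of $\OO_X \otimes_k -$ is immediate since $\OO_X$ is flat over $k$. For full-faithfulness, a $G \times H$-equivariant $\sD_X$-linear morphism $\OO_X \otimes_k V \to \OO_X \otimes_k W$ is tautologically a $G \times H$-invariant horizontal global section of $\OO_X \otimes_k \Hom_k(V,W)$; since the connection is trivial these sections are $c_X(X) \otimes_k \Hom_k(V,W)$, which under the hypothesis collapses to $\Hom_{k[H]}(V,W)$ after taking $G$- and then $H$-invariants.

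For part (2), the candidate quasi-inverse is $\sM \mapsto \sM^{\nabla, G} := \Hom_{G\text{-}\sD_X}(\OO_X, \sM)$, and the central map is the unit $u_\sM \colon \OO_X \otimes_k \sM^{\nabla, G} \to \sM$, which is automatically $G \times H$-equivariant and $\sD_X$-linear. I would first prove $u_\sM$ is always injective: picking a connected component $X_0$, transitivity of $G$ on $\pi_0(X)$ makes the restriction $\sM^{\nabla,G} \to \Gamma(X_0, \sM)^\nabla$ injective, and on the connected base $X_0$ horizontal sections inject into any fiber (a minimal $\OO_{X_0}$-linear dependence among horizontal sections is seen, by differentiating, to be $k$-linear using $c_{X_0}(X_0)=k$). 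This produces the rank bound $\dim_k \sM^{\nabla, G} \leq \rank(\sM)$. When equality holds, $u_\sM$ is an injection of vector bundles of equal rank that is an isomorphism on fibers, hence an isomorphism by Nakayama, so $\sM$ lies in the essential image. Conversely, for $\sM = \OO_X \otimes_k V$ the computation from part (1) gives $\sM^{\nabla, G} \cong V$, completing the characterisation of the essential image and identifying $\Hom_{G\text{-}\sD_X}(\OO_X, -)$ as a quasi-inverse on it.

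For part (3), let $\sM \cong \OO_X \otimes_k V$ lie in the essential image and let $\sN \hookrightarrow \sM$ be a sub-object in $\VectCon^{G \times H}(X)$. On each connected component $X_i$ I would apply the flat-frame / Frobenius-integrability principle to identify sub-vector-bundles with integrable connection of the trivial-connection bundle $\OO_{X_i} \otimes_k V$ with sub-$k$-vector-spaces of $V$: concretely, $\sN|_{X_i} = \OO_{X_i} \otimes_k W_i$ with $W_i$ the horizontal sections of $\sN|_{X_i}$. The decisive observation is that by the defining action, $G$ acts on $\OO_X \otimes V$ purely through $X$ and trivially on the $V$-factor; combined with $G$-equivariance of $\sN$ and transitivity of $G$ on $\pi_0(X)$, this forces all $W_i$ to coincide with a common subspace $W \subseteq V$, which is furthermore $H$-stable by $H$-equivariance of $\sN$. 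Hence $\sN = \OO_X \otimes_k W$ is in the essential image, and closure under quotients follows from $\sM/\sN \cong \OO_X \otimes_k (V/W)$.

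I expect the main technical obstacle to be the flat-frame step in part (3): establishing, in whichever geometric category the theorem is intended to cover, that every sub-vector-bundle with integrable connection of a trivial-connection bundle over a connected smooth base is itself trivial. Locally on a polydisc (in the rigid-analytic setting) or on a small open (in the scheme setting), this follows from Frobenius integrability producing a flat frame, and globally one glues via the observation that the horizontal sections of the sub form a $k$-local system embedded inside the constant local system $\underline{V}$, which on a connected base is automatically constant.
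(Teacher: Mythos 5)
Your part (1) is essentially the paper's own argument (a morphism is determined by where it sends $1_X\otimes v$, which must land in $c_X(X)^G\otimes_k W=k\otimes_k W$), and your overall strategy for (2) is also the right one. But there is a genuine gap in the reduction you use for both (2) and (3): you pass to a connected component $X_0$ and then invoke $c_{X_0}(X_0)=k$. The hypothesis $c_X(X)^G=k$ does \emph{not} imply that the connected components of $X$ are geometrically connected. For instance $X$ may be connected with $c_X(X)=L$ a nontrivial finite Galois extension of $k$ on which $G$ acts through $\Gal(L/k)$, so that $c_X(X)^G=k$ while $c_{X_0}(X_0)=L\neq k$; this is exactly the situation in the paper's main application, where $\sN_m$ is connected but not geometrically connected and only the $G^0$-invariants of $c(\sN_m)$ reduce to $K$. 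In that case horizontal sections on $X_0$ do \emph{not} inject into a fibre (already $\Gamma(\Spec L,\OO)^{\nabla}=L$ has $k$-dimension $[L:k]>1=\rank$), so your rank bound fails as argued. The correct fix, which is what the paper does in Lemma \ref{lem:solfunctorinjective}, is to keep the $G$-invariance in play to the very end: a minimal $\OO_X$-linear relation among the $G$-invariant horizontal sections $e_i=f_i(1_X)$ has, after differentiating, coefficients in $c_X(X)$, and it is their $G$-invariance (not connectivity of a component) that forces them into $c_X(X)^G=k$. The surjectivity criterion via equality of ranks then goes through as you say (the paper uses that a coherent quotient in $\VectCon^{G\times H}(X)$ is locally free, rather than Nakayama fibrewise, but this is cosmetic).

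The same defect infects your part (3): the ``flat-frame'' identification of sub-objects of $\OO_{X_i}\otimes_k V$ with subspaces of $V$ is false on a component with $c_{X_i}(X_i)\neq k$ (over $\Spec L$ any $L$-subspace of $L\otimes_k V$ is a sub-object, and most are not of the form $L\otimes_k W$), and even on a geometrically connected rigid space the sheaf of horizontal sections of the trivial connection is $c_X\otimes_k V$, which by Example \ref{sheaffcXondisk} is not the constant sheaf $\underline{V}$, so the ``embedded $k$-local system'' gluing is also problematic. You correctly flagged this step as the main obstacle; the paper avoids it entirely. Its argument for (3) is purely formal: given $\sM\subset\OO_X\otimes_k V$ with quotient $\sN$ (again in $\VectCon^{G\times H}(X)$ by Lemma \ref{cohimplieslocallyfree}), apply the left-exact solution functor $\Hom_{G\text{-}\sD_X}(\OO_X,-)$ followed by the exact $\OO_X\otimes_k-$ and compare with the original short exact sequence; a diagram chase together with the injectivity from (2) shows $\OO_X\otimes_k\Hom_{G\text{-}\sD_X}(\OO_X,\sM)\to\sM$ is an isomorphism. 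I would recommend replacing your component-by-component analysis with this argument, as it needs no classification of sub-connections at all.
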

\begin{remark*}
	When $G$ is trivial, the assumption of the theorem that $c_X(X)^G = k$ is equivalent to the assumption that $X$ is geometrically connected. This can easily be shown to be a necessary condition for each conclusion of the theorem to hold (Remark \ref{rmk:geomconnnec}).
\end{remark*}
Suppose now that $f \colon X \rightarrow Y$ is a $G$-equivariant finite \'{e}tale Galois morphism of smooth schemes or rigid spaces over $k$, with Galois group $H$. In this situation there is an equivalence
\[
	(-)^H \colon \VectCon^{G \times H}(X) \rightarrow \VectCon^G(Y),
\]
and the functor
\[
\Hom_{k[H]}(-, f_* \OO_X) \colon \Mod_{k[H]}^{\fd} \rightarrow \VectCon^G(Y)
\]
can be shown to be equal to the composition
\[
	(\OO_X \otimes_k (-)^*)^H \colon \Mod_{k[H]}^{\fd} \rightarrow \VectCon^G(Y)
\]
and thus inherits all the good properties of $\OO_X \otimes_k -$ from Theorem D. Furthermore, this functor sends the regular representation $k[H]$ to the pushforward $f_*\OO_X$ (Theorem \ref{mainthm1}), and in this way we obtain a complete description of $f_*\OO_X$ as a semisimple $G\text{-}\sD_Y$-module (Corollary \ref{maincor2}).

For example, we deduce the relevant properties of the functor of Theorem B from the properties of the functors at each finite level $m \geq 1$, which are deduced from the above by taking $X = \sN_m$, $H = \OO_D^\times / (1 + \Pi^m \OO_D)$ and $G = G^0$. 

\addtocontents{toc}{\SkipTocEntry}
\subsection*{Other Results}

Let us now briefly mention some other results we establish in this paper which are of independent interest.

The first is Proposition \ref{prop:continuousactionlifts}, which in particular implies that the action of $\GL_n(F)$ on each covering space $\sM_m$ of the Drinfeld tower is continuous in the sense of \cite{ARD} (Corollary \ref{cor:ctsactionGLnM}).

The second is a description of the geometrically connected components of $(\sN_m)_{m \geq 1}$ (Theorem \ref{thm:geomconncompdescDT}). This description can already be found in the unpublished work of Boutot and Zink \cite[Thm.\ 0.20]{BZ}, which uses global methods and $p$-adic uniformisation of Shimura curves. In contrast, we deduce this by completely elementary methods. Precisely, this description is a simple application of the theory we develop in Section \ref{sect:constantsheaf} regarding the sheaf $c_X$ of constant functions, coupled with a result of Kohlhaase on maximal fields contained inside the global sections of $\sN_m$ \cite[Prop.\ 2.7]{KOH}.

The third (Corollary \ref{secondmapinjective}) is a proof that any $p$-torsion $\SL_n(F)$-equivariant line bundle with connection on $\Sigma^1$ is uniquely determined by its underlying line bundle. We use this to give a more conceptual proof of the main result of our earlier paper \cite[Thm.\ 4.6]{TAY}, which we show follows from the general theory we develop regarding how the pushforward of the structure sheaf of an abelian Galois covering decomposes as a $\sD$-module (Theorem \ref{mainthmabelian}).

\addtocontents{toc}{\SkipTocEntry}
\subsection*{Outline of the Paper}

This paper is organised as follows.

In Section \ref{sect:background} we collect the necessary facts we need concerning $\sD$-modules, equivariant sheaves and Galois coverings. The results of this section are quite general, and the reader is encouraged to skip this section and refer back to it when necessary.

In Section \ref{sect:constantsheaf} we introduce and consider the sheaf of constant functions and relate this sheaf to the notion of geometric connectivity.

In Section \ref{sect:RH} we introduce the functor $\OO_X \otimes_k -$, the solution functor $\Hom_{G \text{-}\sD_X}(\OO_X, -)$, and prove Theorem D (Theorem \ref{FunctorkGGDX}).

In Section \ref{sect:finiteVB} we consider finite equivariant vector bundles with connection, and relate these to Galois coverings.

In Section \ref{sect:DmodGalExt} we establish the main properties of the functor $\Hom_{k[N]}(-, f_* \OO_X)$ (Theorem \ref{mainthm1}) and from this prove a decomposition theorem for the $G\text{-}\sD_Y$-module $f_* \OO_X$ (Corollary \ref{maincor2}).

In Section \ref{sect:final} we apply the results of Section \ref{sect:DmodGalExt} to the Drinfeld tower. The proof of the main results, Theorem A, Theorem B and Theorem C occupies Section \ref{sect:equivariantDmodulesonDrinfeldSpaces} to Section \ref{sect:essimage}.

We also include two appendices. In Appendix \ref{sect:generalities} we show that the property that a group action on a rigid space is continuous lifts along finite \'{e}tale covers, which we use in Section \ref{sect:final}. In Appendix \ref{sect:basechange} we study base change functors on $\VectCon^G(X)$ for quasi-Stein $X$, and show that these are compatible with taking homomorphisms. We use this in our proof of Theorems A and B to pass from the case when $K$ is algebraically closed to general fields $K$. 

\addtocontents{toc}{\SkipTocEntry}
\subsubsection*{Acknowledgements}

I would like to thank Konstantin Ardakov, Simon Wadsley, James Newton, Alex Horawa, Tom Adams and Jan Kohlhaase for their comments and suggestions. I would also like to thank Andreas Bode for his help regarding some questions in $p$-adic functional analysis, and sincerely thank the two anonymous referees for their many detailed and insightful comments which greatly improved the paper. This research was financially supported by the EPSRC.

\section{Preliminary Notions}\label{sect:background}

In this section we collect together all the relevant technical notions we will make use of and refer back to when necessary throughout the rest of the paper. As an overview, in this paper we are concerned with certain categories of equivariant $\sD$-modules (Section \ref{sect:generaleqDmodules}) which are defined in terms of the sheaf of differential operators (Section \ref{sect:Dmodules}) and equivariant sheaves (Section \ref{eqsheaves}). We are particularly interested in how for a Galois extension (Section \ref{sect:galoisextensions}) the various categories we obtain are related to one another (Section \ref{sect:equivofcats}). Throughout the paper we work with $\sD$-modules, and the universal property of the sheaf of differential operators comes from its description as a particular example of a Lie algebroid (Section \ref{sect:LieAlgebroid}). Lie algebroids are objects which are locally described by Lie-Rinehart algebras (Section \ref{sect:LRalgebra}), and so this is where we begin.

\subsection{Lie-Rinehart Algebras}\label{sect:LRalgebra}

In this section, let $R$ be a commutative ring and let $A$ be a commutative $R$-algebra. An $(R,A)$-\emph{Lie algebra} is a pair $(L,\rho)$, where $L$ is simultaneously a $R$-Lie algebra and $A$-module, and $\rho : L \rightarrow \Der_R(A)$ is a homomorphism of both $R$-Lie algebras and $A$-modules such that
\[
	[x,ay] = a[x,y] + \rho(x)(a)y
\]
for any $x,y \in L$ and $a \in A$. Often we write only $L$ when $\rho : L \rightarrow \Der_R(A)$ is implicit.
\begin{eg}
$\Der_R(A)$ is an $(R,A)$-Lie algebra, with $\rho$ as the identity.
\end{eg}
From such a pair, one can construct the universal enveloping algebra of $L$ \cite{RINE}.
This is an $R$-algebra $U(L)$, together with structure maps
\begin{align*}
\iota_L : L \rightarrow U(L), \\
\iota_A : A \rightarrow U(L),
\end{align*}
which are morphisms of $R$-Lie algebras and of $R$-algebras respectively, satisfying
\begin{align*}
\iota_L(ax) &= \iota_A(a)\iota_L(x), & [\iota_L(x), \iota_A(a)] &= \iota_A(\rho(x)(a)),
\end{align*}
for all $a \in A, x \in L$. The triple $(U(L), \iota_A, \iota_L)$ satisfies the following universal property.
\begin{lemma}[{\cite[\S 2]{RINE}}]\label{UEAUnivProp}
Suppose that $S$ is a unital associative $R$-algebra,
\begin{align*}
	\eta_L : L \rightarrow S, \\
	\eta_A : A \rightarrow S,
\end{align*}
are morphisms of $R$-Lie algebras and of $R$-algebras respectively, and that for any $a \in A, x \in L$,
\begin{align*}
	\eta_L(ax) &= \eta_A(a)\eta_L(x), & [\eta_L(x), \eta_A(a)] &= \eta_A(\rho(x)(a)).
\end{align*}
Then there is a unique homomorphism of $R$-algebras $\varphi : U(L) \rightarrow S$ with
\[\begin{tikzcd}
	A & {U(L)} & L \\
	& S
	\arrow["{\iota_L}"', from=1-3, to=1-2]
	\arrow["{\iota_A}", from=1-1, to=1-2]
	\arrow["\varphi"', from=1-2, to=2-2]
	\arrow["{\eta_A}"', from=1-1, to=2-2]
	\arrow["{\eta_L}", from=1-3, to=2-2]
\end{tikzcd}\]
\end{lemma}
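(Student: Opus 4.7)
The plan is to exploit the construction of $U(L)$ itself as a universal object built from the tensor algebra, and then deduce the universal property by routine verification. I will follow Rinehart's construction: view $A \oplus L$ as an $R$-module, form the tensor algebra $T_R(A \oplus L)$, and define $U(L)$ as the quotient by the two-sided ideal generated by the relations
\[
    1_{T} - 1_A,\quad a \otimes b - ab,\quad x \otimes y - y \otimes x - [x,y],\quad a \otimes x - ax,\quad x \otimes a - a \otimes x - \rho(x)(a),
\]
for all $a,b \in A$ and $x,y \in L$, where $1_T$ is the unit of the tensor algebra and $ab$, $ax$, $[x,y]$, $\rho(x)(a)$ are interpreted as elements of $A$ or $L$ embedded in degree one. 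The maps $\iota_A$ and $\iota_L$ are then induced by the inclusions of $A$ and $L$ into degree one.

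The first step is to observe that the hypotheses on $(\eta_A, \eta_L)$ give an $R$-linear map $\eta_A \oplus \eta_L \colon A \oplus L \rightarrow S$, which by the universal property of the tensor algebra extends uniquely to an $R$-algebra homomorphism $\tilde{\varphi} \colon T_R(A \oplus L) \rightarrow S$. The second step is to verify that $\tilde{\varphi}$ annihilates every generator of the defining ideal of $U(L)$: the relation $a\otimes b - ab$ is killed because $\eta_A$ is a ring homomorphism; the relation $x\otimes y - y \otimes x - [x,y]$ is killed because $\eta_L$ is a homomorphism of $R$-Lie algebras into $S$ equipped with its commutator bracket; the relation $a\otimes x - ax$ is killed by the assumed identity $\eta_L(ax) = \eta_A(a)\eta_L(x)$; and the relation $x \otimes a - a \otimes x - \rho(x)(a)$ is killed by $[\eta_L(x), \eta_A(a)] = \eta_A(\rho(x)(a))$. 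Therefore $\tilde{\varphi}$ descends to the desired $R$-algebra homomorphism $\varphi \colon U(L) \rightarrow S$, and unwinding the definitions yields $\varphi \circ \iota_A = \eta_A$ and $\varphi \circ \iota_L = \eta_L$.

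For uniqueness, the key observation is that $U(L)$ is generated as an $R$-algebra by $\iota_A(A) \cup \iota_L(L)$, since already the tensor algebra $T_R(A \oplus L)$ is generated by $A \oplus L$ in degree one. Any $R$-algebra homomorphism out of $U(L)$ is therefore determined by its restrictions to $\iota_A(A)$ and $\iota_L(L)$, so the commutativity of the diagram forces $\varphi$ to be unique.

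The main obstacle, if there is one, is purely bookkeeping: one must check that the ideal generated by the listed relations is stable under both left and right multiplication, and keep track of the signs and the fact that $A$ sits inside $L \oplus A$ as a commutative subalgebra (so that the product on $A$ inherited from the tensor algebra agrees with its original product after quotienting). None of this is deep; the content is simply that the compatibility conditions in the hypothesis precisely match the defining relations of $U(L)$, which is why the universal property holds essentially by construction.
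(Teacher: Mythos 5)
Your proof is correct. Note that the paper does not actually prove this lemma at all: it is quoted directly from Rinehart's paper, so there is no in-text argument to compare against. Your route --- presenting $U(L)$ as the quotient of the tensor algebra $T_R(A \oplus L)$ by the two-sided ideal encoding exactly the five structural identities, and then reading off the universal property from the universal property of the tensor algebra --- is a standard and clean alternative to Rinehart's own construction, which instead forms the ordinary enveloping algebra of the $R$-Lie algebra $A \oplus L$ (with bracket twisted by $\rho$), passes to the subalgebra generated by degree one, and then quotients by the relations $\iota(a)\iota(\xi) = \iota(a\xi)$. The two constructions buy the same thing here; your version makes the ``universal property holds by construction'' point maximally transparent, since the hypotheses on $(\eta_A,\eta_L)$ are verbatim the generators of your ideal, while Rinehart's version is better adapted to proving the PBW-type statement $\Sym_A(L) \cong \gr U(L)$ later. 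Your verification that $\tilde\varphi$ kills each generator is correct (the relation $1_T - 1_A$ is handled by unitality of $\eta_A$, which you implicitly use), and the uniqueness argument via generation by $\iota_A(A) \cup \iota_L(L)$ is right. The only formal caveat is that the lemma concerns Rinehart's $U(L)$ rather than your presentation of it; but since both objects satisfy the same universal property they are canonically isomorphic compatibly with $\iota_A$ and $\iota_L$, so nothing is lost.
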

\begin{eg}
When $A = R$, and $\rho \colon L \rightarrow \Der_R(R) = 0$ is the zero map, $U(L)$ is the classical universal enveloping algebra of $L$ over $R$. However, despite the notation (which doesn't mention $A$), these differ in general when $A \neq R$.
\end{eg}
\begin{eg}
Using the universal property with $S = \End_R(A)$, we see that $A$ is canonically a $U(L)$-module where $A$ acts by left multiplication and $L$ acts by $\rho \colon L \rightarrow \Der_R(A) \subset \End_R(A)$.
\end{eg}
The morphism $\iota_A : A \rightarrow U(L)$ is injective, and by construction $U(L)$ is generated as an $R$-algebra by the images $\iota_A(A)$ and $\iota_L(L)$ \cite[\S 2]{RINE}. There is a increasing exhaustive filtration on $U(L)$, where $F_0U(L) = A$, $F_1U(L) = A + \iota_L(L)$, and
\[
	F_n U(L) = F_1 U(L) \cdot F_{n-1}U(L) = A + \sum_{i = 1}^n \iota_L(L)^i
\]
for any $n \geq 2$. $A$ is central in $\gr U(L)$, and there is a natural surjection $\Sym_A(L) \rightarrow \gr U(L)$, which is an isomorphism whenever $L$ is projective \cite[Thm.\ 3.1]{RINE}. When $\Sym_A(L) \rightarrow \gr U(L)$ is an isomorphism the natural map $\iota_A \oplus \iota_L : A \oplus L \rightarrow U(L)$ is injective, and in this case we will identify both $A$ and $L$ with their images in $U(L)$.
\subsubsection{Functoriality}
In this section we describe in what sense the construction of the universal enveloping algebra is functorial. Fixing the commutative ring $R$, we have the following notion from \cite[\S 2.1]{ARD}.
\begin{defn}\label{phimorphism}
Suppose that $\varphi : A \rightarrow B$ is a morphism of commutative $R$-algebras. Suppose that $L$ is an $(R,A)$-Lie algebra, and $L'$ is an $(R,B)$-Lie algebra. Then $\tilde{\varphi} : L \rightarrow L'$ is a $\varphi$\emph{-morphism} if
\begin{itemize}
	\item $\tilde{\varphi}$ is a homomorphism of $R$-Lie algebras,
	\item $\tilde{\varphi}(a \cdot x) = \varphi(a) \cdot \tilde{\varphi}(x)$,
	\item $\varphi(\rho(x)a) = \rho'(\tilde{\varphi}(x))\varphi(a)$,
\end{itemize}
for all $a \in A$, $x \in L$.
\end{defn}

Given a commutative ring $R$, we denote by $\LR_R$ the category with objects consisting of pairs $(A,L)$, where $A$ is a commutative $R$-algebra and $L$ is an $(R,A)$-Lie algebra. Morphisms $(A, L) \rightarrow (B, L')$ are pairs $(\varphi, \tilde{\varphi})$, where $\varphi : A \rightarrow B$ is a homomorphism of $R$-algebras, and $\tilde{\varphi} : L \rightarrow L'$ is a $\varphi$-morphism. It is straightforward to check that the composition of morphisms is again a morphism.

\begin{lemma}[{\cite[Lem.\ 2.1.7]{ARD}}]\label{UEAfunctoriality}
	Suppose that $\varphi : A \rightarrow B$ is a morphism of commutative $R$-algebras. Suppose that $L$ is an $(R,A)$-Lie algebra, $L'$ is an $(R,B)$-Lie algebra. Then every $\varphi$-morphism $\tilde{\varphi} : L \rightarrow L'$ extends uniquely to a filtration preserving $R$-algebra homomorphism $U(\varphi, \tilde{\varphi}) : U(L) \rightarrow U(L')$ that makes the following diagram commute
\[\begin{tikzcd}
	{A \oplus L} & {B \oplus L'} \\
	{U(L)} & {U(L')}
	\arrow["{\iota_A \oplus \iota_L}"', from=1-1, to=2-1]
	\arrow["{\iota_{B} \oplus \iota_{L'}}", from=1-2, to=2-2]
	\arrow["{\varphi \oplus \tilde{\varphi}}", from=1-1, to=1-2]
	\arrow["{U(\varphi, \tilde{\varphi})}"', from=2-1, to=2-2]
\end{tikzcd}\]
\end{lemma}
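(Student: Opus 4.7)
The natural plan is to derive this directly from the universal property of $U(L)$ given in Lemma \ref{UEAUnivProp}, applied with target $S = U(L')$. The idea is to take the maps $\eta_A$ and $\eta_L$ that Lemma \ref{UEAUnivProp} requires as the obvious composites through $B \oplus L'$, namely
\[
    \eta_A \coloneqq \iota_B \circ \varphi \colon A \to U(L'), \qquad \eta_L \coloneqq \iota_{L'} \circ \tilde\varphi \colon L \to U(L').
\]
Then $\eta_A$ is an $R$-algebra homomorphism because both $\varphi$ and $\iota_B$ are, and $\eta_L$ is an $R$-Lie algebra homomorphism because both $\tilde\varphi$ and $\iota_{L'}$ are.

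The verification of the two compatibility relations required by Lemma \ref{UEAUnivProp} is straightforward, but it is where the $\varphi$-morphism axioms of Definition \ref{phimorphism} get used. For the $A$-linearity relation, $\eta_L(ax) = \iota_{L'}(\tilde\varphi(ax)) = \iota_{L'}(\varphi(a)\tilde\varphi(x))$ by the second bullet of Definition \ref{phimorphism}, and then the corresponding $A$-linearity relation inside $U(L')$ rewrites this as $\iota_B(\varphi(a)) \iota_{L'}(\tilde\varphi(x)) = \eta_A(a)\eta_L(x)$. For the commutator relation, $[\eta_L(x), \eta_A(a)] = [\iota_{L'}(\tilde\varphi(x)), \iota_B(\varphi(a))] = \iota_B(\rho'(\tilde\varphi(x))\varphi(a))$ using the commutator relation in $U(L')$, and the third bullet of Definition \ref{phimorphism} identifies this with $\iota_B(\varphi(\rho(x)(a))) = \eta_A(\rho(x)(a))$. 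So Lemma \ref{UEAUnivProp} yields a unique $R$-algebra homomorphism $U(\varphi, \tilde\varphi) \colon U(L) \to U(L')$ satisfying $U(\varphi, \tilde\varphi) \circ \iota_A = \iota_B \circ \varphi$ and $U(\varphi, \tilde\varphi) \circ \iota_L = \iota_{L'} \circ \tilde\varphi$, which is exactly the commutativity of the stated diagram.

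Filtration preservation then comes for free from the description of the filtration on $U(L)$ recalled above: $F_0 U(L) = \iota_A(A)$ maps into $\iota_B(B) = F_0 U(L')$, while $\iota_L(L)$ maps into $\iota_{L'}(L') \subseteq F_1 U(L')$, so $F_1 U(L) = \iota_A(A) + \iota_L(L)$ lands in $F_1 U(L')$. Since the $F_n U(L)$ are generated multiplicatively by $F_1 U(L)$ and $U(\varphi, \tilde\varphi)$ is an algebra homomorphism, induction on $n$ shows $U(\varphi, \tilde\varphi)(F_n U(L)) \subseteq F_n U(L')$ for all $n$. Uniqueness of a filtration-preserving extension follows already from uniqueness in Lemma \ref{UEAUnivProp}, so there is really no separate obstacle here. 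The only subtlety worth flagging is the direction of the commutator convention and the sign/side on which the $A$-action is written, but these are dictated by the standing conventions already fixed for $U(L)$ and $U(L')$, and once matched the verification is essentially mechanical.
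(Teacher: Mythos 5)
Your proof is correct; the paper gives no proof of this lemma (it is cited from \cite[Lem.\ 2.1.7]{ARD}), and your derivation via the universal property of Lemma \ref{UEAUnivProp} applied to $S = U(L')$ with $\eta_A = \iota_B \circ \varphi$ and $\eta_L = \iota_{L'} \circ \tilde{\varphi}$ is exactly the standard argument one would expect there. The verifications are all in order: the two compatibility relations follow from the $\varphi$-morphism axioms combined with the structural relations in $U(L')$, filtration preservation follows by induction from multiplicativity since $F_nU(L) = F_1U(L)\cdot F_{n-1}U(L)$, and uniqueness is immediate because $U(L)$ is generated as an $R$-algebra by $\iota_A(A)$ and $\iota_L(L)$.
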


Therefore, $U(-)$ is a functor from the category $\LR_R$ to the category of positively filtered $R$-algebras.

\subsubsection{Base Change}

We discuss the notion of base change for Lie-Rinehart Algebras, following \cite[\S 2.2]{AW1}. In this section, suppose that $L$ is an $(R,A)$-Lie algebra.

\begin{lemma}[{\cite[Lem.\ 2.2]{AW1}}]\label{basechangelemma}
Suppose $\varphi : A \rightarrow B$ is a homomorphism of commutative $R$-algebras, and that $\sigma : L \rightarrow \Der_R(B)$ is a $\varphi$-morphism. Then the $B$-module $B \otimes_A L$ has a unique structure of an $R$-Lie algebra such that 
\[
(B \otimes_A L, 1 \otimes \sigma :B \otimes_A L \rightarrow \Der_R(B))
\]
is an $(R,B)$-Lie algebra.
\end{lemma}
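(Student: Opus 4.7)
The plan is to derive the unique formula for the bracket from the Lie-Rinehart axioms together with the natural requirement that the canonical map $L \to B \otimes_A L$, $x \mapsto 1 \otimes x$, preserves brackets, and then to verify that this formula is well-defined and satisfies all the required properties. Assuming such a structure exists, applying the Lie-Rinehart axiom $[u, bv] = b[u,v] + \rho'(u)(b)v$ twice together with antisymmetry forces
\[
	[b \otimes x, c \otimes y] = bc \otimes [x,y]_L + b\, \sigma(x)(c) \otimes y - c\, \sigma(y)(b) \otimes x,
\]
giving uniqueness.

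For existence, the crucial step is checking that this prescription descends to a well-defined $R$-bilinear map on $(B \otimes_A L) \times (B \otimes_A L)$, i.e.\ is balanced in the $A$-action in each argument. This is where all the hypotheses come in: the verification uses the Lie-Rinehart axiom $[x, ay] = a[x,y] + \rho(x)(a) y$ in $L$, together with the defining properties of a $\varphi$-morphism, namely $\sigma(ax) = \varphi(a)\sigma(x)$ and $\varphi(\rho(x)(a)) = \sigma(x)(\varphi(a))$; the terms produced by each of these cancel out precisely as needed.

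Once the bracket is well-defined, antisymmetry is immediate from the formula, while the Jacobi identity together with the Lie-Rinehart axiom for $(B \otimes_A L, 1 \otimes \sigma)$ reduce by $R$-multilinearity to their evaluations on pure tensors, which in turn follow from the Jacobi identities in $L$ and in $\Der_R(B)$ together with the derivation property. That $1 \otimes \sigma$ is both a $B$-module map and an $R$-Lie algebra map is then routine. The main obstacle is the bookkeeping in the $A$-bilinearity verification, as each of the three summands produces several new terms when an $a \in A$ is moved across the tensor symbol; but no conceptual input beyond the $\varphi$-morphism hypothesis is required, and all other axioms reduce to direct checks on pure tensors.
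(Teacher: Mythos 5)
The paper does not prove this lemma at all: it is quoted verbatim from Ardakov--Wadsley and used as a black box, so there is no in-paper argument to compare against. Your proposal is the standard proof of the result and is correct. The uniqueness computation is right: writing $u = 1\otimes x$, $v = 1\otimes y$, the Leibniz axiom and antisymmetry force $[bu,cv] = bc[u,v] + b\,\sigma(x)(c)\,v - c\,\sigma(y)(b)\,u$, and your formula follows once $[1\otimes x, 1\otimes y] = 1\otimes [x,y]$ is imposed. One point worth making explicit: this last normalisation is not merely ``natural'' but genuinely necessary for uniqueness --- without requiring that $x \mapsto 1\otimes x$ preserve brackets, one can perturb the bracket by a suitable antisymmetric $B$-bilinear map and still satisfy every axiom of an $(R,B)$-Lie algebra (e.g.\ take $L$ free of rank $2$ with zero anchor and zero bracket, $B = A$, and set $[e_1,e_2] = e_1$). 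So the lemma as literally transcribed in the paper is slightly underdetermined, and your insertion of the compatibility with $L \to B\otimes_A L$ (which the paper only records afterwards, in Remark \ref{funtorbasechangeLRalgebras}) is exactly the right fix. Your existence verification is also sound: the $A$-balancedness of the formula in each slot is precisely where the Lie--Rinehart axiom in $L$ and the two $\varphi$-morphism identities $\sigma(ax) = \varphi(a)\sigma(x)$ and $\varphi(\rho(x)(a)) = \sigma(x)(\varphi(a))$ enter, and the terms cancel as you claim; antisymmetry, Jacobi, the anchor being a morphism of $R$-Lie algebras and $B$-modules, and the Leibniz axiom all reduce to direct checks on pure tensors.
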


\begin{remark}\label{funtorbasechangeLRalgebras}
As a consequence of Lemma \ref{basechangelemma}, if $\varphi : A \rightarrow B$ is a homomorphism of commutative $R$-algebras, and $\psi \colon \Der_R(A) \rightarrow \Der_R(B)$ is a $\varphi$-morphism, then $\sigma \coloneqq \psi \circ \rho \colon L \rightarrow \Der_R(B)$ is a $\varphi$-morphism, and we have a well defined functor
\[
B \otimes_A - : (L, \rho) \mapsto (B \otimes_A L, 1 \otimes (\psi \circ \rho))
\]
from $(R,A)$-Lie algebras to $(R,B)$-Lie algebras. With respect to this structure, the natural map $L \rightarrow B \otimes_A L$ is a $\varphi$-morphism.
\end{remark}

In the case when $L$ is projective as an $A$-module, we can describe the universal enveloping algebra of the base change.

\begin{lemma}\label{UEAbasechange}
	Suppose $\varphi : A \rightarrow B$ is a homomorphism of commutative $R$-algebras, $\sigma : L \rightarrow \Der_R(B)$ is a $\varphi$-morphism, and $L$ is is projective. Then the natural map
	\[
		U(L) \rightarrow U(B \otimes_A L)
	\]
	of filtered $R$-algebras induces isomorphisms
	\[
		B \otimes_A U(L) \rightarrow U(B \otimes_A L), \qquad U(L) \otimes_A B \rightarrow U(B \otimes_A L),
	\]
	of left and right $B$-modules respectively.
\end{lemma}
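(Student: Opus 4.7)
My plan is to induct on the filtration degree, using Rinehart's identification $\gr U(L) \cong \Sym_A(L)$ from \cite[Thm.\ 3.1]{RINE} (which holds precisely because $L$ is projective) together with the analogous identification for $U(B \otimes_A L)$. The natural map $U(L) \to U(B \otimes_A L)$ provided by Lemma \ref{UEAfunctoriality} is filtration-preserving, and extending $B$-linearly on the left or on the right yields the two maps we want to show are isomorphisms.

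First I would record that $B \otimes_A L$ is a projective $B$-module, as the base change of the projective $A$-module $L$. Hence \cite[Thm.\ 3.1]{RINE} applies to both sides, and the associated graded pieces
\[
\gr_n U(L) \cong \Sym_A^n(L), \qquad \gr_n U(B \otimes_A L) \cong \Sym_B^n(B \otimes_A L)
\]
are projective over $A$ and $B$ respectively. As a consequence, for each $n \geq 1$ the short exact sequence of $A$-modules
\[
0 \to F_{n-1}U(L) \to F_n U(L) \to \Sym_A^n(L) \to 0
\]
splits, and therefore remains exact after applying $B \otimes_A -$.

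Next I would place this exact sequence alongside the corresponding one for $U(B \otimes_A L)$ in a commutative ladder, with vertical arrows induced by the natural map $B \otimes_A U(L) \to U(B \otimes_A L)$ and rightmost vertical arrow the canonical base-change isomorphism $B \otimes_A \Sym_A^n(L) \xrightarrow{\sim} \Sym_B^n(B \otimes_A L)$. Induction on $n$, starting from the trivial case $B \otimes_A A \to B$ at $n = 0$ and applying the five lemma at each step, shows that $B \otimes_A F_n U(L) \to F_n U(B \otimes_A L)$ is an isomorphism for all $n$. Passing to the direct limit yields the left module statement; the right module statement follows by the same argument with $B$ acting on the other side, which is permitted because $A$ is commutative so the two tensor products agree as abelian groups.

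The main obstacle is verifying that the rightmost vertical arrow in the ladder really is the canonical symmetric-algebra base change. This amounts to tracing Rinehart's PBW identification through the construction of the functorial map $U(\varphi, 1 \otimes -)$: both PBW maps send a product $\iota_L(x_1) \cdots \iota_L(x_n)$ modulo $F_{n-1}$ to the symmetric product $x_1 \cdots x_n$, so the compatibility follows from functoriality of $\Sym^n$ together with the defining identity $U(\varphi, 1 \otimes -)(\iota_L(x)) = \iota_{B \otimes_A L}(1 \otimes x)$. Once this compatibility is confirmed, the rest is a standard filtered-module argument.
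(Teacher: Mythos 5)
Your argument is correct and is precisely the filtered/graded comparison that the paper's proof delegates to \cite[Prop.\ 2.3]{AW1}: the paper simply cites that result and notes that its proof applies whenever the PBW map $\Sym_A(L) \to \gr U(L)$ is an isomorphism, which is exactly the mechanism your splitting-plus-five-lemma induction exploits. Nothing further is needed.
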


\begin{proof}
	This is \cite[Prop.\ 2.3]{AW1}, the proof of which applies whenever the natural map $\Sym_A(L) \rightarrow \gr U(L)$ is an isomorphism, and therefore in particular whenever $L$ is projective, by Rinehart's Theorem \cite[Thm.\ 3.1]{RINE}.
\end{proof}

\begin{lemma}\label{basechangefactorisation}
Suppose that $\varphi : A \rightarrow B$ is a morphism of commutative $R$-algebras, $\psi \colon \Der_R(A) \rightarrow \Der_R(B)$ is a $\varphi$-morphism, and $\tilde{\varphi} : L \rightarrow L'$ is a $\varphi$-morphism. Then the natural map
\[
	U(\varphi, \tilde{\varphi})_B \colon B \otimes_A U(L) \rightarrow U(L')
\]
factors as
\[
	B \otimes_A U(L) \rightarrow U(B \otimes_A L) \rightarrow U(L'),
\]
where we consider $B \otimes_A L$ as an $(R,B)$-Lie algebra as in Remark \ref{funtorbasechangeLRalgebras}. In particular, $U(\varphi, \tilde{\varphi})_B$ is an isomorphism whenever $L$ is projective and $\tilde{\varphi}_B : B \otimes_A L \rightarrow L'$ is an isomorphism.
\end{lemma}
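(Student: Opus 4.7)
The plan is to produce each half of the proposed factorization by appropriate applications of the functoriality of $U(-)$, and then check that the composition coincides with $U(\varphi,\tilde{\varphi})_B$ by evaluating on generators of $B \otimes_A U(L)$.

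First I would check that $\tilde{\varphi}_B \colon B \otimes_A L \to L'$, defined by $b \otimes x \mapsto b \cdot \tilde{\varphi}(x)$, is well-defined and is an $\id_B$-morphism of $(R,B)$-Lie algebras, where $B \otimes_A L$ carries the $(R,B)$-Lie algebra structure of Remark \ref{funtorbasechangeLRalgebras}. Well-definedness and $B$-linearity are immediate from the identity $\tilde{\varphi}(ax) = \varphi(a)\tilde{\varphi}(x)$. Bracket compatibility follows by a direct calculation from the Rinehart formula for the bracket on $B \otimes_A L$ (guaranteed by Lemma \ref{basechangelemma}) together with the fact that $\tilde{\varphi}$ is an $R$-Lie algebra homomorphism. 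The anchor compatibility $\rho' \circ \tilde{\varphi}_B = 1 \otimes (\psi \circ \rho)$ reduces by $B$-linearity to the identity $\rho'(\tilde{\varphi}(x)) = \psi(\rho(x))$ in $\Der_R(B)$ for all $x \in L$. Lemma \ref{UEAfunctoriality} then provides a filtration-preserving $R$-algebra homomorphism $U(\tilde{\varphi}_B) \colon U(B \otimes_A L) \to U(L')$. Precomposing with the natural $B$-linear map $B \otimes_A U(L) \to U(B \otimes_A L)$ (which exists without any projectivity hypothesis, as the $B$-linear extension of the functorial map $U(L) \to U(B \otimes_A L)$) gives the composition I wish to identify with $U(\varphi,\tilde{\varphi})_B$. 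By $B$-linearity and multiplicativity, it suffices to check equality on the generating elements of the form $1 \otimes a$ for $a \in A$ and $1 \otimes x$ for $x \in L$; a direct calculation gives $\iota_B(\varphi(a))$ and $\iota_{L'}(\tilde{\varphi}(x))$ in the respective cases, matching the construction of $U(\varphi,\tilde{\varphi})$ from Lemma \ref{UEAfunctoriality}.

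For the final claim, if $L$ is projective then $B \otimes_A U(L) \to U(B \otimes_A L)$ is an isomorphism by Lemma \ref{UEAbasechange}. If additionally $\tilde{\varphi}_B$ is an isomorphism of $(R,B)$-Lie algebras, then its inverse is again an $\id_B$-morphism, and applying the functoriality of $U(-)$ from Lemma \ref{UEAfunctoriality} to the inverse produces an inverse to $U(\tilde{\varphi}_B)$; hence $U(\tilde{\varphi}_B)$ is also an isomorphism, and the composition is an isomorphism.

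The main obstacle I anticipate is verifying the anchor compatibility in the first step: the $\varphi$-morphism properties of $\tilde{\varphi}$ and $\psi$ only yield equality of $\rho'(\tilde{\varphi}(x))$ and $\psi(\rho(x))$ after evaluating on elements of $\varphi(A) \subset B$, whereas equality as elements of $\Der_R(B)$ is an additional compatibility between the data $\varphi$, $\psi$, and $\tilde{\varphi}$. I would expect this to hold implicitly in the geometric settings of interest, where $\psi$ arises canonically from $\varphi$ through a base-change construction, so that $\rho' \circ \tilde{\varphi}$ and $\psi \circ \rho$ agree on $L$ by unwinding the definitions of the derivation functors involved.
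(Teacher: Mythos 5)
Your proof is correct and follows essentially the same route as the paper's: factor $(\varphi,\tilde{\varphi})$ through $(B\otimes_A L,\, 1\otimes(\psi\circ\rho))$ in $\LR_R$, apply the functor $U(-)$, and use the $B$-linearity of $U(B\otimes_A L)\to U(L')$ together with Lemma \ref{UEAbasechange} for the final claim. The anchor-compatibility point you flag at the end is genuine but is equally implicit in the paper's argument, and it is harmless where the lemma is applied: there $\varphi$ is \'{e}tale, so by the uniqueness in Lemma \ref{extendderivations} the derivations $\rho'(\tilde{\varphi}(x))$ and $\psi(\rho(x))$ coincide because they agree on $\varphi(A)$.
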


\begin{proof}
	In the category $\LR_R$, $(\varphi, \tilde{\varphi}) : L \rightarrow L'$, factors as the composition of
	\[
		(\varphi, i_L) :  L \rightarrow B \otimes_A L, \qquad i_L(m) = 1 \otimes m,
	\]
	and 
	\[
		(\id_B, \tilde{\varphi}_B): B \otimes_A L \rightarrow L', \qquad \tilde{\varphi}_B(b \otimes m) = b \tilde{\varphi}(m),
	\]
	and therefore $U(L) \rightarrow U(L')$ factorises as
	\[
		U(L) \rightarrow U(B \otimes_A L) \rightarrow U(L').
	\]
	Then the factorisation result follows from the $B$-linearity of $U(B \otimes_A L) \rightarrow U(L')$, and the final claim follows from Lemma \ref{UEAbasechange}.
\end{proof}

\subsection{Geometric Setup}\label{sect:geometricsetup}

From now on, $k$ will denote a field. Throughout we will work with a quadruple $(X, \sB, \sB', \Omega_{X / k})$ which is as in one of the following settings.

\begin{enumerate}
	\item[\textbf{(A)}]
	$X$ is a scheme over $k$, $\sB = \sB'$ is the set of affine open subsets of $X$, and $\Omega_{X / k}$ is the sheaf of relative differentials of $X$ over $k$.
	\item[\textbf{(B)}] 
	$X$ is a rigid space over $k$, $\sB$ is the set of admissible affinoid open subsets of $X$, $\sB'$ is the set of quasi-Stein admissible open subsets of $X$, and $\Omega_{X/k}$ is the sheaf of relative differentials of $X$ over $k$ (as described in \cite{FRGIII} and \cite[\S 4.4]{FvdP}).
\end{enumerate}

In order to give a uniform approach to both cases, we define a \emph{space over $k$} to be an $X$ as in either case (A) or case (B), and a \emph{morphism of spaces} over $k$ to be a morphism $X \rightarrow Y$, where $X$ and $Y$ are either both in case (A) or both in case (B).

\subsection{Sheaves on a Basis}

For any $X$ as in Section \ref{sect:geometricsetup}, we view $X$ as a G-topological space in the sense of \cite[\S 9.1.1]{BGR}. Specifically, in case (A) we view $X$ with the Zariski topology, and in case (B) we view $X$ with its natural G-topology. With respect to this G-topology on $X$, the sets $\sB$ and $\sB'$ are each a basis for the G-topology of $X$, meaning that every open subset of $X$ has an admissible open covering by elements of $\sB$. Note there is no assumption that a basis in this sense is closed under intersections. For a basis $\sA$, there is a notion of a sheaf on a basis \cite[Def. 9.1]{AW1}, and the restriction functor induces an equivalence of categories from sheaves on $X$ to sheaves on $\sA$ \cite[Thm.\ 9.1]{AW1}.

Suppose in what follows that $\sA = \sB$ or $\sA = \sB'$. Given $\sF$ a sheaf on $\sA$, the extension $\sF^{\ext}$ to a sheaf on $X$ is defined as follows \cite[Prop.\ A.2]{AW1}. For any admissible open subset $U \subset X$ and any covering $\sU = \{U_i\}_{i}$ of $U$, for each pair $i,j$ let $\{W_{ijk}\}_{k}$ be an admissible open covering of $U_i \cap U_j$ by elements of $\sA$. Set
\[
	\HH^0(\sU, \sF) \coloneqq \equaliser \left( \prod_i \sF(U_i) \rightrightarrows \prod_{i,j,k} \sF(W_{ijk}) \right),
\]
which is independent of the choice of the coverings $\{W_{ijk}\}_{k}$, and set
\[
	\sF^{\ext}(U) \coloneqq \varinjlim_{\sU} \HH^0(\sU, \sF).
\]
Now suppose that $\sF$ is a presheaf on $X$, such that $\sF|_{\sA}$ is a sheaf on the basis $\sA$. In this case $\sF^{\ext}$ is a sheaf, and there is a natural morphism of presheaves $\sF \rightarrow \sF^{\ext}$ defined by
\[
	\sF(U) \rightarrow \HH^0(\sU, \sF) \rightarrow \sF^{\ext}(U),
\]
where we choose any covering $\sU$ of $U$, and $\sF(U) \rightarrow \HH^0(\sU, \sF)$ is the natural restriction map. This morphism is independent of the choice of $\sU$. The following is direct to verify.

\begin{lemma}\label{extensionsheafification}
Suppose that $\sF$ is a presheaf on $X$ such that $\sF|_{\sA}$ is a sheaf on the basis $\sA$. Then the morphism $\sF \rightarrow \sF^{\ext}$ defined above is a sheafification of the presheaf $\sF$.
\end{lemma}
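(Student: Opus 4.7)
The plan is to reduce everything to the equivalence of categories between sheaves on $X$ and sheaves on the basis $\sA$ (cited in the excerpt as \cite[Thm.\ 9.1]{AW1}), together with the observation that $\sF^{\ext}$ is already known to be a sheaf by \cite[Prop.\ A.2]{AW1}. So the content of the lemma is really the universal property: for any sheaf $\sG$ on $X$ and any morphism of presheaves $\varphi \colon \sF \to \sG$, there exists a unique morphism of sheaves $\tilde{\varphi} \colon \sF^{\ext} \to \sG$ with $\tilde{\varphi} \circ (\sF \to \sF^{\ext}) = \varphi$.

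The first key step is to show that the natural map $\sF|_{\sA} \to \sF^{\ext}|_{\sA}$ is an isomorphism of sheaves on the basis $\sA$. Given $U \in \sA$, the singleton $\{U\}$ is an admissible covering of $U$ by elements of $\sA$, and I would use this covering together with the fact that $\sF|_{\sA}$ is already a sheaf on $\sA$ to show that the colimit defining $\sF^{\ext}(U)$ is computed by $\sF(U)$ itself. Concretely, the map $\sF(U) \to \HH^0(\sU, \sF)$ is an isomorphism for every covering $\sU$ of $U$ by elements of $\sA$ (since $\sF|_{\sA}$ is a sheaf on $\sA$), and the transition maps in the filtered colimit are isomorphisms, so the colimit collapses.

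Next, given $\varphi \colon \sF \to \sG$, restrict to $\sA$ to obtain $\varphi|_{\sA} \colon \sF|_{\sA} \to \sG|_{\sA}$. Composing with the inverse isomorphism $\sF^{\ext}|_{\sA} \xrightarrow{\sim} \sF|_{\sA}$ from the previous step yields a morphism $\sF^{\ext}|_{\sA} \to \sG|_{\sA}$ of sheaves on the basis $\sA$. By the equivalence of categories between sheaves on $X$ and sheaves on $\sA$, this extends uniquely to a morphism of sheaves $\tilde{\varphi} \colon \sF^{\ext} \to \sG$. Commutativity with $\varphi$ can be checked on sections over elements of $\sA$, where it holds by construction (and by the explicit description of $\sF \to \sF^{\ext}$ via restriction to any chosen covering $\sU$, which on $U \in \sA$ can be taken to be $\{U\}$).

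For uniqueness, any $\tilde{\varphi}' \colon \sF^{\ext} \to \sG$ extending $\varphi$ must restrict on $\sA$ to the same morphism $\sF^{\ext}|_{\sA} \to \sG|_{\sA}$ we produced above, because under the identification $\sF^{\ext}|_{\sA} \cong \sF|_{\sA}$ both restrictions recover $\varphi|_{\sA}$; since restriction to $\sA$ is an equivalence, $\tilde{\varphi}' = \tilde{\varphi}$. I do not expect any genuine obstacle here: the only mildly delicate point is verifying that the map $\sF \to \sF^{\ext}$ described via choosing a covering $\sU$ of $U$ really agrees on $U \in \sA$ with the identification $\sF(U) \cong \sF^{\ext}(U)$ from the first step, which follows from the independence of the construction from the choice of $\sU$.
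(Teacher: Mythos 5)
Your proposal is correct. The paper offers no argument here at all (the lemma is introduced with ``direct to verify''), so there is nothing to diverge from; your route --- showing that $\sF|_{\sA} \to \sF^{\ext}|_{\sA}$ is an isomorphism of sheaves on the basis because the sheaf condition for $\sF|_{\sA}$ makes every $\sF(U) \to \HH^0(\sU,\sF)$ an isomorphism and hence collapses the filtered colimit, and then invoking the equivalence between sheaves on $X$ and sheaves on $\sA$ to produce and uniquely characterise $\tilde{\varphi}$ --- is exactly the natural way to flesh this out. The one step you leave implicit is why checking the commutativity $\tilde{\varphi} \circ (\sF \to \sF^{\ext}) = \varphi$ on sections over elements of $\sA$ suffices: for a general admissible open $U$ the two composites $\sF(U) \to \sG(U)$ agree after restriction to any covering of $U$ by elements of $\sA$, and since $\sG$ is a sheaf (in particular separated) this forces equality on $U$; this is immediate, so it is not a gap, but it is worth a sentence in a written-out version.
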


\subsection{Tangent Sheaf}\label{sect:tangentsheaf}

In the following lemma, which will be fundamental to our constructions, we work with a pair $(\varphi \colon A \rightarrow B, \Omega_{A/k})$ which is as in one of the following cases, each the affine version of the corresponding geometric framework of Section \ref{sect:geometricsetup}.
\begin{enumerate}
	\item[\textbf{(A)}]
	$\varphi : A \rightarrow B$ is an \'{e}tale morphism of commutative $k$-algebras, $\Omega_{A/k}$ is the module of K\"{a}hler differentials of $A$ over $k$,
	\item[\textbf{(B)}] 
	$\varphi : A \rightarrow B$ is an \'{e}tale morphism of affinoid algebras over $k$, $\Omega_{A/k}$ is the universal finite differential module of $A$ over $k$ \cite[\S 3.6]{FvdP}.
\end{enumerate}
\begin{lemma}\label{extendderivations}
	Let $\varphi : A \rightarrow B$ be as in (A) or (B) above. Then any $\partial \in \Der_k(B)$ is determined by its restriction to $A$, and there exists a $\varphi$-morphism
	\[
	\psi \colon \Der_k(A) \rightarrow \Der_k(B)
	\]
	uniquely determined as a function $\Der_k(A) \rightarrow \Der_k(B)$ by the property that for any $\partial \in \Der_k(A)$
	\[
	\psi(\partial) \circ \varphi = \varphi \circ \partial.
	\]
	If furthermore $\Omega_{A/k}$ is finitely generated projective over $A$, then the natural map
	\[
		B \otimes_A \Der_k(A) \rightarrow \Der_k(B), \qquad b \otimes \partial \mapsto b\psi(\partial),
	\]
	is an isomorphism.
\end{lemma}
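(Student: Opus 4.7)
The plan is to deduce all three assertions from the fact that, in either setting, étaleness of $\varphi$ yields both $\Omega_{B/A} = 0$ and a canonical isomorphism $B \otimes_A \Omega_{A/k} \xrightarrow{\sim} \Omega_{B/k}$, together with the formal lifting property along square-zero extensions. The uniqueness claim in part 1 will then follow immediately: if $\partial_1, \partial_2 \in \Der_k(B)$ have the same restriction to $A$, then $\partial_1 - \partial_2$ is an $A$-derivation $B \to B$, which by the universal property of $\Omega_{B/A}$ corresponds to a $B$-linear map $\Omega_{B/A} \to B$, and this must vanish since $\Omega_{B/A} = 0$.

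For the existence of $\psi$, given $\partial \in \Der_k(A)$, I would form the square-zero $k$-algebra $B[\epsilon] \coloneqq B \oplus B\epsilon$ (with $\epsilon^2 = 0$) and consider the $k$-algebra map $A \to B[\epsilon]$ given by $a \mapsto \varphi(a) + \varphi(\partial(a))\epsilon$, which is a homomorphism precisely because $\partial$ satisfies the Leibniz rule. Formal étaleness then produces a unique $k$-algebra lift $B \to B[\epsilon]$ reducing to the identity modulo $\epsilon$, and writing this lift as $b \mapsto b + \psi(\partial)(b)\epsilon$ defines a $k$-derivation $\psi(\partial) \in \Der_k(B)$ whose compatibility $\psi(\partial) \circ \varphi = \varphi \circ \partial$ is built into the construction. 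That $\psi$ is then a $\varphi$-morphism --- $k$-linearity, Lie bracket compatibility, and the semilinearity $\psi(a \partial) = \varphi(a) \psi(\partial)$ --- will each follow by a one-line application of the uniqueness in part 1, since the putative candidates all restrict along $\varphi$ to the correct derivations on $A$.

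For part 3, I would chain together the canonical identifications
\[
\Der_k(B) \;=\; \Hom_B(\Omega_{B/k}, B) \;\cong\; \Hom_B(B \otimes_A \Omega_{A/k}, B) \;\cong\; \Hom_A(\Omega_{A/k}, B),
\]
and use the standard fact that $B \otimes_A \Hom_A(M, A) \to \Hom_A(M, B)$ is an isomorphism whenever $M$ is finitely generated projective over $A$. Tracing through these identifications shows that the composite recovers the map $b \otimes \partial \mapsto b\, \psi(\partial)$ of the statement. The main obstacle I anticipate is verifying that the three inputs from étaleness --- $\Omega_{B/A} = 0$, the base-change isomorphism for $\Omega$, and formal lifting along square-zero thickenings --- all hold in the rigid setting (B) for étale morphisms of affinoid algebras; these are standard but require careful citation of the appropriate results in \cite{FvdP}.
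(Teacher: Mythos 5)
Your proposal is correct, but it routes the existence of $\psi$ through a different characterisation of \'{e}taleness than the paper does. The paper uses a single input --- the base-change isomorphism $B \otimes_A \Omega_{A/k} \xrightarrow{\sim} \Omega_{B/k}$ --- and takes its $B$-linear dual to conclude that the restriction map $\Der_k(B) \rightarrow \Der_k(A,B)$ is an isomorphism; both uniqueness (injectivity) and existence (surjectivity, composed with $\Der_k(A) \rightarrow \Der_k(A,B)$) fall out at once, and the same dual diagram, combined with the projectivity of $\Omega_{A/k}$, immediately gives part 3. Your uniqueness argument via $\Omega_{B/A} = 0$ is equivalent to the paper's (it is precisely the statement that the kernel $\Der_A(B)$ of the restriction map vanishes), and your part 3 is the same chain of identifications. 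The genuine divergence is in existence: you invoke the infinitesimal lifting property along the square-zero extension $B[\epsilon] \rightarrow B$, which is a clean and standard deformation-theoretic argument, but it imports an extra input that must be justified in the rigid-analytic case (B) --- as you yourself flag. The paper sidesteps this entirely: in case (B) it cites only the base-change isomorphism for the universal finite differential module (Berkovich, Prop.\ 3.5.3(i)), and in case (A) it reduces to standard \'{e}tale morphisms locally. So your approach buys conceptual uniformity (the lifting criterion works verbatim in any category with a good notion of formal \'{e}taleness) at the cost of an additional foundational verification in the affinoid setting, whereas the paper's dualisation argument is more economical given that the base-change isomorphism is needed for part 3 anyway. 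The remaining verifications you describe --- that $\psi$ is $A$-semilinear and compatible with Lie brackets by uniqueness, since both sides agree on $\varphi(A)$ --- match the paper exactly.
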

\begin{proof}
	In either case (A) or case (B), because $\varphi : A \rightarrow B$ is \'{e}tale, $B \otimes_A \Omega_{A/k} \rightarrow \Omega_{B/k}$ is an isomorphism. In case (B) this is \cite[Prop.\ 3.5.3(i)]{BER}, noting that Berkovich's definition \cite[\S 3.3]{BER} of $\Omega_{A/k}$ agrees with that which we use, as explained in \cite[Remarks 3.6.2]{FvdP}. In case (A), locally in the Zariski topology $\psi$ is standard \'{e}tale \cite[Lem.\ 02GT]{STACK}, and for standard \'{e}tale extensions the map is an isomorphism \cite[Example 6.1.12]{QL}. Taking the $B$-linear dual, we obtain the following commutative diagram,
\[\begin{tikzcd}
	{\Hom_B(\Omega_{B/k}, B)} & {\Der_k(B)} \\
	{\Hom_B(B \otimes_A \Omega_{A/k}, B)} \\
	{\Hom_A(\Omega_{A/k}, B)} & {\Der_k(A,B)} \\
	{B \otimes_A\Hom_A(\Omega_{A/k},  A)} & {B \otimes_A \Der_k(A)} \\
	{\Hom_A(\Omega_{A/k},  A)} & {\Der_k(A)}
	\arrow["\sim", from=1-1, to=1-2]
	\arrow["\sim", from=3-1, to=3-2]
	\arrow["\sim", from=5-1, to=5-2]
	\arrow[from=5-1, to=4-1]
	\arrow[from=5-2, to=4-2]
	\arrow[from=4-2, to=3-2]
	\arrow[from=4-1, to=3-1]
	\arrow[from=1-2, to=3-2]
	\arrow["\sim"', from=1-1, to=2-1]
	\arrow["\sim"', from=2-1, to=3-1]
	\arrow["\sim", from=4-1, to=4-2]
\end{tikzcd}\]
In particular, we see that the restriction map,
\[
	\Der_k(B) \rightarrow \Der_k(A,B),
\]
is an isomorphism. The composite $A$-linear map,
\[
	\psi : \Der_k(A) \rightarrow \Der_k(B),
\]
satisfies,
\[
	\psi(\partial) \circ \varphi = \varphi \circ \partial,
\]
for any $\partial \in \Der_k(A)$, and consequently is the unique function $\Der_k(A) \rightarrow \Der_k(B)$ with this property. For any $\partial_1, \partial_2 \in \Der_k(A)$, $\psi([\partial_1, \partial_2])$ and $[\psi(\partial_1), \psi(\partial_2)]$ agree on $\varphi(A)$, and thus are equal, hence $\psi : \Der_k(A) \rightarrow \Der_k(B)$ is also a homomorphism of $k$-Lie algebras.

For the second claim, if $\Omega_{A/k}$ is finitely generated projective over $A$, then the map,
\[
B \otimes_A\Hom_A(\Omega_{A/k},  A) \rightarrow \Hom_B(B \otimes_A \Omega_{A/k}, B),	
\]
is an isomorphism \cite[Chapter II, \S 5, Prop. 7]{BOUR}. Therefore, the composite,
\[
	\gamma \colon B \otimes_A \Der_k(A) \rightarrow \Der_k(B), \qquad \gamma(b \otimes \partial) = b \psi(\partial)
\]
is an isomorphism.
\end{proof}

\begin{defn}
	If $\sF$, $\sG$ are sheaves of $k$-algebras on $X$, then a \emph{$k$-derivation from $\sF$ to $\sG$} is a morphism $\partial : \sF \rightarrow \sG$ of sheaves of $k$-vector spaces on $X$ such that for any admissible open subset $U$ of $X$ and $x,y \in \sF(U)$,
	\[
		\partial_U(xy) = x \partial_U(y) + \partial_U(x) y.
	\]
\end{defn}
\begin{defn}	
	The tangent sheaf $\sT_X$ on $X$ is subsheaf of $\underline{\End}_k(\OO_X)$ which has has value
	\[
	\sT_X(U) = \{f \in \End_{k}(\OO_X|_U) \mid f \text{ is a $k$-derivation}\}
	\]
	for an admissible open subset $U \subset X$.
\end{defn}

Note that for any admissible open subset $U \subset X$ (not necessarily in $\sB$), taking sections over $U$ defines action map $\sT_X(U) \rightarrow \Der_k(\OO_X(U))$.

\begin{remark}\label{rem:localdescTX}
	Suppose that $\Omega_{X/k}$ is locally free of finite rank. Then for any $U \in \sB$ the natural map from the sheaf associated to the $\OO_X(U)$-module $\Der_k(\OO_X(U))$,
	\[
		\widetilde{\Der_k(\OO_X(U))} \rightarrow \sT_X|_U,
	\]
	is an isomorphism by Lemma \ref{extendderivations}.
\end{remark}

\subsection{Lie Algebroids}\label{sect:LieAlgebroid}

We follow \cite{BB} and \cite{AW1} in the following definition. 
 
\begin{defn}
A \emph{Lie algebroid} on $X$ is a pair $(\rho, \sL)$ where,
\begin{itemize}
	\item $\sL$ is a coherent sheaf of $\OO_X$-modules,
	\item $\sL$ has the structure of a sheaf of $k$-Lie algebras,
	\item $\rho : \sL \rightarrow \sT_X$ is an $\OO_X$-linear morphism of sheaves of $k$-Lie algebras such that,
	\[
		[x, ay] = a[x,y] + \rho_U(x)(a)y,
	\]
	for any admissible open subset $U$, $x,y \in \sL(U)$, and $a \in \OO_X(U)$.
\end{itemize}
The pair $(\rho, \sL)$ is called \emph{smooth} if $\sL$ is locally free of finite rank.
\end{defn}

\begin{eg}\label{eg:mainegliealgebroid}
The pair $(\id, \sT_X)$ is a Lie algebroid, which is smooth if and only if $\sT_X$ is locally free of finite rank.
\end{eg}

\begin{remark}\label{liealgebroidlocallyLRAlgebra}
If $(\rho, \sL)$ is a Lie algebroid on $X$, then for any admissible open subset $U$ of $X$, $(\rho_U', \sL(U))$ is an $(k,\OO_X(U))$-Lie algebra, where $\rho'_U$ is the composition
\[
	\rho_U' : \sL(U) \xrightarrow{\rho_U} \sT_X(U) \rightarrow \Der_k(\OO_X(U)).
\]
\end{remark}

\begin{defn}
A morphism of Lie algebroids is a morphism of sheaves which is a morphism of $(k,\OO_X(U))$-Lie algebras for any admissible open subset $U$ of $X$.
\end{defn}

In the following, we call a $(k,A)$-Lie algebra $L$ finitely presented if it is finitely presented as an $A$-module, and smooth if $L$ is finitely generated projective as an $A$-module. The next proposition shows that Lie algebroids are globalisations of Lie-Rinehart algebras. This is \cite[Lem.\ 9.2.]{AW1} in case (B), the proof of which also generalises to case (A).

\begin{prop}[{\cite[Lem.\ 9.2.]{AW1}}]
Suppose that $X \in \sB$, and let $A = \OO_X(X)$. Then the global sections functor defines an equivalence of categories from the category of Lie algebroids on $X$ to the category of finitely presented $(k,A)$-Lie algebras. This restricts to an equivalence from smooth Lie algebroids on $X$ to smooth $(k,A)$-Lie algebras.
\end{prop}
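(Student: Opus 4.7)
The plan is to extend the known equivalence between coherent $\OO_X$-modules on $X \in \sB$ and finitely presented $A$-modules by overlaying compatible Lie bracket and anchor data. In case (A) this module equivalence is Serre's theorem for affine schemes, with quasi-inverse $M \mapsto \widetilde{M}$; in case (B) the analogous result for coherent modules on affinoid spaces plays the same role. In both cases, for $U \in \sB$ with $U \subset X$ one has $\widetilde{M}(U) = \OO_X(U) \otimes_A M$, and the extension $A \to \OO_X(U)$ is \'{e}tale, so Lemma \ref{extendderivations} supplies a canonical $\varphi$-morphism $\psi_U \colon \Der_k(A) \to \Der_k(\OO_X(U))$ that will be used to transport the anchor map.

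In the forward direction, the functor sends a Lie algebroid $(\rho, \sL)$ to its global sections $\sL(X)$: this is a finitely presented $A$-module by coherence, carries a $k$-Lie bracket from the sheaf-of-$k$-Lie-algebras structure, and has anchor $\rho_X \colon \sL(X) \to \Der_k(A)$ satisfying the Lie-Rinehart axiom by Remark \ref{liealgebroidlocallyLRAlgebra}. Functoriality is immediate from the definitions.

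For the quasi-inverse, given a finitely presented $(k,A)$-Lie algebra $L$, I would equip $\sL \coloneqq \widetilde{L}$ with a Lie algebroid structure. For each $U \in \sB$ contained in $X$, Lemma \ref{basechangelemma} makes $\sL(U) = \OO_X(U) \otimes_A L$ into a $(k, \OO_X(U))$-Lie algebra with anchor $1 \otimes (\psi_U \circ \rho)$. For $V \subset U$ in $\sB$, the composite $A \to \OO_X(U) \to \OO_X(V)$ together with Lemma \ref{basechangefactorisation} shows that the restriction map $\sL(U) \to \sL(V)$ is a morphism in $\LR_k$ in the sense of Definition \ref{phimorphism}. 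This gives a presheaf of $k$-Lie algebras on the basis $\sB$ whose underlying presheaf of $\OO_X$-modules is already a sheaf, and the anchor maps are compatible with restriction by construction.

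The main obstacle, I expect, lies in verifying that these locally defined brackets genuinely assemble into a \emph{sheaf} of $k$-Lie algebras, rather than merely a compatible family on $\sB$. Using Lemma \ref{extensionsheafification}, this reduces to checking that the bracket behaves well on intersections and coverings within $\sB$; the key input is the functoriality of base change recorded in Remark \ref{funtorbasechangeLRalgebras}, which ensures the cocycle-type compatibilities between the bracket on $\OO_X(U) \otimes_A L$ and its further restrictions. Once both directions are in place, full faithfulness and essential surjectivity for Lie algebroids follow from the corresponding properties of the module equivalence, together with the observation that a morphism of Lie algebroids on $X \in \sB$ is determined by its effect on global sections. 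The restriction to the smooth case is then immediate: under the module equivalence, finitely generated projective $A$-modules correspond precisely to coherent $\OO_X$-modules that are locally free of finite rank.
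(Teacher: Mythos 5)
The paper gives no proof of this proposition: it simply cites \cite[Lem.\ 9.2]{AW1} for case (B) and remarks that the argument generalises to case (A). Your proposal is a correct reconstruction of that argument along exactly the expected lines -- overlaying the bracket and anchor data on the standard coherent-module equivalence via Lemma \ref{extendderivations}, Lemma \ref{basechangelemma} and Lemma \ref{basechangefactorisation}, with the uniqueness clause of Lemma \ref{basechangelemma} also handling the check that morphisms of $(k,A)$-Lie algebras induce morphisms of Lie algebroids.
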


\subsubsection{Universal Enveloping Algebra of a Lie Algebroid}

Suppose that $\sL$ is a Lie algebroid on $X$, $V \subset U$ are admissible open subsets of $X$, and let $\varphi : \OO_X(U) \rightarrow \OO_X(V)$, $\tilde{\varphi} : \sL(U) \rightarrow \sL(V)$ be the restriction maps. It is straightforward to verify that $\tilde{\varphi}$ is a $\varphi$-morphism, and therefore we have an induced map
\[
	U(\varphi, \tilde{\varphi}) : U(\sL(U)) \rightarrow U(\sL(V))
\]
by Lemma \ref{UEAfunctoriality}. Because $\sL(U)$ is a presheaf, so is $U(\sL(-)) \colon V \mapsto U(\sL(V))$.

\begin{defn}
We define $\sU(\sL)$ to be the sheafification of the presheaf $U(\sL(-))$.
\end{defn}

There are canonical morphisms
\[
	\iota_{\OO} : \OO_X \rightarrow \sU(\sL), \qquad \iota_{\sL} : \sL \rightarrow \sU(\sL),
\]
of $\OO_X$-modules and sheaves of $k$-Lie algebras respectively. For any admissible open subset $U \subset X$, $a \in \OO_X(U)$, and $x \in \sL(U)$, these satisfy
\[
	\iota_{\sL}(a x) = \iota_{\OO}(a)\iota_{\sL}(x), \qquad [\iota_{\sL}(x), \iota_{\OO}(a)] = \iota_{\OO}(\rho_U'(x)a).
\]
As for Lie-Rinehart algebras, the triple $(\sU(\sL), \iota_{\OO} :\OO_X \rightarrow \sU(\sL), \iota_{\sL}: \sL \rightarrow \sU(\sL))$ enjoys the following universal property.

\begin{lemma}\label{UEASheafUnivProp}
Suppose that $\sS$ is a sheaf of unital associative $k$-algebras on $X$, and
\[
	\eta_{\OO} : \OO_X \rightarrow \sS, \qquad \eta_{\sL} : \sL \rightarrow \sS,
\]
are morphisms of $\OO_X$-modules and sheaves of $k$-Lie algebras respectively which satisfy
\[
	\eta_{\sL}(a x) = \eta_{\OO}(a)\eta_{\sL}(x), \qquad [\eta_{\sL}(x), \eta_{\OO}(a)] = \eta_{\OO}(\rho_U'(x)a).
\]
for any admissible open subset $U \subset X$, $a \in \OO_X(U)$, and $x \in \sL(U)$. Then there is a unique morphism of sheaves of $k$-algebras $\sU(\sL) \rightarrow S$ such that
\[\begin{tikzcd}
	{\OO_X} & {\sU(\sL)} & \sL \\
	& \sS
	\arrow["{\iota_\sL}"', from=1-3, to=1-2]
	\arrow["{\iota_{\OO}}", from=1-1, to=1-2]
	\arrow["\varphi"', from=1-2, to=2-2]
	\arrow["{\eta_\OO}"', from=1-1, to=2-2]
	\arrow["{\eta_\sL}", from=1-3, to=2-2]
\end{tikzcd}\]
commutes.
\end{lemma}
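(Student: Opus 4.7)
The plan is to reduce the global claim to the pointwise universal property from Lemma~\ref{UEAUnivProp} applied on the basis $\sB$, and then appeal to the universal property of sheafification.

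First I would fix $U \in \sB$ and apply Lemma~\ref{UEAUnivProp} to the $(k,\OO_X(U))$-Lie algebra $\sL(U)$ (see Remark~\ref{liealgebroidlocallyLRAlgebra}), with $S = \sS(U)$ and the homomorphisms $\eta_{\OO,U}, \eta_{\sL,U}$ obtained by taking sections. The hypothesis of our lemma guarantees that these satisfy the compatibility conditions of Lemma~\ref{UEAUnivProp}, so we obtain a unique $k$-algebra homomorphism
\[
\varphi_U \colon U(\sL(U)) \to \sS(U)
\]
with $\varphi_U \circ \iota_{\OO(U)} = \eta_{\OO,U}$ and $\varphi_U \circ \iota_{\sL(U)} = \eta_{\sL,U}$.

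Next I would check that the collection $(\varphi_U)_{U \in \sB}$ defines a morphism of presheaves $U(\sL(-)) \to \sS|_\sB$. For $V \subset U$ with $V, U \in \sB$, both composites
\[
U(\sL(U)) \xrightarrow{U(\varphi,\tilde\varphi)} U(\sL(V)) \xrightarrow{\varphi_V} \sS(V), \qquad U(\sL(U)) \xrightarrow{\varphi_U} \sS(U) \xrightarrow{\res} \sS(V)
\]
are $k$-algebra homomorphisms whose pre-composition with $\iota_{\OO(U)} \oplus \iota_{\sL(U)}$ recovers the restrictions to $V$ of $\eta_\OO$ and $\eta_\sL$; uniqueness in Lemma~\ref{UEAUnivProp} then forces them to coincide. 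So we have a morphism of sheaves of $k$-algebras on the basis $\sB$, and by the equivalence between sheaves on $\sB$ and sheaves on $X$ this extends to a morphism $U(\sL(-))^{\ext} \to \sS$. By Lemma~\ref{extensionsheafification} applied to the presheaf $U(\sL(-))$, this extension is precisely a morphism $\sU(\sL) \to \sS$.

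For the commutativity of the triangle with $\iota_\OO$ and $\iota_\sL$, it is enough to check it on sections over each $U \in \sB$ (both source sheaves $\OO_X$ and $\sL$ are fully determined by their values on the basis), where it holds by construction of $\varphi_U$. For uniqueness, any morphism $\psi \colon \sU(\sL) \to \sS$ satisfying the required compatibilities yields, after taking sections over $U \in \sB$ and pre-composing with the canonical map $U(\sL(U)) \to \sU(\sL)(U)$, a $k$-algebra homomorphism $U(\sL(U)) \to \sS(U)$ that satisfies the hypotheses of Lemma~\ref{UEAUnivProp}. By uniqueness at the affine level it equals $\varphi_U$, so $\psi$ agrees with our constructed morphism on a basis, hence globally.

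The only non-routine step is the verification that the family $(\varphi_U)$ is compatible with restrictions; this is purely formal once one invokes the uniqueness half of Lemma~\ref{UEAUnivProp}, so I do not expect any genuine obstacle. The proof is essentially a sheafified version of the Lie--Rinehart universal property, and the use of sheafification is harmless because $\sS$ is already a sheaf.
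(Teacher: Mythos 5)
Your overall strategy is the same as the paper's: apply the affine universal property (Lemma~\ref{UEAUnivProp}) section-wise, use its uniqueness clause to get compatibility with restriction, and then pass to the sheafification. The local construction of $\varphi_U$ and the uniqueness argument are fine. However, there is one step that does not work as written. You restrict to the basis $\sB$ and then invoke ``the equivalence between sheaves on $\sB$ and sheaves on $X$'' together with Lemma~\ref{extensionsheafification} to identify $U(\sL(-))^{\ext}$ with $\sU(\sL)$. Both of those tools require the presheaf $U(\sL(-))$ to restrict to a \emph{sheaf} on the basis $\sB$, and at this point in the paper that is not known: it is only established later, in Lemma~\ref{quasicoherent}, and only under the additional hypothesis that $\sL$ is smooth. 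Lemma~\ref{UEASheafUnivProp} is stated for an arbitrary Lie algebroid, where $\sU(\sL)$ is \emph{defined} as the sheafification of the presheaf $U(\sL(-))$, not as its extension from a basis.

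The repair is easy and is exactly what the paper does: Remark~\ref{liealgebroidlocallyLRAlgebra} makes $(\rho'_U,\sL(U))$ a $(k,\OO_X(U))$-Lie algebra for \emph{every} admissible open $U$, so you can define $\varphi_U \colon U(\sL(U)) \to \sS(U)$ for all $U$ (not just $U \in \sB$), check compatibility with restrictions by the same uniqueness argument, and then apply the universal property of sheafification to the resulting morphism of presheaves $U(\sL(-)) \to \sS$; no sheaf-on-a-basis hypothesis is needed. (Alternatively, your basis-level family $(\varphi_U)_{U\in\sB}$ can be extended by hand to all opens by gluing in the target, using that $\sS$ is a sheaf, but this requires an explicit argument rather than an appeal to Lemma~\ref{extensionsheafification}.)
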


\begin{proof}
	Given such a triple $(\sS, \iota_{\OO} :\OO_X \rightarrow \sS, \iota_{\sL}: \sL \rightarrow \sS)$, we define a morphism of presheaves $\varphi : U(\sL) \rightarrow \sS$, for any admissible open subset $U \subset X$ using Lemma \ref{UEAUnivProp} and setting
	\[
		\varphi_U : U(\sL(U)) \rightarrow \sS(U)
	\]
	to be the unique morphism induced from $\eta_{\OO, U} :\OO_X(U) \rightarrow \sS(U)$, $\eta_{\sL, U}: \sL(U) \rightarrow \sS(U)$. It is direct to check using Lemma \ref{UEAUnivProp} that this is a morphism of presheaves, and we set $\varphi : \sU(\sL) \rightarrow \sS$ to be the unique morphism determined by the universal property of the sheafification. The uniqueness follows directly from the uniqueness of Lemma \ref{UEAUnivProp}.
\end{proof}

As for Lie-Rinehart algebras, we can consider the universal enveloping algebra as a functor in the following manner. In the following, for a Lie algebroid $\sL$ on $X$, we write $\tau : \sL \rightarrow \sT_X$ for the composition of $\rho \colon \sL \rightarrow \sT_X$ with $\sT_X \hookrightarrow \underline{\End}_{k}(\OO_X)$.
\begin{defn}\label{varphimorphismsheaf}
Suppose that $\varphi : X \rightarrow Y$ is a morphism of spaces over $k$, and that $\sL'$, $\sL$ are Lie algebroids over $X$ and $Y$ respectively. Then a $\varphi$-morphism is a morphism of sheaves of $\OO_Y$-modules,
\[
	\tilde{\varphi} : \sL \rightarrow \varphi_* \sL',
\]
which is also a morphism of sheaves of $k$-Lie algebras, such that
\[\begin{tikzcd}
	\sL & {\underline{\End}_{k}(\OO_Y)} & {\underline{\Hom}_k(\OO_Y, \varphi_* \OO_X)} \\
	{\varphi_* \sL'} & {\varphi_* \underline{\End}_k(\OO_X)} & {\underline{\End}_k(\varphi_* \OO_X)}
	\arrow["\tau", from=1-1, to=1-2]
	\arrow["{\tilde{\varphi}}"', from=1-1, to=2-1]
	\arrow["{\varphi_*\tau'}", from=2-1, to=2-2]
	\arrow["{- \circ \varphi^\sharp}", from=1-2, to=1-3]
	\arrow[from=2-2, to=2-3]
	\arrow["{ \varphi^\sharp \circ -}"', from=2-3, to=1-3]
\end{tikzcd}\]
commutes.
\end{defn}

\begin{remark}\label{varphimorphismlocal}
	If $\varphi : X \rightarrow Y$ is a morphism of spaces over $k$, and $\tilde{\varphi} : \sL \rightarrow \varphi_* \sL'$ is a morphism of sheaves of sets, then $\tilde{\varphi}$ is a $\varphi$-morphism if and only if 
	\[
		\tilde{\varphi}_U : \sL(U) \rightarrow \sL'(\varphi^{-1}(U))
	\]
	is a $\varphi^\sharp_U$-morphism for any $U \in \sB_Y$, where $\varphi^\sharp_U : \OO_Y(U) \rightarrow \OO_X(\varphi^{-1}(U))$.
\end{remark}

We write $\sLR_k$ for the category of pairs $(X,\sL)$, where $X$ is a space over $k$ and $\sL$ is a Lie-algebroid on $X$. A morphism $(X, \sL) \rightarrow (Y,\sL')$ is a pair $(\varphi, \tilde{\varphi})$, where $\varphi \colon X \rightarrow Y$ is a morphism of spaces over $k$ and $\tilde{\varphi} \colon \varphi_* \sL \rightarrow \sL'$ is a $\varphi$-morphism. Given $(\varphi, \tilde{\varphi}) : (X,\sL) \rightarrow (Y, \sL')$ and $(\psi, \tilde{\psi}) : (Y, \sL') \rightarrow (Z, \sL'')$, then $\tilde{\psi} \circ \psi_* \tilde{\varphi}$ is a $(\psi \circ \varphi)$-morphism, and the composition is defined as $(\psi \circ \varphi, \tilde{\psi} \circ \psi_* \tilde{\varphi})$.

Note that $\sU(\sL)$ is naturally filtered, coming from the filtration on the presheaf $U(\sL(-))$ induced by the filtration on $U(L)$ described in Section \ref{sect:LRalgebra}.

\begin{lemma}\label{UEAsheaffunctoriality}
	Suppose that $\varphi : X \rightarrow Y$ is a morphism of spaces over $k$, and $\sL', \sL$ are Lie algebroids on $X$ and $Y$ respectively. Then every $\varphi$-morphism $\tilde{\varphi} : \sL \rightarrow \varphi_*\sL'$ extends uniquely to a filtration preserving morphism $\sU(\varphi, \tilde{\varphi}) : \sU(\sL) \rightarrow \varphi_* \sU(\sL')$ of sheaves of $k$-algebras on $Y$ that makes the following diagram commute
\[\begin{tikzcd}
	{\OO_Y \oplus \sL} & {\varphi_* \OO_X \oplus \varphi_*\sL'} \\
	{\sU(\sL)} & {\varphi_* \sU(\sL')}
	\arrow["{\iota_{\OO_Y} \oplus \iota_{\sL}}"', from=1-1, to=2-1]
	\arrow["{\varphi_*\iota_{\OO_X} \oplus \varphi_* \iota_{\sL'}}", from=1-2, to=2-2]
	\arrow["{\varphi^\sharp \oplus \tilde{\varphi}}", from=1-1, to=1-2]
	\arrow["{\sU(\varphi, \tilde{\varphi})}"', from=2-1, to=2-2]
\end{tikzcd}\]
\end{lemma}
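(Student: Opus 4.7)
The plan is to invoke the universal property of $\sU(\sL)$ established in Lemma \ref{UEASheafUnivProp}, taking the target sheaf of unital associative $k$-algebras on $Y$ to be $\sS \coloneqq \varphi_* \sU(\sL')$. To apply it I need structural morphisms
\[
\eta_{\OO} \coloneqq \varphi_*\iota_{\OO_X} \circ \varphi^\sharp \colon \OO_Y \to \varphi_*\sU(\sL'), \qquad \eta_{\sL} \coloneqq \varphi_*\iota_{\sL'} \circ \tilde{\varphi} \colon \sL \to \varphi_*\sU(\sL'),
\]
and I need to verify the two compatibility relations of Lemma \ref{UEASheafUnivProp}. By Remark \ref{varphimorphismlocal}, it is enough to check these on sections over a basis $U \in \sB_Y$, where $\tilde{\varphi}_U$ becomes a $\varphi^\sharp_U$-morphism of $(k,\OO_Y(U))$-Lie algebras in the sense of Definition \ref{phimorphism}. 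Both $\eta_{\OO}$ and $\eta_{\sL}$ are manifestly morphisms of sheaves of $k$-algebras and of sheaves of $k$-Lie algebras respectively.

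For the multiplicative compatibility $\eta_{\sL}(ax) = \eta_{\OO}(a)\eta_{\sL}(x)$, the first defining property of a $\varphi^\sharp_U$-morphism gives $\tilde{\varphi}(ax) = \varphi^\sharp(a)\cdot\tilde{\varphi}(x)$, and then the universal-enveloping-algebra relation $\iota_{\sL'}(b v) = \iota_{\OO_X}(b)\iota_{\sL'}(v)$ inside $\sU(\sL')$ yields exactly the required identity. For the bracket compatibility $[\eta_{\sL}(x), \eta_{\OO}(a)] = \eta_{\OO}(\rho'_U(x) a)$, the commutative square in Definition \ref{varphimorphismsheaf} unpacks, on sections, to the identity $\rho_X(\tilde{\varphi}(x)) \circ \varphi^\sharp = \varphi^\sharp \circ \rho_Y(x)$ in $\Hom_k(\OO_Y(U), \OO_X(\varphi^{-1}U))$; combining this with the commutation relation $[\iota_{\sL'}(v), \iota_{\OO_X}(b)] = \iota_{\OO_X}(\rho_X(v) b)$ inside $\sU(\sL')$ gives the result.

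Lemma \ref{UEASheafUnivProp} then produces a unique morphism of sheaves of $k$-algebras $\sU(\varphi, \tilde{\varphi}) \colon \sU(\sL) \to \varphi_*\sU(\sL')$ which makes the claimed square commute, and the uniqueness statement of the present lemma is inherited directly from the uniqueness clause of Lemma \ref{UEASheafUnivProp}. Filtration preservation is then immediate: $\eta_{\OO}$ factors through $\varphi_* F_0 \sU(\sL')$ and $\eta_{\sL}$ factors through $\varphi_* F_1 \sU(\sL')$, and since $\sU(\sL)$ is generated as a $k$-algebra by $\iota_{\OO_Y}(\OO_Y) + \iota_{\sL}(\sL)$ with the filtration $F_n\sU(\sL) = (F_1\sU(\sL))^n$, the image of $F_n \sU(\sL)$ lies in $\varphi_* F_n \sU(\sL')$.

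The only real subtlety I anticipate is notational book-keeping when translating the global commutative diagram of Definition \ref{varphimorphismsheaf} into the pointwise identity between compositions of derivations and $\varphi^\sharp$; once this is done the verification is a direct application of the universal property. There is no serious obstacle, which is why this lemma can be presented as a purely formal consequence of Lemma \ref{UEASheafUnivProp}, mirroring the passage from Lemma \ref{UEAUnivProp} to Lemma \ref{UEAfunctoriality} in the Lie–Rinehart setting.
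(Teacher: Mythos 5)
Your proof is correct, but it takes a different route from the paper. The paper constructs $\sU(\varphi,\tilde{\varphi})$ at the presheaf level: on each admissible open $U \subset Y$ it sets $U(\varphi,\tilde{\varphi})_U \coloneqq U(\varphi^\sharp_U, \tilde{\varphi}_U)$ using the affine functoriality statement (Lemma \ref{UEAfunctoriality}) together with Remark \ref{varphimorphismlocal}, checks this is a morphism of presheaves, and then passes to sheafifications. You instead treat $\varphi_*\sU(\sL')$ as a sheaf of unital associative $k$-algebras on $Y$ and apply the sheaf-level universal property of $\sU(\sL)$ (Lemma \ref{UEASheafUnivProp}) to the pair $\eta_{\OO} = \varphi_*\iota_{\OO_X}\circ\varphi^\sharp$, $\eta_{\sL} = \varphi_*\iota_{\sL'}\circ\tilde{\varphi}$; your verification of the two compatibility relations from the defining properties of a $\varphi$-morphism is accurate. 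Your approach buys uniqueness immediately (and in a slightly stronger form, without assuming filtration preservation) and avoids repeating the sheafification argument, which Lemma \ref{UEASheafUnivProp} has already absorbed; the paper's approach buys the explicit local description of $\sU(\varphi,\tilde{\varphi})$ on the basis $\sB_Y$, which is what gets used later (e.g.\ in the description of the direct image of $\sD$-modules and in Lemma \ref{quasicoherent}). By the uniqueness clause the two constructions coincide, so nothing is lost either way. The only point worth tightening in your write-up is the filtration claim: $\sU(\sL)$ is only \emph{locally} generated by $\iota_{\OO_Y}(\OO_Y)$ and $\iota_{\sL}(\sL)$ (being a sheafification), so one should say that a section of $F_n\sU(\sL)$ is locally a sum of products of at most $n$ elements of $\iota_{\sL}(\sL)$ over $\iota_{\OO_Y}(\OO_Y)$, whence its image lies locally, and hence globally, in the subsheaf $\varphi_*F_n\sU(\sL')$; this is a book-keeping refinement rather than a gap.
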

\begin{proof}
	We define a morphism of presheaves,
	\[
		U(\varphi, \tilde{\varphi}) : U(\sL) \rightarrow \varphi_* U(\sL'),
	\]
	by setting for each admissible open $U \subset Y$,
	\[
		U(\varphi, \tilde{\varphi})_U \coloneqq U(\varphi^\sharp, \tilde{\varphi}_U) \colon U(\sL(U)) \rightarrow U(\sL'(\varphi^{-1}(U))),
	\]
	which is well-defined by Remark \ref{varphimorphismlocal}. Then we define the morphism $\sU(\varphi, \tilde{\varphi})$ by the universal property of sheafification to be the unique morphism making the diagram 
\[\begin{tikzcd}
	{\sU(\sL)} & {\varphi_* \sU(\sL')} \\
	{U(\sL)} & {\varphi_* U(\sL')}
	\arrow["{\sU(\varphi, \tilde{\varphi})}", from=1-1, to=1-2]
	\arrow["{U(\varphi, \tilde{\varphi})}"', from=2-1, to=2-2]
	\arrow[from=2-1, to=1-1]
	\arrow[from=2-2, to=1-2]
\end{tikzcd}\]
	commute.
\end{proof}

Therefore, we can view $\sU(-)$ as a functor from $\sLR_k$ to the category of pairs $(X, \sS)$, where $X$ is a space over $k$ and $\sS$ is a sheaf of unital associative filtered $k$-algebras on $X$. Morphisms are pairs $(\varphi, \phi)$ for $\varphi : X \rightarrow Y$ a morphism of schemes over $k$, and $\phi : \sS \rightarrow \varphi_*\sS'$ a morphism of sheaves of unital associative filtered $k$-algebras on $Y$.

We have a more explicit description of the sheaf $\sU(\sL)$ when $\sL$ is smooth. In the following, when $U \in \sB$ and $M$ is an $\OO_X(U)$-module, we can consider the presheaf defined by
\[
V \mapsto \OO_X(V) \otimes_{\OO_X(U)} M,
\]
as $V \subset U$ ranges over the admissible open subsets of $U$, and let $\widetilde{M}$ denote its sheafification.

If $V \in \sB$ with $V \subset U$ the natural map
\[
	\OO_X(U) \otimes_{\OO_X(X)} M \rightarrow \widetilde{M}(U)
\]
is an isomorphism, which is a standard fact in Case (A), and in Case (B) follows from \cite[\S 9.4.2]{BGR}.

\begin{lemma}\label{quasicoherent}
Let $(\rho, \sL)$ be a smooth Lie algebroid on $X$. Then $U(\sL(-))$ is a sheaf on the basis $\sB$, so $\sU(\sL) = U(\sL(-))^{\ext}$, and for any $U, V \in \sB$ with $V \subset U$ the canonical $\OO_X(V)$-module homomorphism
\[
	\OO_X(V) \otimes_{\OO_X(U)} U(\sL(U)) \rightarrow U(\sL(V))
\]
is an isomorphism. This also holds for $U \in \sB'$, $V \in \sB$ and $V \subset U$ if $\sup_{x \in U} \dim(\OO_{U,x}) < \infty$.
\end{lemma}

\begin{proof}
Let $\sA$ be the collection of admissible open subsets of $X$ consisting of $\sB$ and those $U \in \sB' \setminus \sB$ for which $\sup_{x \in U} \dim(\OO_{U,x}) < \infty$. For any pair $U \in \sA$ and $V \in \sB$ with $V \subset U$, the natural map
\[
	\OO_X(V) \otimes_{\OO_X(U)} U(\sL(U)) \rightarrow U(\sL(V))
\]
is an isomorphism by Lemma \ref{basechangefactorisation}, as $\sL$ is smooth and coherent as an $\OO_X$-module. The fact that $\sL(U)$ is projective as an $\OO(U)$-module follows when $U \in \sA \setminus \sB$ from \cite[Prop.\ A.2]{KOH}, which is where the condition on the dimension of $U$ arises. For any $U \in \sB$, these maps define an isomorphism
\[
	\widetilde{U(\sL(U))}|_{\sB_U} \xrightarrow{\sim} U(\sL(-))|_{\sB_U},
\]
where $\sB_U$ is the basis of $U$ consisting of $V \in \sB$ such that $V \subset U$. As this holds in particular for any $U \in \sB$, $U(\sL(-))$ defines a sheaf on the basis $\sB$, and by Lemma \ref{extensionsheafification} we see that $U(\sL(-))^{\ext}$ is a sheafification of the presheaf $U(\sL(-))$.
\end{proof}

For example, if $X$ is a scheme, then Lemma \ref{quasicoherent} says that $\sU(\sL)$ is a quasi-coherent sheaf whenever $\sL$ is smooth.

\subsection{Equivariant Sheaves}\label{eqsheaves}

Throughout this section \ref{eqsheaves}, let $G$ be a group and let $X$ be a set with a $G$-topology in the sense of \cite[\S 9.1.1]{BGR}. Suppose also that $G$ acts on $X$, by which we mean there is a group homomorphism $\rho : G \rightarrow \text{Homeo}(X)$, where $\text{Homeo}(X)$ is the group of homeomorphisms from $X$ to itself. For each $g \in G$, $g_*$ and $g^{-1}$ are inverse functors from $\text{Sh}(X)$ to itself, where 
\[
(g^{-1}\sF)(U) = \sF(gU), \: (g_* \sF)(U) = \sF(g^{-1}U),
\]
for all admissible open subsets $U$ of $X$. We summarise some notions from \cite[\S 5.1]{GRO}.

\begin{defn}
A \emph{$k$-linear $G$-equivariant sheaf} on $X$, is a pair $(\sF , \{g^\sF\}_{g \in G})$, such that $\sF$ is a sheaf of $k$-vector spaces on $X$, and for $g \in G$,
\[
g^\sF : \sF \rightarrow g^{-1} \sF,
\]
is an isomorphism of sheaves of $k$-vector spaces, and for all $g , h \in G$,
\[
(gh)^\sF = h^{-1}(g^\sF) \circ h^\sF.
\]
A morphism $\psi : (\sF , \{g^\sF\}_{g \in G}) \rightarrow (\sG , \{g^\sG\}_{g \in G})$ is a morphism of sheaves of $k$-vector spaces, such that
\[
g^{-1}(\psi) \circ g^\sF= g^\sG \circ \psi,
\]
for all $g \in G$.
\end{defn}

\begin{defn}
A \emph{$G$-equivariant sheaf of $k$-algebras} on $X$ is a pair $(\sA , \{g^\sA\}_{g \in G})$, such that 
\begin{itemize}
\item
$(\sA , \{g^\sA\}_{g \in G})$ is a $k$-linear $G$-equivariant sheaf,
\item
$\sA$ is a sheaf of $k$-algebras,
\item
For all $g \in G$,
\[
g^\sA : \sA \rightarrow g^{-1} \sA,
\]
is a morphism of sheaves of $k$-algebras.
\end{itemize}
\end{defn}

\begin{remark}
	If $U$ is a $G$-stable admissible open subset of $X$, then for $a \in \sA(U)$,
	\[
		g \cdot a \coloneqq g^{\sA}(a)
	\]
	defines an action of $G$ on $\sA(U)$ by $k$-algebra automorphisms. Therefore, for such a subset $U$ we can form the skew group ring $\sA(U) \rtimes G$.
\end{remark}

\begin{defn}
Let $\sA$ be a $G$-equivariant sheaf of $k$-algebras on $X$. A \emph{$G$-equivariant sheaf of $\sA$-modules} or $G$-$\sA$-\emph{module} on $X$ is a sheaf $\sM$ of left $\sA$-modules, together with a $k$-linear $G$-equivariant structure $(\sM , \{g^\sM\}_{g \in G})$ such that for any admissible open subset $U \subset X$,
\[
g^\sM_U(a \cdot m) = g^\sA_U(a) \cdot g^\sM_U(m),
\]
for all $g \in G$, $a \in \sA(U)$ and $m \in \sM(U)$. A morphism of $G$-$\sA$-modules is a morphism of sheaves of $\sA$-modules which is also a morphism of $k$-linear $G$-equivariant sheaves. We write $\Mod(G \text{-} \sA)$ for the category of $G\text{-}\sA$-modules on $X$.
\end{defn}

\begin{remark}\label{skewgroupringeg}
	Suppose that $U$ is a $G$-stable admissible open subset of $X$. Then $\Gamma(U, -)$ is a functor from $G$-$\sA$-modules to $\sA(U) \rtimes G$-modules \cite[Prop.\ 2.3.5]{ARD}. For $a \in \sA(U)$ and $g \in G$, $ag \in \sA(U) \rtimes G$ acts on $\sM(U)$ by
	\[
		ag \cdot m \coloneqq a \cdot g^{\sM}(m).
	\]
	for any $m \in \sM(U)$.
\end{remark}

\begin{remark}\label{rem:homspaceequivsheaf}
	Suppose that $\sA$ is a $G$-equivariant sheaf of $k$-algebras, and $\sM$ and $\sN$ are $G\text{-}\sA$-modules. Then it is straightforward to show that $\sF \coloneqq \underline{\Hom}_{\sA}(\sM, \sN)$ is a $G$-equivariant sheaf, where 
	\[
		g^{\sF} \colon \sF \rightarrow g^{-1}\sF
	\]
	is defined by
	\[
		g^{\sF}_U(f) = g_*(g^{\sN}|_{U} \circ f \circ (g^{\sM}|_U)^{-1})
	\]
	for any $g \in G$ and $f \in \sF(U)$. In particular, $\Hom_{\sA}(\sM, \sN)$ is a $k[G]$-module where $g \in G$ acts on $f \in \Hom_{\sA}(\sM, \sN)$ by
	\[
		g \cdot f = g_*(g^{\sN} \circ f \circ (g^{\sM})^{-1}).
	\]
\end{remark}

\begin{eg}\label{eg:conncompequiv}
We will often find ourselves in the following situation. Suppose that $\sA$ is a $G$-equivariant sheaf of $k$-algebras on $X$, and suppose that $X_0$ is an admissible open subset of $X$ such that $X$ is the disjoint union
 \[
 	X = \bigsqcup_{g \in G / G^0} g(X_0),
 \]
 indexed by the set of left cosets $G /G^0$, where $G^0 \coloneqq \Stab_G(X_0) \leq G$. Then it is straightforward to show that the restriction functor
 \[
 	\Mod(G \text{-} \sA) \rightarrow \Mod(G^0 \text{-} \sA|_{X_0}), \]
is an equivalence of categories.
\end{eg}

\begin{eg}\label{eg:quotients}
We will also later need to make use of the following equivalence between equivariant sheaves and sheaves on the quotient space. Suppose that $X$ has an action of a semi-direct product $H \rtimes G$ and $X_0$ is an admissible open subset of $X$ such that $X$ is the disjoint union
	\[
		X = \bigsqcup_{h \in H} h(X_0).
	\]
In this situation, we may form the quotient $G$-topological space $X / H$ as follows. As a set, $X / H$ is the quotient of $X$ by $H$, and writing $p \colon X \rightarrow X / H$ for the quotient map, a subset $U \subset X / H$ is defined to be admissible open if and only if $p^{-1}(U)$ is an admissible open subset of $X$. A collection $\{U_i\}_i$ of admissible open subsets of $X / H$ which covers the admissible open subset $U$ is defined to be an admissible open covering of $U$ if $\{p^{-1}(U_i)\}_i$ forms an admissible open covering of $p^{-1}(U)$ in $X$. It is routine to check that this defines a $G$-topology on $X / H$, for which the quotient map $p \colon X \rightarrow X / H$ is continuous. Furthermore, because $H$ is normal in $H \rtimes G$, the action of $H \rtimes G$ on $X$ induces an action of $H \rtimes G$ on $X/H$ for which the quotient map $p \colon X \rightarrow X / H$ is $H \rtimes G$-equivariant.
	
Suppose now that $\sA$ is a $(H \rtimes G)$-equivariant sheaf of $k$-algebras on $X$. For example, $\sA$ could just be the constant sheaf $\underline{k}$ (in which case a $(H \rtimes G)\text{-}\sA$-module is just a $(H \rtimes G)\text{-}$equivariant $k$-linear sheaf), or $X$ could be a locally ringed $G$-topological space and $\sA = \OO_X$ the structure sheaf.
	
For any $(H \rtimes G)$-equivariant sheaf $\sF$ on $X$, the presheaf $\sF^H$ on $X / H$ defined by
	\[
		\sF^H(U) \coloneqq \sF(p^{-1}(U))^H
	\]
	is a $G$-equivariant sheaf on $X / H$, where the $G$-equivariant structure $g^{\sF^H} \colon \sF^{H} \rightarrow g^{-1} \sF^H$ is defined as the restriction of $p_*g^{\sF} \colon p_* \sF \rightarrow p_* (g^{-1}\sF) = g^{-1} (p_* \sF)$ to $\sF^{H}$. The fact that $p_* (g^{-1} \sF) = g^{-1} (p_* \sF)$ follows from the $(H \rtimes G)$-equivariance of $p \colon X \rightarrow X / H$, and the fact that this restricts to a map $\sF^H \rightarrow g^{-1}\sF^{H}$ is because $H$ is normal in $H \rtimes G$. In particular, this applies to the $H \rtimes G$-equivariant sheaf $\sA$ on $X$, and we obtain the $G$-equivariant sheaf of $k$-algebras $\sA^H$ on $X/H$. Therefore, if $\sF$ is a $(H \rtimes G)\text{-}\sA$-module, then $\sF^H$ naturally acquires the structure of a $G\text{-}\sA^H$-module, and this defines a functor
	\[
		(-)^H \colon \Mod((H \rtimes G)\text{-}\sA) \rightarrow \Mod(G \text{-} \sA^H).
	\]
	In the other direction, we have the pullback functor
	\[
		p^{-1} \colon \Mod(G \text{-} \sA^H) \rightarrow \Mod((H \rtimes G)\text{-}\sA),
	\]
	defined (for simplicity because $p$ is an open map) by
	\[
		p^{-1}\sG(V) \coloneqq \sG(p(V))
	\]
	for $\sG \in \Mod(G \text{-} \sA^H)$. The $H \rtimes G$-equivariant structure on $p^{-1} \sG$ is defined by
	\[
		g^{p^{-1} \sG} \coloneqq p^{-1}g^{\sG} \colon p^{-1} \sG \rightarrow p^{-1} (g^{-1} \sG) = g^{-1} (p^{-1} \sG),
	\]
	for $g \in H \rtimes G$. It is direct to show that, because our assumption that $X$ is the disjoint union of copies of $X_0$ indexed by $H$, that these functors are mutually inverse quasi-equivalences of categories.
	
	We also note that under our assumption on $X$ and the action of $H$, the natural map $X_0 \hookrightarrow X \twoheadrightarrow X / H$ is an isomorphism of $G$-topological spaces, and the pushforward of $\sA|_{X_0}$ to $X / H$ is canonically identified with $\sA^H$. In particular, if $X$ is a rigid space, then taking $\sA = \OO_X$ we see that $X / H$ is also a rigid space, canonically identified with $X_0$.
\end{eg}

\subsection{\texorpdfstring{$\sD$-Modules}{D-Modules}}\label{sect:Dmodules}

In this section we specialise the geometric framework of Section \ref{sect:geometricsetup} in which we work, and additionally assume that \emph{the characteristic of $k$ is zero}, and
\begin{enumerate}
	\item[\textbf{(A)}]
	$X$ is a smooth scheme over $k$ ($X \rightarrow \Spec(k)$ is smooth),
	\item[\textbf{(B)}] 
	$X$ is a smooth rigid space over $k$ ($X \rightarrow \Spec(k)$ is smooth \cite[Def. 2.1]{FRGIII}).
\end{enumerate}

In both cases, this assumption implies that $\Omega_{X/k}$ is locally free of rank $\dim_x X$ at any $x \in X$ (which in case (B) is actually an equivalent condition \cite[Lem.\ 2.8]{FRGIII}). In particular, the dual sheaf $\sT_X$ is locally free (and hence coherent). We also note that in case (A), as $\charfield(k) = 0$, then $X \rightarrow \Spec(k)$ is smooth if and only if $\Omega_{X/k}$ is locally free and $X$ is locally of finite type over $k$ \cite[Lem.\ 04QN]{STACK}. 

\begin{defn}
	We define the sheaf of differential operators $\sD_X \coloneqq \sU(\sT_X)$.
\end{defn}

We write $\Mod(\sD_X)$ for the category of $\sD_X$-modules, and $\VectCon(X)$ for the full subcategory of integrable connections: $\sD_X$-modules for which the underlying $\OO_X$-module is a vector bundle.

\begin{remark}
When $X$ is a smooth rigid space over a field of characteristic zero, $\sD_X$ is by definition the sheaf of differential operators on $X$ \cite{AW1}. When $X$ is a smooth scheme over a field of characteristic zero, the sheaf $\sD_X$ we have defined coincides with the usual sheaf of Grothendieck differential operators in the sense that the natural morphism $\sD_X \rightarrow \underline{\End}_k(\OO_X)$ induced from $\OO_X \rightarrow \underline{\End}_k(\OO_X)$ and $\sT_X \hookrightarrow \underline{\End}_k(\OO_X)$ is injective, and has image the subsheaf of $\underline{\End}_k(\OO_X)$ generated by $\OO_X$ and $\sT_X$. This can be seen by taking the associated graded of this morphism, and using Rinehart's Theorem \cite[Thm.\  3.1]{RINE} and (for example) \cite[\S 1.1]{HTT}.
\end{remark}

We now record some results which will be useful later.

\begin{lemma}\label{connirred}
If $X$ is connected then $\OO_X$ is an irreducible $\sD_X$-module.
\end{lemma}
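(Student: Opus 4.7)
The plan is to combine a local analysis of stalks with connectedness of $X$. Let $\sM \subset \OO_X$ be a non-zero $\sD_X$-submodule; the goal is to show $\sM = \OO_X$. The key preliminary step is the \emph{local claim}: for each $x \in X$, the stalk $\sM_x \subset \OO_{X, x}$ is either $0$ or $\OO_{X, x}$. Indeed, $\sM_x$ is an ideal of the regular local ring $\OO_{X, x}$ stable under the action of $\Theta_{X, x}$. Given $0 \neq f \in \sM_x$, Krull's intersection theorem provides a maximal $n \geq 0$ with $f \in \fm_x^n$. If $n = 0$ then $f$ is a unit and $\sM_x = \OO_{X,x}$. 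Otherwise, smoothness of $X$ at $x$ identifies $\gr_{\fm_x} \OO_{X, x}$ with $\Sym_{\kappa(x)}(\fm_x / \fm_x^2)$, on which $\Theta_{X, x}$ acts by partial differentiation in a dual basis, and since $\charfield k = 0$ some $\partial \in \Theta_{X, x}$ carries $f$ to an element of $\fm_x^{n-1} \setminus \fm_x^n \subset \sM_x$, allowing induction on $n$.

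For the global step, I would cover $X$ by connected affine or affinoid opens $\{U_i\}$ whose rings of sections $\OO_X(U_i)$ are integral. In the scheme case this is because smooth connected schemes over a field are irreducible, so every non-empty open has integral coordinate ring; in the rigid case, smoothness makes all local rings regular hence integral domains, so one may shrink $U_i$ until $\OO_X(U_i)$ is integral. On each such $U_i$, the ideal $\sM(U_i) \subset \OO_X(U_i)$ is $\Der_k(\OO_X(U_i))$-stable, and running the derivation-induction of the local claim at the level of $\OO_X(U_i)$ gives $\sM(U_i) \in \{0, \OO_X(U_i)\}$; combined with the stalk-wise dichotomy from the local claim this determines $\sM|_{U_i}$ as either the zero sheaf or $\OO_X|_{U_i}$. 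On any non-empty overlap $U_i \cap U_j$ the two restrictions must agree, so the ``type'' (zero or full) is locally constant. Connectedness of $X$ then forces a single global type, and since $\sM \neq 0$ we conclude $\sM = \OO_X$.

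The main obstacle is this passage from stalks to sections on opens: the local claim alone is cheap, but globalizing it requires both the integrality of $\OO_X(U)$ (so the derivation-induction can be rerun globally on $U$) and a way to promote $\sM(U_i) = \OO_X(U_i)$ to $\sM|_{U_i} = \OO_X|_{U_i}$. Without the integrality/connectedness input, one could a priori construct sub-$\sD_X$-modules that are stalk-wise trivial on part of $X$ and stalk-wise full on another, so it is precisely the interplay between smoothness (giving the local claim), integrality on small opens (giving the sectional dichotomy), and connectedness (forcing a single global type) that makes the lemma work.
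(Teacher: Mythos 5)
Your argument is, in substance, the standard one and coincides with the content of the results the paper merely cites: the derivation-induction on the order of vanishing at a point is precisely the proof of the affine statement in [MCR, Thm.\ 15.3.8], and the reduction to connected affine/affinoid pieces glued along a connected cover is the reduction carried out in the proof of [AW2, Prop.\ 3.1.3]. So you have reproved the cited results rather than found a different route; that is fine, but you may as well run the stalk computation only at closed points (where $\kappa(x)/k$ is finite and the surjectivity of $\Theta_{X,x}\to\Hom_{\kappa(x)}(\fm_x/\fm_x^2,\kappa(x))$ is immediate), since a proper $\Der_k$-stable ideal of $\OO_X(U)$ is contained in some maximal ideal; at non-closed points the cotangent space has smaller dimension than $\Omega_{X/k,x}\otimes\kappa(x)$ and your ``partial differentiation in a dual basis'' phrasing needs a word of justification.

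The one genuine gap is in the globalization. From $\sM(U_i)=\OO_X(U_i)$ you may indeed conclude $\sM|_{U_i}=\OO_X|_{U_i}$ (the unit section generates), but from $\sM(U_i)=0$ you cannot conclude $\sM|_{U_i}=0$, nor is the ``type'' locally constant on overlaps, for an arbitrary subsheaf: on $X=\bA^1$ the assignment $\sF(U)=\OO_X(U)$ if $0\notin U$ and $\sF(U)=0$ if $0\in U$ defines a nonzero, proper, $\OO_X$- and $\Theta_X$-stable subsheaf of $\OO_X$ that satisfies your stalkwise dichotomy at every point. What excludes such examples is that $\sM$ should be quasi-coherent in case (A), resp.\ coherent in case (B), so that $\sM|_{U_i}$ is the sheaf attached to the ideal $\sM(U_i)$ and the section-level dichotomy really does determine the restricted sheaf. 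This hypothesis is implicit in the lemma as it is used in the paper (the submodules that arise are generated by global sections or are kernels of maps of coherent modules), but your proof must invoke it explicitly, since it is exactly what rules out the ``trivial on part of $X$, full on the rest'' configurations you mention at the end.
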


\begin{proof}
	In case (B), this follows from \cite[Prop.\ 3.1.3]{AW2}. In case (A), we can argue as in the proof of \emph{loc. cit.} to reduce to the case when $X$ is affine, which follows from \cite[Thm.\ 15.3.8]{MCR}.
\end{proof}

\begin{lemma}\label{cohimplieslocallyfree}
Any $\sD_X$-module which is coherent as an $\OO_X$-module is locally free.
\end{lemma}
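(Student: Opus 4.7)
The claim is local on $X$, so we may replace $X$ by an element of $\sB$ and consider $A = \OO_X(X)$, $L = \Theta_X(X)$, and $M = \sM(X)$. By coherence $M$ is a finitely presented $A$-module, and the $\sD_X$-module structure provides an action of $L$ on $M$ by $k$-linear endomorphisms compatible with the anchor $L \to \Der_k(A)$. Since $A$ is Noetherian in both cases, it suffices to prove that $M_\fm$ is a free $A_\fm$-module for every maximal ideal $\fm \subset A$: this implies $M$ is projective, equivalently that $\sM$ is locally free.

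Let $n = \dim_{A/\fm}(M / \fm M)$, pick $e_1, \ldots, e_n \in M$ lifting an $A/\fm$-basis of $M/\fm M$, and observe that Nakayama's lemma gives a surjection $\varphi \colon A_\fm^n \twoheadrightarrow M_\fm$. It remains to show $\varphi$ is injective. Suppose for contradiction that $\sum_i a_i e_i = 0$ is a non-trivial relation in $M_\fm$, and let $k \geq 0$ be minimal such that all $a_i \in \fm^k A_\fm$ while some $a_{i_0} \notin \fm^{k+1} A_\fm$; this is finite by Krull's intersection theorem applied to the Noetherian local ring $A_\fm$. If $k = 0$, reducing modulo $\fm A_\fm$ produces a non-trivial dependence among the $A/\fm$-basis $\bar{e}_1, \ldots, \bar{e}_n$ of $M / \fm M$, a contradiction.

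Assume now $k \geq 1$. Smoothness of $X$ makes $\Omega^1_{X/k}$ locally free near $\fm$, so by duality we may find $\partial \in L$ (extended to $A_\fm$) with $\partial(a_{i_0}) \notin \fm^k A_\fm$: explicitly, take $\partial$ dual to a local parameter contributing to the leading term of $a_{i_0}$ in the $\fm$-adic expansion. Writing $\partial(e_i) = \sum_j b_{ij} e_j$ for some $b_{ij} \in A_\fm$ (using surjectivity of $\varphi$), and applying $\partial$ to the relation yields
\[
	0 = \sum_i \bigl( \partial(a_i) + \sum_j a_j b_{ji} \bigr) e_i.
\]
Each inner sum $\sum_j a_j b_{ji}$ lies in $\fm^k A_\fm$ since $a_j \in \fm^k A_\fm$, whereas $\partial(a_{i_0}) \notin \fm^k A_\fm$; hence the $i_0$-th coefficient is non-zero and the new relation has minimum vanishing order strictly less than $k$. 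Descending induction on $k$ closes out the contradiction.

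The main technical obstacle is the derivation-extension step: one must confirm that a derivation $\partial \in L = \Der_k(A)$ restricts to a $k$-derivation of $A_\fm$ in both geometric settings, and that smoothness supplies enough such $\partial$ to detect leading terms in the $\fm$-adic filtration of $A_\fm$. The former is automatic by the universal property of localization, and the latter follows from the local freeness of $\Omega^1_{X/k}$ together with the duality between $L$ and $\Omega^1_{X/k}$ in a neighborhood of $\fm$.
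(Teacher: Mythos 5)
Your proof is correct and is essentially the paper's proof: the paper simply cites the argument of \cite[Thm.\ 1.4.10]{HTT}, and what you have written out is exactly that argument (minimal generators via Nakayama, lowering the $\fm$-adic order of a hypothetical relation by applying a suitable derivation, using smoothness and characteristic $0$ to produce a $\partial$ detecting the leading term). No gaps.
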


\begin{proof}
	This follows from exactly the same proof that is given for \cite[Thm.\ 1.4.10]{HTT}.
\end{proof}

\subsubsection{Inverse Image and Direct Image}\label{section:inversedirectimage}

We now describe the inverse and direct image functors of $\sD$-modules for an \'{e}tale morphism $f \colon X \rightarrow Y$. We describe these explicitly, as we will make use of this explicit description later.

\addtocontents{toc}{\SkipTocEntry}
\subsection*{The Direct Image}
Let us first describe the direct image
\[
	f_* \colon \Mod(\sD_X) \rightarrow \Mod(\sD_Y).
\]
Let $\sN \in \Mod(\sD_X)$. As an $\OO_Y$-module, $f_* \sN$ is the $\OO$-module direct image of $\sN$. Because $f \colon X \rightarrow Y$ is \'{e}tale, the maps of Lemma \ref{extendderivations} glue to define a morphism $\sT_Y \rightarrow f_* \sT_X$, and because $\sN$ is a $\sD_X$-module, there is a morphism of sheaves of $k$-Lie algebras,
\[
	\sT_X \rightarrow \sD_X \rightarrow \underline{\End}_k(\sN).
\]
Using these we obtain an action of $\sT_Y$ on $f_* \sN$ by
\begin{align*}
	\sT_Y \rightarrow f_* \sT_X \rightarrow f_*  \underline{\End}_k(\sN) \rightarrow \underline{\End}_k(f_* \sN).
\end{align*}
One can check that this is appropriately compatible with the $\OO_Y$-module structure on $f_* \sN$ and hence both actions extend uniquely by Lemma \ref{UEASheafUnivProp} to a $\sD_Y$-module structure on $f_* \sN$. Concretely, for $U \in \sB_Y$, if $f^{-1}(U) \in \sB_X$ then
\[
(f_*\sN)(U) = \sN(f^{-1}(U)),
\]
which has action of $\sT_Y(U)$ through the morphism $\sT_Y(U) \rightarrow \sT_X(f^{-1}(U))$ of Lemma \ref{extendderivations}.

\addtocontents{toc}{\SkipTocEntry}
\subsection*{The Inverse Image}
Let us now describe the inverse image
\[
	f^* \colon \VectCon(Y) \rightarrow \VectCon(X).
\]
Suppose that $\sM \in \VectCon(Y)$. Then as an $\OO_X$-module
\[
	f^*\sM = \OO_X \otimes_{f^{-1} \OO_Y} f^{-1} \sM,
\]
is the usual $\OO$-module pullback. This obtains an action of $\sT_X$ through the canonical morphism $\sT_X \rightarrow f^* \sT_Y$, which extends to a $\sD_X$-module structure by Lemma \ref{UEASheafUnivProp}. We note that the morphism $\sT_X \rightarrow f^* \sT_Y$ exists whenever $Y$ is smooth, irrespective of the \'{e}taleness of $f \colon X \rightarrow Y$. Concretely, for $U \in \sB_X$ and $V \in \sB_Y$ with $f(U) \subset W$, then just as in the proof of Lemma \ref{extendderivations} there is a natural map
\[
\sT_X(U) \rightarrow \OO_X(U) \otimes_{\OO_Y(V)} \sT(V),
\]
and under the identification
\[
(f^* \sM)(U) = \OO_X(U) \otimes_{\OO_Y(V)} \sM(V)
\]
a local section $\partial \in \sT_X(U)$ acts on $(f^* \sM)(U)$ by
\[
	\partial (s \otimes m) = \partial(s) \otimes m + \sum_i s s_i \otimes \partial_i(m),
\]
where under the above map $\partial \mapsto \sum_i s_i \otimes \partial_i$.

\subsection{Equivariant $\sD$-Modules}\label{sect:generaleqDmodules}

Suppose in this section that $X$ is as in Section \ref{sect:geometricsetup}, and that $G$ is a group that acts on $X$ (on the right), given by the data of a group homomorphism 
\[
\rho : G^{\text{op}} \rightarrow \Aut_k(X), \qquad  g \mapsto (\rho(g) : X \rightarrow X, \rho(g)^\sharp : \OO_X \rightarrow \rho(g)_* \OO_X).
\]
Then we have a group homomorphism
\[
	G \rightarrow \text{Homeo}(X), \qquad g \mapsto \rho(g^{-1}),
\]
and we can consider $G$-equivariant sheaves on $X$.
\begin{eg}
The structure sheaf $\OO_X$ is naturally a $G$-equivariant sheaf of $k$-algebras, with the $G$-equivariant structure
\[
	g^{\OO_X} \coloneqq \rho(g)^\sharp : \OO_X \rightarrow \rho(g)_* \OO_X = g^{-1}\OO_X.
\]
\end{eg}
Suppose now that $(\rho, \sL)$ is a Lie algebroid on $X$, and that $\sL$ also has the structure of a $G\text{-}\OO_X$-module such that each $g^{\sL} \colon \sL \rightarrow g^{-1} \sL$ for $g \in G$ is a $g^{-1}$-morphism in the sense of Definition \ref{varphimorphismsheaf}. In this situation $\sU(\sL)$ is a $G$-equivariant sheaf of $k$-algebras via
\[
	g^{\sU(\sL)} \coloneqq \sU(g^{\OO_X}, g^{\sL}) \colon \sU(\sL) \rightarrow g^{-1} \sU(\sL),
\]
and $\OO_X$ with its natural $\sU(\sL)$-module and $G$-equivariant sheaf structures is a $G \text{-} \sU(\sL)$-module. 

\begin{eg}
If $X$ is additionally as in Section \ref{sect:Dmodules} (i.e.\ $X$ is smooth over a field of characteristic $0$) then the tangent sheaf $\sT_X$ is naturally a $G$-$\OO_X$-module, where we define $g^{\sT_X} : \sT_X \rightarrow g^{-1} \sT_X$ as follows. On each $U \in \sB$, $\sT_X(U) = \Der_k(\OO_X(U))$ (cf.\ Remark \ref{rem:localdescTX}) and we define
\[
g^{\sT_X}_U : \Der_k(\OO_X(U)) \rightarrow \Der_k(\OO_X(g(U)))
\]
by
\[
	g^{\sT_X}_U : \partial \mapsto g^{\OO_X}_U \circ \partial \circ (g^{\OO_X}_U)^{-1}.
\]
Each $g^{\sT_X}$ is a $g^{-1}$-morphism by Remark \ref{varphimorphismlocal} and the description of $g^{\sT_X}$ on $U \in \sB$ above. Therefore taking $\sL = \sT_X$ above, $\sD_X$ is naturally a $G$-equivariant sheaf of $k$-algebras for which $\OO_X$ is a $G\text{-}\sD_X$-module.
\end{eg}

\begin{defn}
For locally noetherian $U \in \sB'$, we call an $\OO_X(U)$-module $M$ \emph{coherent} if as an $\OO_X(U)$-module,
\begin{enumerate}
	\item[\textbf{(A)}] $M$ is finitely generated,
	\item[\textbf{(B)}] $M$ is coadmissible (in the sense of \cite{SchTeit}).
\end{enumerate}
\end{defn}
\begin{remark}
	Note that the locally noetherian hypothesis is only an extra condition in Case (A), where it is equivalent to the condition that $\OO_X(U)$ is a noetherian ring. In Case (B), for $U \in \sB'$, we are viewing $\OO_X(U)$ as a Fr\'{e}chet-Stein algebra (in the sense of \cite{SchTeit}) with respect to the $k$-Banach algebras $(\OO_X(U_i))_i$ for any admissible open quasi-Stein covering $(U_i)_i$ of $X$. We also note that when $U$ is in fact affinoid, $M$ is coadmissible if and only if $M$ is finitely generated, as in Case (A).
\end{remark}
Each $g \in G$ acts on $U(\sL(X))$ via
\[
	U(g^{\OO_X}_X, g^{\sL}_X) \colon U(\sL(X)) \rightarrow U(\sL(X)),
\]
and we can make the following definition.
\begin{defn}
We write $\Mod_{\coh}(G \text{-} \sU(\sL))$ for the full subcategory of $\Mod(G \text{-} \sU(\sL))$ consisting of objects for which the underlying $\OO_X$-module is coherent. 

We write $\Mod_{\coh}(U(\sL(X)) \rtimes G)$ for the full subcategory of $\Mod(U(\sL(X)) \rtimes G)$ consisting of objects for which the underlying $\OO_X(X)$-module is coherent. 
\end{defn}
\begin{prop}\label{prop:LAonQS}
	Suppose that $X \in \sB$ and let $\sL$ be a smooth Lie algebroid on $X$. Then the global sections functor defines an equivalence of categories 
		\[
			\Gamma(X,-) \colon \Mod_{\coh}(G \text{-} \sU(\sL)) \xrightarrow{\sim} \Mod_{\coh}(U(\sL(X)) \rtimes G).
		\]
	The same is true for $X \in \sB'$ if $\sup_{x \in X} \dim(\OO_{X,x}) < \infty$.
\end{prop}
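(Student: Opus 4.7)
The plan is to reduce this to the non-equivariant version, and then bolt on the $G$-action using Remark \ref{skewgroupringeg} (applied with $U = X$, which is $G$-stable).

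First I would prove the non-equivariant analogue: for $X \in \sB'$ and $\sL$ a smooth Lie-algebroid on $X$, the functor $\Gamma(X,-) \colon \Mod_{\coh}(\sU(\sL)) \to \Mod_{\coh}(U(\sL(X)))$ is an equivalence. This starts from the $\OO_X$-module version: in case (A), Serre's theorem on affine schemes gives an equivalence between quasi-coherent $\OO_X$-modules and $\OO(X)$-modules, restricting to finitely presented objects on both sides; in case (B), the theory of Schneider-Teitelbaum gives an equivalence between coherent (in our sense, i.e.\ coadmissible) $\OO_X$-modules on a quasi-Stein space and coadmissible $\OO(X)$-modules. Under this equivalence, the quasi-inverse is $M \mapsto \widetilde{M}$. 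A $\sU(\sL)$-module structure on $\sM$ that is coherent over $\OO_X$ amounts to an action of $\sL$ satisfying the Lie-Rinehart compatibility, which on global sections produces a $U(\sL(X))$-action extending the $\OO(X)$-action (using the universal property of Lemma \ref{UEAUnivProp}). Conversely, given a $U(\sL(X))$-module $M$ coherent over $\OO(X)$, the sheaf $\widetilde{M}$ carries a natural $\sU(\sL)$-module structure: by Lemma \ref{quasicoherent} we have $\sU(\sL)|_V \cong \widetilde{U(\sL(V))}$ for $V \in \sB'$, and the base change $U(\sL(V)) = \OO(V) \otimes_{\OO(X)} U(\sL(X))$ (Lemma \ref{UEAbasechange}, available since $\sL(X)$ is projective) allows one to define the $\sU(\sL)|_V$-module structure on $\widetilde{M}|_V = \OO(V) \otimes_{\OO(X)} M$ compatibly.

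Next I would incorporate the $G$-action. Since $X$ is a $G$-stable admissible open, by Remark \ref{skewgroupringeg} the global sections functor lands in $\Mod(U(\sL(X)) \rtimes G)$: a $G\text{-}\sU(\sL)$-module $\sM$ gives $\Gamma(X,\sM)$ a compatible $G$-action (with $g \cdot m = g^\sM_X(m)$) and a $U(\sL(X))$-action, and the compatibility $g^\sM_X(u \cdot m) = g^{\sU(\sL)}_X(u) \cdot g^\sM_X(m)$ is exactly the skew relation. Restricting to $\Mod_{\coh}$, global sections lands in $\Mod_{\coh}(U(\sL(X)) \rtimes G)$ because coherence is a condition on the underlying $\OO(X)$-module. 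For the quasi-inverse, given $M$ in $\Mod_{\coh}(U(\sL(X)) \rtimes G)$, take $\widetilde{M}$ from the non-equivariant equivalence, and define the $G$-equivariant structure $g^{\widetilde{M}} \colon \widetilde{M} \to g^{-1}\widetilde{M}$ by sheafifying the morphism which on $V \in \sB'$ with $V \subset X$ sends $a \otimes m \in \OO(V) \otimes_{\OO(X)} M$ to $g^{\OO_X}_V(a) \otimes (g \cdot m) \in \OO(g^{-1}V) \otimes_{\OO(X)} M$. One checks this is well-defined using the skew relation in $U(\sL(X)) \rtimes G$ and compatible with the $\sU(\sL)$-action using the functoriality of $\sU(-)$ (Lemma \ref{UEAsheaffunctoriality}) applied to the $g^{-1}$-morphism $g^\sL$.

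Finally I would verify that these two constructions are mutually inverse up to natural isomorphism, which follows because both reduce, after forgetting either the equivariant or the Lie-algebroid structure, to the established non-equivariant and $\OO$-module equivalences. The cocycle condition $(gh)^{\widetilde{M}} = h^{-1}(g^{\widetilde{M}}) \circ h^{\widetilde{M}}$ is inherited from the $G$-action on $M$.

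The main obstacle I anticipate is the bookkeeping in case (B): one must check that $M \mapsto \widetilde{M}$ on coadmissible modules is suitably compatible with the sheafy $g^{-1}$-pullback and that $g^{\widetilde{M}}$ genuinely defines a sheaf morphism rather than only a map on the basis $\sB'$. Both points are routine once one uses Lemma \ref{extensionsheafification} to pass between presheaves on the basis and their sheafifications, together with the fact that $g \in G$ acts by isomorphisms of rigid spaces so preserves quasi-Stein coverings and coadmissible systems.
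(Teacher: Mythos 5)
Your proposal is correct and follows essentially the same route as the paper: the $\OO$-module equivalence for $X \in \sB'$, the identification $\widetilde{M}(U) \cong U(\sL(U)) \otimes_{U(\sL(X))} M$ via Lemma \ref{quasicoherent} and base change, and the equivariant structure $g^{\widetilde{M}} = g^{\OO_X} \otimes g$ checked against the skew relation, exactly as in Remark \ref{skewgroupringeg}. One cosmetic point: with the paper's convention $(g^{-1}\sF)(U) = \sF(gU)$, the target of $g^{\widetilde{M}}_V$ should be $\OO(gV) \otimes_{\OO(X)} M$ rather than $\OO(g^{-1}V) \otimes_{\OO(X)} M$.
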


 \begin{proof}
	Suppose that $X \in \sB'$ satisfies $\sup_{x \in X} \dim(\OO_{X,x}) < \infty$, which is automatic if $X \in \sB$. Then the global sections functor induces an equivalence
	\[
		\Gamma(X,-) \colon \Mod_{\coh}(\OO_X) \xrightarrow{\sim} \Mod_{\coh}(\OO_X(X)),
	\]
	with inverse sending an $A$-module $M$ to $\widetilde{M}$. In Case (A) this is \cite[\S 5.1.3, Prop.\ 1.11]{QL}, and in Case (B) this is \cite[Satz 2.4]{KIEHL}, or alternatively follows from \cite[Cor.\ 3.3]{SchTeit}. If $\sM \in \Mod_{\coh}(G \text{-} \sU(\sL))$, then $\sM(X)$ is naturally a $U(\sL(X)) \rtimes G$-module via the natural map
	\[
		\sU(\sL(X)) \rightarrow U(\sL)(X)
	\]
	and the action of $g \in G$ by $g^{\sM}_X \colon \sM(X) \rightarrow \sM(g(X)) = \sM(X)$. Furthermore, for any $G\text{-}\sU(\sL)$-morphism $f \colon \sM \rightarrow \sN$ the map on global sections will be $U(\sL(X)) \rtimes G$-linear. 
	
	Suppose now that $M \in \Mod_{\coh}(U(\sL(X)) \rtimes G)$. For any $U \in \sB$, then using Lemma \ref{quasicoherent}, which is where the hypothesis on the dimension of $X$ arises, we can factorise the natural map
	\begin{align*}
		\widetilde{M}(U) &= \OO_X(U) \otimes_{\OO_X(X)} \sM(X), \\
		&\xrightarrow{\sim} \OO_X(U) \otimes_{\OO_X(X)} U(\sL(X)) \otimes_{U(\sL(X))} \sM(X), \\
		 &\xrightarrow{\sim} U(\sL(U)) \otimes_{U(\sL(X))} \sM(X)
	\end{align*}
	as the composition of isomorphisms. Therefore, letting $U(\sL(U))$ act via left multiplication on the left factor $U(\sL(U))$, the sheaf $\widetilde{M}$ naturally has the structure of a sheaf of $\sU(\sL)$-modules. This is further a $G \text{-} \sU(\sL)$-module with respect to the $G$-equivariant structure
	\[
		g^{\widetilde{M}} \colon \widetilde{M} \rightarrow g^{-1}\widetilde{M}
	\]
	defined on $U \in \sB$ by
	\[
		g^{\widetilde{M}}_U = g^{\OO_X}_U \otimes g \colon \OO_X(U) \otimes_{A} \sM(X) \rightarrow \OO_X(g(U)) \otimes_{A} \sM(X).
	\]
	Any morphism $\lambda \colon M \rightarrow N$ naturally induces a morphism $1 \otimes \lambda \colon \widetilde{M} \rightarrow \widetilde{N}$, and this is easily shown to be a morphism of $G \text{-} \sU(\sL)$-modules. It is straightforward to check that with these definitions, the natural isomorphisms
	\[
		\widetilde{\sM(X)} \xlongrightarrow{\sim} \sM, \qquad M \xlongrightarrow{\sim} \widetilde{M}(X),
	\]
	are isomorphisms of $G \text{-} \sU(\sL)$-modules and $U(\sL(X)) \rtimes G$-modules respectively.
 \end{proof}

 \begin{remark}
In case (A) (so the condition that $X \in \sB$ means that $X$ is an affine scheme), the same proof shows that taking the global sections also defines an equivalence
\[
	\Gamma(X,-) \colon \Mod_{\text{qc}}(G \text{-} \sU(\sL)) \xrightarrow{\sim} \Mod(\sU(\sL(X)) \rtimes G).
\]
where $\Mod_{\text{qc}}(G \text{-} \sU(\sL))$ is the full subcategory of $\Mod(G \text{-} \sU(\sL))$ consisting of objects for which the underlying $\OO_X$-module is quasi-coherent.
 \end{remark}

 Suppose now that $X$ is as in Section \ref{sect:Dmodules} ($X$ is smooth over a field of characteristic $0$). In this case, we write $\VectCon^G(X)$ for the category $\Mod_{\coh}(G\text{-}\sD_X)$ which in light of Lemma \ref{cohimplieslocallyfree} is the category of $G$-equivariant vector bundles with connection on $X$. Because coherent sheaves are closed under kernels and cokernels, $\VectCon^G(X)$ is abelian and furthermore $\VectCon^G(X)$ is a rigid abelian tensor category in the sense of \cite{DELMIL}. The tensor product $\sV \otimes \sW$ of $\sV, \sW \in \VectCon^G(X)$ is defined to be the tensor product of $\OO_X$-modules, with $\sD_X$-module structure
\[
	\partial \cdot (x \otimes y) = x \otimes \partial(y) + \partial(x) \otimes y,
\]
for a local section $\partial$ of $\sT_X$, and $G$-equivariant structure
\begin{align*}
	g^{\sV \otimes \sW} \coloneqq g^{\sV} \otimes g^{\sW} \colon \sV \otimes \sW \rightarrow g^{-1}(\sV \otimes \sW).
\end{align*}
With this tensor structure, then $\underline{\Hom}(\sV, \sW)$ is given by the internal hom of $\OO_X$-modules with $\sD_X$-module structure 
\[
	(\partial \cdot f)(x) = \partial \cdot f(x) - f(\partial \cdot x)
\]
for a local section $\partial$ of $\sT_X$, and $G$-equivariant structure as described in Remark \ref{rem:homspaceequivsheaf}.

\subsection{Galois Extensions}\label{sect:galoisextensions}

Let $G$ be a group, and let $X$ and $Y$ be as in Section \ref{sect:geometricsetup}. For a morphism $f \colon X \rightarrow Y$ of spaces over $k$ we denote by $\Aut_Y(X)$ the group of automorphism of $X$ over $Y$. Suppose that we have a (right) action of $G$ on $X$ over $Y$, by which we mean a group homomorphism $G^{\op} \rightarrow \Aut_Y(X)$. In this situation, using the notation of Section \ref{sect:generaleqDmodules}, the sheaf of $\OO_Y$-modules $f_* \OO_X$ has a (left) action of $G$,
\[
	G \rightarrow \Aut_k(f_*\OO_X), \qquad g \mapsto (g^{\OO_X}_{f^{-1}(U)} : f_*\OO_X(U) \rightarrow f_*\OO_X(U))_{U \subset Y},
\]
which is well-defined as $g^{\OO_X} \colon \OO_X \rightarrow g^{-1} \OO_X$, and $g(f^{-1}(U)) = f^{-1}(U)$ for any admissible open subset $U \subset Y$. Therefore we can consider the sheaf of $\OO_Y$-modules $(f_* \OO_X)^{G}$ defined by
\[
	(f_* \OO_X)^{G}(U) = \OO_X(f^{-1}(U))^{G},
\]
for any admissible open subset $U$ of $Y$, which is a sheaf because $(-)^G$ preserves products and equalisers. 
\begin{defn}\label{defn:galois}
Suppose that $G$ is a finite group, $f \colon X \rightarrow Y$ is a finite \'{e}tale morphism, and $G$ acts on $X$ over $Y$. Then $f \colon X \rightarrow Y$ is a \emph{Galois covering with Galois group $G$} if the natural map $\OO_Y \rightarrow (f_* \OO_X)^{G}$ is an isomorphism of $\OO_Y$-modules and
\[
	p_X \times a \colon X \times \underline{G} \rightarrow X \times_Y X
\]
is an isomorphism of spaces over $k$, where $a \colon X \times \underline{G} \rightarrow X$ denotes the action map.
\end{defn}

We will also make use of the notion of a Galois extension of commutative $k$-algebras, which is the affine version of Definition \ref{defn:galois}.

\begin{defn}[{\cite[Def. 1.4]{CHR}}]\label{defn:galoisring}
	Suppose that $\varphi : A \hookrightarrow B$ is an injective homomorphism of commutative $k$-algebras, and $G$ is a finite subgroup of $\Aut_{\Alg_k}(B)$. Then $\varphi : A \hookrightarrow B$ is called a Galois extension with Galois group $G$ if $A = B^G$, and 
\begin{align*}
	\beta : B \otimes_A B \rightarrow B \otimes_k \OO(G), \\
	\beta(x \otimes y) = \sum_{g \in G} x (g\cdot y) \otimes \delta_g,
\end{align*}
is an isomorphism, where $\OO(G) = \OO(\underline{G})$ is the $k$-algebra of functions from $G$ to $k$.
\end{defn}

\begin{eg}\label{eg:galextrings}
Suppose that $X \rightarrow Y$ is a morphism of affine (resp. affinoid) spaces over $k$, given on global sections by a morphism $\varphi \colon A \rightarrow B$. Then an action of $G$ on $X$ over $Y$ is equivalent to a homomorphism $\rho \colon G \rightarrow \Aut_{\Alg_k}(B)$ with $\varphi(A) \subset B^G$.

If $f \colon X \rightarrow Y$ is a finite \'{e}tale Galois covering with Galois group $G$, $\rho$ is injective (from the surjectivity of $\beta$), and $\varphi \colon A \rightarrow B$ is injective and a Galois extension with Galois group $G$.

Conversely, if $\varphi \colon A \rightarrow B$ is injective and a Galois extension with Galois group $G$, then $f \colon X \rightarrow Y$ is a finite \'{e}tale Galois covering with Galois group $G$ in the sense of Definition \ref{defn:galois}, as $\varphi \colon A \rightarrow B$ will automatically be finite \'{e}tale by \cite[Thm.\ 1.3]{CHR}.
\end{eg}

\begin{remark}
We note in Definition \ref{defn:galois} we obtain an equivalent definition if we replace the condition that $\OO_Y \rightarrow (f_* \OO_X)^{G}$ is an isomorphism by the condition that $f \colon X \rightarrow Y$ is faithfully flat, or equivalently that $f \colon X \rightarrow Y$ is surjective. Indeed, if $\OO_Y \rightarrow (f_* \OO_X)^{G}$ is an isomorphism, then for any $U \in \sB_Y$, the extension $\OO_Y(U) \rightarrow \OO_X(f^{-1}(U))$ is Galois with Galois group $G$, and thus faithfully flat \cite[Lem.\ 1.9]{GRE}. Conversely, if $f \colon X \rightarrow Y$ is faithfully flat, then for any $U \in \sB_Y$ $A \coloneqq \OO_Y(U) \hookrightarrow B \coloneqq \OO_X(f^{-1}(U))$. The map $\beta$ of Definition \ref{defn:galoisring} is an isomorphism, and therefore because $\beta(b \otimes 1) = \beta(1 \otimes b)$, $b \otimes 1 = 1 \otimes b$ in $B \otimes_A B$. But then because the composition $A \rightarrow B^G \rightarrow B$ is faithfully flat, the fact that $A = B^G$ follows from the following elementary lemma.
\end{remark}

\begin{lemma}
Suppose that $R \subset S \subset T$ are commutative rings with $R \hookrightarrow T$ faithfully flat and $s \otimes 1 = 1 \otimes s$ in $T \otimes_R T$ for any $s \in S$. Then $R = S$.
\end{lemma}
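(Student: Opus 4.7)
The plan is to deduce this lemma as an immediate consequence of the standard descent exact sequence attached to a faithfully flat ring homomorphism. Specifically, for any faithfully flat map of commutative rings $R \hookrightarrow T$, the sequence
\[
0 \longrightarrow R \longrightarrow T \xrightarrow{\; d \;} T \otimes_R T,
\qquad d(t) = t \otimes 1 - 1 \otimes t,
\]
is exact; equivalently, $R$ is the equalizer of the two maps $t \mapsto t \otimes 1$ and $t \mapsto 1 \otimes t$ from $T$ to $T \otimes_R T$. Once this is in hand, the lemma becomes a one-line consequence: the hypothesis $s \otimes 1 = 1 \otimes s$ for all $s \in S$ says precisely that $S$ is contained in $\ker(d)$, which equals $R$; combined with $R \subset S$, this forces $R = S$.

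The only content is therefore to (cite or) verify the descent exact sequence. The classical argument is to base change along the faithfully flat map $R \to T$ and observe that the resulting sequence
\[
0 \longrightarrow T \longrightarrow T \otimes_R T \longrightarrow T \otimes_R T \otimes_R T
\]
admits an $R$-linear (in fact $T$-linear on the left) splitting provided by multiplication $T \otimes_R T \to T$, $t \otimes t' \mapsto tt'$, making the base changed sequence split exact. Faithful flatness of $R \to T$ then descends exactness back to the original sequence over $R$. If one prefers, one may simply cite this as a standard fact (\textit{e.g.} \cite[Tag 023N]{STACK}).

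I do not anticipate any real obstacle here: modulo quoting the descent exact sequence, the statement is a tautology. The only minor care needed is to ensure that $R$ acts on $T$ through the given inclusion (so that $T \otimes_R T$ is the relevant object) and that the hypothesis is applied to elements of $S$ viewed inside $T$, which is automatic given $S \subset T$.
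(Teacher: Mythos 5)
Your proof is correct. The hypothesis $s\otimes 1 = 1\otimes s$ says exactly that $S$ lies in the equalizer of the two coface maps $T \rightrightarrows T\otimes_R T$, and the exactness of the Amitsur sequence $0 \to R \to T \to T\otimes_R T$ for a faithfully flat $R\hookrightarrow T$ (Stacks 023M) identifies that equalizer with $R$; together with $R\subset S$ this gives $R=S$. The route is genuinely different from the paper's, though both ultimately rest on descending a statement proved after base change to $T$. The paper argues directly: it reduces, by faithful flatness, to showing $T\otimes_R(S/R)=0$, i.e.\ that $T\otimes_R R \to T\otimes_R S$ is surjective; it then embeds $T\otimes_R S$ into $T\otimes_R T$ (flatness of $T$) and uses the hypothesis to rewrite $t\otimes s = (t\otimes 1)(1\otimes s) = (t\otimes 1)(s\otimes 1) = ts\otimes 1$, which visibly lies in the image. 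That argument is entirely self-contained and never invokes the exactness of the descent complex, only right-exactness of $T\otimes_R-$ and flatness. Your version is shorter and more conceptual once the descent exact sequence is granted, but the content you are quoting (exactness of the Amitsur complex, proved by the standard splitting after base change to $T$) is strictly stronger than what the paper actually needs; the paper's computation is essentially the special case of that splitting argument applied only to the elements of $S$. Either is perfectly acceptable here.
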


\begin{proof}
Because $R \hookrightarrow T$ is faithfully flat it is sufficient to show that $ T \otimes_R S / R = 0$. By the exact sequence,
\[
T \otimes_R R \rightarrow T \otimes_R S \rightarrow T \otimes_R S / R \rightarrow 0,
\]
it is sufficient to show that $T \otimes_R R \rightarrow T \otimes_R S$ is surjective. We can view $T \otimes_R S \subset T \otimes_R T$, because $T$ is flat over $R$, and therefore for any pure tensor $t \otimes s \in T \otimes _R S$,
\[
ts \otimes 1 \mapsto ts \otimes 1 =( t \otimes 1)(s \otimes 1) = ( t \otimes 1)(1 \otimes s) = t \otimes s,
\]
and so the map is surjective.
\end{proof}

We note here the following lemma for later use, which says that derivations lifted along a Galois extension commute with the Galois action.

\begin{lemma}\label{dercommuteG}
	Suppose that $G$ is a finite group and $\varphi \colon A \hookrightarrow B$ is a Galois extension of commutative $k$-algebras with Galois group $G$. Then any $\partial \in \Der_k(A)$ and $g \in G$,
	\[
		g \circ \psi(\partial) = \psi(\partial) \circ g,
	\] 
	where $\psi : \Der_k(A) \rightarrow \Der_k(B)$ is the $A$-linear homomorphism of Lemma \ref{extendderivations}.
\end{lemma}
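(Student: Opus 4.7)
The plan is to apply the uniqueness clause of Lemma \ref{extendderivations}: any $k$-derivation of $B$ is determined by its precomposition with $\varphi$. So it suffices to produce, from the hypothetical identity we want, a comparison of two $k$-derivations of $B$ whose restrictions to $\varphi(A)$ coincide.

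First I would rewrite the desired identity as $g \circ \psi(\partial) \circ g^{-1} = \psi(\partial)$. The left-hand side makes sense as a $k$-linear endomorphism of $B$, and since $g \in G$ acts on $B$ by $k$-algebra automorphisms, conjugation sends $k$-derivations to $k$-derivations; in particular $g \circ \psi(\partial) \circ g^{-1} \in \Der_k(B)$. By Lemma \ref{extendderivations}, to conclude it agrees with $\psi(\partial)$ it suffices to check that both have the same precomposition with $\varphi \colon A \hookrightarrow B$.

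Now I would use the Galois hypothesis in the form $A = B^G$, which says that $g$ fixes $\varphi(A)$ pointwise. For $a \in A$, we then have $g^{-1}(\varphi(a)) = \varphi(a)$, so
\[
(g \circ \psi(\partial) \circ g^{-1})(\varphi(a)) \;=\; g\bigl(\psi(\partial)(\varphi(a))\bigr) \;=\; g(\varphi(\partial(a))) \;=\; \varphi(\partial(a)),
\]
where we use the defining property $\psi(\partial)\circ\varphi = \varphi\circ\partial$ in the middle equality, and again $\varphi(\partial(a)) \in \varphi(A) = B^G$ at the end. On the other hand $\psi(\partial)(\varphi(a)) = \varphi(\partial(a))$ directly. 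Hence the two derivations agree on $\varphi(A)$, so by the uniqueness in Lemma \ref{extendderivations} they coincide, which gives $g \circ \psi(\partial) = \psi(\partial) \circ g$.

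There is no real obstacle: the proof is a one-line application of uniqueness, and the only input beyond Lemma \ref{extendderivations} is that $A = B^G$, which is part of the definition of a Galois extension. No use of the more subtle condition that $\beta$ is an isomorphism is required.
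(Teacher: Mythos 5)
Your proof is correct and is essentially identical to the paper's: both conjugate $\psi(\partial)$ by $g$, use $A = B^G$ to see that the conjugate still restricts to $\varphi\circ\partial$ on $\varphi(A)$, and conclude by the uniqueness clause of Lemma \ref{extendderivations}. (Your closing remark is slightly overstated only in that the isomorphism condition on $\beta$ is what guarantees $\varphi$ is \'{e}tale and hence that Lemma \ref{extendderivations} applies at all, but this does not affect the argument.)
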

	
\begin{proof}
	For any $g \in G$, both $\psi$ and,
	\[
	\psi_g : \partial \mapsto g \circ \psi(\partial) \circ g^{-1},
	\]
	are $A$-linear maps $\Der_k(A) \rightarrow \Der_k(B)$. Furthermore,
	\[
		\psi_g(\partial) \circ \varphi = \varphi \circ \partial,
	\]
	as for any $a \in A$, the left-hand side is,
	\begin{align*}
		(\psi_g(\partial) \circ \varphi)(a) &= g(\psi(\partial)(g^{-1}(\varphi(a)))),\\
		&= g(\psi(\partial)(\varphi(a))),\\
		&= g(\varphi(\partial(a))),\\
		&= \varphi(\partial(a)),
	\end{align*}
	which is exactly the right-hand side. Therefore, by the uniqueness of $\psi$, $\psi = \psi_g$.
\end{proof}

We will need the following basic fact concerning connected components of Galois coverings.

\begin{lemma}\label{lemma:conncompgaloiscovering}
Suppose that $f \colon X \rightarrow Y$ is a finite \'{e}tale Galois covering with Galois group $G$. Then for any admissible open subset $Y_0 \subset Y$,
\[
f|_{f^{-1}(Y_0)} \colon f^{-1}(Y_0) \rightarrow Y_0
\]
is a finite \'{e}tale Galois covering with Galois group $G$.

Suppose additionally that $Y_0$ is connected. Then $G$ acts on the set of connected components of $f^{-1}(Y_0)$ transitively, and for any connected component $X_0$ of $f^{-1}(Y_0)$, 
\[
f_0 \coloneqq f|_{X_0} \colon X_0 \rightarrow Y_0
\]
is a finite \'{e}tale Galois covering with Galois group $H \coloneqq \Stab_G(X_0)$.
\end{lemma}

\begin{proof}
	This is based on \cite[Prop.\ 3.8]{GRE}. It is direct to see that $f \colon f^{-1}(Y_0) \rightarrow Y_0$ is finite \'{e}tale Galois with Galois group $G$. For the second claim, first note that the morphism $f : X \rightarrow Y$ is finite locally free of constant rank $|G|$. Indeed, for any $U \in \sB_Y$ and $V \coloneqq f^{-1}(U)$, $\OO_X(V)$ is finitely generated projective over $\OO_Y(U)$ \cite[Thm.\ 1.3 (c)]{CHR} and the isomorphism $\beta$ for the Galois extension $\OO_Y(U) \rightarrow \OO_X(V)$ shows that $\rank_{\OO_Y(U)}(\OO_X(V)) = |G|$. Therefore, because $Y_0$ is connected, $f^{-1}(Y_0)$ has at most $|G|$ connected components, and we let $S$ be the corresponding set of primitive orthogonal idempotents of $\OO_X(f^{-1}(Y_0))$. $G$ acts on $S$, and this action is transitive: if $\{ e^1, ... , e^m\}$ is an orbit of $S$, then the sum $e^1 + \cdots + e^m$ is $G$-invariant, hence is an non-zero idempotent of $\OO_Y(Y_0)$. Therefore, $e^1 + \cdots + e^m = 1$, and the orbit is the whole of $S$.

 	If $e$ is the idempotent of $\OO_X(f^{-1}(Y_0))$ corresponding to the connected component $X_0$, then $S = \{g(e) \mid g \in G\}$ and the stabiliser $H$ of $e$ in $G$ acts on $X_0$. Furthermore, as $G$ acts on $S$ transitively with stabiliser $H$, $|S| = |G| / |H|$. The group $G$ provides isomorphisms between all connected components of $f^{-1}(Y_0)$ and these isomorphisms respect the morphism to $Y$. Therefore, writing $\deg(f \colon X \rightarrow Y) \coloneqq \rank_Y(f_* \OO_X) \in \bZ_{\geq 1}$ and $\deg(f_0 : X_0 \rightarrow Y_0) \coloneqq \rank_{Y_0}(f_* \OO_{X_0}) \in \bZ_{\geq 1}$,
 	\[
 		|G| = \deg(f : X \rightarrow Y) = (|G| / |H|) \cdot \deg(f_0 : X_0 \rightarrow Y_0),
 	\]
 	and so $|H| = \deg(f_0 : X_0 \rightarrow Y_0)$. We will make use of this fact below.

	To show that $f_0 : X_0 \rightarrow Y_0$ is Galois with Galois group $H$, it is sufficient to show that for any $U \in \sB_{Y_0}$, if $V_0 \coloneqq f_0^{-1}(U) \subset X_0$, that $f|_{V_0} \colon V_0 \rightarrow U$ is Galois with Galois group $H$.
	
	Set $A \coloneqq \OO_Y(U), B_0 \coloneqq \OO_X(V_0)$, and $B \coloneqq \OO_X(f^{-1}(U))$. By Remark \ref{eg:galextrings}, we know that $A \hookrightarrow B$ is a $G$-Galois extension of $k$-algebras, and we are reduced to showing that $A \hookrightarrow B_0$ is a $H$-Galois extension of $k$-algebras. From the above we have that the natural map
	\[
		B \rightarrow \prod_{g \in G/H} g(e) B, \qquad b \mapsto (g(e)b)_{g \in G/H}
	\]
	is an isomorphism and $B_0 = e B$. We first show that $B_0^{H} = A$. For $b \in B_0^{H}$, consider 
 	\[
 		s \coloneqq \sum_{g \in G/H} g(b) \in B^{G} = A.
 	\]
	Because the action of $G$ on $S$ is transitive with stabiliser $H$, $e g(b) = 0$ whenever $g \not\in H$, thus
\[
es =  \sum_{g \in G/H} eg(b) = e b = b,
\]	
where $eb = b$ because $b \in B_0$. Therefore $b \in A$, as $es \in A$, which follows from the fact that $s \in A$ and $ea = a$ for any $a \in A \subset B_0$.

 	The $A$-module $B_0$ is finitely generated projective, being a direct summand of $B$, and thus to show that $A \hookrightarrow B_0$ is a $H$-Galois extension of $k$-algebras, by \cite[Thm.\ 1.3]{CHR} it is sufficient to show that the natural map
 	\[
 		j_0 : B_0 \rtimes H \rightarrow \End_A(B_0)
 	\]
 	from the skew group ring $B_0 \rtimes H$ to the ring of $A$-module endomorphisms $\End_A(B_0)$ is an isomorphism. In fact, because these are both finitely generated projective of rank $|H|^2$ over $A$ (because $|H| = \deg(f_0 : X_0 \rightarrow Y_0)$), it is sufficient for us to show this is surjective.

	Given any $\psi \in \End_A(B_0)$, then $\psi e \in \End_A(B)$ (where we are identify any $b \in B$ with the left multiplication map by $b$). Therefore, because
 	\[
 		j : B \rtimes G \rightarrow \End_A(B)
 	\]
	 is surjective, there are elements $b_{g} \in B$ such that $\sum_{g \in G} b_{g} g = \psi e$. If we multiply by $e$ on both sides we obtain $\sum_{g \in H} e b_{g} g e = e \psi e$ because $e g e = 0$ if $g \not\in H$. When we restrict these functions to $B_0$, as $e$ acts as the identity $\sum_{g \in H} e b_{g} g = \psi$ and $j_0$ is surjective as required.
 \end{proof}

\subsection{Equivalence of Categories}\label{sect:equivofcats}

In this section we specialise the geometric framework of Section \ref{sect:geometricsetup} and Section \ref{sect:Dmodules} further and suppose that $\charfield (k) = 0$,
\begin{enumerate}
	\item[\textbf{(A)}]
	$f \colon X \rightarrow Y$ is a Galois covering of smooth schemes over $k$ with Galois group $H$,
	\item[\textbf{(B)}] 
	$f \colon X \rightarrow Y$ is a Galois covering of smooth rigid spaces over $k$ with Galois group $H$.
\end{enumerate}

We suppose further that $G$ is a group which contains $H$ as a normal subgroup, and that the actions of $H$ on $X$ and $Y$ extend to actions of $G$ for which the morphism $f \colon X \rightarrow Y$ is $G$-equivariant. For example, in this generality $G$ could be infinite or simply $H$.

In this section, we show there is a canonically defined equivalence between $\VectCon^{G/H}(Y)$ and $\VectCon^{G}(X)$ (Proposition \ref{FunctorEquivalence}). This equivalence is completely formal, and taking $G = H$ and forgetting the action of $\sD$ restricts to the well-known equivalence between $\Coh(Y)$ and $\Coh^H(X)$. We briefly describe both functors of the equivalence explicitly, as we shall later make use of these descriptions.

\addtocontents{toc}{\SkipTocEntry}
\subsection*{Inverse Image Functor}
First, we show that the inverse image functor $f^* \colon \VectCon(Y) \rightarrow \VectCon(X)$ described in Section \ref{section:inversedirectimage} extends to a functor
\[
	f^* \colon \VectCon^{G/H}(Y) \rightarrow \VectCon^{G}(X).
\]
Suppose that $\sM$ is a $(G/H) \text{-} \sD_Y$-module, coherent as an $\OO_Y$-module. Then $f^{*}\sM$ is naturally a $G$-equivariant sheaf via 
\[
	g^{f^* \sM} \coloneqq g^{\OO_X} \otimes f^{-1}g^{\sM} \colon f^* \sM \rightarrow g^{-1} (f^* \sM),
\]
using that $g^{-1} (f^{-1} \sM) = f^{-1} (g^{-1} \sM)$ as $f \colon X \rightarrow Y$ is $G$-equivariant. It is direct to check that this makes $f^* \sM$ a $G$-$\sD_X$-module.

\addtocontents{toc}{\SkipTocEntry}
\subsection*{Invariants Functor} Next, we use the direct image functor $f_* \colon \Mod(\sD_X) \rightarrow \Mod(\sD_Y)$ of Section \ref{section:inversedirectimage} to define a functor
\[
	(-)^H \colon \VectCon^{G}(X) \rightarrow \VectCon^{G/H}(Y).
\]
as follows. Suppose that $\sN \in \VectCon^{G}(X)$. Because $f \colon X \rightarrow Y$ is equivariant with respect to the trivial action of $H$ on $Y$,
\[
	H \rightarrow \Aut_k(f_* \sN), \qquad h \mapsto f_* h^{\sN},
\]
is a well-defined group homomorphism. We set
\[
	\sN^H(U) \coloneqq \sN(f^{-1}(U))^H,
\]
which is a sheaf because taking $H$-invariants commutes with products and equalisers.

The sheaf $f_* \sN$ is naturally a $\OO_Y$-module via $\OO_Y \rightarrow f_* \OO_X$. In fact, the action of $\OO_Y(U)$ on $\sN(f^{-1}(U))$ preserves $\sN(f^{-1}(U))^H$ because $\OO_Y \rightarrow f_* \OO_X$ has image $\OO_X^H$ and $\sN$ is a $H$-$\OO_X$-module. From the $\sD_Y$-module structure on $f_* \sN$ we have
\[
	\sT_Y \rightarrow \underline{\End}_k(f_* \sN),
\]
and in fact this action preserves the subsheaf $\sN^H$ and induces:
\[
	\sT_Y \rightarrow \underline{\End}_k(\sN^H).
\]
Indeed, for any $U, V \in \sB_Y$ with $V \subset U$ and $\partial \in \sT_Y(U)$,
\[
	\partial_V|_{\sN^H(V)}: \sN^H(V) \rightarrow \sN(V)
\]
factors as
\[
	\partial_V|_{\sN^H(V)}: \sN^H(V) \rightarrow \sN^H(V) \rightarrow \sN(V),
\]
by Lemma \ref{dercommuteG}. Using Lemma \ref{UEASheafUnivProp} this extends to give $\sN^H$ the structure of a $\sD_Y$-module. For $g \in G/H$, because $H$ is normal in $G$, we have a well defined morphism of sheaves
\[
	g^{\sN^H} \coloneqq f_* g^{\sN} \colon \sN^H \rightarrow g^{-1} \sN^H,
\]
which gives $\sN^H$ the structure of a $G$-$\sD_Y$-module. Finally, note that $\sN^H$ is coherent, being the kernel of the morphism of coherent sheaves
\[
\bigoplus_{h \in H} (f_*h^{\sN} - \id) \colon f_* \sN \rightarrow \bigoplus_{h \in H} f_* \sN.
\]

\begin{prop}\label{FunctorEquivalence}
The functors,
\begin{align*}
	f^* &\colon \VectCon^G(Y) \rightarrow \VectCon^{G/H}(X), \\
	(-)^H &\colon \VectCon^{G/H}(X) \rightarrow \VectCon^G(Y),
\end{align*}
are quasi-inverse equivalences of monoidal categories.
\end{prop}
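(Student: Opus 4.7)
The plan is to produce explicit unit and counit natural transformations and reduce the core verification to classical faithfully flat Galois descent applied locally on the basis $\sB_Y$.

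First I would define the unit $\eta_{\sM}\colon \sM \to (f^*\sM)^H$ locally by $m \mapsto 1 \otimes m$, noting that the image lies in the $H$-invariants since $H$ acts trivially on $Y$ (it is the Galois group of $f$) and trivially on local sections of $\sM$. Dually I would define the counit $\epsilon_{\sN}\colon f^*(\sN^H) \to \sN$ by the natural multiplication $a \otimes s \mapsto a \cdot s$. Both are visibly $\OO$-linear morphisms of sheaves; $\sD$-linearity uses Lemma \ref{dercommuteG}, which ensures the action of lifted derivations preserves the subsheaf of $H$-invariants. Compatibility with the equivariant structures is a direct computation from the explicit descriptions in Section \ref{section:inversedirectimage} and Section \ref{sect:equivofcats}, with the key point being that normality of $H$ in $G$ ensures the $G/H$-equivariant structure on invariants is well-defined and that pullback along the $G$-equivariant map $f$ preserves the $G$-equivariant structure.

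The heart of the proof is showing $\eta$ and $\epsilon$ are isomorphisms, which is local on $\sB_Y$. For $U \in \sB_Y$ with $V = f^{-1}(U)$, set $A = \OO_Y(U)$ and $B = \OO_X(V)$; by Remark \ref{eg:galextrings}, $A \hookrightarrow B$ is a Galois extension of $k$-algebras with group $H$. The statement then reduces to two classical claims: for any coherent $A$-module $M$ the natural map $M \to (B \otimes_A M)^H$ is an isomorphism (using $A = B^H$ together with flatness of $B$ over $A$, so that forming invariants as a kernel commutes with the tensor), and for any coherent $B$-module $N$ with semi-linear $H$-action the natural map $B \otimes_A N^H \to N$ is an isomorphism. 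The second follows from faithfully flat descent, which is made explicit by the isomorphism $\beta \colon B \otimes_A B \xrightarrow{\sim} B \otimes_k \OO(H)$ of Definition \ref{defn:galoisring}: base-changing the claim to $B$ reduces it to a tautology using $\beta$. In case (B), one extends from affinoids to coadmissible modules over the quasi-Stein opens in $\sB'$ by applying the above on each term of an admissible affinoid cover and passing to limits, using that the Galois property is preserved.

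Finally, for monoidality, the standard isomorphisms $f^*(\sM \otimes \sN) \cong f^*\sM \otimes f^*\sN$ and $\sM^H \otimes_{\OO_Y} \sN^H \cong (\sM \otimes_{\OO_X} \sN)^H$ (the latter again verified locally via Galois descent) respect the $\sD_X$-module and equivariant structures, since those structures on tensor products are defined diagonally. The main obstacle is not conceptual — classical Galois descent is the essential input — but organizational: one must simultaneously track the $\OO$-module, $\sD$-module, and $G$-equivariant structures through the local verification, with the additional rigid-analytic care in case (B) of handling coadmissible modules on quasi-Stein opens.
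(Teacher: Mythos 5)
Your proposal is correct and follows essentially the same route as the paper: the same unit and counit, the same local reduction on the basis $\sB_Y$ to a Galois extension $A \hookrightarrow B$ of $k$-algebras, with Lemma \ref{dercommuteG} for $\sD$-linearity and normality of $H$ for the equivariant structures. The only cosmetic differences are that the paper cites \cite[Thm.\ 1.3(d)]{CHR} where you unwind faithfully flat descent via $\beta$ by hand, and that your extra passage to coadmissible modules on quasi-Stein opens in case (B) is unnecessary, since checking the isomorphisms on the affinoid basis $\sB$ and gluing already suffices.
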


\begin{proof}
	Let $\sM \in \VectCon^{G/H}(Y)$ and $\sN \in \VectCon^{G}(X)$. We define natural transformations
	\[
		\phi \colon f^* \sN^H \rightarrow \sN, \qquad \psi \colon \sM \rightarrow (f^*\sM)^H,
	\]
	and show that they are isomorphisms. We first consider $\phi$. Let $V \in \sB_Y$, $U \coloneqq f^{-1}(V) \in \sB_X$, and define $\phi|_U \colon f^* \sN^H|_U\rightarrow \sN|_U$ on global sections to be the natural multiplication map
	\[
		\OO_X(U) \otimes_{\OO_Y(V)} \sN(U)^H \rightarrow \sN(U).
	\]
	It is straightforward to check that this is $\sD_X(U)$-linear using our local description of the $\sD$-module structure on $f^* \sN$ of Section \ref{section:inversedirectimage}, and an isomorphism because $\OO_Y(V) \rightarrow \OO_X(U)$ is a Galois extension of $k$-algebras \cite[Thm.\ 1.3(d)]{CHR}. For different choices of $V$, the morphisms $\phi|_U$ agree on their intersection, and thus glue to define a morphism of sheaves $\phi \colon f^* \sN^H \rightarrow \sN$. Because $\{f^{-1}(V)\}_{V \in \sB_Y}$ is an admissible open cover of $X$, $\phi$ is $\sD_X$-linear. Similarly, to show that $\phi$ is $G$-linear, because $\phi$ is a morphism of coherent sheaves it is sufficient to show that for any $g \in G$ and $V \in \sB_Y$, $U = f^{-1}(V)$ as above, that
\[\begin{tikzcd}[ampersand replacement=\&]
	{\OO_X(U) \otimes_{\OO_Y(V)} \sN(U)^H} \& {\sN(U)} \\
	{\OO_X(g(U)) \otimes_{\OO_Y(g(V))} \sN(g(U))^H} \& {\sN(g(U))}
	\arrow["{\phi_U}", from=1-1, to=1-2]
	\arrow["{g^{f^* \sN^H}_U}"', from=1-1, to=2-1]
	\arrow["{g^{\sN}_U}", from=1-2, to=2-2]
	\arrow["{\phi_{g(U)}}"', from=2-1, to=2-2]
\end{tikzcd}\]
commutes, which follows directly from the definition of $g^{f^* \sN^H}$.
	
	Now let us consider $\psi \colon \sM \rightarrow (f^*\sM)^H$. For any $V \in \sB_Y$, setting $U \coloneqq f^{-1}(V)$ we define $\psi|_V \colon \sM|_V \rightarrow (f^*\sM)^H|_V$ on global sections to be the natural inclusion
	\[
		\sM(V) \rightarrow (\OO_X(U) \otimes_{\OO_Y(V)} \sM(V))^H,
	\]
	which is $\sD_Y(V)$-linear (again using our description of the $\sD$-module structure on $f^* \sM$ of Section \ref{section:inversedirectimage}), and glues to a well-defined morphism $\psi \colon \sM \rightarrow (f^*\sM)^H$. Each restriction $\psi_V$ is an isomorphism, because the $H$-action on $M$ is trivial and $\OO_X(U)$ is projective over $\OO_Y(V)$, and therefore $\psi$ is an isomorphism. To show that $\psi$ is $G$-equivariant, it is sufficient to show that
\[\begin{tikzcd}[ampersand replacement=\&]
	{\sM(V)} \& {(\OO_X(U) \otimes_{\OO_Y(V)} \sM(V))^H} \\
	{\sM(g(V))} \& {(\OO_X(g(U)) \otimes_{\OO_Y(g(V))} \sM(g(V)))^H}
	\arrow[from=1-1, to=1-2]
	\arrow["{g^{\sM}}", from=1-1, to=2-1]
	\arrow["{g^{(f^* \sM)^H}}", from=1-2, to=2-2]
	\arrow[from=2-1, to=2-2]
\end{tikzcd}\]
which similarly follows directly from the definition of the morphism $g^{(f^* \sM)^H}$. Finally, given $\sV, \sW \in \VectCon^G(Y)$, the canonical $\OO_X$-linear isomorphism
\[
	f^* \sV \otimes f^* \sW \xrightarrow{\sim} f^*(\sV \otimes \sW)
\]
can be checked to be $G\text{-}\sD_X$-linear, using the explicit description of the $\sD_X$-module structure on the inverse image, and therefore the equivalences are equivalences of monoidal categories.
\end{proof}

\begin{remark}
If $X$ is a scheme, then the same proof shows that the same equivalences hold when one more generally considers equivariant $\sD$-modules for which the underlying $\OO$-module is quasi-coherent.
\end{remark}

\section{The Sheaf of Constant Functions}\label{sect:constantsheaf}

In this section we work in the same geometric framework of Section \ref{sect:Dmodules} and assume that $k$ has characteristic zero and
\begin{enumerate}
	\item[\textbf{(A)}]
	$X$ is a smooth scheme over $k$ ($X \rightarrow \Spec(k)$ is smooth),
	\item[\textbf{(B)}] 
	$X$ is a smooth rigid space over $k$ ($X \rightarrow \Spec(k)$ is smooth \cite[Def. 2.1]{FRGIII}).
\end{enumerate}

In this section we are interested in the following sheaf, known as the sheaf of constant functions, and how it relates to geometric connectivity.

\begin{defn}
Let $c_X \coloneqq \ker(d \colon \OO_X \rightarrow \Omega_{X/k})$.
\end{defn}

This sheaf of $k$-algebras has been considered by Berkovich in the setting of Berkovich spaces \cite{BER04,BER07}. We can give a more explicit description of $c_X$ on the basis $\sB$.

\begin{lemma}\label{affinedesccX}
For any $U \in \sB$,
\[
	c_X(U) = \OO_X(U)^{\sT_X(U) = 0} = \{f \in \OO_X(U) \mid \partial(f) = 0 \text{ for all } \partial \in \sT_X(U)\}.
\]
In particular, $c_X$ is the sheaf extension of the sheaf $U \mapsto \OO(U)^{\sT_X(U) = 0}$ on the basis $\sB$.
\end{lemma}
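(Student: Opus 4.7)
The plan is to first establish the desired description of $c_X(U)$ for $U \in \sB$ by a direct computation using the universal property of $\Omega_{X/k}$, and then deduce the ``in particular'' statement from the sheaf-basis equivalence and Lemma \ref{extensionsheafification}.

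Let $U \in \sB$. By definition $c_X(U) = \ker\!\left(d_U \colon \OO_X(U) \to \Omega_{X/k}(U)\right)$. In both cases (A) and (B), the module $\Omega_{X/k}(U)$ represents $k$-derivations of $\OO_X(U)$ into (finite) $\OO_X(U)$-modules; in particular every $\partial \in \Theta_X(U) = \Der_k(\OO_X(U))$ factors uniquely as $\partial = \tilde{\partial} \circ d_U$ for a unique $\OO_X(U)$-linear map $\tilde{\partial} \colon \Omega_{X/k}(U) \to \OO_X(U)$, and the assignment $\partial \mapsto \tilde{\partial}$ identifies $\Theta_X(U)$ with $\Hom_{\OO_X(U)}(\Omega_{X/k}(U), \OO_X(U))$. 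The inclusion $c_X(U) \subset \OO_X(U)^{\Theta_X(U) = 0}$ is then immediate: if $d_U(f) = 0$, then $\partial(f) = \tilde{\partial}(d_U f) = 0$ for every $\partial \in \Theta_X(U)$.

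For the reverse inclusion, I would invoke smoothness: by our standing hypotheses on $X$, the sheaf $\Omega_{X/k}$ is locally free of finite rank, so $\Omega_{X/k}(U)$ is a finitely generated projective $\OO_X(U)$-module (Kiehl/Tate-acyclicity in case (B), standard in case (A)). Hence the canonical evaluation map $\Omega_{X/k}(U) \to \Omega_{X/k}(U)^{**}$ is injective. If $f \in \OO_X(U)^{\Theta_X(U) = 0}$, then by the identification above every $\OO_X(U)$-linear functional on $\Omega_{X/k}(U)$ kills $d_U(f)$, so the image of $d_U(f)$ in the double dual is zero. Injectivity then forces $d_U(f) = 0$, i.e.\ $f \in c_X(U)$.

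For the final assertion: the equality just proven shows that the presheaf $\sF \colon U \mapsto \OO_X(U)^{\Theta_X(U) = 0}$ on $\sB$ agrees with the restriction $c_X|_{\sB}$, which is automatically a sheaf on the basis because $c_X$ is a sheaf on $X$. Viewing $c_X$ itself as a presheaf on $X$ whose restriction to $\sB$ is $\sF$, Lemma \ref{extensionsheafification} identifies the sheafification $\sF^{\ext}$ with $c_X$. The only non-formal input is the isomorphism $\Theta_X(U) \cong \Hom_{\OO_X(U)}(\Omega_{X/k}(U), \OO_X(U))$ together with projectivity of $\Omega_{X/k}(U)$, both of which are consequences of smoothness; this is the step I expect to require the most care, particularly in the rigid case (B).
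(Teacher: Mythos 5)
Your proof is correct and follows essentially the same route as the paper: the identification $\Theta_X(U) \cong \Hom_{\OO_X(U)}(\Omega_{X/k}(U), \OO_X(U))$ via composition with $d$, the immediate forward inclusion, and the reverse inclusion via projectivity of $\Omega_{X/k}(U)$ (your double-dual injectivity argument is equivalent to the paper's dual-basis argument). The concluding appeal to Lemma \ref{extensionsheafification} for the ``in particular'' statement likewise matches the paper's intent.
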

\begin{proof}
For $U \in \sB$ and $A \coloneqq \OO_X(U)$, we want to show that
\[
	\ker(d \colon A \rightarrow \Omega_{A/k}) = A^{\Der_k(A)},
\]
where $\Omega_{A/k}$ has the same meaning in cases (A) and (B) as it does in Section \ref{sect:tangentsheaf}. Composition with $d$ induces an isomorphism,
\[
	\Hom_A(\Omega_{A/k},A) \xrightarrow{\sim} \Der_k(A),
\]
and so if $a \in \ker(d)$, then $a \in A^{\Der_k(A)}$. On the other hand, if $\partial(a) = 0$ for all $\partial \in \Der_k(A)$, then for any $A$-linear $f \colon \Omega_{A/k} \rightarrow A$, $f(d(a)) = 0$. Therefore $d(a) = 0$, which can be seen by picking a dual basis for the module $\Omega_{A/k}$, which is projective because $X$ is smooth.
\end{proof}

We also have a third description of $c_X$, which will be the most relevant for us.

\begin{lemma}\label{altdesccX}
The isomorphism of sheaves of $k$-algebras,
\[
	\OO_X \xrightarrow{\sim} \underline{\End}_{\OO_X}(\OO_X)
\]
restricts to an isomorphism
\[
	c_X \xrightarrow{\sim} \underline{\End}_{\sD_X}(\OO_X).
\]
\end{lemma}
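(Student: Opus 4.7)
The statement is local on $X$, so it suffices to check that for every $U \in \sB$, under the canonical isomorphism
\[
\OO_X(U) \xrightarrow{\sim} \Hom_{\OO_X|_U}(\OO_X|_U, \OO_X|_U), \qquad a \mapsto m_a,
\]
(where $m_a$ denotes left multiplication by $a$), the subset $c_X(U)$ corresponds exactly to the subset $\Hom_{\sD_X|_U}(\OO_X|_U, \OO_X|_U)$. First I would note that every $\sD_X|_U$-linear endomorphism of $\OO_X|_U$ is, in particular, $\OO_X|_U$-linear, and is therefore of the form $m_a$ for a unique $a \in \OO_X(U)$. Hence the only question is to characterise those $a$ for which $m_a$ is $\sD_X|_U$-linear.

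Next, by the universal property of $\sD_X = \sU(\Theta_X)$ (Lemma \ref{UEASheafUnivProp}), a morphism of $\OO_X|_U$-modules $\OO_X|_U \to \OO_X|_U$ is $\sD_X|_U$-linear if and only if it commutes with the action of $\Theta_X|_U$. By the Leibniz rule, for any $V \in \sB$ with $V \subset U$, any $\partial \in \Theta_X(V)$, and any $b \in \OO_X(V)$ one computes
\[
[\partial, m_{a|_V}](b) \;=\; \partial(a|_V \cdot b) - a|_V \cdot \partial(b) \;=\; \partial(a|_V) \cdot b,
\]
so $m_a$ commutes with the action of $\Theta_X$ on $\OO_X|_U$ if and only if $\partial(a|_V) = 0$ for all such $V$ and all $\partial \in \Theta_X(V)$.

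Finally, applying Lemma \ref{affinedesccX} on each such $V$, this last condition is equivalent to $a|_V \in c_X(V)$ for every $V \in \sB$ with $V \subset U$; since the $V$ cover $U$ and $c_X$ is a sheaf, this in turn is equivalent to $a \in c_X(U)$. I do not anticipate any serious obstacle: the argument is entirely formal, the only subtlety being that one works on the basis $\sB$ and then uses that both $c_X$ and $\underline{\End}_{\sD_X}(\OO_X)$ are sheaves to propagate the identification to arbitrary admissible opens.
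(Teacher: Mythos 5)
Your proposal is correct and follows essentially the same route as the paper: reduce to $U \in \sB$, observe that a $\sD_X$-linear endomorphism is multiplication by some $a \in \OO_X(U)$, use the Leibniz rule to see that $[\partial, m_a] = m_{\partial(a)}$, and conclude via Lemma \ref{affinedesccX}. The only cosmetic difference is that the paper passes to module endomorphisms over $\OO_X(U)$ via Proposition \ref{prop:LAonQS}, while you check commutation with $\Theta_X(V)$ on all basic $V \subset U$ and invoke the sheaf property; both are fine.
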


\begin{proof}
	It is sufficient to show that for any $U \in \sB$, the isomorphism $\phi$,
	\[
		\OO_X(U) \xrightarrow{\phi} \End_{\OO_X|_U}(\OO_X|_U) \xrightarrow{\sim} \End_{\OO_X(U)}(\OO_X(U))
	\]
	restricts to an isomorphism,
	\[
		c_X(U) \xrightarrow{\phi} \End_{\sD_X|_U}(\OO_X|_U) \xrightarrow{\sim} \End_{\sD_X(X)}(\OO_X(U)).
	\]
	Here we are using the equivalence of Proposition \ref{prop:LAonQS} in the case that $G$ is trivial. Given $f \in \OO_X(U)$, the corresponding $\OO_X(U)$-linear endomorphism $\phi_f $ of $\OO_X(U)$ is defined by $\phi_f(x) = fx$. For any $x \in \OO_X(U)$ and $\partial \in \sT_X(U)$,
	\begin{align*}
		\partial(\phi_f(x)) &= \partial(fx), \\
		&= f \partial(x) + x \partial(f),\\
		&= \phi_f(\partial(x)) + x \partial(f).
	\end{align*} 
	Therefore, $\phi_f$ is $\sD_X(U)$-linear if and only if $\partial(f) = 0$ for all $\partial \in \sT_X(U)$.
\end{proof}

Before describing the sheaf $c_X$ in more detail, we have the following lemma regarding admissible open coverings of $X$.

\begin{lemma}\label{lem:opencoveringlemma}
	Suppose that $X$ is as described at the start of this section and non-empty. Then:
	\begin{enumerate}
		\item $X$ is a disjoint union of its connected components,
		\item $X$ has an admissible open covering by connected elements of $\sB$.
	\end{enumerate}
	Suppose additionally that $X$ is connected and $\sV$ is an admissible open covering of $X$. Then:
	\begin{enumerate}
		\item[(3)] $\OO_X(X)$ is an integral domain,
		\item[(4)] For any non-empty $U,V \in \sV$, there is a finite sequence $V_0, V_1, ... , V_n \in \sV$ with $V_0 = U$, $V_n = V$, and $V_m \cap V_{m+1} \neq \emptyset$ for $0 \leq m \leq n-1$.
	\end{enumerate}
\end{lemma}

\begin{proof}
	First suppose that we are in case (A) and $X$ is a scheme. Because $X \rightarrow \Spec(k)$ is smooth, $X \rightarrow \Spec(k)$ is locally of finite type. In particular, $X$ is locally noetherian, and therefore the underlying topological space of $X$ is locally noetherian \cite[Lem.\ 01OZ]{STACK} and thus locally connected \cite[Lem.\ 04MF]{STACK}. This gives point (1) by \cite[Lem.\ 04ME]{STACK}. From the fact that $X$ is locally noetherian we also have point (3) by \cite[Lem.\ 033N]{STACK} and \cite[Lem.\ 0358]{STACK}. Point (4) is \cite[Thm.\ 26.15]{WILLARD}.
	
	Now suppose that we are in case (B) and $X$ is a rigid space. Points (1) and (4) both follow from the definition and basic properties of connected components of rigid spaces, which can be found on \cite[pg.\ 492]{CON}, and point (3) follows from \cite[Lem.\ 3.1.5]{AW2}.

	In either case, point (2) follows from point (1): given any open covering $\{U_i\}_i$ of $X$ by elements of $\sB$, the refinement obtained by replacing each $U_i$ with its connected components is an admissible open covering, as these form an admissible open covering of $U_i$ by point (1).
\end{proof}

We first study how $c_X$ behaves under base change.

\begin{lemma}\label{baseechangecX}
For any finite field extension $L / k$, there is a canonical isomorphism
\[
	L \otimes_k c_X(X) \xrightarrow{\sim} c_{X_L}(X_L),
\]
where $X_L$ denotes the base change of $X$ to $L$.
\end{lemma}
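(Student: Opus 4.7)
The plan is to construct the map on basis opens and then globalize via the sheaf condition. For $U \in \sB$, set $A \coloneqq \OO_X(U)$. Then $\OO_{X_L}(U_L) = A \otimes_k L$ (the completion in case (B) is harmless since $L/k$ is finite) and $\Omega_{X_L/L}(U_L) = \Omega_{A/k} \otimes_k L$ by the compatibility of K\"ahler differentials (resp.\ universal finite differentials in case (B)) with finite base change. The differential $d_{X_L}$ on $U_L$ is then identified with $d_X|_U \otimes \id_L$. Since $L$ is flat over $k$, taking kernels commutes with $- \otimes_k L$, yielding
\[
c_{X_L}(U_L) = \ker(d_X|_U \otimes \id_L) = \ker(d_X|_U) \otimes_k L = c_X(U) \otimes_k L,
\]
and this isomorphism is induced by the canonical map $f \otimes \lambda \mapsto \lambda \cdot \pi^\sharp(f)$, where $\pi \colon X_L \to X$ is the projection.

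To globalize, fix an admissible covering $\sU = \{U_i\}_i$ of $X$ by elements of $\sB$, and for each pair $(i,j)$ an admissible covering $\{W_{ijk}\}_k$ of $U_i \cap U_j$ by elements of $\sB$. Then $\{(U_i)_L\}_i$ is an admissible covering of $X_L$ with analogous refinements $\{(W_{ijk})_L\}_k$, and the sheaf property for $c_X$ and $c_{X_L}$ produces exact sequences
\[
0 \to c_X(X) \to \prod_i c_X(U_i) \to \prod_{i,j,k} c_X(W_{ijk})
\]
and analogously for $c_{X_L}(X_L)$ on $X_L$. Because $L$ is a finite-dimensional $k$-vector space, the functor $- \otimes_k L$ is naturally isomorphic to $\Hom_k(L^*, -)$ and hence commutes with arbitrary products, while flatness of $L/k$ ensures it preserves kernels. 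Tensoring the first sequence with $L$ and applying the local identification above therefore produces precisely the second sequence, yielding the claimed isomorphism $c_X(X) \otimes_k L \xrightarrow{\sim} c_{X_L}(X_L)$.

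The main technical subtlety I anticipate is the compatibility $\Omega_{X_L/L}(U_L) \cong \Omega_{A/k} \otimes_k L$ in case (B): here one needs that for finite $L/k$ the universal finite differential module base-changes cleanly (the completed tensor product reduces to the ordinary one). Everything else is a formal manipulation with exact functors, finite-dimensionality of $L$, and the sheaf condition.
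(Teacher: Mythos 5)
Your proof is correct and follows essentially the same strategy as the paper: establish the isomorphism on basis opens $U \in \sB$ and then globalize using the sheaf condition together with the fact that $- \otimes_k L$ is exact and commutes with products for finite $L/k$. The only (harmless) difference is that you carry out the local step via base change of the (universal finite) differential module and the kernel of $d$, whereas the paper uses the equivalent description $c_X(U) = \OO_X(U)^{\Theta_X(U)=0}$ from Lemma \ref{affinedesccX} and base-changes derivations via Lemma \ref{extendderivations}.
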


\begin{proof}
	Suppose first that $U \in \sB_X$, and set $A \coloneqq \OO_X(U)$. By Lemma \ref{extendderivations}, as $A_L \coloneqq L \otimes_k A$ is \'{e}tale over $A$, there is an isomorphism
	\[
		L \otimes_k \Der_k(A) \rightarrow \Der_k(A_L),
	\]
	which by the uniqueness part of Lemma \ref{extendderivations} is explicitly given by letting $\lambda \otimes \partial$ act on $A_L$ by
	\[
		(\lambda \otimes \partial)(\mu \otimes a) =  \lambda \mu \otimes \partial(a).
	\]
	In particular we see that $\Der_k(A_L) = \Der_L(A_L)$, and that if $a \in A^{\Der_k(A)}$ and $\mu \in L$, then
	\[
		(\lambda \otimes \partial)(\mu \otimes a) = \lambda \mu \otimes \partial(a) = 0
	\]
	for any $\lambda \otimes \partial \in \Der_L(A_L)$ and so the identity of $A_L$ induces a well-defined map 
	\begin{equation}\label{eqn:basechangederivations}
		L \otimes_k A^{\Der_k(A)} \hookrightarrow A_L^{\Der_L(A_L)}.
	\end{equation}
	To see that this is actually an isomorphism, let $e_1, ... , e_r$ be a $k$-basis of $L$, so that any element $b \in A_L$ has the form
	\[
	b = \sum_{i = 1}^r e_i \otimes a_i
	\]
	for unique $a_i \in A$. If $b \in A_L^{\Der_L(A_L)}$, then for any $\partial \in \Der_k(A)$, $b$ is fixed by $1 \otimes \partial$, so
	\[
	\sum_{i = 1}^r e_i \otimes a_i = b = (1 \otimes \partial)(b) = \sum_{i = 1}^r e \otimes \partial(a_i).
	\]
	By the uniqueness of this description, $\partial(a_i) = a_i$ for any $i = 1, ... ,r$ and $\partial \in \Der_k(A)$, and thus $b$ is in the image of the map (\ref{eqn:basechangederivations}). Writing $\phi \colon X_L \rightarrow X$ for the projection, the morphism of sheaves 
	\[
	\OO_X \rightarrow \phi_* \OO_{X_L}
	\]
	maps the subsheaf $c_X$ into the subsheaf $\phi_*c_{X_L}$, this being true for any $U \in \sB_X$ by the above. Similarly, being true on $\sB_X$, the canonical morphism of sheaves
	\[
	L \otimes_k c_X \rightarrow \phi_*c_{X_L},
	\]
	is an isomorphism, and taking global sections we obtain the desired result. Here we have used that $L / k$ is finite to ensure that $L \otimes_k c_X$, defined by $(L \otimes_k c_X)(U) \coloneqq L \otimes_k c_X(U)$ for an admissible open subset $U \subset X$, is indeed a sheaf.
\end{proof}

The sheaf $c_X$ is a sheaf of $k$-algebras. The next lemma shows that for any admissible open subset $U$ of $X$, $c_X(U)$ is always a product of finite field extensions of $k$.

\begin{lemma}\label{lem:finiteextension}
Suppose that $U \subset X$ is a non-empty connected admissible open subset of $X$. Then $c_X(U)$ is a finite field extension of $k$.
\end{lemma}
\begin{proof}
	If $U \in \sB$, using the description of Lemma \ref{affinedesccX}, then in either case (A) or (B) the proof of \cite[Prop.\ 3.1.6]{AW2} shows that $c_X(U) = \OO_X(U)^{\sT_X(U) = 0}$ is a finite field extension of $k$.
	Now suppose that $U$ is any non-empty connected admissible open subset of $X$. Let $\sV$ be any admissible open covering of $U$ by connected elements of $\sB$, which exists by Lemma \ref{lem:opencoveringlemma}(2). In order to see that $c_X(U)$ is a field, suppose that $f \in c_X(U)$ with $f \neq 0$. As $f \neq 0$, there is some non-empty $V_0 \in \sV$ with $f_0 \coloneqq f|_{V_0} \neq 0$. For any other non-empty element $V \in \sV$, by Lemma \ref{lem:opencoveringlemma}(4) there is some finite sequence $V_1, ... , V_n \in \sV$ with $V_{m} \cap V_{m+1} \neq \emptyset$ for all $0 \leq m \leq n-1$, and $V_n = V$. Setting $f_m \coloneqq f|_{V_m}$, because each $c_X(V_m)$ is a field and $V_{m} \cap V_{m+1} \neq \emptyset$ the restriction maps $c_X(V_m) \rightarrow c_X(V_{m} \cap V_{m+1})$ are injective, and hence by induction $f|_V = f_n \neq 0$. Therefore $f \in c_X(U)$ is non-zero and has an inverse when restricted to any $V \in \sV$, and thus $f \in c_X(U)^\times$. Finally, as $c_X(U)$ is a field, $c_X(U) \rightarrow c_X(V)$ is injective for any $V \in \sV$, and therefore $c_X(U)$ is also a finite extension of $k$. 
\end{proof}

We now relate $c_X$ to how the connectivity of $X$ changes under field extension.

\begin{defn}\label{def:geomconn}
	We say that $X$ is \emph{geometrically connected} if for any finite extension $L$ of $k$, $X \times_k L$ is connected. 
\end{defn}

\begin{remark}\label{rem:geomconnsubtle}
If $X$ is a scheme, then $X$ is geometrically connected if and only if $X \times_k L$ is connected for any field extension $L$ of $k$ \cite[Lem.\ 0389]{STACK}.

For rigid spaces, the base change functor is more subtle. When $L$ is a finite extension of $k$, then just as for schemes $L \times_k -$ is defined as the fibre product functor $\Sp(L) \times_{\Sp(k)} -$ on the category of rigid spaces over $k$. When $L/k$ is an infinite extension of complete fields, this definition no longer makes sense, as $L$ is no longer a $k$-affinoid algebra. Nevertheless, one can still define a base change functor $L \times_k -$ for quasi-separated rigid spaces $Y$ over $k$ (see \cite[\S 9.3.6]{BGR} and \cite[\S 3.1]{CON} for more details). In this case, it is shown in \cite[\S 3.2]{CON} that (just like for schemes), $Y$ is geometrically connected if and only if $L \times_k Y$ is connected for any complete field extension $L$ of $k$. In particular, this holds for quasi-Stein $Y$ \cite[Prop.\ 9.6.1(7)]{BGR}.
\end{remark}

\begin{cor}\label{cor:cXuequalsk1}
Suppose that $U \subset X$ is a non-empty connected admissible open subset of $X$, and let $L$ be a Galois closure of $c_X(U)$. Then $U_L$ has at least $\dim_k c_X(U)$ connected components.

In particular, if $U$ is geometrically connected then $c_X(U) = k$.
\end{cor}

\begin{proof}
	The field $L$ is a finite extension of $k$ by Lemma \ref{lem:finiteextension}. Because $L/k$ is Galois, $L \otimes_k c_X(U)$ is isomorphic to the product of $[c_X(U):k]$ copies of $L$, and $L \otimes_k c_X(U) \xrightarrow{\sim} c_{X_L}(U_L) \subset \OO_{X_L}(U_L)$ by Lemma \ref{baseechangecX}. In particular, these $[c_X(U):k]$ idempotents of $\OO_{X_L}(U_L)$ give a disjoint union decomposition of $U_L$ into $[c_X(U):k]$ non-trivial parts, and therefore $U_L$ has at least $[c_X(U):k]$ connected components.
\end{proof}

We would now like to show the converse.

\begin{cor}\label{cor:cXuequalsk2}
Suppose that $U \subset X$ is a non-empty connected admissible open subset of $X$. Let $L$ be the Galois closure of $c_X(U)$ over $k$ in some fixed algebraic closure of $c_X(U)$. Then $U_{L}$ is a disjoint union of $\dim_{k} c_X(U)$ geometrically connected components.

In particular, if $c_X(U) = k$, then $U$ is geometrically connected.
\end{cor}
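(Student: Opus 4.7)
The plan is to combine the base-change formula of Lemma \ref{baseechangecX} with the observation that, in characteristic zero, every idempotent of $\OO_X$ is automatically a constant function.

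First, by Lemma \ref{baseechangecX}, $c_{X_L}(U_L) \cong c_X(U) \otimes_k L$. Since $L$ is the Galois closure of $c_X(U)$ over $k$, the right-hand side is isomorphic to $L^m$ as an $L$-algebra, where $m \coloneqq \dim_k c_X(U)$.

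Second, I would show that every idempotent $e \in \OO_{X_L}(U_L)$ lies in $c_{X_L}(U_L)$: differentiating $e^2 = e$ yields $(2e-1)\,de = 0$, and $(2e-1)^2 = 4e^2 - 4e + 1 = 1$ shows $2e-1$ is a unit (here we use $\charfield k = 0$), so $de = 0$. Consequently the idempotents of $\OO_{X_L}(U_L)$ coincide with those of $c_{X_L}(U_L) = L^m$, giving precisely $m$ primitive idempotents $e_1, \ldots, e_m$. These determine a clopen decomposition $U_L = V_1 \sqcup \cdots \sqcup V_m$ into admissible open subsets; the resulting product decompositions of global sections yield $\prod_i c_{X_L}(V_i) = L^m$, so counting dimensions over $L$ forces $c_{X_L}(V_i) = L$ for each $i$. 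Any further clopen splitting of $V_i$ would produce a new primitive idempotent of $\OO_{X_L}(U_L)$, contradicting primitivity of $e_i$, so each $V_i$ is connected.

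To finish, for the geometric connectivity of each $V_i$ over $L$: for any finite extension $M/L$, Lemma \ref{baseechangecX} gives $c_{X_M}((V_i)_M) \cong c_{X_L}(V_i) \otimes_L M = M$, and rerunning the idempotent argument with $(V_i)_M$ in place of $U_L$ shows that $(V_i)_M$ has exactly $\dim_M M = 1$ connected component. The ``in particular'' statement is the special case $c_X(U) = k$, for which $L = k$ and $m = 1$, so $U = U_k$ is itself the unique geometrically connected component. The main subtlety lies in confirming admissibility of the finite clopen decomposition in the rigid-analytic setting, but this follows from the local constancy of the (finitely many) idempotents $e_i$ on any admissible cover of $U_L$ refining them.
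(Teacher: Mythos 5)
Your proof is correct and follows essentially the same route as the paper: both rest on Lemma \ref{baseechangecX} to identify $c_{X_L}(U_L)$ with $c_X(U)\otimes_k L\cong L^m$ and then count connected components via idempotents, reducing geometric connectivity to the case $c=k$ by re-running the argument over $L$. The one organizational difference is that you make explicit the fact that every idempotent is killed by $d$ (so the idempotents of $\OO_{X_L}(U_L)$ are exactly those of $c_{X_L}(U_L)$, giving the component count directly), whereas the paper gets the same count by a dimension squeeze, $\dim_L c_{X_L}(U_L)=\sum_i\dim_L c_{X_L}(U_i)\geq m$ with each summand at least $1$; your observation is in fact implicitly used in the paper's final step, where the $K$-span of the idempotents is asserted to lie in $c_{X_K}(U_K)$, so making it explicit is a small improvement rather than a departure.
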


\begin{proof}
	Write $U_1, ... ,U_m$ for the connected components of $U_L$, which decompose $U_L$ as a disjoint union by Lemma \ref{lem:opencoveringlemma}(1). Each $U_i$ is a space over $L$, so $c_{X_L}(U_i) \supset L$, and thus by Lemma \ref{baseechangecX} 
	\[
		\dim_k c_X(U) = \dim_L(c_{X_L}(U_L)) = \sum_{i = 1}^{m} \dim_L c_{X_L}(U_i) \geq  m.
	\]
	On the other hand, $U_L$ has at least $\dim_k c_X(U)$ connected components by Corollary \ref{cor:cXuequalsk1}, and therefore $m = \dim_k c_X(U)$ and each $c_{X_L}(U_i) = L$. To show the claim it is thus sufficient to show that if $c_X(U) = k$, then $U$ is geometrically connected. If there was some finite extension $K$ of $k$ with $U_K$ disconnected, then $c_{X_K}(U_K)$ contains at least $K \times K$, the $K$-span of each idempotent. However,
	\[
		\dim_K c_{X_K}(U_K) = \dim_k c_X(U) = 1
	\]
	by Lemma \ref{baseechangecX}, a contradiction.
\end{proof}

We can also be more precise about the value of $c_X$ on connected admissible open subsets.

\begin{cor}\label{cor:maxfieldext}
	Suppose that $U \subset X$ is a non-empty connected admissible open subset of $X$. Then $c_X(U)$ is the unique maximal finite field extension of $k$ contained in $\OO_X(U)$.
\end{cor}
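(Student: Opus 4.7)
The plan is to split the claim into two parts: that $c_X(U)$ is itself a finite field extension of $k$ contained in $\OO_X(U)$, and that any other finite field extension of $k$ inside $\OO_X(U)$ must sit inside $c_X(U)$. The first part is essentially already established: Lemma \ref{lem:finiteextension} gives that $c_X(U)$ is a finite field extension of $k$, and $c_X \subset \OO_X$ holds by construction.

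For the maximality, I would take an arbitrary finite field extension $L$ of $k$ with $L \subset \OO_X(U)$, and show that every $\ell \in L$ satisfies $d\ell = 0$ in $\Omega_{X/k}(U)$. Let $p(T) \in k[T]$ be the minimal polynomial of $\ell$ over $k$, so that $p(\ell) = 0$ in $\OO_X(U)$. Applying the de Rham differential $d \colon \OO_X \to \Omega_{X/k}$ and using $k$-linearity together with the Leibniz rule yields $p'(\ell)\, d\ell = 0$ in $\Omega_{X/k}(U)$. Because $\charfield(k) = 0$, the minimal polynomial $p$ is separable, so $p'(\ell)$ is a nonzero element of the field $L$ (as $\deg p' < \deg p$), hence invertible in $L$ and therefore in $\OO_X(U)$. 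Multiplying by its inverse forces $d\ell = 0$, so $\ell \in \ker(d_U) = c_X(U)$.

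I do not expect a real obstacle here. The entire argument reduces to the Leibniz rule plus separability of algebraic extensions in characteristic zero, and no local-to-global considerations enter: since $c_X$ is by definition the kernel of a morphism of sheaves, its sections over any admissible open $U$ are computed directly as $\ker(d_U \colon \OO_X(U) \to \Omega_{X/k}(U))$, so there is nothing to glue. The only feature of the connectedness hypothesis that is used is implicit, in guaranteeing via Lemma \ref{lem:finiteextension} that $c_X(U)$ is itself a field rather than merely a product of fields.
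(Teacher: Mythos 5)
Your proof is correct, and it takes a genuinely different route from the paper's. The paper first forms the compositum $L \cdot c_X(U)$ inside $\OO_X(U)$ (using that $\OO_X(U)$ is an integral domain for connected $U$) to reduce to the case $c_X(U) \subset L$, and then invokes the base-change isomorphism $c_X(U) \otimes_k L \cong c_{X_L}(U_L)$ of Lemma \ref{baseechangecX} together with a count of connected components of $U_L$ to force $L = c_X(U)$. Your argument is purely algebraic and local in nature: applying the $k$-derivation $d$ to the relation $p(\ell)=0$ gives $p'(\ell)\,d\ell = 0$, and separability in characteristic zero makes $p'(\ell)$ a unit of $L \subset \OO_X(U)$, so $d\ell = 0$ and $\ell \in \ker(d_U) = c_X(U)$ (kernels of sheaf morphisms are indeed computed section-wise, so no gluing is needed). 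This buys you a shorter proof that avoids both the integral-domain input and the geometric base-change machinery, and it isolates precisely where connectedness enters (only via Lemma \ref{lem:finiteextension}, to know $c_X(U)$ is a field rather than a product of fields). What the paper's approach buys in exchange is that it reuses the base-change formalism already developed for Corollaries \ref{cor:cXuequalsk1} and \ref{cor:cXuequalsk2}, keeping the whole cluster of statements about $c_X$ within one uniform geometric framework; but as a standalone proof of this corollary, yours is the more elementary one.
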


\begin{proof}
	Let $L$ be any finite field extension of $k$ contained in $\OO_X(U)$, and let $L \cdot c_X(U)$ be the $k$-algebra generated by $L$ and $c_X(U)$. If $\{e_i\}$ is a $k$-basis of $L$ and $\{f_j\}$ is a $k$-basis of $c_X(U)$ then this is generated as a $k$-vector space by $\{e_i f_j\}$. In particular, because $\OO_X(U)$ is an integral domain by Lemma \ref{lem:opencoveringlemma}(3) and $c_X(U)$ is a finite field extension of $k$ by Lemma \ref{lem:finiteextension}, $L \cdot c_X(U)$ is a finite dimensional field extension of $c_X(U)$. Therefore, it is sufficient to show that if $L$ is a finite field extension of $k$ containing $c_X(U)$, then $L = c_X(U)$. Suppose that $L \supset c_X(U)$ is such a field extension, and write $K$ for a Galois closure of $L$. By Lemma \ref{baseechangecX},
	\[
		\dim_k c_X(U) = \dim_K c_{X_K}(U_K).
	\]
	However, $U_K$ has at least $\dim_k L$ connected components by Corollary \ref{cor:cXuequalsk1}, and so the right-hand side is at least $\dim_k L$. Therefore, $\dim_k c_X(U) \geq \dim_k L$, and $c_X(U) = L$ as required.
\end{proof}

Our collected facts about the sheaf $c_X$ have the following interesting consequence.

\begin{remark}
Any connected space $X$ for which $\OO_X(X)$ contains a proper non-trivial finite field extension of $k$ provides an ``obvious'' example of a connected but not geometrically connected space (cf. the proof of Corollary \ref{cor:cXuequalsk1}). 

In fact, we now see that this is the only way that $X$ can fail be to geometrically connected: if $X$ is connected but not geometrically connected, then $\OO_X(X)$ contains a proper field extension of $k$, namely $c_X(X)$, by Corollary \ref{cor:cXuequalsk2}.
\end{remark}

When $X$ is a scheme, we can describe the sheaf $c_X$ more concretely. For a $k$-algebra $A$ we denote by $\underline{A}$ the constant sheaf with value $A$, by which we mean the sheafification of the constant presheaf $A^{\text{psh}}$, which has value $A$ on every admissible open subset $U \subset X$ and all restriction maps the identity morphism of $A$.

\begin{prop}
When $X$ is a connected scheme, $c_X$ is the constant sheaf $\underline{c_X(X)}$.
\end{prop}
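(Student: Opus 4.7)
The plan is to reduce to a single connected component and show that $c_X$ restricted to it is the constant sheaf with value a certain finite extension of $k$. Since $X$ is smooth over a characteristic zero field, it is regular, hence locally a domain, and each connected component $X_0$ of $X$ is irreducible with function field $K(X_0)$. As $c_X$ respects the disjoint union decomposition of $X$ into connected components, it suffices to show that $c_X|_{X_0}$ is the constant sheaf with value $L \coloneqq c_X(X_0)$, which is a finite field extension of $k$ by Lemma~\ref{lem:finiteextension}.

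The key step is to show that for any inclusion of non-empty open subsets $V \subset U$ of $X_0$, the restriction map $c_X(U) \to c_X(V)$ is an isomorphism. Injectivity is automatic since $c_X(U)$ is a field. For surjectivity, I would take $a \in c_X(V)$, note that $a$ is algebraic over $k$ (since $c_X(V)$ is a finite extension of $k$), and use that $\OO_X(U)$ is a normal domain with fraction field $K(X_0)$ (smooth implies regular implies normal) to conclude that $a \in \OO_X(U)$. To verify that $a$ actually lies in $c_X(U)$, I would apply any $\partial \in \Theta_X(U)$ to the minimal polynomial relation $P(a) = 0$ over $k$: the Leibniz rule gives $P'(a) \partial(a) = 0$, and since $\charfield(k) = 0$ and $P$ is irreducible, $P'(a) \neq 0$, hence is a non-zero-divisor in the domain $\OO_X(U)$, forcing $\partial(a) = 0$.

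To conclude, I would use that every non-empty open subset of the irreducible scheme $X_0$ is irreducible, hence connected, so by the above every such open has $c_X$-sections canonically identified with $L$ and all restriction maps are the identity on $L$. This canonically identifies $c_X|_{X_0}$ with the constant sheaf $\underline{L}$ on $X_0$, completing the proof after summing over connected components.

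The main obstacle is really the characteristic-zero/normality input identifying $c_X(U)$ with the relative algebraic closure $\overline{k}^{K(X_0)}$ on any non-empty affine open $U \subset X_0$; everything else is topological bookkeeping specific to the Zariski topology (and contrasts with the rigid case, where non-empty admissible opens of an irreducible rigid space need not be connected, so the argument does not globalise in the same way).
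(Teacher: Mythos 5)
Your proof is correct, but it takes a genuinely different route from the paper's. The paper passes to a Galois closure $L$ of $c_X(X)$ over $k$, uses that the geometrically connected components of $X_L$ are smooth and geometrically normal, hence geometrically irreducible, and then bounds $\dim_k c_X(U)$ by counting connected components of $U_L$ inside those of $X_L$ via Lemma \ref{baseechangecX} and Corollaries \ref{cor:cXuequalsk1} and \ref{cor:cXuequalsk2}. You instead stay over $k$: regularity makes each connected component $X_0$ integral and normal, so every non-empty open is connected with $\OO_X(U)$ integrally closed in $K(X_0)$; an element of $c_X(V)$ is algebraic over $k$ by Lemma \ref{lem:finiteextension}, hence already lies in $\OO_X(U)$, and the minimal-polynomial/Leibniz computation (valid since $\charfield(k)=0$ and $\OO_X(U)$ is a domain) shows it is killed by all derivations, so it lies in $c_X(U)$ by Lemma \ref{affinedesccX}. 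Your argument is more elementary and self-contained, avoids base change entirely, and has the bonus of identifying the constant value as the relative algebraic closure of $k$ in $K(X_0)$; the paper's argument recycles the base-change machinery it has already built and stays closer in spirit to the rigid-analytic sections. Both proofs ultimately rest on the same scheme-specific input — normality forcing irreducibility of connected components, so that non-empty opens remain connected — which, as you note, is exactly what fails in the rigid case (Example \ref{sheaffcXondisk}). Two cosmetic points: for non-affine $U$ you should justify integral closedness of $\OO_X(U)$ as $\bigcap_{x\in U}\OO_{X,x}$ inside $K(X_0)$, and, as in the paper's own proof, the statement is really that $c_X$ is constant on each connected component (equivalently, after the harmless reduction to connected $X$).
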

\begin{proof}
	We will show that for any non-empty connected open subset $U \subset X$ the restriction map $c_X(X) \rightarrow c_X(U)$ is an isomorphism, from which it is a straightforward sheaf-theoretic argument to show that the natural map $c_X(X)^{\text{psh}} \rightarrow c_X$ is a sheafification. To this end, let $U \subset X$ be a non-empty connected open subset. Because $c_X(X)$ is a field by Lemma \ref{lem:finiteextension} and $U$ is non-empty, $c_X(X) \rightarrow c_X(U)$ is injective. Writing $L$ for a Galois closure of $c_X(X)$, then $U_L \hookrightarrow X_L$, and $X_L$ is a disjoint union of $\dim_k(c_X(X))$ geometrically connected components by Corollary \ref{cor:cXuequalsk2}. Each connected component $V$ of $U_L$ is contained in a geometrically connected component $Y$ of $X_L$. As $Y$ is smooth and geometrically connected, $Y$ is geometrically normal and thus geometrically irreducible. In particular, $V$ is also geometrically irreducible, and hence $c_{X_L}(V) = L$ by Corollary \ref{cor:cXuequalsk1}. As any connected component $Y$ of $X_L$ is irreducible, $U_L$ is thus a disjoint union of at most $\dim_k(c_X(X))$ geometrically connected components, hence
	\[
		\dim_k(c_X(U)) = \dim_L(c_{X_L}(U_L)) \leq \dim_k(c_X(X))
	\]
	by Lemma \ref{baseechangecX} and $c_X(X) \xrightarrow{\sim} c_X(U)$.
\end{proof}
However in the rigid case, case (B), the sheaf $c_X$ can be very far from a constant sheaf. This is shown in the following example, due to J\'{e}r\^{o}me Poineau.

\begin{eg}\label{sheaffcXondisk}
Let $X = \bD = \Sp(\bQ_p\langle x \rangle)$ be the rigid analytic unit disk over $\bQ_p$. Then $\bD$ is geometrically connected, but we can construct open subsets $U \subset \bD$ which are connected but not geometrically connected, and in fact have $c_X(U)$ being an extension of $\bQ_p$ of arbitrarily large degree. Indeed, let $w \in \overline{\bQ_p} \setminus \bQ_p$ with $|w| \leq 1$, and let $f(x)$ be the minimal polynomial of $w$ over $\bQ_p$. For $r \in p^{\bQ}$, we can consider the affinoid open subset 
\[
	U = \{z \in \bD \mid |f(z)| \leq r \} \subset \bD.
\]
If $L$ is a splitting field for $f(x)$ over $\bQ_p$, then for $r$ small enough, $U_L \subset \bD_L$ is the disjoint union of $\dim_{\bQ_p} L$ closed disks. By Lemma \ref{lemma:conncompgaloiscovering}, the Galois covering $U_L \rightarrow U$ over $\bQ_p$ restricts to an isomorphism from each connected component to $U$, and thus $U$ is connected. Therefore $U$ is connected but not geometrically connected, and in fact $\dim_{\bQ_p} c_X(U) = \dim_{\bQ_p} L$ by Lemma \ref{baseechangecX}.
\end{eg}

\section{The Functors \texorpdfstring{$\OO_X \otimes_k -$}{O_X o_k -} and \texorpdfstring{$\Hom_{G\text{-}\sD_X}(\OO_X, -)$}{Hom_G-D(O, -)}}\label{sect:RH}

In this section, we have the following running assumptions.
\begin{enumerate}
	\item $X$ is as in Section \ref{sect:Dmodules} with $k$ of characteristic $0$,
	\item $X$ has an action by a product of abstract groups $G \times H$.
\end{enumerate}

For example, both $G$ and $H$ could be trivial, or infinite.
\begin{remark}
We will often impose the assumption that there are no global non-trivial $G$-invariant constant functions: $c_X(X)^G = k$. This is always satisfied whenever the connected components of $X$ are geometrically connected and the action of $G$ on the set of connected components is transitive. When $G$ is trivial, then the condition that $c_X(X) = k$ is equivalent to the assumption that $X$ is geometrically connected by Corollary \ref{cor:cXuequalsk1} and Corollary \ref{cor:cXuequalsk2}.
\end{remark}
We will make use of the following consequence of the assumption that $c_X(X)^G = k$.
\begin{lemma}\label{lem:OXirrGDX}
Suppose that $c_X(X)^G = k$. Then $\OO_X$ is irreducible as a $G\text{-}\sD_X$-module.
\end{lemma}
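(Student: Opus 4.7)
The plan is to classify all $\sD_X$-submodules of $\OO_X$ and then impose the $G$-equivariance condition, at which point the hypothesis $c_X(X)^G = k$ will force the only possibilities to be $0$ and $\OO_X$.

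First, I will show that any $\sD_X$-submodule $\sI \subset \OO_X$ has the form $\sI = e \cdot \OO_X$ for a unique idempotent $e \in c_X(X)$. For any connected $U \in \sB$, applying Lemma \ref{connirred} to $U$ (viewed as a smooth connected space in its own right) shows that $\sI|_U$ is either $0$ or $\OO_U$. Using the sheaf axioms, together with the decomposition of $X$ into connected components, one concludes that $X$ splits as a disjoint union of admissible opens $X = X_0 \sqcup X_1$, with $\sI|_{X_0} = \OO_{X_0}$ and $\sI|_{X_1} = 0$. The characteristic function of $X_0$ then defines a locally constant idempotent $e \in \OO_X(X)$ such that $\sI = e \cdot \OO_X$; since $e$ is locally constant, $de = 0$ and hence $e \in c_X(X)$.

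Second, the submodule $e \cdot \OO_X$ is $G$-stable if and only if $g(e) = e$ for every $g \in G$, i.e., $e \in c_X(X)^G$. By hypothesis $c_X(X)^G = k$, a field, whose only idempotents are $0$ and $1$. Therefore $\sI$ equals $0$ or $\OO_X$, as required.

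The main technical point is the first step, specifically the production of the global idempotent $e \in \OO_X(X)$ from the local decomposition of $\sI$ over connected components; this is routine for schemes and requires a brief verification of admissibility for the cover $X_0 \sqcup X_1$ in the rigid setting. Once $\sI$ is identified with an idempotent in $c_X(X)$, the translation of $G$-equivariance into $G$-invariance of this idempotent, combined with the hypothesis, immediately yields irreducibility.
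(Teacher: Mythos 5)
Your proof is correct and rests on the same two ingredients as the paper's own argument: Lemma \ref{connirred} applied to each connected component, and the hypothesis $c_X(X)^G = k$ used to rule out nontrivial $G$-invariant idempotents. The paper organizes this slightly differently (it first deduces from $c_X(X)^G = k$ that $G$ permutes the connected components transitively, then spreads nontriviality of a submodule from one component to all via the $G$-action), but your classification of the $\sD_X$-submodules of $\OO_X$ by idempotents of $c_X(X)$ is the same argument repackaged.
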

\begin{proof}
	First note that $X$ is the disjoint union of its connected components by Lemma \ref{lem:opencoveringlemma}(1). When $X$ is a rigid space this follows by definition (see \cite[\S 2.1]{CON}), and when $X$ is a scheme this follows because $X$ is locally noetherian. Furthermore, $G$ acts transitively on the set of connected components of $X$. Indeed, if $X = X_1 \sqcup X_2$ were a $G$-stable disjoint union, then we would have $k \times k \subset c(X_1)^G \times c(X_2)^G \subset c_X(X)^G$. Suppose now that $\sF$ is a proper non-trivial $G$-$\sD_X$ submodule of $\OO_X$. Then there is some admissible open subset $U \subset X$ with $\sF(U) \neq 0$, and so we may find some non-zero $x \in \sF(U)$. As $x \neq 0$ in $\sF(U)$, and the connected components $\{X_i\}_i$ form an admissible open cover of $X$, there is some $X_i$ with
	\[
	0 \neq x|_{U \cap X_i} \in \sF(U \cap X_i) \subset \OO_{X_i}(U \cap X_i).
	\]
	Therefore $\sF|_{X_i}$ is a non-trivial $\sD_{X_i}$-submodule of $\OO_{X_i}$, and thus $\sF|_{X_i} = \OO_{X_i}$ by Lemma \ref{connirred} because $X_i$ is connected. By the transitivity of the $G$ action on the set of connected components and the fact that $\sF \hookrightarrow \OO_{X}$ is $G$-linear, then for any other connected component $X_j$ we also have that $\sF|_{X_j} = \OO_{X_j}$. Therefore, as this is true for any $X_j$, $\sF = \OO_X$.
\end{proof}

From the action of $G \times H$ on $X$, $\sD_X$ is a $G \times H$-equivariant sheaf, and we can consider the category $\VectCon^{G \times H}(X)$ (see Section \ref{sect:generaleqDmodules}). In this section we define and study properties of a pair of functors between $\Mod_{k[H]}^{\fd}$ and $\VectCon^{G \times H}(X)$. 

\addtocontents{toc}{\SkipTocEntry}
\subsection*{The Functor \texorpdfstring{$\OO_X \otimes_k -$}{O_X o_k -}}
In one direction, we define,
\[
\OO_X \otimes_k - \colon \Mod_{k[H]}^{\fd} \rightarrow \VectCon^{G \times H}(X)
\]
by setting $\OO_X \otimes_k V$ to be the sheaf of $\sD_X$-modules with
\[
	(\OO_X \otimes_k V)(U) = \OO_X(U) \otimes_k V
\]
for any admissible open subset $U \subset X$, where $x \in \sD_X(U)$ acts by
\[
	x \cdot (s \otimes v) \coloneqq x s \otimes v.
\]
This has a $G \times H$-equivariant structure defined by
\[
	(g,h)^V \coloneqq (g,h)^{\OO_X} \otimes h(-) \colon \OO_X \otimes_k V \rightarrow  (g,h)^{-1} (\OO_X \otimes_k V),
\]
and with this $G \times H$-equivariant structure $\OO_X \otimes_k V$ is a $G \times H$-equivariant sheaf of $\sD_X$-modules. Given a $k[H]$-module homomorphism $V \rightarrow W$, the induced morphism $1 \otimes f$ is defined in the obvious manner. It is furthermore direct to verify that the functor $\OO_X \otimes_k -$ is monoidal.

\addtocontents{toc}{\SkipTocEntry}
\subsection*{The Functor \texorpdfstring{$\Hom_{G \text{-}\sD_X}(\OO_X, -)$}{Hom_G-D(O, -)}}
In the other direction, we have the \emph{solution functor},
\[
	\Hom_{G\text{-}\sD_X}(\OO_X, -) \colon \VectCon^{G \times H}(X) \rightarrow \Mod_{k[H]}^{\fd}.
\]
Here, for $\sM \in \VectCon^{G \times H}(X)$, $H$ acts on $\Hom_{\sD_X}(\OO_X, \sM)$ as in Remark \ref{rem:homspaceequivsheaf}, and this restricts to an action of $H$ on $\Hom_{G\text{-}\sD_X}(\OO_X, \sM)$ because the actions of $G$ and $H$ commute.

We first show that $\Hom_{G\text{-}\sD_X}(\OO_X, \sM)$ is well-defined: that if $\sM$ is a $(G \times H)\text{-}\sD_X$-module which is coherent as an $\OO_X$-module, then $\Hom_{G\text{-}\sD_X}(\OO_X, \sM)$ is finite dimensional as a $k$-vector space. This is a consequence of the following lemma.

\begin{lemma}\label{lem:solfunctorinjective}
Suppose that $c_X(X)^G = k$ and $\sM \in \VectCon^{G \times H}(X)$. Then the natural map
\[
	 \OO_X \otimes_k \Hom_{G \text{-}\sD_X}(\OO_X, \sM) \rightarrow \sM
\]
is $(G \times H)\text{-}\sD_X$-linear and injective. In particular, $\dim_k(\Hom_{G\text{-}\sD_X}(\OO_X, \sM)) \leq \rank(\sM)$.
\end{lemma}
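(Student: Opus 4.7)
My plan is as follows. The natural map $\psi \colon \OO_X \otimes_k V \to \sM$, where $V \coloneqq \Hom_{G\text{-}\sD_X}(\OO_X, \sM)$, is the evaluation $s \otimes \varphi \mapsto \varphi(s)$ on local sections. First I would check $(G \times H)\text{-}\sD_X$-linearity directly from the explicit descriptions of all structures involved: $\sD_X$-linearity uses that each $\varphi$ is $\sD_X$-linear; $G$-linearity uses that $g \cdot \varphi = \varphi$ in $\Hom_{\sD_X}(\OO_X, \sM)$ for all $g \in G$; and $H$-linearity follows from the diagonal $H$-action on $\OO_X \otimes_k V$ defined via $(g,h)^V = (g,h)^{\OO_X} \otimes h(-)$.

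The crucial input for injectivity is the identity $\End_{G\text{-}\sD_X}(\OO_X) = k$, which combines Lemma \ref{altdesccX} with the standing hypothesis on $c_X$: taking global $G$-invariants gives $\End_{G\text{-}\sD_X}(\OO_X) = \underline{\End}_{\sD_X}(\OO_X)(X)^G = c_X(X)^G = k$. Together with the simplicity of $\OO_X$ as a $G\text{-}\sD_X$-module (Lemma \ref{lem:OXirrGDX}), this sets up a Schur-type argument. Since any local section of the kernel involves only finitely many basis vectors, it suffices to verify that for every finite-dimensional subspace $V' \subseteq V$, the induced map $\OO_X \otimes_k V' \to \sM$ is injective. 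Fixing a basis $v_1, \dots, v_n$ of $V'$, this becomes $(v_1, \dots, v_n) \colon \OO_X^n \to \sM$, and I would let $\sK$ denote its kernel as a $G\text{-}\sD_X$-submodule of $\OO_X^n$.

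The plan for the core argument is to filter $\sK$ by $\sK_i \coloneqq \sK \cap \OO_X^i$, where $\OO_X^i \hookrightarrow \OO_X^n$ sits in the first $i$ coordinates; each graded piece $\sK_i / \sK_{i-1}$ embeds $G\text{-}\sD_X$-linearly into $\OO_X$ via the $i$th coordinate projection, and by simplicity is either $0$ or $\OO_X$. If all are zero, then $\sK = 0$. Otherwise, letting $i$ be minimal with $\sK_i \neq 0$, the quotient map forces $\sK_i \cong \OO_X$ sitting in $\OO_X^i$ as the graph $s \mapsto (c_1 s, \dots, c_{i-1} s, s)$, where each $c_j \in \End_{G\text{-}\sD_X}(\OO_X) = k$ arises as the $G\text{-}\sD_X$-linear projection to the $j$th coordinate composed with the inverse of the $i$th. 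Applying $(v_1, \dots, v_n)$ to a generator of $\sK_i$ then yields $c_1 v_1 + \cdots + c_{i-1} v_{i-1} + v_i = 0$, contradicting linear independence in $V$. The rank bound is immediate: any $d$-dimensional $V' \subseteq V$ gives a subsheaf embedding $\OO_X^d \hookrightarrow \sM$, so comparing ranks at any point yields $d \leq \rank(\sM)$. The main technical point is checking that the filtration subquotients really are $G\text{-}\sD_X$-submodules of $\OO_X$ so that Lemma \ref{lem:OXirrGDX} applies, and that the graph coefficients indeed lie in the full $G$-invariant endomorphism ring where $c_X(X)^G = k$ can be exploited.
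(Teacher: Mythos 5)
Your proposal is correct and follows essentially the same route as the paper: both arguments are Schur-type, resting on the irreducibility of $\OO_X$ as a $G\text{-}\sD_X$-module (Lemma \ref{lem:OXirrGDX}) and on the identification $\End_{G\text{-}\sD_X}(\OO_X) = c_X(X)^G = k$ coming from Lemma \ref{altdesccX}. The paper phrases the kernel analysis as an induction showing that the sum $\sum_i \OO_X \cdot f_i(1_X)$ is direct, with the relation coefficients shown to be killed by all derivations and hence constant; your filtration-and-graph formulation is the same computation in structural clothing.
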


\begin{proof}
	The $(G \times H)\text{-}\sD_X$-linearity is direct to verify from the definitions. For the injectivity, we proceed as follows. Suppose that $f_1, ... ,f_r \in \Hom_{G \text{-}\sD_X}(\OO_X, \sM)$ are $k$-linearly independent, and define $e_1, ... , e_r \in \sM(X)^G$ by $e_i \coloneqq f_i(1_X)$. These are also $k$-linearly independent by \cite[Lem.\ 3.1.4]{AW2}. Because $\OO_X$ is irreducible as a $G$-$\sD_X$-module by Lemma \ref{lem:OXirrGDX}, it is sufficient for us to show that the sum
	\[
		\sum_{i = 1}^r \OO_X \cdot e_i \hookrightarrow \sM
	\]
	is direct. We prove this by induction on $r \geq 1$. When $r = 1$ this is trivially true, so suppose that the statement is true for some fixed $r \geq 1$, and consider
	\[
		\sum_{i = 1}^{r+1} \OO_X \cdot e_i.
	\]
	After rearranging the factors if necessary, it is sufficient to show that
	\[
		\left( \bigoplus_{i = 1}^{r} \OO_X \cdot e_i \right) \bigcap \OO_X \cdot e_{r+1} = 0.
	\]
	If this intersection were non-zero, then 
	\[
		\left( \bigoplus_{i = 1}^{r} \OO_X \cdot e_i \right) \bigcap  \OO_X \cdot e_{r+1} =  \OO_X \cdot e_{r+1},
	\]
	by the irreducibility of $\OO_X \cdot e_{k+1}$ (Lemma \ref{lem:OXirrGDX}). We can therefore write
	\[
		e_{r+1} = \lambda_1 e_1 + \cdots + \lambda_r e_r
	\]
	for unique $\lambda_i \in \OO_X(X)$. We have that for any admissible open subset $U$ and $\partial \in \sT_X(U)$,
	\begin{align*}
		0 &= \partial(e_{r+1}) = \partial(\lambda_1 e_1 + \cdots + \lambda_k e_r), \\
		&= \partial(\lambda_1)e_1 + \cdots + \partial(\lambda_k) e_r,
	\end{align*}
	in $\sM(U)$, and therefore as the sum is direct, $\partial(\lambda_i) = 0$ for all $i = 1, ... ,r$. Therefore, as this holds for each admissible open $U \in \sB$, each $\lambda_i$ is a global section of the sheaf $c_X$ by Lemma \ref{affinedesccX}. Furthermore, each $\lambda_i \in c_X(X)^G = k$ because each $e_i \in \sM(X)^G$. However this is a contradiction, as we know that $e_1, ... ,e_r, e_{r+1}$ are linearly independent over $k$.
\end{proof}

\begin{thm}\label{FunctorkGGDX}
Suppose that $X, G$ and $H$ are as described at the start of this section. Then:
\begin{enumerate}
	\item The functor
\[
	\OO_X \otimes_k - \colon \Mod_{k[H]}^{\fd} \rightarrow \VectCon^{G \times H}(X)
\]
is exact, monoidal and faithful.
\end{enumerate}
Suppose additionally that $c_X(X)^G = k$. Then:
\begin{enumerate}
\item[(2)] The functor $\OO_X \otimes_k -$ is full,
\item[(3)]
The essential image of $\OO_X \otimes_k -$ is the full subcategory with objects those $\sM$ which satisfy
\[
	\dim_k(\Hom_{G \text{-}\sD_X}(\OO_X, \sM)) = \rank(\sM),
\]	
and on this subcategory $\Hom_{G \text{-}\sD_X}(\OO_X, -)$ is a quasi-inverse for $\OO_X \otimes_k -$.
\item[(4)]
The essential image of $\OO_X \otimes_k -$ is closed under sub-quotients.
\end{enumerate}
\end{thm}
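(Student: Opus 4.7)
The plan is to prove the three parts of the theorem in turn, drawing on Lemma~\ref{lem:solfunctorinjective}, Lemma~\ref{cohimplieslocallyfree}, and Lemma~\ref{altdesccX}.

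For part (1), exactness is immediate, since $\OO_X$ is a sheaf of $k$-vector spaces and tensor product with $\OO_X$ over the field $k$ is exact. For full faithfulness, I would first compute $\Hom_{\sD_X}(\OO_X \otimes_k V, \OO_X \otimes_k W)$ by decomposing $\OO_X \otimes_k V$ as a direct sum of $\dim V$ copies of $\OO_X$ in $\Mod(\sD_X)$, reducing to $\Hom_k(V, \Hom_{\sD_X}(\OO_X, \OO_X \otimes_k W))$. Since the $\sD_X$-invariants inside $\OO_X \otimes_k W$ form the sheaf $c_X \otimes_k W$ (by Lemma~\ref{altdesccX} and finite-dimensionality of $W$), passing to global sections gives $\Hom_{\sD_X}(\OO_X, \OO_X \otimes_k W) \cong c_X(X) \otimes_k W$. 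Taking $G$-invariants, which acts only on the $c_X(X)$ factor, produces $c_X(X)^G \otimes_k W = W$ by the hypothesis $c_X(X)^G = k$. The residual $H$-action on $\Hom_k(V, W)$ is the usual conjugation action, so its invariants are $\Hom_{k[H]}(V, W)$, as desired.

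For part (2), one direction is immediate from part (1): for $\sM \cong \OO_X \otimes_k V$ the rank equality $\dim_k \Hom_{G\text{-}\sD_X}(\OO_X, \sM) = \dim_k V = \rank(\sM)$ holds. For the converse, Lemma~\ref{lem:solfunctorinjective} supplies an injective $(G \times H)\text{-}\sD_X$-linear morphism $\OO_X \otimes_k \Hom_{G\text{-}\sD_X}(\OO_X, \sM) \hookrightarrow \sM$. Under the rank equality assumption, source and target are vector bundles of the same rank on every connected component, so the cokernel is a coherent $\sD_X$-module whose generic $\OO_X$-rank is zero. By Lemma~\ref{cohimplieslocallyfree} this cokernel is locally free, hence vanishes, and the injection is an isomorphism. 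The unit and counit of the claimed quasi-inverse pair are precisely the natural isomorphisms just established.

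For part (3), I would apply the left exact functor $\Hom_{G\text{-}\sD_X}(\OO_X, -)$ to a short exact sequence $0 \to \sM' \to \sM \to \sM'' \to 0$ in $\VectCon^{G \times H}(X)$ with $\sM$ in the essential image. Combining left exactness, additivity of ranks in the exact sequence, and the bounds from Lemma~\ref{lem:solfunctorinjective} gives
\[
\rank(\sM) = \dim_k \Hom_{G\text{-}\sD_X}(\OO_X, \sM) \leq \dim_k \Hom_{G\text{-}\sD_X}(\OO_X, \sM') + \dim_k \Hom_{G\text{-}\sD_X}(\OO_X, \sM'') \leq \rank(\sM') + \rank(\sM'') = \rank(\sM),
\]
forcing equality throughout. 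Both $\sM'$ and $\sM''$ then satisfy the rank criterion of part (2) and lie in the essential image. The main obstacle I anticipate is the surjectivity step in part (2): ruling out a nonzero cokernel relies crucially on Lemma~\ref{cohimplieslocallyfree}, since without the fact that coherent $\sD_X$-modules in characteristic zero are automatically locally free, nonzero torsion integrable connections could obstruct the isomorphism.
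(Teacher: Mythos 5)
Your proposal is correct and follows essentially the same route as the paper: part (1) rests on identifying horizontal sections of $\OO_X \otimes_k W$ with $c_X(X) \otimes_k W$ and invoking $c_X(X)^G = k$ (the paper phrases this by evaluating a morphism at $1_X \otimes v$ rather than via a direct-sum decomposition, and uses Lemma \ref{affinedesccX} rather than Lemma \ref{altdesccX}, but the substance is identical), and part (2) is word-for-word the paper's argument via Lemma \ref{lem:solfunctorinjective} and Lemma \ref{cohimplieslocallyfree}. The only genuine variation is in part (3), where the paper first reduces to sub-objects and runs a diagram chase with the left-exact solution functor, whereas you give a direct dimension count $\rank(\sM) \leq \dim_k \sH(\sM') + \dim_k \sH(\sM'') \leq \rank(\sM') + \rank(\sM'') = \rank(\sM)$ forcing equality; this handles sub-objects and quotients simultaneously and is a perfectly valid, arguably cleaner, deployment of the same ingredients.
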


\begin{proof}
	The first point (1) is direct. For point (2), let $\sU$ be an admissible open covering by elements of $\sB$. Given any $(G \times H)$-$\sD_X$-linear morphism 
	\[
		f \colon \OO_X \otimes_k V \rightarrow \OO_X \otimes_k W,
	\]
	then for each $U \in \sU$, $v \in V$ and $\partial \in \sT_U(U)$,
	\[
		\partial(1_U \otimes v) = \partial(1_U) \otimes v = 0,
	\]
	and hence
	\[
		f_U(1_U \otimes v) \in (\OO_X(U) \otimes_k W)^{\sT_U(U) = 0} = c_X(U) \otimes_k W,
	\]
	by Lemma \ref{affinedesccX} so
	\[
		f_X(1_X \otimes v) \in c_X(X) \otimes_k W.
	\]
	Now because $f$ is a morphism of $G$-equivariant sheaves, and using that $c_X(X)^G = k$,
	\[
		f_X(1_X \otimes v) \in c_X(X)^G \otimes_k W = k \otimes_k W,
	\]
	which allows us to define a morphism $\lambda \colon V \rightarrow W$ uniquely determined by the property that
	\[
	f_X(1_X \otimes v) = 1 \otimes \lambda(v).
	\]
	Because $f$ is $H$-$\sD_X$-linear, $\lambda$ is $k[H]$-linear, and $f = 1 \otimes \lambda$. Therefore $\OO_X \otimes_k -$ is full.

	For point (3), by Lemma \ref{lem:solfunctorinjective} we have an $(G \times H)\text{-}\sD_X$-linear injection
	\begin{equation}\label{eqn:injsamerank}
		\OO_X \otimes_k \Hom_{G \text{-}\sD_X}(\OO_X, \sM) \hookrightarrow \sM
	\end{equation}
	for any $\sM \in \VectCon^{G \times H}(X)$, and by taking the quotient we can extend this to a short exact sequence,
	\[
		0 \rightarrow \OO_X \otimes_k \Hom_{G \text{-}\sD_X}(\OO_X, \sM) \rightarrow \sM \rightarrow \sN \rightarrow 0.
	\]
	The quotient $\sN$ is coherent as an $\OO_X$-module, and therefore by Lemma \ref{cohimplieslocallyfree}, $\sN$ is in fact locally free. Therefore, if $\rank(\OO_X \otimes_k \Hom_{G \text{-}\sD_X}(\OO_X, \sM)) = \rank(\sM)$, then $\rank(\sN) = 0$ and thus $\sN = 0$, so (\ref{eqn:injsamerank}) is an isomorphism. Consequently, if $\dim_k \Hom_{G \text{-}\sD_X}(\OO_X, \sM) = \rank(\sM)$ then $\sM$ is in the essential image of $\OO_X \otimes_k -$ and $\Hom_{G \text{-}\sD_X}(\OO_X, -)$ provides a right quasi-inverse to $\OO_X \otimes_k -$ on this full subcategory.

	On the other hand, if $\sM = \OO_X \otimes_k V$ is in the image of $\OO_X \otimes_k -$, then
	\[
		\Hom_{G \text{-}\sD_X}(\OO_X, \OO_X \otimes_k V) = \Hom_{G \text{-}\sD_X}(\OO_X \otimes_k k, \OO_X \otimes_k V) = V,
	\]
	 which follows from the fully faithfulness of $\OO_X \otimes_k -$ (part (1)) in the case that $H$ is trivial. Therefore, any $\sM$ in the essential image satisfies $\dim_k \Hom_{G \text{-}\sD_X}(\OO_X, \sM) = \rank(\sM)$, and $\Hom_{G \text{-}\sD_X}(\OO_X, -)$ provides a left quasi-inverse to $\OO_X \otimes_k -$.

	For point (4), because the functor is exact and full, it is sufficient to show that the essential image is closed under sub-objects. Suppose that Given $V \in \Mod_{k[H]}^{\fd}$, suppose that
	\[
	\sM \subset \OO_X \otimes_k V
	\]
	in $\VectCon^{G\times H}(X)$. As above, the quotient of $\sN$ of $\OO_X \otimes_k V$ by $\sM$ is in $\VectCon^{G\times H}(X)$, and applying the left-exact $\sH \coloneqq \Hom_{G \text{-}\sD_X}(\OO_X, -)$ and then the exact $\OO_X \otimes_k -$ we have a commutative diagram with exact rows
\[\begin{tikzcd}
	0 & {\OO_X \otimes_k \sH(\sM)} & {\OO_X \otimes_k \sH(\OO_X \otimes_k V)} & {\OO_X \otimes_k \sH( \sN)} \\
	0 & \sM & {\OO_X \otimes_k V} & \sN
	\arrow[from=1-1, to=1-2]
	\arrow[from=1-2, to=1-3]
	\arrow[hook, from=1-2, to=2-2]
	\arrow[from=1-3, to=1-4]
	\arrow[hook, from=1-3, to=2-3]
	\arrow[hook, from=1-4, to=2-4]
	\arrow[from=2-1, to=2-2]
	\arrow[from=2-2, to=2-3]
	\arrow[from=2-3, to=2-4]
\end{tikzcd}\]
	A simple diagram chase shows that the left vertical map is an isomorphism, and therefore $\sM$ is in the essential image of $\OO_X \otimes_k -$.
\end{proof}

\section{Finite Equivariant Vector Bundles with Connection}\label{sect:finiteVB}

In this section we let $X$ and $G$ be as described at the start of Section \ref{sect:RH} (with $H$ taken to be trivial) and assume that $X$ has a $k$-rational point $z \in X(k)$. As described in Section \ref{sect:generaleqDmodules} $\VectCon^G(X)$ has a natural tensor structure and as such is a rigid abelian tensor category. An identity object of $\VectCon^G(X)$ (in the sense of \cite{DELMIL}) is given by $\OO_X$, and by our assumption that $c_X(X)^G = k$ and Lemma \ref{altdesccX}, $\End(\OO_X) = k$ in $\VectCon^G(X)$. Furthermore, the $k$-rational point $z \in X(k)$ allows us to define a fibre functor
\[
\omega_z \colon \VectCon^{G}(X) \rightarrow \Vect_k, \qquad \omega_z \colon \sV \mapsto \sV(z) \coloneqq \sV_z \otimes_{\OO_{X,z}} k(z),
\]
which is exact because the sheaves in $\VectCon^{G}(X)$ are locally free. By \cite[Prop.\ 1.19]{DELMIL}, $\omega_z$ is faithful, and thus $\VectCon^{G}(X)$ is in this way a neutral Tannakian category \cite[Prop.\ 1.20]{DELMIL}.

In this section we consider the (rigid tensor) subcategory
\[
\VectCon^{G}(X)_{\fin} \subset \VectCon^{G}(X)
\]
of $G$-equivariant vector bundles with connection on $X$ which are \emph{finite}. This is a notion that was introduced by Andr\'{e} Weil \cite{WEIL} in the context of vector bundles on complex projective varieties, who showed that the pushforward of the structure sheaf along a finite \'{e}tale morphism is a finite vector bundle. The converse was shown by Nori for proper integral schemes over a field with a rational point \cite{NORI}. In this section we prove the corresponding version of their results in our context, for the category $\VectCon^{G}(X)$. The idea to remove the properness assumption and replace this by the extra data of an integrable connection was first considered by Esnault and Hai \cite{ESHAI}. Our method of proof is most similar in style to the approach of Biswas and O'Sullivan \cite{BisOS}, who prove the analogue of Nori's result when $X$ is a complex analytic space and $G$ is a complex Lie group.

First, let $\sC$ be any neutral Tannakian category in the sense of \cite[Def.\ 2.19]{DELMIL}. We will later specialise to $\sC = \VectCon^G(X)$. We have the following immediate property of $\sC$.

\begin{lemma}\label{lem:KRS}
	The Krull-Remak-Schmidt Theorem holds in $\sC$: any non-zero object $\sV \in \sC$ can be written as a direct sum of indecomposable objects
	\[
		\sV \cong \sV_1 \oplus \cdots \oplus \sV_n,
	\]
	and any such description is unique up to isomorphism and permutation.
\end{lemma}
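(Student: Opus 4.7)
The plan is to derive the statement from the classical Krull--Remak--Schmidt--Azumaya theorem, which holds in any additive category in which every object is a finite direct sum of objects whose endomorphism rings are local. I will verify both hypotheses for $\sC$ using the fibre functor $\omega \colon \sC \to \Vect_k$ that comes with the neutral Tannakian structure.

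First I would record the key finiteness input: for any $\sV \in \sC$, the $k$-algebra $\End_{\sC}(\sV)$ injects into $\End_k(\omega(\sV))$ via $\omega$ (which is faithful), and $\omega(\sV)$ is by definition finite-dimensional. Hence $\End_{\sC}(\sV)$ is a finite-dimensional $k$-algebra and in particular is an Artinian ring. I would then establish existence of a decomposition into indecomposables by induction on $\dim_k \omega(\sV)$: if $\sV$ is itself indecomposable there is nothing to do; otherwise write $\sV = \sV_1 \oplus \sV_2$ with both summands non-zero, and note that exactness and faithfulness of $\omega$ force $0 < \dim_k \omega(\sV_i) < \dim_k \omega(\sV)$ for $i = 1,2$, so the induction hypothesis applies to each factor.

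Next I would check the local endomorphism ring condition. Let $\sV \in \sC$ be indecomposable. Direct sum decompositions $\sV = \sV_1 \oplus \sV_2$ correspond bijectively to idempotents $e \in \End_{\sC}(\sV)$ (via $\sV_1 = \ker(e)$, $\sV_2 = \ker(1-e)$), and the fact that $\sV$ is indecomposable means $\End_{\sC}(\sV)$ has no idempotents other than $0$ and $1$. Since $\End_{\sC}(\sV)$ is Artinian and has no non-trivial idempotents, it is local (this is a standard fact: the Jacobson radical of an Artinian ring is nilpotent and contains every non-unit if and only if there are no non-trivial idempotents modulo the radical, which for an Artinian ring reduces to the same condition on the ring itself).

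With both hypotheses in hand I would invoke the Krull--Remak--Schmidt--Azumaya theorem in its standard form to conclude uniqueness up to isomorphism and permutation. I do not anticipate any serious obstacle: the argument is entirely formal once the fibre functor is used to guarantee the relevant finiteness, and Krull--Remak--Schmidt--Azumaya is a well-known result.
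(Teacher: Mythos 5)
Your proof is correct, and it rests on the same key input as the paper's: the faithful fibre functor forces all Hom (and hence endomorphism) spaces in $\sC$ to be finite-dimensional over $k$, after which a standard form of Krull--Remak--Schmidt applies. The paper simply cites Atiyah's version of the theorem for abelian categories with finite-dimensional Hom spaces, whereas you verify the hypotheses of the Azumaya formulation directly (finite decomposition by induction on $\dim_k\omega(\sV)$, and locality of the Artinian endomorphism ring of an indecomposable via absence of non-trivial idempotents); both routes are sound and essentially equivalent.
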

	
\begin{proof}
	This follows from \cite[Thm.\ 1]{ATI}, using \cite[\S 3 Cor.]{ATI} and the fact that $\Hom(\sV ,\sW)$ is finite dimensional over $k$ for any $\sV, \sW \in \sC$ because the fibre functor is faithful.
\end{proof}

\begin{defn}
For $f  = \sum_{n \geq 0} a_n x^n \in \bZ_{\geq 0}[x]$ and $\sV \in \sC$, we define
\[
	f(\sV) \coloneqq \bigoplus_{n \geq 0} (\sV^{\otimes n})^{\oplus a_n} \in \sC.
\]
\end{defn}

\begin{defn}\label{defn:finiteobject}
	$\sV \in \sC$ is called \emph{finite} if there exist $f,g \in \bZ_{\geq 0}[x]$ with $f \neq g$ and $f(\sV) \cong g(\sV)$. We write $\sC_{\fin}$ for the full subcategory of finite objects.
\end{defn}

\begin{defn}
For $\sV \in \sC$, we write $I(\sV)$ for the set of isomorphism classes of indecomposable direct summands of $\sV$ in $\sC$, and set
\[
S(\sV) \coloneqq \bigcup_{r \geq 0} I(\sV^{\otimes r}).
\]
\end{defn}

\begin{lemma}\label{lem:finiteequiv}
Suppose that $\sV \in \sC$. Then:
\begin{itemize}
	\item $I(\sV)$ is a finite set,
	\item $S(\sV)$ is a finite set if and only if $\sV$ is finite.
\end{itemize}
In particular, if $\sL \in \VectCon^G(X)$ has rank $1$, then $\sL$ is finite if and only if $[\sL] \in \Pic^G(X)$ is torsion.
\end{lemma}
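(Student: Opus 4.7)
The plan is to derive all three parts from Krull--Remak--Schmidt (Lemma \ref{lem:KRS}), together with an elementary monoid-counting argument. The first statement will be immediate, since $I(\sV)$ is by definition the set of isomorphism classes of indecomposable summands occurring in any Krull--Remak--Schmidt decomposition $\sV \cong W_1 \oplus \cdots \oplus W_m$, which is finite.

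For the backward direction of the second statement, I will set $n = |S(\sV)|$ and note that Krull--Remak--Schmidt gives, for each $k \geq 0$, a well-defined multiplicity vector $\mu_k \in \bZ_{\geq 0}^n$ recording the decomposition of $\sV^{\otimes k}$ in terms of $S(\sV)$. The assignment $f = \sum a_i x^i \mapsto \sum a_i \mu_i$ defines a monoid homomorphism $\bZ_{\geq 0}[x] \to \bZ_{\geq 0}^n$ whose extension $\bZ[x] \to \bZ^n$ to Grothendieck groups cannot be injective, for rank reasons. Any non-zero element $h$ of this kernel may be written as $h = f - g$ with $f,g \in \bZ_{\geq 0}[x]$ of disjoint support and $f \neq g$, and then $f(\sV) \cong g(\sV)$ by Krull--Remak--Schmidt, proving $\sV$ finite.

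The forward direction of the second statement will be the main obstacle. Starting from $f(\sV) \cong g(\sV)$ with $f \neq g$ and $\sV \neq 0$, I first use Krull--Remak--Schmidt cancellation at each coefficient to reduce to polynomials $P, Q \in \bZ_{\geq 0}[x]$, $P \neq Q$, with disjoint supports and $P(\sV) \cong Q(\sV)$. Both $P$ and $Q$ must then be non-zero, and after relabelling I may take $d := \deg P > \deg Q$. The key observation is that $\sV^{\otimes d}$ occurs as a summand of $P(\sV) \cong Q(\sV) = \bigoplus_{i < d} (\sV^{\otimes i})^{\oplus b_i}$, so by Krull--Remak--Schmidt every $W \in I(\sV^{\otimes d})$ already belongs to $I(\sV^{\otimes i})$ for some $i < d$. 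Induction on $k$ then yields $I(\sV^{\otimes k}) \subseteq \bigcup_{i \leq d} I(\sV^{\otimes i})$ for all $k$: an indecomposable summand of $\sV^{\otimes (k+1)} = \sV^{\otimes k} \otimes \sV$ is a summand of $U \otimes \sV$ for some $U \in I(\sV^{\otimes k})$, and the inductive hypothesis together with the key observation allow me to replace $U$ by a summand of $\sV^{\otimes i}$ with $i < d$, so the indecomposable lies in $I(\sV^{\otimes (i+1)})$ with $i+1 \leq d$. Hence $S(\sV) = \bigcup_{i \leq d} I(\sV^{\otimes i})$ is a finite union of finite sets.

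The third statement follows quickly. Since $\rank(\sL^{\otimes k}) = 1$ each tensor power is indecomposable, so $\{\sL^{\otimes k}\}_{k \geq 0} \subseteq S(\sL)$. If $\sL^{\otimes n} \cong \OO_X$ then $f = x^n$, $g = 1$ show $\sL$ is finite. Conversely, if $\sL$ is finite then $S(\sL)$ is finite by the second statement, and the pigeonhole principle furnishes $k_1 < k_2$ with $\sL^{\otimes k_1} \cong \sL^{\otimes k_2}$; tensoring with the dual $(\sL^{\otimes k_1})^{\vee}$, which is invertible in the rigid tensor category $\VectCon^G(X)$, yields $\sL^{\otimes (k_2 - k_1)} \cong \OO_X$.
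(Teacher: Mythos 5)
Your argument is correct, and two of its three parts track the paper's proof closely: the forward direction of the second bullet (reduce to disjoint supports, observe that every indecomposable summand of $\sV^{\otimes d}$ already occurs in some lower tensor power, then propagate by induction) is the same cancellation argument the paper uses, just with the bookkeeping made more explicit, and the rank-$1$ statement is handled identically, with your pigeonhole-plus-dualise step merely spelling out what the paper leaves implicit. The genuine divergence is in the backward direction. The paper (following Nori, via Szamuely) considers the free abelian group $A(\sV)$ on $S(\sV)$, checks that the multiplication operator $m_{\sV}\colon [\sW]\mapsto[\sV\otimes\sW]$ preserves it, and applies Cayley--Hamilton to its characteristic polynomial $\chi$, splitting $\chi=f-g$ into positive and negative parts. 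You instead record multiplicity vectors $\mu_k\in\bZ_{\geq 0}^n$ and note that the induced map $\bZ[x]\to\bZ^n$ cannot be injective for rank reasons, so any non-zero kernel element supplies the required pair $f\neq g$ with $f(\sV)\cong g(\sV)$ by Krull--Remak--Schmidt. Your route is more elementary (no Cayley--Hamilton over $\bZ$, no need to verify that $m_{\sV}$ stabilises $A(\sV)$), at the cost of losing the extra structure the characteristic polynomial provides, namely a \emph{monic} relation with a distinguished top-degree term; neither version needs that extra structure here, since your forward-direction reduction recovers $\deg P>\deg Q$ on its own. One small point worth making explicit in your write-up: the claims that $P,Q$ are both non-zero and that $f(\sV)\neq 0$ for $f\neq 0$, $\sV\neq 0$ rest on the fact that tensor powers of non-zero objects are non-zero, which holds here because the faithful exact tensor fibre functor $\omega_z$ detects zero objects.
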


\begin{proof}
	Let $\sV \in \sC$. If $\sV = 0$ then the statement holds, so we may assume that $\sV \neq 0$. That $I(\sV)$ is finite is a direct consequence of the Krull-Remak-Schmidt Theorem (Lemma \ref{lem:KRS} above).
	For the second point we follow the proof of \cite[Prop.\ 6.7.4]{SZAM}, which we include for the convenience of the reader. Suppose that $S(\sV)$ is finite, and consider the free abelian group $A$ generated by the isomorphism classes $[\sW]$ of indecomposable objects of $\sC$, with subgroup $A(\sV)$ generated by the set $S(\sV)$. There is a well-defined $\bZ$-linear map $m_{\sV} \colon A \rightarrow A$ defined on each generator $[\sW]$ to be
	\[
		m_{\sV}([\sW]) \coloneqq [\sV \otimes \sW] \coloneqq [\sW_1] + \cdots + [\sW_n],
	\]
	where 
	\[
		\sV \otimes \sW = \sW_1 \oplus \cdots \oplus \sW_n
	\]
	is the unique decomposition into indecomposable objects of Lemma \ref{lem:KRS}, and this linear map preserves $A(\sV)$. By the assumption that $S(\sV)$ is finite, $A(\sV)$ is a finitely generated free abelian group, which is non-zero because $\sV \neq 0$, and we may therefore consider the characteristic polynomial $\chi \in \bZ[x]$ of $m_{\sV}$, which is monic and satisfies $\chi(m_{\sV}) = 0$. If we write $\chi = f - g$, where $f, g \in \bZ_{\geq 0}[x]$, then applying $\chi(m_{\sV})$ to the identity object of $\sC$ we see that $f(\sV) \cong g(\sV)$.

	Conversely, suppose that we have $f, g \in \bZ_{\geq 0}[x]$ with $f \neq g$, and that $f(\sV) \cong g(\sV)$. As $\sV \neq 0$, $f-g$ has degree $d > 0$. By the uniqueness of Lemma \ref{lem:KRS} we may write any element of $I(\sV^{\otimes d})$ as a sum of elements of $I(\sV^{\otimes k})$ for $k < d$. Similarly, for any $i \geq 0$, using the isomorphism $(x^if)(\sV) \cong (x^ig)(\sV)$, we can write, for any $m \geq d$, $\sV^{\otimes m}$ as the sum of elements of $I(\sV^{\otimes k})$ for $k < d$, and thus
	\[
		S(\sV) \subset \bigcup_{k = 0}^{d-1} I(\sV^{\otimes k})
	\]
	and thus $S(\sV)$ is finite. For the final claim, note that because $\End(\OO_X) = k$, the rank of any object of $\VectCon^G(X)$ is $\bN$-valued, and so if $\sL$ has rank $1$ then each $\sL^{\otimes k}$ is indecomposable. In particular, $S(\sL) = \{\sL^{\otimes k}\}_{k \geq 0}$ and thus $\sL$ is finite if and only if $[\sL] \in \Pic^G(X)$ is torsion.
\end{proof}

\begin{cor}\label{cor:finiteclosure}
	$\sC_{\fin}$ is closed under duals, direct sums, direct summands, and tensor products.
\end{cor}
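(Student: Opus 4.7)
The plan is to exploit the characterisation from Lemma~\ref{lem:finiteequiv}: an object $\sV \in \sC$ is finite if and only if the set $S(\sV)$ is finite. Each of the four closure properties then reduces to a finiteness statement about $S$ of the constructed object, with Krull--Remak--Schmidt (Lemma~\ref{lem:KRS}) doing most of the work.

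The two easy cases I would handle first. For \emph{direct summands}, if $\sW$ is a direct summand of a finite object $\sV$, then $\sW^{\otimes k}$ is a direct summand of $\sV^{\otimes k}$ for every $k \geq 0$, and Krull--Remak--Schmidt forces $I(\sW^{\otimes k}) \subset I(\sV^{\otimes k})$, whence $S(\sW) \subset S(\sV)$. For \emph{duals}, I would use that $\sC$ is rigid, so $(-)^\vee$ is a contravariant autoequivalence preserving direct sums and indecomposability; combined with the natural isomorphism $(\sV^{\otimes k})^\vee \cong (\sV^\vee)^{\otimes k}$ this gives $S(\sV^\vee) = \{A^\vee : A \in S(\sV)\}$, which is finite whenever $S(\sV)$ is.

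The direct sum and tensor product cases I would treat together. Using the distributivity of $\otimes$ over $\oplus$ in a symmetric monoidal category I would expand
\[
(\sV \oplus \sW)^{\otimes k} \cong \bigoplus_{i=0}^{k} (\sV^{\otimes i} \otimes \sW^{\otimes k-i})^{\oplus \binom{k}{i}}, \qquad (\sV \otimes \sW)^{\otimes k} \cong \sV^{\otimes k} \otimes \sW^{\otimes k},
\]
and apply Krull--Remak--Schmidt to $\sV^{\otimes i}$ and $\sW^{\otimes k-i}$ separately, to conclude that every indecomposable summand of either side appears as an indecomposable summand of some $A \otimes B$ with $A \in S(\sV)$ and $B \in S(\sW)$. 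Thus
\[
S(\sV \oplus \sW) \cup S(\sV \otimes \sW) \;\subset\; \bigcup_{A \in S(\sV),\, B \in S(\sW)} I(A \otimes B).
\]
When $\sV$ and $\sW$ are both finite, the outer index set is finite, and each $I(A \otimes B)$ is finite by the first bullet of Lemma~\ref{lem:finiteequiv}, so the right-hand side is a finite union of finite sets.

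I do not expect a genuine obstacle here: the only piece of real content is the reduction for direct sums and tensor products, and even that is essentially symmetric-monoidal bookkeeping plus a single appeal to Krull--Remak--Schmidt to match indecomposables on both sides.
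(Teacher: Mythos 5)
Your proof is correct and follows exactly the route the paper intends: the corollary is stated without proof as an immediate consequence of Lemma \ref{lem:finiteequiv}, and your argument (bounding $S$ of each constructed object via Krull--Remak--Schmidt and the finiteness of each $I(A\otimes B)$) is the standard way to fill in those details. No gaps.
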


Now we specialise to the case where $\sC = \VectCon^G(X)$.

\begin{prop}\label{prop:galoisisfinite}
Suppose that $f \colon Z \rightarrow X$ is a $G$-equivariant finite \'{e}tale Galois covering and that the action of $G$ commutes with the Galois action. Then $f_*\OO_Z \in \VectCon^G(X)_{\fin}$.
\end{prop}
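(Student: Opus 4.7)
My plan is to exhibit $f^*(f_*\OO_Z)$ explicitly as $\OO_Z \otimes_k k[H]$, then reduce to the elementary fact that every finite-dimensional $k$-representation of the finite group $H$ is finite in the Tannakian sense.

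First, by Proposition \ref{FunctorEquivalence} applied to the Galois covering $f \colon Z \to X$ with ambient group $G \times H$ and normal subgroup $H$ (the $H$-action on $X$ being trivial), the inverse image
\[
	f^* \colon \VectCon^G(X) \longrightarrow \VectCon^{G \times H}(Z)
\]
is a monoidal equivalence with quasi-inverse $(-)^H$. A monoidal equivalence both preserves and reflects finiteness of objects (the defining relation $p(\sV) \cong q(\sV)$ with $p \neq q$ in $\bZ_{\geq 0}[x]$ is transported by any monoidal functor and reflected by a fully faithful one), so it suffices to prove that $f^*(f_*\OO_Z)$ is finite in $\VectCon^{G \times H}(Z)$.

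Next I identify $f^*(f_*\OO_Z)$ with $\OO_Z \otimes_k k[H]$ in $\VectCon^{G \times H}(Z)$, where $k[H]$ carries the regular $H$-action and the trivial $G$-action. On any $U \in \sB_X$ with preimage $V = f^{-1}(U) \in \sB_Z$, the Galois isomorphism $\beta$ of Definition \ref{defn:galoisring} applied to $\OO_X(U) \hookrightarrow \OO_Z(V)$ provides
\[
	\OO_Z(V) \otimes_{\OO_X(U)} \OO_Z(V) \xrightarrow{\sim} \OO_Z(V) \otimes_k k[H],
\]
and these glue to an isomorphism of $\OO_Z$-modules $f^*(f_*\OO_Z) \cong \OO_Z \otimes_k k[H]$. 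One then verifies, using Lemma \ref{dercommuteG} (to handle the $\sD_Z$-action, since lifted derivations commute with the Galois action, forcing $\beta$ to be horizontal) together with the commutation of the $G$- and $H$-actions on $Z$, that this is an isomorphism of $(G \times H)\text{-}\sD_Z$-modules. This compatibility check is the only step of the proof requiring genuine computation and is where I expect the main technical obstacle to lie.

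With this identification in hand, the monoidal functor $\OO_Z \otimes_k - \colon \Mod^{\fd}_{k[H]} \to \VectCon^{G \times H}(Z)$ of Section \ref{sect:RH} sends finite objects to finite objects, so it remains to show that $k[H]$ is finite in $\Mod^{\fd}_{k[H]}$. Since $H$ is a finite group and $\charfield(k) = 0$, Maschke's theorem tells us that $\Mod^{\fd}_{k[H]}$ is semisimple with only finitely many isomorphism classes of simple objects. Consequently, for any $V \in \Mod^{\fd}_{k[H]}$ the set $S(V) = \bigcup_{n \geq 0} I(V^{\otimes n})$ is contained in this finite set, and so $V$ is finite by Lemma \ref{lem:finiteequiv}. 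In particular $k[H]$ is finite, which completes the proof.
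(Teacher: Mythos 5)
Your proof is correct, but it takes a different route from the paper's. The paper argues entirely on the base $X$: the Galois isomorphism $Z \times H \xrightarrow{\sim} Z \times_X Z$ directly yields $\sA \otimes \sA \cong \sA^{\oplus |H|}$ for $\sA = f_*\OO_Z$ in $\VectCon^G(X)$ (with $\sD$-linearity from Lemma \ref{dercommuteG} and $G$-equivariance from the commutation of the actions), whence $I(\sA^{\otimes k}) \subset I(\sA)$ for all $k$ and finiteness follows from Lemma \ref{lem:finiteequiv}. You instead pass to the cover via the monoidal equivalence of Proposition \ref{FunctorEquivalence}, identify $f^*f_*\OO_Z$ with $\OO_Z \otimes_k k[H]$, and reduce to the finiteness of $k[H]$ in $\Mod^{\fd}_{k[H]}$. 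The core geometric input is identical — your local isomorphism $\beta$ is exactly the affine incarnation of the paper's Galois isomorphism, and the compatibility checks you flag (Lemma \ref{dercommuteG} for horizontality, commutation of $G$ and $H$ for equivariance) are the same ones the paper performs — so neither route dodges the "genuine computation" you worry about. What your detour costs is the extra bookkeeping of verifying that the equivalence $f^*$ transports finiteness and that your local isomorphisms glue equivariantly over the cover; what it buys is the stronger structural statement $f^*f_*\OO_Z \cong \OO_Z \otimes_k k[H]$ in $\VectCon^{G\times H}(Z)$, which is consistent with (and essentially a special case of) Theorem \ref{mainthm1}(1) combined with Proposition \ref{prop:regrepn}, and a reduction to pure representation theory of the finite group $H$ (the same Maschke argument the paper later uses in Corollary \ref{cor:inflationfinite}). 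Two small remarks: you could shortcut your last step by observing $k[H] \otimes k[H] \cong k[H]^{\oplus |H|}$, giving $f = x^2$, $g = |H|x$ directly; and note that your use of the functor $\OO_Z \otimes_k -$ only requires its definition and monoidality, not the hypothesis $c_Z(Z)^G = k$ of Theorem \ref{FunctorkGGDX}, which is good since that hypothesis need not hold on $Z$.
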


\begin{proof}
	Let $\sA \coloneqq f_* \OO_Z$. Writing $H$ for the Galois group of $Z \rightarrow X$, the Galois isomorphism
	\[
		Z \times \underline{H} \xrightarrow{\sim} Z \times_X Z
	\]
	induces an isomorphism of $\sA \otimes \sA$ with the direct sum of $|H|$ copies of $\sA$ in $\VectCon^G(X)$, this isomorphism being $\sD$-linear as the action of $\sT_X$ and $H$ commute (Lemma \ref{dercommuteG}), and $G$-equivariant because the actions of $G$ and $H$ commute. Therefore, $I(\sA^{\otimes k}) \subset I(\sA)$ for all $k \geq 1$, and thus $\sA$ is finite by Lemma \ref{lem:finiteequiv}. 
\end{proof}

Now we would like to show the converse. We will make use of the following notion. For a set $\sS$ of objects of a rigid abelian tensor category $\sC$, we write $\sC(\sS)$ for the full subcategory of $\sC$ with objects those isomorphic to the underlying object of $\text{coker}(W \hookrightarrow V)$, where $W \hookrightarrow V$ is a monomorphism between objects of $\sC$, and $V$ is such that there exists a monomorphism
\[
	V \hookrightarrow \bigoplus_{i = 1}^r S_i
\]
for some $r \geq 1$ and $S_1, ... , S_r \in \sS$. This is an abelian subcategory of $\sC$, which is rigid whenever $\sS$ is closed under duality up to isomorphism. This is further a tensor subcategory of $\sC$ whenever the tensor product of any two objects of $\sS$ is isomorphic to a sub-object of a direct sum of elements of $\sS$.

\begin{prop}\label{prop:finitecomesfromcovering}
	Suppose that $\sV \in \VectCon^G(X)_{\fin}$ and $k$ is algebraically closed. Then there is some $G$-equivariant finite \'{e}tale Galois covering $f \colon Z \rightarrow X$ such that the action of $G$ commutes with the Galois action and $\sV$ is a direct summand of $f_* \OO_Z$.
\end{prop}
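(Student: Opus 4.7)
The plan is to invoke Tannakian duality on the rigid tensor subcategory of $\VectCon^G(X)$ generated by $\sV$, identify the corresponding Tannakian group, show that it is a finite group, and then produce the desired Galois cover as its affine torsor.

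Let $\sC \subset \VectCon^G(X)$ denote the smallest strictly full Tannakian subcategory containing $\sV$; concretely, $\sC$ consists of sub-quotients of finite direct sums of tensor products of $\sV$ and $\sV^*$. The fibre functor $\omega_z \colon \VectCon^G(X) \rightarrow \Vect_k$ restricts to a faithful exact $k$-linear tensor functor on $\sC$, so by Tannakian reconstruction one has an equivalence of rigid tensor categories $\sC \simeq \Rep_k(\Gamma)$, where $\Gamma \coloneqq \underline{\Aut}^{\otimes}(\omega_z|_{\sC})$. Since $\sV$ is a tensor generator of $\sC$, the object $\omega_z(\sV)$ is a faithful $\Gamma$-representation, so $\Gamma$ embeds as a closed subgroup of $\GL(\omega_z(\sV))$; in particular $\Gamma$ is algebraic of finite type over $k$.

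The heart of the argument, and the step I expect to be the main obstacle, is to show that $\Gamma$ is a finite group. By Corollary \ref{cor:finiteclosure} the object $\sV \oplus \sV^*$ is finite, so by Lemma \ref{lem:finiteequiv} the set $S(\sV \oplus \sV^*)$ of isomorphism classes of indecomposable summands appearing in tensor powers of $\sV \oplus \sV^*$ is finite. Translating through the Tannakian equivalence, the representation $W \coloneqq \omega_z(\sV \oplus \sV^*)$ of $\Gamma$ has only finitely many isomorphism classes of indecomposable summands appearing in all tensor powers $W^{\otimes n}$. A structural result in the representation theory of algebraic groups in characteristic zero then forces $\dim \Gamma = 0$: if $\dim \Gamma > 0$, then either the reductive quotient of $\Gamma$ has positive dimension (whence infinitely many inequivalent irreducible summands of tensor powers of $W$) or $\Gamma$ contains a non-trivial additive subgroup (whence infinitely many inequivalent non-semisimple indecomposable summands on restriction). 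In either case this contradicts finiteness. As $\charfield k = 0$, the zero-dimensional group scheme $\Gamma$ is smooth and hence reduced, and since $k$ is algebraically closed it is then a finite constant group.

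Having identified $\Gamma$ as a finite group, the construction of the Galois cover is formal. The right regular representation $k[\Gamma]$ is a commutative $\Gamma$-equivariant $k$-algebra, so under the Tannakian equivalence it corresponds to a commutative $\OO_X$-algebra $\sA \in \sC$ with a $\Gamma$-action by algebra automorphisms commuting with both the connection and the $G$-equivariant structure (the latter because $\Gamma$ is canonically attached to $\omega_z$ independently of $G$). Form the relative spectrum $f \colon Z \coloneqq \underline{\Spec}_X(\sA) \rightarrow X$; since $\sA$ is $\OO_X$-locally free of rank $|\Gamma|$ this exists as a scheme or rigid space. The Hopf algebra isomorphism $k[\Gamma] \otimes_k k[\Gamma] \cong k[\Gamma]^{\oplus |\Gamma|}$ in $\Rep_k(\Gamma)$ translates under the equivalence into $\sA \otimes_{\OO_X} \sA \cong \sA^{\oplus |\Gamma|}$, so $Z \rightarrow X$ is a $G$-equivariant finite \'etale Galois covering with group $\Gamma$ and $\sA = f_* \OO_Z$. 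Finally, because $\charfield k = 0$ every finite-dimensional $\Gamma$-representation is semisimple and $k[\Gamma]$ contains each irreducible with multiplicity equal to its dimension, so $\sV$ is a direct summand of $k[\Gamma]^{\oplus n}$ for some $n \geq 1$; replacing $Z$ by the $n$-fold disjoint union $Z^{\sqcup n}$, which is still a $G$-equivariant Galois covering (with group $\Gamma \wr S_n$), realises $\sV$ as a direct summand of $f_* \OO_{Z^{\sqcup n}}$.
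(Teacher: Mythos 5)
Your proposal follows essentially the same route as the paper: pass to the Tannakian subcategory generated by $\sV$, prove the Tannaka group is finite, and reconstruct the covering from the regular representation, taking a disjoint union of copies at the end to absorb multiplicities. The one substantive difference is the finiteness step: the paper simply invokes \cite[Prop.\ 2.20(a)]{DELMIL} (an affine group scheme is finite precisely when there is a single object of which every representation is a subquotient of a power, which the finiteness of $S(\sV)\cup S(\sV^*)$ supplies directly), whereas you give a hands-on argument ruling out positive-dimensional groups via $\bG_m$- and $\bG_a$-subgroups; both are valid, yours being more self-contained and the paper's shorter. Two small points. First, $Z^{\sqcup n}\rightarrow X$ has degree $n|\Gamma|$, so it cannot be Galois with group $\Gamma \wr S_n$, whose order is $|\Gamma|^n\, n!$; the correct Galois group is $\Gamma\times C_n$ with $C_n$ permuting the copies cyclically, as in the paper. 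Second, the paper devotes some care to checking that the $\sD_X$-module structure on $f_*\OO_Z$ coming from the \'etale covering (unique lifting of derivations, Lemma \ref{extendderivations}) coincides with the one $\sA$ carries as an object of $\sC$; this is a verification rather than a new idea, but it is needed to conclude that $\sV$ is a summand of $f_*\OO_Z$ \emph{in} $\VectCon^G(X)$ and should not be omitted entirely.
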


\begin{proof}
	Suppose that $\sV \in \VectCon^G(X)_{\fin}$, and write $\sC$ for the full abelian subcategory $\sC(\sS_0)$ of $\VectCon^G(X)$ as defined above, where $\sS_0$ is a set of representatives of $S(\sV) \cup S(\sV^*)$. The set $\sS_0$ is closed under duality up to isomorphism and the tensor product of any two objects of $S_0$ is isomorphic to a sub-object of a direct sum of elements of $\sS_0$ because $\sV$ is finite, hence $\sC$ is a rigid abelian tensor subcategory. In particular, with the fibre functor $\omega_z$, $\sC$ is a neutral Tannakian category in its own right. Therefore, by \cite[Thm.\ 2.11]{DELMIL}, there is an equivalence of categories
	\[
		F \colon \Rep_k(H) \rightarrow \sC
	\]
	for some affine group scheme $H$ over $k$, which is finite because every object of $\sC$ is isomorphic to a sub-quotient of a direct sum of copies of $T$, the direct sum of all elements of $\sS_0$ \cite[Prop.\ 2.20(a)]{DELMIL}. Note further, that because $k$ has characteristic $0$ and is algebraically closed, $H$ is a constant group. Because $\sV$ is in the essential image of $F$, $\sV$ is a direct summand of a finite direct sum of copies of $\sA \coloneqq F(\OO(H))$. Therefore, it is sufficient for us to show that there is a $G$-equivariant finite \'{e}tale Galois covering $f \colon Z \rightarrow X$ such that $\sA = f_* \OO_Z$. Indeed, given such a covering $f \colon Z \rightarrow X$ with Galois group $H$, then for any $n \geq 1$ we can consider the $G$-equivariant Galois covering given by
	\[
	h \colon Y \coloneqq \bigsqcup_{i = 1}^n Z \rightarrow X,
	\]
	with Galois group $C_n \times H$ where $C_n$ permutes the disjoint union, which has $h_*\OO_Y = \sA^{\oplus n}$.

	We construct such a covering $f \colon Z \rightarrow X$ as follows. First, interpreting $\Rep_k(\OO(H))$ as the category of finite dimensional $\OO(H)$-comodules, we note that the algebra multiplication
	\[
		m \colon \OO(H) \otimes_k \OO(H) \rightarrow \OO(H)
	\]
	is an $\OO(H)$-comodule homomorphism, and therefore we may apply $F$ to obtain
	\[
		F(m) \colon \sA \otimes_{\OO_X} \sA \rightarrow \sA,
	\]
	which gives $\sA$ the structure of a finite sheaf of $\OO_X$-algebras. We may therefore define
	\[
	f \colon Z \coloneqq \underline{\Spec}(\sA) \rightarrow X	
	\]
	to be the unique finite covering of $X$ with $f_* \OO_Z = \sA$ (as sheaves of $\OO_X$-algebras). We now define an action of the group $G$ on $Z$, which we give as a group homomorphism $\rho \colon G^{\op} \rightarrow \Aut_X(Z)$. For $g \in G$ and $U \in \sB_X$, $g^{\sA}_U \colon \sA(U) \rightarrow \sA(g(U))$ is an algebra morphism because $F(m)$ is $G$-equivariant, being a morphism in $\VectCon^G(X)$. Therefore we may define
	\[
		\rho(g)|_U \coloneqq \Spec(g^{\sA}_U) \colon Z|_{f^{-1}(U)} = \Spec(\sA(U)) \rightarrow \Spec(\sA(g(U))) = Z|_{f^{-1}(g(U))},
	\]
	which glue to define the required $\rho(g) \in \Aut_X(Z)$. It is direct to check that $\rho$ is a group homomorphism, and with respect to this action of $G$ the equality $f_* \OO_Z = \sA$ is one of $G$-$\OO_X$-modules.
	
	We further define an action of $H$ on $Z$ as follows. For any $h \in H$, we have the $\OO(H)$-comodule homomorphism
	\[
		r_h \colon \OO(H) \rightarrow \OO(H), \qquad r_h(\phi) = \phi(-h),
	\]
	which over all $H$ defines a group homomorphism $H \rightarrow \Aut_{\text{Comod}}(\OO(H))$. Therefore, we obtain a left action of $H$ on $\sA$ by letting $h \in H$ act by $F(r_h)$. Because each $r_h$ is furthermore a $k$-algebra homomorphism of $\OO(H)$, we obtain a group homomorphism $H \rightarrow \Aut(Z)$ and therefore an action $a \colon Z \times \underline{H} \rightarrow Z$ on the space $Z$, which commutes with the action of $G$ and for which the morphism $f \colon Z \rightarrow X$ is equivariant with respect to the trivial action of $H$ on $X$. To check that this is finite \'{e}tale Galois, first note that the morphism $\OO_X \rightarrow \OO_Z^H$ is an isomorphism because this is the image under $F$ of the morphism $k \rightarrow \OO(H)^H$ which is itself an isomorphism. The morphism
	\[
		p_Z \times a \colon Z \times \underline{H} \rightarrow Z \times_X Z
	\]
	corresponds to the morphism
	\[
		\sA \otimes_{\OO_X} \sA \rightarrow \sA \otimes_k \OO(H)
	\]
	in $\VectCon^G(X)$, which is an isomorphism precisely because
	\[
		\OO(H) \otimes_k \OO(H) \rightarrow \OO(H) \otimes_k \OO(H), \qquad \phi \otimes \psi \mapsto \sum_{h \in H} \phi \psi(-h) \otimes \delta_h
	\]
	is an isomorphism. The fact that $f \colon Z \rightarrow X$ is finite \'{e}tale follows from the fact that $p_Z$ is an isomorphism as, working locally with $U \in \sB_X$, the Galois extension of commutative $k$-algebras $\OO_X(U) \rightarrow \OO_Z(f^{-1}(U))$ is automatically finite \'{e}tale by \cite[Thm.\ 1.3(a)]{CHR}.

	Finally, we need to verify that the equality $f_* \OO_Z = \sA$ of $G$-$\OO_X$-modules is actually an equality of $G$-$\sD_X$-modules, for which it suffices to show that for any $U \in \sB_X$, the action of $\sT(U)$ on $\sA(U)$ is the same as the natural action of $\sT(U)$ on $(f_*\OO_Z)(U) = \OO(V)$, where $V \coloneqq f^{-1}(U)$.
	
	By definition of the $\sD$-module pushforward (Section \ref{section:inversedirectimage}), $\sT(U)$ acts on $\OO(V)$ via the $\OO(U)$-linear map $\psi \colon \sT(U) \rightarrow \sT(V)$, which is uniquely characterised amongst functions $\sT(U) \rightarrow \sT(V)$ by the property that $\iota \circ \partial = \psi(\partial) \circ \iota$ for all $\partial \in \sT(U)$, where $\iota \colon \OO(U) \hookrightarrow \OO(V)$ denotes the inclusion map (Lemma \ref{extendderivations}).

	For the action of $\sT(U)$ on $\sA(U)$, note that $\sT(U)$ acts via derivations on $\sA(U)$ because the multiplication map $\sA \otimes_{\OO_X} \sA \rightarrow \sA$ is a morphism in $\VectCon^G(X)$, and write $\phi \colon \sT(U) \rightarrow \Der_k(\sA(U))$ for the induced map. Then, noting that $F(k) = \OO_X$, applying $F$ to the inclusion $k \hookrightarrow \OO(H)$ we see that $\OO_X \hookrightarrow \sA$ is a morphism in $\VectCon^G(X)$ and thus $\iota \colon \OO(U) \hookrightarrow \sA(U)$ is $\sD(U)$-linear. In particular, $\iota \circ \partial = \phi(\partial) \circ \iota$ for all $\partial \in \sT(U)$, and hence $\phi = \psi$.
\end{proof}

As an example application of Proposition \ref{prop:finitecomesfromcovering}, we have the following characterisation of finite objects in the category $\Rep^{\fd}_k(G)$ of finite-dimensional $k$-representations of $G$.

\begin{cor}\label{cor:inflationfinite}
	Suppose $G$ is a group and $k$ is a characteristic $0$ field. Then $V \in \Rep^{\fd}_k(G)$ is finite if and only if $V$ is inflation-finite: inflated from a representation of a finite quotient of $G$.
\end{cor}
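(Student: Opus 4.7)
The plan is to handle the two directions separately. For the forward direction (inflation-finite $\Rightarrow$ finite), suppose $V$ is inflated from a representation of a finite quotient $G/N$. Then every tensor power $V^{\otimes n}$ is again a $k[G/N]$-module, and by Maschke's theorem (applicable since $\charfield(k) = 0$ and $G/N$ is finite) each such module is semisimple. Since $k[G/N]$ has only finitely many isomorphism classes of simple modules, $S(V)$ is a finite set, and hence $V$ is finite by Lemma \ref{lem:finiteequiv}.

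For the reverse direction, the plan is to invoke Proposition \ref{prop:finitecomesfromcovering} with $X = \Spec k$. First I would reduce to the case that $k$ is algebraically closed: the relation $f(V) \cong g(V)$ is preserved by base change, so $V \otimes_k \bar{k}$ remains finite in $\Rep^{\fd}_{\bar k}(G)$; and any factorisation of $V \otimes_k \bar{k}$ through $G/N$ restricts to the same factorisation of $V$, since the subgroup acting trivially on $V \otimes_k \bar{k}$ also acts trivially on the subspace $V$. Once $k$ is algebraically closed, I would identify $\Rep^{\fd}_k(G) = \VectCon^G(\Spec k)$: the relative cotangent sheaf $\Omega^1_{\Spec k / k}$ vanishes, forcing $\sD_{\Spec k} = \OO_{\Spec k}$; and the only $k$-scheme action of $G$ on $\Spec k$ is trivial, so $c_X(X)^G = k$ is automatic. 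Proposition \ref{prop:finitecomesfromcovering} then provides a $G$-equivariant finite \'{e}tale Galois covering $f \colon Z \to \Spec k$ with Galois group $H$ commuting with the $G$-action, together with a realisation of $V$ as a direct summand of $f_*\OO_Z$.

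To conclude I would analyse $f_*\OO_Z$ explicitly. Since $k$ is algebraically closed, $Z$ is a disjoint union of $|H|$ copies of $\Spec k$ permuted freely and transitively by $H$, so $f_*\OO_Z \cong k[H]$ as a $k[H]$-module. The $G$-action on $Z$ commutes with the regular left $H$-action and acts trivially on each component (because $\Aut_k(\Spec k) = 1$), so it amounts to a permutation of components. The centraliser of the regular left action of $H$ on itself being the regular right action, this permutation must be given by right translation through some group homomorphism $\phi \colon G \to H$. Consequently $V$ embeds as a $G$-subrepresentation of $k[H]$ on which $G$ acts via $\phi$, and is therefore inflated from the finite quotient $G / \ker\phi$. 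The main obstacle in executing the plan is not conceptual but rather the concrete identification of the $G$-action on $f_*\OO_Z$ with right translation through this centraliser-of-the-regular-representation argument; everything else follows formally from Proposition \ref{prop:finitecomesfromcovering} and the base change step.
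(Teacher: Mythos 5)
Your proof is correct and follows essentially the same route as the paper: the forward direction via the finiteness of the set of indecomposables for a finite group in characteristic zero, and the converse via base change to $\overline{k}$, the identification of $\Rep^{\fd}_{\overline{k}}(G)$ with $\VectCon^G(\Spec \overline{k})$, and Proposition \ref{prop:finitecomesfromcovering}. The only cosmetic difference is in the last step, where the paper simply observes that the $G$-action on $\OO(Z)$ factors through the finite group $\Aut_k(\OO(Z))$, whereas you identify it concretely as right translation through a homomorphism $G \to H$; both yield the same conclusion.
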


\begin{proof}
	Firstly, any $k$-representation of a finite group $H$ is finite because there are only finitely many indecomposable representations of $H$ (as $k$ has characteristic $0$). In particular, as inflation preserves indecomposability, any inflation-finite representation is finite. Conversely, suppose that $V \in \Rep^{\fd}_k(G)$ is finite. Note that the base change $V_{\overline{k}}$ is also finite, and to show $V$ is inflation-finite it is sufficient to show that the same is true of $V_{\overline{k}}$. Taking $X = \Spec(\overline{k})$ with trivial action of $G$, taking global sections identifies $\VectCon^G(X)$ with $\Rep_{\overline{k}}^{\fd}(G)$ and we may apply Proposition \ref{prop:finitecomesfromcovering} to see that $V_{\overline{k}}$ is a sub-representation of $A = \OO(Z)$, for some finite \'{e}tale $G$-equivariant covering $Z \rightarrow X$. The action of $G$ on $A$ factors through the group $\Aut_k(A)$, which is finite because $A/k$ is \'{e}tale, and thus we see that $A$ and hence $V_{\overline{k}}$ are inflation-finite.
\end{proof}

\section{\texorpdfstring{$\sD$-Modules and Galois Coverings}{D-Modules and Galois Coverings}}\label{sect:DmodGalExt}

In this section we state our main result concerning Galois coverings and $\sD$-modules, Theorem \ref{mainthm1}. We specialise our geometric framework of the previous sections further and suppose that $\charfield(k) = 0$, $G$, $H$ and $N$ are groups, $N \triangleleft H$ a normal subgroup of $H$,
\begin{enumerate}
	\item[\textbf{(A)}]
	$f \colon X \rightarrow Y$ is a Galois covering of smooth schemes over $k$ with Galois group $N$,
	\item[\textbf{(B)}] 
	$f \colon X \rightarrow Y$ is a Galois covering of smooth rigid spaces over $k$ with Galois group $N$,
\end{enumerate}
and $X$ and $Y$ have an action of the abstract group $G \times H$ which extends the action of $N$ and for which $f \colon X \rightarrow Y$ is equivariant.

\subsection*{The Functor $\Hom_{k[N]}(-, f_* \OO_X)$} In this section we consider the contravariant functor
\[
	\Hom_{k[N]}(-, f_* \OO_X) : \Mod_{k[H]}^{\fd} \rightarrow \VectCon^{G \times H / N}(Y),
\]
defined as follows. The sheaf
\[
\Hom_{k}(V, f_* \OO_X) \colon U \mapsto \Hom_{k}(V, f_* \OO_X(U))
\]
on $Y$ has a natural structure as a $(G \times H)\text{-}\sD_Y$-module, where the $\sD_Y$-module structure is defined, for $U \in \sB_Y$, $\partial \in \sT_Y(U)$, and local section $\phi \in \Hom_{k}(V, f_* \OO_X(U))$, by
\[
	\partial \cdot \phi \coloneqq \partial \circ \phi,
\]
where $\partial$ acts on $f_* \OO_X(U)$ through the $\sD_Y$-module structure on $f_* \OO_X$ as described in Section \ref{section:inversedirectimage}. The equivariant structure on $\Hom_{k}(V, f_* \OO_X)$ is defined by viewing $V$ as a $k[G \times H]
$-module with trivial action of $G$, and setting, for any admissible open subset $U \subset Y$ and $g \in G \times H$,
\[
	g^{\Hom_{k}(V, f_* \OO_X)}_U(\phi) \coloneqq g^{f_*\OO_X}_U \circ \phi \circ g^{-1},
\]
using the natural structure of $f_*\OO_X$ as a $G \times H$-equivariant sheaf. Because $N$ is normal in $G \times H$, this induces the structure of a $G \times H / N$-equivariant sheaf on
\[
\Hom_{k[N]}(V, f_* \OO_X) \colon U \mapsto \Hom_{k[N]}(V, f_* \OO_X(U)),
\]
which is further a $(G \times H/N)\text{-}\sD_Y$-module, noting the the $\sD_Y$-action on $\Hom_{k}(V, f_* \OO_X)$ restricts to an action of $\sD_Y$ on $\Hom_{k[N]}(V, f_* \OO_X)$ by Lemma \ref{dercommuteG}.

\begin{thm}\label{mainthm1}
	Suppose that $G, H, N$, and $f \colon X \rightarrow Y$ are as described above. Then:
	\begin{enumerate}
		\item For $V \in \Mod_{k[H]}^{\fd}$, there is a natural isomorphism
		\[
		\Hom_{k[N]}(V, f_* \OO_X) \cong (\OO_X \otimes_k V^* )^N,
		\]
		\item The functor $\Hom_{k[N]}(-, f_* \OO_X)$ is exact, monoidal and faithful, and commutes with duals, symmetric powers, exterior powers, and determinants.
		\item For $V \in \Mod_{k[H]}^{\fd}$,
		\[
			\rank_Y (\Hom_{k[N]}(V, f_* \OO_X)) = \dim_k(V).
		\]
		\item If $H=N$, then $\Hom_{k[H]}(V, f_* \OO_X) \in \VectCon^{G}(Y)_{\fin}$ for any $V  \in \Mod_{k[H]}^{\fd}$.
	\end{enumerate}
	\noindent Suppose additionally that $c_X(X)^G = k$. Then:
	\begin{enumerate}
		\item[(5)] The functor $\Hom_{k[N]}(-, f_* \OO_X)$ is full,
		\item[(6)] For any $\sM \in \VectCon^{G \times H / N}(Y)$,
		\[
			\Hom_{(G \times H)\text{-}\sD_X}(\OO_X, f^{*}\sM) \leq \rank_Y(\sM),
		\]
		and $\sM$ is in the essential image of $\Hom_{k[N]}(V, f_* \OO_X)$ if and only if this is an equality.
	\item[(7)] The essential image of $\Hom_{k[N]}(V, f_* \OO_X)$ is closed under sub-quotients.
	\end{enumerate}
\end{thm}

\begin{proof}
	First point (1), let $U \in \sB_Y$ and note that because $V$ is finite dimensional there is a $\OO_Y(U)[N]$-module isomorphism
	\[
		\OO_X(f^{-1}(U)) \otimes_k V^* \xrightarrow{\sim} \Hom_{k}(V, \OO_X(f^{-1}(U))), \qquad x \otimes \lambda \mapsto \lambda(-)x.
	\]
	When we take $N$-invariants, we obtain an isomorphism of $\OO_Y(U)$-modules, 
	\[
	(\OO_X(f^{-1}(U)) \otimes_k V^*)^N \xrightarrow{\sim} \Hom_{k[N]}(V, \OO_X(f^{-1}(U))),
	\]
	which is furthermore $\sD_Y(U)$-linear as for any $\partial \in \sT_Y(U)$,
	\[
		\partial \cdot (x \otimes \phi) = \partial(x) \otimes \phi \mapsto \phi(-) \partial(x) = \partial \cdot (\phi(-)x).
	\]
	The $G \times H/N$-linearity follows from the commutativity of
\[\begin{tikzcd}
	{(\OO_X(f^{-1}(U)) \otimes_k V^*)^N} & {\Hom_{k[N]}(V, \OO_X(f^{-1}(U)))} \\
	{(\OO_X(f^{-1}(g(U))) \otimes_k V^*)^N} & {\Hom_{k[N]}(V, \OO_X(f^{-1}(g(U))))}
	\arrow[from=1-1, to=1-2]
	\arrow["{g^{\OO_X}_{f^{-1}(U)} \otimes g^*}", from=1-1, to=2-1]
	\arrow["{g^{\OO_X}_{f^{-1}(U)} \circ - \circ g^{-1}}", from=1-2, to=2-2]
	\arrow[from=2-1, to=2-2]
\end{tikzcd}\]
and these isomorphisms glue to give point (1). The functor $(\OO_X \otimes_k - )^N$ is the composition
	\[
	 	\Mod_{k[H]}^{\fd} \rightarrow \VectCon^{G \times H}(X) \xrightarrow{\sim} \VectCon^{G \times H / N}(Y),
	\]
	of the functors of Theorem \ref{FunctorkGGDX} and Proposition \ref{FunctorEquivalence}. As the Galois equivalence preserves rank this gives point (3). Both of these functors are exact, monoidal and faithful by Theorem \ref{FunctorkGGDX}(1), and when $c_X(X)^G = k$ both are full with the essential image is closed under sub-quotients. This gives statements (5), (7), and the first part of point (2). Statement (6) is simply a restatement of Theorem \ref{FunctorkGGDX}(3), and statement (4) follows directly from Lemma \ref{prop:galoisisfinite}. For the remaining statements of point (2), if $X$ is in either $\Mod_{k[H]}^{\fd}$ or $\VectCon^{G \times H / N}(Y)$, then the symmetric power $S^n(X)$ is the coequaliser of the $n!$ symmetries $X^{\otimes n} \rightarrow X^{\otimes n}$, and the $n$th exterior power can be described as the image of the antisymmetrisation map
	\[
	\sum_{\sigma \in S_n} (-1)^{\text{sgn}(\sigma)} \sigma \colon X^{\otimes n} \rightarrow X^{\otimes n},
	\]
	both of which are preserved by $\Hom_{k[N]}(V, f_* \OO_X)$. In particular, for $X$ of constant rank $n$, the determinant $\det(X) = \wedge^n X$ of $X$ is preserved. Duals are also preserved, as the functor $\Hom_{k[N]}(V, f_* \OO_X)$ is a tensor functor between rigid tensor categories.
\end{proof}

\subsection{The Image of the Regular Representation}\label{sect:imregrep} Suppose now that the quotient map $H \rightarrow H / N$ has a section $s \colon H / N \rightarrow H$. We write $H_s = s(H/N)$ for the corresponding subgroup of $H$ with $N \cap H_s = 1$ and $N H_s = H$, or in other words $H = N \rtimes H_s$.

With respect to this section, we can view $k[N]$ as a left $k[H]$ module where $n \in N$ acts by left multiplication on $k[N]$ and $h \in H_s$ acts on $n \in N$ through the group homomorphism $H_s \rightarrow \Aut_{\Alg_k}(k[N])$ defined by $h * n = hnh^{-1}$. One can check this defines an action of the semi-direct product $H = N \rtimes H_s$.

Similarly, with respect to this section, we view the direct image $f_* \OO_X$ as a $(G \times H/N)\text{-}\sD_Y$-module, through the natural $(G \times H)\text{-}\sD_Y$-module structure on $f_* \OO_X$ and composing along the section $s \colon H / N \rightarrow H$.

\begin{eg}\label{eg:HNnosection}
In the important special case that $H = N$, the trivial section is the unique choice for $s$, for which the action of $k[N]$ on itself is by left multiplication and $f_* \OO_X$ is equipped with its natural $G\text{-}\sD_Y$-module structure.
\end{eg}

We now show that for a given section $s$ as above, these two structures correspond under the functor $\Hom_{k[N]}(-, f_* \OO_X)$ in the sense that $k[N]$ (with this action of $k[H]$, which is dependent on $s$) is sent to the $(G \times H/N)\text{-}\sD_Y$-module $f_* \OO_X$ (the $G \times H/N$-equivariant structure being also dependent on $s$). In the following we also make use of the isomorphism
\[
	k[N]^{H/N, \op} \xrightarrow{\sim} \End_{k[H]}(k[N]),
\]
which is obtained as the restriction of the isomorphism
\[
k[N]^{\op} \xrightarrow{\sim} \End_{k[N]}(k[N])
\]
defined by the action of $k[N]$ on itself by right multiplication. Here $H/N$ is considered to act on $k[N]$ through the section $s \colon H / N \rightarrow H_s$ composed with the group homomorphism $H_s \rightarrow \Aut_{\Alg_k}(k[N])$, $h * n = hnh^{-1}$, as defined above.

\begin{prop}\label{prop:regrepn}
	The isomorphism 
	\[
		\Hom_{k[N]}(k[N], f_* \OO_X) \xrightarrow{\sim} f_* \OO_X, \qquad \phi \mapsto \phi(1),
	\]
	is $(G \times H/N)\text{-}\sD_Y$-linear, and the functorially induced isomorphism (cf.\ Theorem \ref{mainthm1}(2,5))
	\[
	k[N]^{H/N} \xrightarrow{\sim} \End_{k[H]}(k[N])^{\op} \xrightarrow{\sim} \End_{(G\times H/N)\text{-}\sD_Y}(f_* \OO_X)
	\]
	coincides with the natural action of $k[N]$ on $f_*\OO_X$.
\end{prop}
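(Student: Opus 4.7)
The plan is to verify both statements by direct computation, tracing through the definitions.

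For the first statement, the standard evaluation isomorphism $\Hom_{k[N]}(k[N], M) \xrightarrow{\sim} M$, $\phi \mapsto \phi(1)$, valid for any left $k[N]$-module $M$, applied sectionwise gives a $k$-linear sheaf isomorphism. It remains to check compatibility with the $\OO_Y$-module, $\sD_Y$-module, $G$-equivariant, and $H/N$-equivariant structures. The $\OO_Y$- and $\sD_Y$-linearity are immediate from the definitions $(a\cdot\phi)(1) = a\phi(1)$ and $(\partial\cdot\phi)(1) = \partial(\phi(1))$. The $G$-equivariance follows because $G$ acts trivially on $k[N]$, so $(g^{\sH}\phi)(1) = g^{f_*\OO_X}(\phi(g^{-1}\cdot 1)) = g^{f_*\OO_X}(\phi(1))$. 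The one genuinely substantive point is $H/N$-equivariance: for $h \in H/N$ acting on $\sH^N$ through $s(h) \in H_s$, one has $(s(h)^{\sH}\phi)(1) = s(h)^{f_*\OO_X}(\phi(s(h)^{-1}\cdot 1))$, and $s(h)^{-1}\cdot 1 = s(h)^{-1}\cdot 1\cdot s(h) = 1$, because the $H_s$-action on $k[N]$ is by conjugation and hence fixes the identity.

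For the second statement, the functorial isomorphism of rings is the composition
\[
k[N]^{H/N,\op} \xrightarrow{\sim} \End_{k[H]}(k[N]) \xrightarrow{\Hom_{k[N]}(-,f_*\OO_X)} \End_{(G\times H/N)\text{-}\sD_Y}(f_*\OO_X)^{\op},
\]
where the second arrow uses the identification from the first part and is an isomorphism by the fully faithfulness of Theorem \ref{mainthm1}(2) applied to $V = k[N]$. I would trace an element $n \in k[N]^{H/N}$ through explicitly: its image in $\End_{k[H]}(k[N])$ is right multiplication $\rho_n\colon m \mapsto mn$, and the condition that $\rho_n$ commutes with the $H_s$-action on $k[N]$ (i.e.\ $h m h^{-1} n = h(mn)h^{-1}$) is exactly the condition $n \in k[N]^{H/N}$. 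Applying $\Hom_{k[N]}(-, f_*\OO_X)$ to $\rho_n$ yields the endomorphism $\phi \mapsto \phi \circ \rho_n$ of $\Hom_{k[N]}(k[N], f_*\OO_X)$; under the isomorphism $\phi \mapsto \phi(1)$ from part one, this transports to $\phi(1) \mapsto \phi(\rho_n(1)) = \phi(n) = n\cdot\phi(1)$ on $f_*\OO_X$, i.e.\ left multiplication by $n$, which is by definition the natural $k[N]$-action.

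I do not expect any serious obstacle: the entire proof is bookkeeping. The only moment requiring a touch of care is recognising that the conjugation form of the $H_s$-action on $k[N]$ enters twice in parallel ways, ensuring both that $s(h)^{-1}\cdot 1 = 1$ in part one and that right multiplication by an element of $k[N]^{H/N}$ is $H_s$-equivariant in part two; once this is noted, the rest is a one-line diagram chase.
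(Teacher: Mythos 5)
Your proposal is correct and follows essentially the same route as the paper: the paper likewise reduces everything to evaluation at $1$, checks $H/N$-equivariance by observing that the conjugation action of $H_s$ fixes the identity of $k[N]$ (the paper writes a general coset representative $h = s(h)n$ and cancels the $N$-part, which amounts to your computation), and identifies the functorial image of $n$ as $\phi \mapsto \phi \circ r_n$, hence $\phi(1) \mapsto \phi(n) = n^{f_*\OO_X}(\phi(1))$. No gaps.
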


\begin{proof}
	The given isomorphism is easily seen to be $G\text{-}\sD_Y$-linear. To see that it is $H/N$-linear, let $hN \in H/N$, with $h = s(h) n$ the unique representation of $h$ in $N \rtimes H_s$. By definition $hN$ acts on a local section $\phi$ over $U \in \sB_Y$ by
	\begin{align*}
		(h*\phi)(-) &= (s(h)n)^{f_* \OO_X}_U \phi(n^{-1} s(h)^{-1} (-) s(h)), \\
		&= s(h)^{f_* \OO_X}_U \phi(s(h)^{-1} (-) s(h)), \\
		&\mapsto s(h)^{f_* \OO_X}_U(\phi(1)),
	\end{align*}
	which is exactly the action of $hN$ on $f_* \OO_X$. Finally, the functorially induced action of $n \in k[N]$ on $\phi$ maps $\phi \mapsto \phi \circ r_n$, which when evaluated at $1$ is $\phi(n) = n^{f_* \OO_X}_U(\phi(1))$.
\end{proof}

\begin{eg}\label{eg:HNequalisom}
	When $H = N$ as in Example \ref{eg:HNnosection}, Proposition \ref{prop:regrepn} becomes the statement that
	\[
		\Hom_{k[H]}(k[H], f_* \OO_X) \xrightarrow{\sim} f_* \OO_X, \qquad \phi \mapsto \phi(1),
	\]
	is a $G\text{-}\sD_Y$-linear isomorphism, where $k[H]$ is viewed as a left $k[H]$-module, and the functorially induced isomorphism
	\[
		k[H] \xrightarrow{\sim} \End_{G\text{-}\sD_Y}(f_* \OO_X)
	\]
	agrees with the natural action of $k[H]$ on $f_* \OO_X$. In particular, when $H = N$, by Theorem \ref{mainthm1}(7) the essential image of the functor
	\[
		\Hom_{k[H]}(-, f_* \OO_X) : \Mod_{k[H]}^{\fd} \rightarrow \VectCon^{G}(Y)
	\]
	can be described as the full subcategory of objects which admit a $G\text{-}\sD_Y$-linear embedding into $(f_*\OO_X)^{\oplus n}$ for some $n \geq 1$.
\end{eg}

\subsection{Decomposing $f_* \OO_X$} For each $\rho \in \Irr(N)$ with character $\chi_{\rho}$, we let
\[
	e_{\rho} = \frac{\dim_k(\rho)}{d_{\rho} \cdot |N|} \sum_{n \in N} \chi_{\rho}(n^{-1}) n \in k[N],
\]
be the corresponding central primitive idempotent, where 
\[
d_{\rho} \coloneqq \dim_k \End_{k[N]}(\rho).
\]
Under the assumptions of this section, we may forget $H$ and apply the results of this section to the new triple $(G',H',N') \coloneqq (G,N,N)$. Considering Theorem \ref{mainthm1}, Proposition \ref{prop:regrepn} and Example \ref{eg:HNequalisom}, we have the following explicit decomposition of $f_* \OO_X$ as an object of $\VectCon^{G}(Y)$, due to the fact that $f_* \OO_X$ is semisimple with endomorphism ring $k[N]$ whenever $c_X(X)^G = k$.

\begin{cor}\label{maincor2}
	There is a decomposition
	\[
			f_* \OO_X = \bigoplus_{\rho \in \Irr(N)} e_{\rho} \cdot f_* \OO_X.
	\]
	in $\VectCon^G(Y)$. When $c_X(X)^G = k$:
	\begin{enumerate}
		\item $f_* \OO_X$ is a semisimple object of $\VectCon^G(Y)$ and this decomposition is isotypic.
		\item For any $\rho \in \Irr(N)$, there is a one-to-one correspondence between decompositions of $e_{\rho}$ as a sum of primitive orthogonal idempotents in $e_{\rho} \cdot k[N]$ and decompositions of $e_{\rho} \cdot f_* \OO_X$ as a direct sum of irreducible objects of $\VectCon^G(Y)$. Here we send,
		\[
			e_{\rho} = \sum_{i = 1}^{\dim(\rho) / d_{\rho}} e_i \qquad \longmapsto \qquad e_{\rho} \cdot f_* \OO_X = \bigoplus_{i = 1}^{\dim(\rho) / d_{\rho}} e_i \cdot f_* \OO_X.
		\]
		\item For any irreducible $\OO_X$-coherent $G \text{-}\sD_Y$-submodule $\sM$ of $e_\rho \cdot f_* \OO_X$, $\sM = e \cdot f_* \OO_X$ for some primitive idempotent $e$ of $e_{\rho} \cdot k[N]$, and the natural $k$-algebra homomorphisms
		\begin{align*}
		e \cdot k[N] \cdot e &\rightarrow \End_{G \text{-} \sD_Y}(e \cdot f_* \OO_X), \\
		e_{\rho} \cdot k[N] &\rightarrow \End_{G \text{-} \sD_Y}(e_{\rho} \cdot f_* \OO_X),
		\end{align*}
		are isomorphisms.
	\end{enumerate}
\end{cor}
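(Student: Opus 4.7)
The plan is to deduce Corollary \ref{maincor2} formally from Theorem \ref{mainthm1} and Proposition \ref{prop:regrepn}, specialised to $H = N$. Writing $F \coloneqq \Hom_{k[N]}(-, f_* \OO_X) \colon \Mod_{k[N]}^{\fd} \to \VectCon^G(Y)$, Theorem \ref{mainthm1} provides that $F$ is a contravariant, exact, fully faithful functor with essential image closed under sub-quotients, and Proposition \ref{prop:regrepn} identifies $F(k[N]) = f_*\OO_X$ together with the functorial action of $\End_{k[N]}(k[N]) \cong k[N]^{op}$ as the natural left $k[N]$-action on $f_*\OO_X$. The closure-under-sub-quotients property, combined with the fact that $F$ restricts to an equivalence onto its essential image, immediately implies that $F$ preserves and reflects irreducibility, and hence (by Maschke's theorem) $f_*\OO_X = F(k[N])$ is semisimple in $\VectCon^G(Y)$.

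For part (1), I would apply $F$ to the canonical decomposition $k[N] = \bigoplus_{\rho \in \Irr(N)} k[N] e_\rho$ of the regular module by central primitive idempotents, yielding $f_*\OO_X = \bigoplus_\rho F(k[N] e_\rho)$. The summand $F(k[N] e_\rho)$ is identified with $e_\rho \cdot f_*\OO_X$: the canonical surjection $k[N] \twoheadrightarrow k[N] e_\rho$, being right multiplication by the central element $e_\rho$, corresponds under the contravariant $F$ to a monomorphism which, under the identification $F(k[N]) = f_*\OO_X$, $\phi \mapsto \phi(1)$, is precisely the inclusion of the image of $e_\rho$ acting naturally on $f_*\OO_X$.

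For part (2), given any decomposition $e_\rho = \sum_i e_i$ into orthogonal primitive idempotents in $e_\rho k[N]$, I would observe that this is equivalent to a direct sum decomposition of left $k[N]$-modules $k[N] e_\rho = \bigoplus_i k[N] e_i$, with each summand simple (isomorphic to $\rho$) precisely because each $e_i$ is primitive. Applying $F$ and using the same identification $F(k[N] e) = e \cdot f_*\OO_X$ of part (1) for any idempotent $e$, this transports to the desired bijection with decompositions of $e_\rho \cdot f_*\OO_X$ into irreducibles. For the first half of part (3), an irreducible subobject $\sM \subset e_\rho \cdot f_*\OO_X$ must be of the form $F(V)$ for some $V$, and $V$ is necessarily a simple quotient of $k[N] e_\rho$, hence isomorphic to $\rho$, realised by a projection onto a simple direct summand. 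A standard Wedderburn-type argument identifies this simple summand as $k[N] e$ for a primitive idempotent $e \in e_\rho k[N]$, obtained as the image of $e_\rho$ under the projection, giving $\sM = e \cdot f_*\OO_X$.

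The main subtlety to address lies in the endomorphism ring isomorphisms of part (3). Full faithfulness of the contravariant functor $F$ gives an anti-isomorphism $\End_{G\text{-}\sD_Y}(e \cdot f_*\OO_X) \cong \End_{k[N]}(k[N] e)$, and the standard anti-isomorphism $\End_{k[N]}(k[N] e) \cong (e k[N] e)^{op}$ induced by $\phi \mapsto \phi(e)$ then combines to yield $\End_{G\text{-}\sD_Y}(e \cdot f_*\OO_X) \cong e k[N] e$ as rings. A short trace through these identifications, completely analogous to the one carried out in the proof of Proposition \ref{prop:regrepn}, verifies that this isomorphism is indeed given by the natural left action of $e k[N] e$ on $e \cdot f_*\OO_X$; the case of the central idempotent $e_\rho$ is identical, with $e_\rho k[N] e_\rho = e_\rho k[N]$. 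The hard part will be keeping careful track of the two appearances of ``opposite ring'' (one from contravariance of $F$, one from the endomorphism ring of a cyclic module), which cancel exactly so that the functorial isomorphism matches the natural action.
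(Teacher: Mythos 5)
Your proposal is correct and follows exactly the route the paper intends: the corollary is derived by transporting the standard structure theory of the semisimple algebra $k[N]$ along the exact, fully faithful functor $\Hom_{k[N]}(-,f_*\OO_X)$ (with image closed under sub-quotients) using the identification $\Hom_{k[N]}(k[N],f_*\OO_X)\cong f_*\OO_X$ and the endomorphism-ring computation of Proposition \ref{prop:regrepn}. Your bookkeeping of the two opposite-ring twists (contravariance of the functor and $\End_{k[N]}(k[N]e)\cong (ek[N]e)^{\op}$), which cancel to give the natural left action, is precisely the point the paper leaves implicit.
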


\begin{remark}
When $G$ is trivial and $H = N$, the setup of this section is that $f \colon X \rightarrow Y$ is a finite \'{e}tale Galois morphism of smooth spaces over $k$, and the assumption that $c_X(X) = c_X(X)^G=k$ is simply that $X$ is geometrically connected by Corollary \ref{cor:cXuequalsk1} and Corollary \ref{cor:cXuequalsk2}.
\end{remark}

\begin{remark}\label{rmk:geomconnnec}
	The condition that $c_X(X)^G = k$ is in general necessary in Theorem \ref{FunctorkGGDX}, Theorem \ref{mainthm1} and Corollary \ref{maincor2}. For example, suppose that $G$ is trivial and $H = N$, and consider $L$ a finite Galois extension of $k$ of degree $n$ with Galois group $N$. Then the corresponding extension (which also makes sense in case (B)),
	\[
		f \colon X = \Spec(L) \rightarrow \Spec(k) = Y,
	\]
	is a Galois extension with Galois group $N$, with $X$ connected but not geometrically connected whenever $n > 1$, as then $c_X(X) = L > k$. Because $L/k$ is separable, $\Der_k(L) = 0$, $\sD_X(X) =L$ and $\sD_Y(Y) = k$. The dual functor $(\OO_X \otimes_k -)^N$ (cf.\ Theorem \ref{mainthm1}(1)) becomes the composition
\[\begin{tikzcd}
	{\Mod_{k[N]}^{\fd}} & {\Mod_{L \rtimes N}^{\fd}} \\
	& {\Vect_k^{\fd}}
	\arrow["{L \otimes_k -}", from=1-1, to=1-2]
	\arrow["{(\OO_X \otimes_k -)^N}"', from=1-1, to=2-2]
	\arrow["{(-)^N}"', shift right, from=1-2, to=2-2]
	\arrow["{f^* = L \otimes_k -}"', shift right, from=2-2, to=1-2]
\end{tikzcd}\]
	where the vertical arrows are the equivalence of Galois descent. Unlike Example \ref{eg:HNequalisom}, the map
	\[
		k[N] \rightarrow \End_{\sD_Y}(f_* \OO_X) = \End_k L
	\]
	will never be an isomorphism when $n > 1$, as the left-hand side has $k$-dimension $n$ and the right-hand side $k$-dimension $n^2$. The essential image of $(\OO_X \otimes_k -)^N$ also need not be closed under sub-objects, as this functor need not preserve irreducibility: when $N$ is non-abelian $k[N]$ has length strictly less than $n$ as a $k[N]$-module, but $L$ has length $n$ as a $k$-vector space.
	\end{remark}

\subsection{Compatibility with Intermediate Coverings}

We are interested in the compatibility of $\Hom_{k[N]}(-, f_* \OO_X)$ when passing to Galois sub-covers. Suppose that $N_0 \triangleleft H$ with $N_0$ contained in $N$, and let $Z$ be the intermediate covering of $Y$ defined by $N_0$, with Galois group $N / N_0$. We have Galois coverings
\[
X \rightarrow Z \xrightarrow{\phi} Y	
\]
with Galois groups $N_0$ and $N / N_0$ respectively. Note that because the action of $G$ commutes with the action of $N$, there is an induced action of $G \times H / N_0$ on $Z$ for which $\phi \colon Z \rightarrow Y$ is $G \times H / N_0$-equivariant, and the Galois action of $N / N_0$ on $Z$ is through this action and the inclusion $N / N_0 \hookrightarrow H / N_0$. We have an inflation functor
\[
	\iota \colon \Mod^{\fd}_{k[H / N_0]} \rightarrow \Mod^{\fd}_{k[H]}.
\]
The next lemma relates the functors $\Hom_{k[N]}(-, f_* \OO_X)$ and $\Hom_{k[N/N_0]}(-, \phi_* \OO_Z)$.

\begin{lemma}\label{compatiblelemma}
Suppose that $N_0 \triangleleft H$ is a normal subgroup of $H$ which is contained in $N$, and $\phi \colon Z \rightarrow Y$ is the intermediate covering defined by $N_0$, with Galois group $N/N_0$. Then there is a canonical identification of $(G \times H/N)\text{-}\sD_Y$-modules
\[
	\Hom_{k[N]}(\iota(W), f_*\OO_X) \xrightarrow{\sim} \Hom_{k[N/N_0]}(W, \phi_* \OO_Z)
\]
which is natural in $W \in \Mod^{\fd}_{k[H / N_0]}$.
\end{lemma}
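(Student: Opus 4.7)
The plan is to exhibit the isomorphism by a routine ``inflation--invariants'' adjunction argument on each admissible open, and then verify that the construction is compatible with all the extra structures at hand.

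The first step is to identify $\phi_* \OO_Z$ canonically with $(f_* \OO_X)^{N_0}$ as a $(G \times H/N_0)\text{-}\sD_Y$-submodule of $f_* \OO_X$. Writing $\psi \colon X \rightarrow Z$ for the Galois covering with group $N_0$ (so $f = \phi \circ \psi$), the defining property $\OO_Z \xrightarrow{\sim} (\psi_* \OO_X)^{N_0}$ of a Galois covering and the equality $f_* = \phi_* \circ \psi_*$ give the required identification as sheaves of $\OO_Y$-modules. The induced $\sD_Y$-module structure on $(f_*\OO_X)^{N_0}$ agrees with that on $\phi_* \OO_Z$ by the construction of the pushforward in Section \ref{section:inversedirectimage} combined with Lemma \ref{dercommuteG} (so that the $\sT_Y$-action on $f_* \OO_X$ preserves the subsheaf of $N_0$-invariants), and likewise the equivariant structures agree because $N_0$ is normal in $H$.

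For the main step, fix $U \in \sB_Y$ and set $A \coloneqq \OO_X(f^{-1}(U))$, $B \coloneqq \OO_Z(\phi^{-1}(U)) = A^{N_0}$. Because $N_0$ acts trivially on $\iota(W)$, every $k[N]$-linear map $\iota(W) \rightarrow A$ automatically takes values in $A^{N_0} = B$, and thus factors through a unique $k[N/N_0]$-linear map $W \rightarrow B$. Conversely, composing any $k[N/N_0]$-linear map $W \rightarrow B$ with the inclusion $B \hookrightarrow A$ produces a $k[N]$-linear map $\iota(W) \rightarrow A$. These two operations are mutually inverse, yielding a $k$-linear bijection
\[
\Phi_{U,W} \colon \Hom_{k[N]}(\iota(W), f_*\OO_X(U)) \xrightarrow{\sim} \Hom_{k[N/N_0]}(W, \phi_* \OO_Z(U)).
\]

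The final step is to verify that the $\Phi_{U,W}$ assemble into a morphism of $(G \times H/N)\text{-}\sD_Y$-modules, natural in $W$. Naturality in $U$ (giving a morphism of sheaves) is immediate from the definition, since both sides use the restriction maps of $f_*\OO_X$, and $\phi_*\OO_Z$ is obtained by restricting these to $N_0$-invariants. The $\sD_Y$-linearity follows because on both sides the action of a local derivation $\partial$ is postcomposition $\phi \mapsto \partial \circ \phi$, and this postcomposition preserves the property of landing in $N_0$-invariants, again by Lemma \ref{dercommuteG}. The $(G \times H/N)$-equivariance is a similarly formal verification: on each $U$ the element $gN$ acts on a homomorphism by $\phi \mapsto g^{f_*\OO_X}_U \circ \phi \circ g^{-1}$, and the identification of $\phi_*\OO_Z$ with $(f_*\OO_X)^{N_0}$ intertwines these two actions. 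Naturality in $W \in \Mod^{\fd}_{k[H/N_0]}$ is clear from the pointwise description of $\Phi_{U,W}$. No real obstacle is expected: the statement is entirely formal once one has the identification $\phi_*\OO_Z = (f_*\OO_X)^{N_0}$, which is the only input that uses the Galois hypothesis in a non-trivial way.
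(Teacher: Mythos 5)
Your proposal is correct and follows essentially the same route as the paper: identify $\phi_*\OO_Z(U)$ with $\OO_X(f^{-1}(U))^{N_0}$ and observe that, since $N_0$ acts trivially on $\iota(W)$, every $k[N]$-linear map $\iota(W)\rightarrow \OO_X(f^{-1}(U))$ lands in the $N_0$-invariants, giving the identity of Hom-spaces. The paper compresses the remaining compatibility checks into ``it is direct to see''; your explicit verifications of the $\sD_Y$-linearity (via Lemma \ref{dercommuteG}) and the equivariance are exactly what that phrase is hiding.
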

\begin{proof}
For any admissible open subset $U$ of $Y$, $\phi_* \OO_Z(U) = \OO_X(f^{-1}(U))^{N_0}$, and therefore
\[
	\Hom_{k[N/N_0]}(W, \phi_* \OO_Z(U)) = \Hom_{k[N/N_0]}(W, \OO_X(f^{-1}(U))^{N_0}) = \Hom_{k[N]}(\iota(W), \OO_X(f^{-1}(U))).
\]
It is direct to see this defines a $(G \times H/N) \text{-}\sD_Y$-linear natural isomorphism.
\end{proof}

\subsection{Compatibility with Connected Components}

We are also interested in the compatibility of $\Hom_{k[N]}(-, f_* \OO_X)$ when passing to a connected component of a Galois covering.

\begin{lemma}\label{lem:conncompgaloisDmodule}
	Suppose that $f \colon X \rightarrow Y$ is as described at the start of Section \ref{sect:DmodGalExt}, with $H = N$ and $Y$ connected. Let $X_0$ be a connected component of $X$ stabilised by $G$, and write $f_0 \colon X_0 \rightarrow Y$ for the induced Galois extension with Galois group $H_0 \coloneqq \Stab_H(X_0)$ (cf.\ Lemma \ref{lemma:conncompgaloiscovering}). Then the diagram
\[\begin{tikzcd}
	& {\Mod_{k[H]}^{\fd}} \\
	{\Mod_{k[H_0]}^{\fd}} & {} & {\VectCon^G(Y)}
	\arrow[from=1-2, to=2-1]
	\arrow[from=1-2, to=2-3]
	\arrow[from=2-1, to=2-3]
\end{tikzcd}
\]
commutes up to natural isomorphism. Explicitly, if $e_0 \in \OO_X(X)$ is the idempotent corresponding to $X_0$,
	\[
		\Hom_{k[H]}(V, f_* \OO_X) \rightarrow \Hom_{k[H_0]}(V|_{H_0}, f_{0,*} \OO_{X_0}), \qquad \phi \mapsto e_0 \cdot \phi(-),
	\]
	is an isomorphism of $G \text{-} \sD_Y$-modules, natural in $V$.
	\end{lemma}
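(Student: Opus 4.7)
The plan is to exhibit an explicit inverse to the map using the decomposition of $X$ into connected components permuted by $H$. By Lemma~\ref{lemma:conncompgaloiscovering} applied to $f \colon X \to Y$ with $Y$ connected, the finite group $H$ acts transitively on the set of connected components of $X$ with stabiliser $H_0$ at $X_0$, giving $X = \bigsqcup_{[h] \in H/H_0} h(X_0)$. Consequently, the idempotents $\{h(e_0) : [h] \in H/H_0\}$ are mutually orthogonal and sum to $1$ in $\OO_X(X)$, and for any admissible open $U \subset Y$ there is an $\OO_Y(U)$-module decomposition
\[
f_* \OO_X(U) = \bigoplus_{[h] \in H/H_0} h(e_0) \cdot \OO_X(f^{-1}(U)),
\]
where the summand indexed by $[1]$ is canonically identified with $f_{0,*}\OO_{X_0}(U)$ via multiplication by $e_0$.

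Using this identification, I would first verify the given map $\phi \mapsto e_0 \cdot \phi(-)$ is well-defined and respects all relevant structure. The $k[H_0]$-linearity is immediate from $h_0 \cdot e_0 = e_0$ for $h_0 \in H_0$. For $G$-equivariance, each $g \in G$ stabilises $X_0$ by hypothesis, hence $g^{\OO_X}(e_0) = e_0$, so multiplication by $e_0$ commutes with the $G$-action. For $\sD_Y$-linearity, any local section $\partial \in \Theta_Y(U)$ lifts to a derivation in $\Theta_X(f^{-1}(U))$ which annihilates $e_0$ (differentiating $e_0^2 = e_0$ gives $\partial(e_0) = 2e_0\partial(e_0)$, hence $\partial(e_0) = 0$ in characteristic zero), so multiplication by $e_0$ commutes with the $\sD_Y$-action.

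The main step is constructing the inverse. Given $\psi \in \Hom_{k[H_0]}(V|_{H_0}, f_{0,*}\OO_{X_0})$, viewed via the inclusion $f_{0,*}\OO_{X_0}(U) \hookrightarrow f_*\OO_X(U)$ above, define
\[
\tilde{\psi}(v) \coloneqq \sum_{[h] \in H/H_0} h \cdot \psi(h^{-1} v).
\]
The sum is finite, and a direct check using $k[H_0]$-linearity of $\psi$ shows it is independent of the choice of coset representatives. A reindexing argument shows $\tilde{\psi}(h'v) = h' \tilde{\psi}(v)$ for all $h' \in H$, so $\tilde{\psi}$ is $k[H]$-linear. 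The orthogonality $e_0 \cdot h(e_0) = 0$ for $h \notin H_0$ immediately gives $e_0 \cdot \tilde{\psi}(v) = \psi(v)$, so one composition is the identity; the other composition follows because any $k[H]$-linear $\phi$ is recovered from $e_0\phi(-)$ by the formula $\phi(v) = \sum_{[h]} h(e_0)\phi(v) = \sum_{[h]} h \cdot (e_0 \cdot \phi(h^{-1}v))$ upon moving $h(e_0)$ inside using $H$-linearity.

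There is no serious obstacle: once the decomposition of $X$ into $H$-translates of $X_0$ is in hand, the construction of the inverse is forced, and the structural compatibilities are all formal consequences of the fact that $e_0$ is fixed by $G$, annihilated by $\sD_Y$, and orthogonal to its non-trivial $H$-translates. Naturality in $V$ is clear from the pointwise definitions of both the map and its inverse.
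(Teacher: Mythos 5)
Your proof is correct and is essentially the paper's argument: both rest on the decomposition of $X$ into the $H$-translates of $X_0$, the fact that lifted derivations kill the idempotent $e_0$ (so that projection onto $B_0$ is $\sD$-linear), and an induction/averaging identification of $\Hom_{k[H]}(V,B)$ with $\Hom_{k[H_0]}(V|_{H_0},B_0)$. Your explicit inverse $\tilde{\psi}(v)=\sum_{[h]\in H/H_0} h\cdot\psi(h^{-1}v)$ is exactly the unwound form of the paper's route through $B\cong\Ind_{H_0}^H B_0$ followed by Frobenius reciprocity, so the two proofs coincide up to presentation.
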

	
	\begin{proof}
		Because $G$ preserves the connected component $X_0$, this is an isomorphism of $G$-equivariant sheaves. To see that this morphism is $\sD_Y$-linear, let $U \in \sB_Y$, $A \coloneqq \OO_Y(U)$, $B \coloneqq \OO_X(f^{-1}(U))$, $B_0 \coloneqq \OO_X(f_0^{-1}(U))$, so $B_0 \subset B$, and $A \hookrightarrow B_0$, $A \hookrightarrow B$ are Galois extensions of commutative $k$-algebras with Galois groups $H_0$, $H$ respectively. It is sufficient for us to show that 
		\[
			\Hom_{k[H]}(V, B) \rightarrow \Hom_{k[H_0]}(V|_{H_0}, B_0), \qquad \phi \mapsto \pi_0(\phi(-)),	
		\]
		is an isomorphism of $\sD(A)$-modules, where $\pi_0 \colon B \rightarrow B_0$ is the projection. First, we show that this is an isomorphism. There is an isomorphism of $k[H]$-modules,
		\[
			B \rightarrow \Ind_{H_0}^H B_0 = \Hom_{k[H_0]}(k[H], B_0), \qquad b \mapsto \psi_b, \quad \psi_b(h) \coloneqq \pi_0(hb)
		\]
		for $h \in H$. Here $\Ind_{H_0}^H B_0$ is a left $k[H]$-module via $(g \cdot \sigma)(h) = \sigma(hg)$. The inverse is given by choosing left coset representatives $h_1, ... , h_k$ of $H / H_0$ and mapping
		\[
			\sigma \mapsto \sum_{i = 1}^k h_i \sigma(h_i^{-1}) \in B,
		\]
		and is independent of the choice of coset representatives. 
		Frobenius reciprocity gives us isomorphisms,
		\[
			\Hom_{k[H]}(V,\Ind_{H_0}^H B_0) \xleftrightarrow{\sim} \Hom_{k[H_0]}(V|_{H_0}, B_0),
		\]
		explicitly given by
		\[
			[f \colon V \rightarrow \Ind_{H_0}^H B_0] \mapsto \Phi(f), \qquad \Phi(f)(v) \coloneqq f(v)(1),
		\]
		and
		\[
			[\lambda \coloneqq V|_{H} \rightarrow B_0] \mapsto \Pi(\lambda), \qquad \Pi(\lambda)(v)(k) \coloneqq \lambda(kv).
		\]
		Then our map of interest is simply the composition of the induced isomorphisms,
		\[
			\Hom_{k[H]}(V,B) \xrightarrow{\sim} \Hom_{k[H]}(V,\Ind_{H_0}^H B_0) \xrightarrow{\sim} \Hom_{k[H_0]}(V|_{H_0}, B_0).
		\]
		We therefore are left with showing that the isomorphism is $\sD(A)$-linear. First note that for $e_0$ the idempotent of $B$ defining $B_0$, any $\partial \in \Der_k(B)$ satisfies $\partial(e_0) = 0$, and therefore, $\partial \mapsto \partial_{B_0}$ is a well-defined $B_0$-linear restriction map. There are $A$-linear maps $\psi \colon \Der_k(A) \rightarrow \Der_k(B)$, $\psi_0 \colon \Der_k(A) \rightarrow \Der_k(B_0)$ provided by Lemma \ref{extendderivations}, and by the uniqueness part of Lemma \ref{extendderivations} we see that the composition
		\[
			\Der_k(A) \xrightarrow{\psi} \Der_k(B) \rightarrow \Der_k(B_0)
		\] 
		is equal to $\psi_0$. Therefore, the projection $\pi_0 \colon B \rightarrow B_0$ is $\sD(A)$-linear. Finally, we see that the isomorphism is $\sD(A)$-linear, as given $x \in \sD(A)$ and $f \in \Hom_{k[H]}(V, B)$,
		\[
			\pi_0(x \cdot f(-)) = x \cdot \pi_0(f(-)). \qedhere
		\]
	\end{proof}

\subsection{Abelian Galois Coverings}

In the section, we continue with the assumptions from the start of Section \ref{sect:DmodGalExt}.

\begin{defn}
	We define $\PicCon^G(Y)$ to be the set of isomorphism classes of $G$-equivariant line bundles with connection on $Y$ (i.e.\ rank $1$ elements of $\VectCon^G(Y)$).
\end{defn}

In the case that $G$ is trivial, we write $\PicCon(Y)$ for $\PicCon^G(Y)$, the set of isomorphism classes of line bundles with connection on $Y$. The tensor product (as described in Section \ref{sect:generaleqDmodules}) induces an abelian group structure on $\PicCon^G(Y)$ for which the natural forgetful map
\[
	\PicCon^G(Y) \rightarrow \Pic(Y)
\]
is a group homomorphism.

We can now state a consequence of Corollary \ref{maincor2}. Write $\widehat{N}$ for the group of $k$-valued characters of $N$. For such a character, let $e_{\chi}$ denote the corresponding primitive central idempotent of $k[N]$ as defined immediately before Corollary \ref{maincor2}, and set
\[
 \sL_{\chi} \coloneqq e_{\chi} \cdot f_* \OO_X \in \VectCon^G(Y).
\]
We let $e$ denote the exponent of $\widehat{N}$.

\begin{cor}\label{mainthmabelian}
 The map
\[
	\widehat{N} \rightarrow \PicCon^G(Y)[e], \qquad \chi \mapsto \Hom_{k[N]}(\chi, f_* \OO_X),
\]
is a group homomorphism. If $c_X(X)^G = k$, this is injective and for any $\chi \in \widehat{N}$ the natural map
\begin{equation}\label{eqn:natmap}
\Hom_{k[N]}(\chi, f_* \OO_X) \rightarrow \sL_{\chi}, \qquad f \mapsto f(1),
\end{equation}
is an isomorphism in $\VectCon^G(Y)$. The map (\ref{eqn:natmap}) is also an isomorphism when $N$ is abelian and $k$ is splits $N$, in which case there is a decomposition
\[
f_* \OO_X = \bigoplus_{\chi \in \widehat{N}} \sL_{\chi}
\]
in $\VectCon^G(Y)$.
\end{cor}

\begin{proof}
	This map is the restriction to the set of isomorphism classes of $1$-dimensional representations of the functor
	\[
		\Hom_{k[N]}(-, f_* \OO_X) \colon \Mod_{k[N]}^{\fd} \rightarrow \VectCon^{G}(Y)
	\]
	of Theorem \ref{mainthm1}, with $H$ taken to be $N$. Therefore this map is a well-defined group homomorphism because this functor is monoidal by Theorem \ref{mainthm1}(2) and sends $1$-dimensional representations to rank $1$ objects by Theorem \ref{mainthm1}(3). The map (\ref{eqn:natmap}) is always injective and therefore an isomorphism when $c_X(X)^G = k$ because in this case $e_{\chi} \cdot f_* \OO_X$ is simple by Corollary \ref{maincor2}(2) and the fact that $\chi$ is $1$-dimensional. The injectivity when $c_X(X)^G = k$ follows from the isotypic decomposition of Corollary \ref{maincor2}(1). Suppose now that $N$ is abelian and $k$ splits $N$. The decomposition of $f_* \OO_X$ follows from the decomposition of Corollary \ref{maincor2}. By \cite[Prop.\ 2.3]{TAY}, each $\sL_{\chi}$ is a line bundle, and therefore because $\Hom_{k[N]}(\chi, f_* \OO_X)$ also has rank $1$, the inclusion (\ref{eqn:natmap}) is also an isomorphism in this case by the same argument as in the proof of Theorem \ref{FunctorkGGDX}(3).
\end{proof}

\section{The Drinfeld Tower}\label{sect:final}

We are now in a position to give the main applications of the general results we have established to the Drinfeld tower. We first recall some basic facts about Drinfeld spaces (Section \ref{sect:thedrinfeldtower}) and their geometrically connected components (Section \ref{sect:geomconncomp}), and use the general theory of Section \ref{sect:DmodGalExt} to give a more conceptual proof of our earlier result concerning line bundles on the first Drinfeld covering (Section \ref{sect:linebundles}). Then in the remainder of this section we prove Theorem A, Theorem B and Theorem C (Section \ref{sect:equivariantDmodulesonDrinfeldSpaces} onwards).

\subsection{The Drinfeld Tower}\label{sect:thedrinfeldtower}

From now on, suppose that $K$ contains $L \coloneqq \breve{F}$, the completion of the maximal unramified extension of $F$, and that $n \geq 1$. We write $\Omega$ for the $(n-1)$-dimensional Drinfeld symmetric space over $K$. Let $D$ be the division algebra over $F$ of invariant $1/n$ with ring of integers $\OO_D$, let $\Pi$ denote a uniformiser of $\OO_D$, and write $\Nrd \colon D^\times \rightarrow F^\times$ for the reduced norm of $D$. The Drinfeld tower is a system of rigid analytic spaces over $K$,
\[
\sM \leftarrow \sM_1 \leftarrow \sM_2 \leftarrow \cdots,	
\]
for which each space has an action of $\GL_{n}(F) \times D^\times$ such that the transition morphisms are equivariant. Background material on these spaces is contained in \cite{DRI, BC, RZ}. The set of connected components of the space $\sM$ is canonically identified with $\bZ$, and under this identification $(g, \delta) \in \GL_{n}(F) \times D^\times$ acts on this set of connected components by addition by $\nu(\det(g)\Nrd(\delta^{-1}))$ \cite[Thm.\ 0.20]{BZ}. In particular, the connected components of $\sM$ are permuted simply transitively by the action of $\Pi$. Let $\sN$ be a connected component of $\sM$. The Grothendieck-Messing period morphism
\[
\pi_{\text{GM}} \colon \sM \rightarrow \Omega
\]
is an \'{e}tale $\GL_n(F) \times D^\times$-equivariant morphism, where $\Omega$ is considered with the trivial action of $D^\times$, which induces a $\GL_n(F)$-equivariant isomorphism
\[
\pi_{\text{GM}}\colon \sM / H \xrightarrow{\sim} \Omega,
\]
where $H \coloneqq D^\times / \OO_D^\times$. In particular, the composition
\[
\sN \hookrightarrow \sM \xrightarrow{p} \sM / H \rightarrow \Omega
\]
is a $G^0$-equivariant isomorphism, where
\[
G^0 \coloneqq \{g \in \GL_{n}(F) \mid \nu(\det(g)) = 0\},
\]
which we will use to identify $\sN$ with $\Omega$.

Considering the preimage $\sN_m$ of $\sN$ in each covering space $(\sM_n)_{n \geq 1}$, we obtain a sub-tower,
\[
	\sN \leftarrow \sN_{1} \leftarrow \sN_{2} \leftarrow \cdots.
\]
which is stable under the action of $G^0 \times \OO_D^\times$, as this is true for $\sN$. The subgroup $1 + \Pi^m \OO_D \leq \OO_D^{\times}$ acts trivially on $\sM_{m}$, and the morphisms $\sM_{m} \rightarrow \sM$, $\sN_{m} \rightarrow \sN$ are Galois with Galois group $\OO_D^{\times} / (1 + \Pi^m \OO_D)$ \cite[Thm.\ 2.2]{KOH}. 

\subsection{Continuous Action on The Drinfeld Tower}\label{sect:ctsactionDrinfeldTower}

As a consequence of the general results established in Appendix \ref{sect:generalities} on continuous actions and finite \'{e}tale coverings (to which we direct the reader for the definition of a continuous action) we have the following.

\begin{cor}\label{cor:ctsactionG0N}
	The action of $G^0$ on each of the spaces $\sN, \sN_1, \sN_2, ...$ is continuous. 
\end{cor}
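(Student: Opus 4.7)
The plan is to deduce continuity of the $G^0$-action on each $\sN_m$ from two ingredients: first, continuity of the $G^0$-action on $\sN$ (which reduces to the known continuity of $\GL_n(F)$ on $\Omega$ via the identification $\sN \cong \Omega$), and then a general lifting principle that propagates continuity along $G^0$-equivariant finite \'{e}tale Galois coverings. The introduction announces Proposition \ref{prop:continuousactionlifts} as the main technical tool here, and we invoke its conclusion through the appendix cited in the statement.

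First I would establish the base case $m=0$. The Grothendieck--Messing period morphism induces a $G^0$-equivariant isomorphism $\sN \xrightarrow{\sim} \Omega$, as recalled in Section \ref{sect:thedrinfeldtower}. The continuity of the $\GL_n(F)$-action on $\Omega$ is standard (one can argue by exhibiting a $G^0$-stable admissible affinoid cover of $\Omega$ by Stein subdomains on which $G^0$ acts by isometries of the Banach norm, as in \cite{ARD}). Transporting this along the isomorphism gives continuity of the $G^0$-action on $\sN$.

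Next, for the inductive step, I would apply the lifting statement of Proposition \ref{prop:continuousactionlifts}: if $f \colon X \to Y$ is a $G$-equivariant finite \'{e}tale Galois covering of smooth rigid spaces with Galois group $H$, and the action of $G$ on $Y$ is continuous, then the action of $G$ on $X$ is also continuous. Applied to the morphism $\sN_m \to \sN$, which by the results recalled in Section \ref{sect:thedrinfeldtower} is a $G^0$-equivariant finite \'{e}tale Galois covering with Galois group $\OO_D^\times / (1+\Pi^m \OO_D)$ (the Galois action commuting with the $G^0$-action), this immediately yields continuity of $G^0$ on $\sN_m$ from continuity on $\sN$. Since each $\sN_m \to \sN$ is finite, there is no need to pass through intermediate levels: one application of the lifting result handles all $m$ uniformly.

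The main obstacle, in principle, is verifying the hypotheses of the lifting result at each level. Concretely, one needs to produce, for a given $G^0$-stable admissible affinoid $U \subset \sN$ on which $G^0$ acts continuously in the affinoid-norm sense, a $G^0$-stable admissible affinoid preimage in $\sN_m$ with a compatible continuous structure. This is essentially formal once one has a good grasp of how the Banach structure on $\OO(\sN_m(f^{-1}(U)))$ is induced from that on $\OO(\sN(U))$ via the finite \'{e}tale extension (these are canonically related because finite \'{e}tale extensions of affinoid algebras are automatically continuous with respect to the unique Banach topology), but working out the details carefully is what occupies the appendix. Granting Proposition \ref{prop:continuousactionlifts}, however, the corollary follows in one line from the base case plus Galois covering structure of $\sN_m \to \sN$.
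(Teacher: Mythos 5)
Your proposal is correct and follows essentially the same route as the paper: establish continuity of the $G^0$-action on $\sN$ by transporting the known continuity of the $\GL_n(F)$-action on $\Omega$ (cited from \cite{ARD}) along the $G^0$-equivariant isomorphism $\sN \xrightarrow{\sim} \Omega$, then apply Proposition \ref{prop:continuousactionlifts} directly to each finite \'{e}tale Galois covering $\sN_m \rightarrow \sN$. The only cosmetic difference is that the paper pins down the precise references in \cite{ARD} rather than sketching the affinoid-cover argument, and it leaves the hypotheses of the lifting proposition (quasi-Stein spaces, the open topologically finitely generated profinite subgroup $\GL_n(\OO_F) \leq G^0$) implicit just as you do.
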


\begin{proof}
	The natural action of $\GL_n(\OO_F)$ on $\Omega$ is continuous by \cite[Prop.\ 3.1.12(b)]{ARD} and \cite[Lem.\ 3.1.9(b)]{ARD}, and thus the action of $\GL_n(F)$ on $\Omega$ is continuous by \cite[Lem.\ 3.1.9(c)]{ARD}. In particular, the restriction to the subgroup $G^0$ is continuous, and therefore by the $G^0$-equivariant isomorphism above $\sN \xrightarrow{\sim} \Omega$ the action of $G^0$ on $\sN$ is continuous. Then the result follows from Proposition \ref{prop:continuousactionlifts}.
\end{proof}

\begin{cor}\label{cor:ctsactionGLnM}
The action of $\GL_n(F)$ on each of the spaces $\sM, \sM_1, \sM_2, ...$ is continuous. 
\end{cor}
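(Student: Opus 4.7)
The plan is to deduce continuity of the $\GL_n(F)$-action on each $\sM_m$ from the continuity of the $G^0$-action on the $\sN_m$ already established in Corollary \ref{cor:ctsactionG0N}, exploiting the fact that $G^0$ is an open subgroup of $\GL_n(F)$. The overall shape of the proof is parallel to that of Corollary \ref{cor:ctsactionG0N}: first establish continuity for a natural subgroup that preserves the connected components, then enlarge.

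First I would record the admissible disjoint union decomposition
\[
    \sM_m = \bigsqcup_{k \in \bZ} \Pi^k \cdot \sN_m,
\]
which follows from the fact that $\langle \Pi \rangle \subset D^\times$ permutes the connected components of $\sM$ simply transitively together with the observation that $\sN_m$ is the preimage of $\sN$ under $\sM_m \to \sM$. Each summand $\Pi^k \cdot \sN_m$ is $G^0$-stable, since $G^0$ preserves every connected component of $\sM$.

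Next, because the actions of $\GL_n(F)$ and $D^\times$ on $\sM_m$ commute, the translation $\Pi^k \colon \sN_m \to \Pi^k \cdot \sN_m$ is a $G^0$-equivariant isomorphism. Corollary \ref{cor:ctsactionG0N} gives continuity of the $G^0$-action on $\sN_m$, which transfers via these translations to continuity on each $\Pi^k \cdot \sN_m$ and hence on $\sM_m$.

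Finally, $G^0 = \ker(\nu \circ \det \colon \GL_n(F) \to \bZ)$ is an open subgroup of $\GL_n(F)$, being the kernel of a continuous homomorphism to the discrete group $\bZ$. Applying \cite[Lem.\ 3.1.9(c)]{ARD} (the same result used in the proof of Corollary \ref{cor:ctsactionG0N} to pass from $\GL_n(\OO_F)$ to $\GL_n(F)$), continuity of the $G^0$-action on $\sM_m$ promotes to continuity of the full $\GL_n(F)$-action. I do not anticipate any serious obstacle; the only subtlety is recognising that $\GL_n(F)$ itself does not preserve the component decomposition, so the reduction must pass through $G^0$ before invoking the open-subgroup criterion.
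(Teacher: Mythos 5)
Your proof is correct and takes essentially the same route as the paper, which deduces the corollary from Corollary \ref{cor:ctsactionG0N} together with Lemma \ref{lem:ctsactionconncomp} (the open subgroup $G^0$ stabilises the pieces $\Pi^k \cdot \sN_m$ and acts continuously on each, hence all of $\GL_n(F)$ acts continuously). Your explicit use of the $G^0$-equivariant translations by $\Pi^k$ to transfer continuity from $\sN_m$ to the other components is precisely the step the paper leaves implicit in its one-line proof.
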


\begin{proof}
	This follows directly from Lemma \ref{lem:ctsactionconncomp} and Corollary \ref{cor:ctsactionG0N}.
\end{proof}

\subsection{Geometrically Connected Components of The Drinfeld Tower}\label{sect:geomconncomp}

Each of the spaces $(\sN_m)_{m \geq 1}$ is connected over $L$ \cite[Thm.\ 2.5]{KOH}, but none of these spaces are geometrically connected. The connected components of the covering spaces $\sN_m$ and $\sM_m$ over $\bC_p$ have been described by Boutot and Zink \cite[Thm.\ 0.20]{BZ} using global methods and $p$-adic uniformisation of Shimura curves.

In this section we describe how the cofinal system of spaces $(\sM_{nm})_{m \geq 1}$ of the covering spaces $(\sM_{m})_{m \geq 1}$ breaks up into geometrically connected components over a finite extension of $L$. The result we obtain is not quite as strong as that of Boutot and Zink, as it only describes the action of the subgroup $\GL_n(\OO_F) \times \OO_D^\times$, but on the other hand the proof is elementary, self-contained, and sufficient for our purposes. The proof uses the theory of the sheaf $c_X$ developed in Section \ref{sect:constantsheaf} together with a result of Kohlhaase on the maximal sub-field of the global sections of $\sN_{nm}$ \cite[Prop.\ 2.7]{KOH}.

Henceforth, we write $L_m$ for the compositum of the $m$th Lubin-Tate extension of $F$ with $L$.

\begin{thm}\label{thm:geomconncompdescDT}
Suppose that $m \geq 1$ and that $K$ contains $L_m$. Then there is an isomorphism of $\GL_n(\OO_F) \times \OO_D^\times$-sets,
\[
	\pi_0(\sN_{nm}) \xrightarrow{\sim} \OO_F^\times / (1 + \pi^m \OO_F),
\] 
where $(g, \delta) \in \GL_n(\OO_F) \times \OO_D^\times$ acts on $\OO_F^\times / (1 + \pi^m \OO_F)$ by multiplication by $\det(g) \Nrd(\delta^{-1}) \in \OO_F^\times$. These are compatible in the sense that for any $1 \leq r \leq m$, the diagram
\[\begin{tikzcd}
	{\pi_0(\sN_{nm})} & {\OO_F^\times / (1 + \pi^m \OO_F)} \\
	{\pi_0(\sN_{nr})} & {\OO_F^\times / (1 + \pi^r \OO_F)}
	\arrow[two heads, from=1-1, to=2-1]
	\arrow["\sim", from=1-1, to=1-2]
	\arrow[two heads, from=1-2, to=2-2]
	\arrow["\sim", from=2-1, to=2-2]
\end{tikzcd}\]
commutes.
\end{thm}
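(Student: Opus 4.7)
The plan is to combine the theory of the sheaf of constant functions $c_X$ developed in Section \ref{sect:constantsheaf} with a result of Kohlhaase \cite[Prop.\ 2.7]{KOH} that identifies the maximal finite field extension of $L$ sitting inside $\OO(\sN_{nm})$.

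First, I would show that $c_X(\sN_{nm}) = L_m$ as a $\GL_n(\OO_F) \times \OO_D^\times$-equivariant $L$-subalgebra of $\OO(\sN_{nm})$. Since $\sN_{nm}$ is connected by \cite[Thm.\ 2.5]{KOH}, Corollary \ref{cor:maxfieldext} identifies $c_X(\sN_{nm})$ with the unique maximal finite field extension of $L$ contained in $\OO(\sN_{nm})$, and \cite[Prop.\ 2.7]{KOH} says that this maximal field is precisely $L_m$. Kohlhaase's proposition moreover describes the $\GL_n(\OO_F) \times \OO_D^\times$-action on $L_m$ explicitly: under the Lubin-Tate reciprocity identification $\Gal(L_m/L) \cong \OO_F^\times / (1 + \pi^m \OO_F)$, it factors through the homomorphism $(g,\delta) \mapsto \det(g)\Nrd(\delta^{-1})$.

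Second, for $K$ a complete field extension of $L_m$, I would deduce the statement about geometric connected components from $c_X(\sN_{nm}) = L_m$. Base changing first to the finite Galois extension $L_m/L$, Corollary \ref{cor:cXuequalsk2} together with Lemma \ref{baseechangecX} shows that $\sN_{nm} \times_L L_m$ is a disjoint union of $[L_m:L]$ geometrically connected components, canonically in bijection with the set of primitive idempotents of $L_m \otimes_L L_m$, equivalently with $\Gal(L_m/L)$. These remain geometrically connected after further base change to $K$ by Remark \ref{rem:geomconnsubtle}. Combining this bijection with the Lubin-Tate identification and the formula for the action above yields the desired $\GL_n(\OO_F) \times \OO_D^\times$-equivariant bijection.

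Finally, the compatibility for $1 \leq r \leq m$ is immediate from the functoriality of $c_X$ applied to the equivariant finite \'etale transition morphism $\sN_{nm} \to \sN_{nr}$, which recovers the inclusion $L_r \hookrightarrow L_m$. The main obstacle is the bookkeeping needed to verify that Kohlhaase's formula for the action on $L_m$ matches the one in the statement, especially the inverse on the $\OO_D^\times$-factor; this should reduce to the standard convention by which a group acts on the global sections of a rigid space by pullback along inverses.
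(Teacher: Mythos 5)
Your proposal is correct and follows essentially the same route as the paper: identifying $c(\sN_{nm})$ with $L_m$ via Corollary \ref{cor:maxfieldext} and Kohlhaase's result, splitting $L_m \otimes_L L_m$ over $\Gal(L_m/L)$ after base change using Lemma \ref{baseechangecX}, invoking Corollary \ref{cor:cXuequalsk2} for geometric connectedness, and deducing compatibility from that of the Lubin--Tate identifications. The only minor discrepancy is that the paper cites \cite[Thm.\ 2.8(ii)]{KOH} (rather than \cite[Prop.\ 2.7]{KOH}) for the explicit description of the $\GL_n(\OO_F) \times \OO_D^\times$-action on $L_m$.
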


\begin{proof}
	Let $m \geq 1$. By Lemma \ref{baseechangecX} there is a $G^0 \times \OO_D^\times$-equivariant isomorphism of $L_m$-algebras,
	\[
		c(\sN_{nm, L}) \otimes_{L} L_m \xrightarrow{\sim} c(\sN_{{nm}, L_m}).
	\]
	The field $c(\sN_{nm, L})$ is the maximal field extension of $L$ contained in $\OO(\sN_{nm})$ by Corollary \ref{cor:maxfieldext}, and the result \cite[Prop.\ 2.7]{KOH} of Kohlhaase shows that this is a copy of $L_m$. Furthermore, it is shown that the action of $(g, \delta) \in \GL_n(\OO_F) \times \OO_D^\times$ on $L_m$ is by multiplication by $\det(g) \Nrd(\delta^{-1}) \in \OO_F^\times$ \cite[Thm.\ 2.8(ii)]{KOH}, where this element of $\OO_F^\times$ is viewed as an automorphism of $L_m$ over $L$ under the identification $\OO^\times_F / (1 + \pi^m \OO_F) \xrightarrow{\sim} \Gal(L_m / L)$ of Lubin-Tate theory. Because $L_m / L$ is Galois, we have canonical isomorphisms of $L_m$-algebras
	\[
		c(\sN_{nm, L}) \otimes_{L} L_m = L_m \otimes_L L_m \xrightarrow{\sim} \prod_{\sigma \in \Gal(L_m / L)} L_m, \qquad x \otimes y \mapsto (\sigma(x)y)_{\sigma}.
	\]
	Composing these isomorphisms, we have a $\GL_n(\OO_F) \times \OO_D^\times$-equivariant $L_m$-algebra isomorphism,
	\[
		c(\sN_{nm, L_m}) \xrightarrow{\sim} \prod_{\OO^\times_F / (1 + \pi^m \OO_F)} L_m,
	\]
	and thus by Lemma \ref{lem:finiteextension} a $\GL_n(\OO_F) \times \OO_D^\times$-equivariant bijection,
	\[
		\pi_0(\sN_{nm, L_m}) \xrightarrow{\sim} \OO^\times_F / (1 + \pi^m \OO_F).
	\]
	The compatibility follows from the compatibility of the isomorphisms $\OO^\times_F / (1 + \pi^m \OO_F) \xrightarrow{\sim} \Gal(L_m / L)$. Finally, because $\dim_{L_m} c(\sN_{nm, L_m}) = |\pi_0(\sN_{nm, L_m})|$, each connected component is geometrically connected by Corollary \ref{cor:cXuequalsk2}. We obtain the result for general complete extensions $K$ of $L_m$ by Remark \ref{rem:geomconnsubtle}.
\end{proof}

The following is the key result which allows us to apply Theorem \ref{mainthm1} to the Drinfeld tower.

\begin{cor}\label{cor:drinfeldinvconstantfunctions}
For any $m \geq 1$, $c(\sN_m)^{G^0} = K$ and $c(\sM_m)^{\GL_n(F)} = K$.
\end{cor}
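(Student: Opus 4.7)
The plan is to prove the first claim $c(\sN_m)^{G^0} = K$ by a two-step reduction to the case directly covered by Theorem \ref{thm:geomconncompdescDT}, and then deduce the second claim from the first using the decomposition of $\sM_m$ into $\bZ$-many $\GL_n(F)$-translates of $\sN_m$.

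For the first claim, I would first reduce to the case $m = nr$. Given any $m$, choose $r$ with $nr \geq m$. The finite \'{e}tale $G^0$-equivariant covering $\sN_{nr} \to \sN_m$ is surjective, so the pullback on global sections $\OO(\sN_m) \hookrightarrow \OO(\sN_{nr})$ is injective; by Lemma \ref{extendderivations} this restricts to an injection $c(\sN_m) \hookrightarrow c(\sN_{nr})$, which is $G^0$-equivariant by construction of the sub-tower. Hence $c(\sN_m)^{G^0} \hookrightarrow c(\sN_{nr})^{G^0}$, reducing everything to the case $\sN_{nr}$ for arbitrary $r \geq 1$. Next, I would reduce the base field: choose a finite field extension $K' / K$ containing $L_r$ (for instance, a field factor of the finite \'{e}tale $K$-algebra $K \otimes_L L_r$). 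By Lemma \ref{baseechangecX} together with the flatness of $K \to K'$, one has $c(\sN_{nr, K'})^{G^0} \cong c(\sN_{nr, K})^{G^0} \otimes_K K'$, so knowing the left-hand side equals $K'$ forces $\dim_K c(\sN_{nr, K})^{G^0} = 1$. Finally, when $K \supset L_r$, Theorem \ref{thm:geomconncompdescDT} provides a decomposition $\sN_{nr} = \bigsqcup_{\alpha} X_\alpha$ indexed by $\OO_F^\times / (1 + \pi^r \OO_F)$ into geometrically connected components; by Corollary \ref{cor:cXuequalsk1} each $c(X_\alpha) = K$, so $c(\sN_{nr}) = \prod_\alpha K$. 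The subgroup $\GL_n(\OO_F) \subset G^0$ acts on the index set via the surjection $\det\colon \GL_n(\OO_F) \twoheadrightarrow \OO_F^\times$, hence $G^0$ acts transitively on $\pi_0(\sN_{nr})$, and any $g \in G^0$ stabilising some $X_\alpha$ acts on $c(X_\alpha) = K$ as a $K$-algebra automorphism, thus trivially. The $G^0$-invariants are therefore exactly the diagonal copy of $K$.

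For the second claim, I would use the connected-component decomposition $\sM = \bigsqcup_{j \in \bZ} \sN^{(j)}$ on which $\GL_n(F)$ acts transitively via $j \mapsto j + \nu(\det g)$ with stabiliser of $\sN^{(0)} = \sN$ equal to $G^0$. Pulling back by $\sM_m \to \sM$ yields $\sM_m = \bigsqcup_j \sN_m^{(j)}$ with the same transitive action and stabiliser structure, so $c(\sM_m) = \prod_j c(\sN_m^{(j)})$. Since each $\sN_m^{(j)}$ is a $\GL_n(F)$-translate of $\sN_m$, transporting the first claim along this isomorphism gives $c(\sN_m^{(j)})^{G^0} = K$ for every $j$. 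Consequently every $\GL_n(F)$-invariant element has each component in $K$; since any element of $\GL_n(F)$ acts on the field $K$ as a $K$-algebra map (hence as the identity), transitivity on $\bZ$ forces the components to coincide, giving $c(\sM_m)^{\GL_n(F)} = K$.

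The main obstacle will be ensuring that $G^0$-equivariance is preserved cleanly across the various base changes and intermediate Galois coverings in the first reduction; once these bookkeeping points are settled, the argument is just a direct application of the sheaf-theoretic machinery of Section \ref{sect:constantsheaf} combined with the explicit component description of Theorem \ref{thm:geomconncompdescDT}.
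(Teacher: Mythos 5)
Your proposal is correct and follows essentially the same route as the paper: reduce $c(\sN_m)^{G^0}$ to the cofinal case $\sN_{nr}$, invoke the component description of Theorem \ref{thm:geomconncompdescDT} together with the transitive action of $\det(\GL_n(\OO_F))$ on $\pi_0$, and pass between $\sM_m$ and $\sN_m$ using the transitive $\GL_n(F)$-action on connected components with stabiliser $G^0$. The only difference is cosmetic: you insert an explicit finite base change to a field containing $L_r$ before applying the theorem (which is slightly more careful about the hypothesis $K \supset L_m$ there), whereas the paper cites the proof of the theorem directly and identifies $c(\sN_{rn})$ with $L_m$ carrying the $\Gal(L_m/L)$-action.
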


\begin{proof}
	Because the action of $\GL_n(F)$ on the connected components of $\sM_m$ is transitive and $G^0$ is normal in $\GL_n(F)$, the projection $\OO(\sM_m) \rightarrow \OO(\sN_m)$ induces an isomorphism
	\[
	c(\sM_m)^{\GL_n(F)} \xrightarrow{\sim} c(\sN_m)^{G^0},
	\]
	and so it sufficient to show the claim for $\sN_m$. Furthermore, if $r \geq 1$ is such that $rn \geq m$, then in light of the inclusion
	\[
		c(\sN_m)^{G^0} \hookrightarrow c(\sN_{rn})^{G^0},
	\]
	it is sufficient to show that $c(\sN_{rn})^{G^0} = K$, which follows directly from the proof of Theorem \ref{thm:geomconncompdescDT} above, as 
	\[
		c(\sN_{rn}) = c(\sN_{rn, L}) \otimes_L K = L_m \otimes_L K,
	\]
	and $G^0$ acts on $L_m \otimes_L K$ through the left-factor $L_m$ and the surjection
	\[
		G^0 \xrightarrow{\det} \OO_F^\times \twoheadrightarrow \Gal(L_m / L). \qedhere
	\]
\end{proof}

\subsection{Equivariant Line Bundles with Connection on the First Drinfeld Covering}\label{sect:linebundles}

Before proving Theorems A, B and C, we first give an application of Corollary \ref{mainthmabelian} to the study of equivariant line bundles with connection on the first Drinfeld covering.

In this section we shall assume that $n \geq 2$ and $K$ contains $L_1$. Let $\Sigma^1$ be a geometrically connected component of of $\sN_1$, and let $\Sigma^2$ be the preimage of $\Sigma^1$ in $\sN_2$, which, because $\lfloor\frac{0}{n} \rfloor = 0 = \lfloor\frac{1}{n} \rfloor$, is also geometrically connected by \cite[Prop.\ 3.1]{TAY} (which itself uses the result \cite[Thm.\ 0.20]{BZ} of Boutot and Zink). The covering,
\[
	f \colon \Sigma^2 \rightarrow \Sigma^1,
\]
is a finite \'{e}tale Galois covering of rigid spaces over $K$, with abelian Galois group
\[
	\Gamma \coloneqq \frac{1 + \Pi \OO_D}{1 + \Pi^2 \OO_D} \cong (\bF_{q^{n}}, +)
\]
of exponent $p$. Furthermore, by \cite[Prop.\ 3.1]{TAY}, the spaces $\Sigma^1, \Sigma^2$ are stable under the action of $\SL_{n}(F)$. This action commutes with the Galois action, and $f \colon \Sigma^2 \rightarrow \Sigma^1$ is $\SL_{n}(F)$-equivariant.

As a consequence of the results of Appendix \ref{sect:generalities}, we can deduce the following statement about torsion $\SL_n(F)$-equivariant line bundles with connection on $\Sigma^1$. We direct the reader to Definition \ref{def:piccts} for the definitions of the relevant groups involved.

\begin{cor}\label{congroupzero}
	$\Con^{\SL_{n}(F)}_{\cts}(\Sigma^1)[p] = 0$.
\end{cor}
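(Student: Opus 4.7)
The strategy is to apply Corollary \ref{mainthmabelian} to the abelian $\Gamma$-Galois covering $f \colon \Sigma^2 \to \Sigma^1$ with group $G = \SL_n(F)$. The hypotheses are verified directly: the Galois group $\Gamma \cong (\mathbb{F}_{q^n},+)$ is abelian of exponent $p$; $K$ contains a primitive $p$-th root of unity by the standing assumption on $K$; and $c(\Sigma^2)^{\SL_n(F)} = K$ because $\Sigma^2$ is geometrically connected (by the cited result \cite{TAY}), so that $c(\Sigma^2) = K$ by Corollary \ref{cor:cXuequalsk1}. This produces an injective group homomorphism
\[
\iota \colon \widehat{\Gamma} \;\hookrightarrow\; \PicCon^{\SL_n(F)}(\Sigma^1)[p], \qquad \chi \mapsto \sL_\chi = e_\chi \cdot f_*\OO_{\Sigma^2}.
\]

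The plan is then to deduce the corollary by establishing two complementary facts: (i) every $\sL \in \PicCon^{\SL_n(F)}_{\cts}(\Sigma^1)[p]$ lies in the image of $\iota$, i.e.\ has the form $\sL_\chi$ for some character $\chi$; and (ii) the composite $\widehat\Gamma \xrightarrow{\iota} \PicCon^{\SL_n(F)}(\Sigma^1)[p] \to \Pic(\Sigma^1)[p]$ is injective. Granting these, any $\sL \in \Con^{\SL_n(F)}_{\cts}(\Sigma^1)[p]$ equals $\sL_\chi$ with trivial underlying line bundle, forcing $\chi = 1$ by (ii) and hence $\sL$ trivial.

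For (ii), the composite is the standard Kummer-style map attached to the $\Gamma$-cover $f$ under our hypothesis on roots of unity, and its injectivity should follow by a direct computation: $\sL_\chi$ trivialises as a line bundle precisely when $\OO(\Sigma^2)$ contains a $\chi$-eigenvector (a unit on which $\Gamma$ acts through $\chi$), and for such an eigenvector to exist one analyses $\OO(\Sigma^2)^\times$ together with the $\Gamma$-action. The main obstacle is step (i): one must argue that the continuity constraint on the $\SL_n(F)$-action forces any trivialising cover of an arbitrary element of $\PicCon^{\SL_n(F)}_{\cts}(\Sigma^1)[p]$ to descend from $f \colon \Sigma^2 \to \Sigma^1$. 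The route I would take is to exploit finiteness: by Lemma \ref{lem:finiteequiv} such an $\sL$ is finite, so by Proposition \ref{prop:finitecomesfromcovering} (after passing to a suitable base change to ensure algebraic closedness) it appears as a direct summand of $g_* \OO_Z$ for some $\SL_n(F)$-equivariant finite étale Galois covering $g \colon Z \to \Sigma^1$; the continuity of the $\SL_n(F)$-action, together with the known classification of finite étale equivariant coverings of Drinfeld spaces, should then force $g$ to factor through $f \colon \Sigma^2 \to \Sigma^1$, and Lemma \ref{compatiblelemma} identifies $\sL$ with some $\sL_\chi$.
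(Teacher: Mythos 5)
Your proposal does not follow the paper's argument, and as written it has two serious problems. The paper's proof is short and direct: since $\Sigma^1$ is geometrically connected one applies Proposition \ref{AWProp} to get the exact sequence
\[
0 \rightarrow \Hom(\SL_n(F), \mu_p(K)) \rightarrow \Con^{\SL_n(F)}_{\cts}(\Sigma^1)[p] \rightarrow (\OO(\Sigma^1)^\times / K^\times \OO(\Sigma^1)^{\times p})^{\SL_n(F)},
\]
and then both outer terms vanish --- the left because $\SL_n(F)$ is perfect, the right by \cite[Cor.\ 3.9]{TAY}. The covering $\Sigma^2 \rightarrow \Sigma^1$ and the map $\widehat{\Gamma} \rightarrow \PicCon^{\SL_n(F)}(\Sigma^1)[p]$ play no role in this proof at all; they enter only afterwards, in Corollary \ref{cor:earlierwork}.

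The first gap in your route is step (i): you give no actual argument that an arbitrary element of $\PicCon^{\SL_n(F)}_{\cts}(\Sigma^1)[p]$ lies in the image of $\iota$. Proposition \ref{prop:finitecomesfromcovering} only produces \emph{some} $\SL_n(F)$-equivariant Galois covering $Z \rightarrow \Sigma^1$ (and only after base change to an algebraically closed field, with no descent argument supplied), and there is no ``known classification of finite \'{e}tale equivariant coverings'' in the paper that forces $Z$ to factor through $\Sigma^2$; the only result of that flavour, Theorem \ref{mainthmEI}, concerns $\GL_n(F)$- (resp.\ $G^0$-) equivariant coverings of $\Omega$ via Scholze--Weinstein, is proved much later, and even there the covering factors through some $\sM_m$ with $m$ large, not through the first layer. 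A priori a $p$-torsion object could require a deeper layer of the tower, in which case the relevant character group is not $\widehat{\Gamma}$. The second, more structural, problem is step (ii): the injectivity of $\widehat{\Gamma} \rightarrow \Pic(\Sigma^1)[p]$ is precisely \cite[Thm.\ 4.6]{TAY}, which in this paper is \emph{deduced from} the present corollary (via Corollary \ref{secondmapinjective} and Corollary \ref{cor:earlierwork}); calling it a ``direct computation'' hides a substantial argument, and using it here inverts the intended logical order. If you want a self-contained proof, the input you actually need from the earlier paper is the vanishing of $(\OO(\Sigma^1)^\times / K^\times \OO(\Sigma^1)^{\times p})^{\SL_n(F)}$, fed into the exact sequence of Proposition \ref{AWProp}.
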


\begin{proof}
Because $\Sigma^1$ is geometrically connected, we can apply Proposition \ref{AWProp} and thus the group $\Con^{\SL_{n}(F)}_{\cts}(\Sigma^1)[p]$ fits into the exact sequence,
\[
	0 \rightarrow \Hom(\SL_{n}(F), \mu_p(K)) \rightarrow \Con^{\SL_{n}(F)}_{\cts}(\Sigma^1)[p] \rightarrow (\OO(\Sigma^1)^\times / K^\times \OO(\Sigma^1)^{\times p})^{\SL_{n}(F)} .
\]
The first term $\Hom(\SL_{n}(F), \mu_p(K)) = 0$ because $\SL_{n}(F)$ is perfect. The final term, which we would like to show is zero, sits in the short exact sequence
\[
1 \rightarrow K^\times / K^{\times p} \rightarrow \OO(\Sigma^1)^{\times} / \OO(\Sigma^1)^{\times p} \rightarrow \OO(\Sigma^1)^{\times} / K^\times \OO(\Sigma^1)^{\times p} \rightarrow 1.
\]
To see that this is exact on the left, suppose that $\lambda \in K^\times$ is the $p$th power of some $x \in \OO(\Sigma^1)^\times$, $\lambda = x^p$. Then for any $U \in \sB$ and $\partial \in \sT_{\Sigma^1}(U)$, $0 = \partial(\lambda) = p x^{p-1}\partial(x)$, and therefore $\partial(x) = 0$ as $p x^{p-1}$ is a unit. By Lemma \ref{affinedesccX}, $x|_U \in c_{\Sigma^1}(U)$, and as this holds for any $U \in \sB$, $x \in c_{\Sigma^1}(\Sigma^1)$, and $c_{\Sigma^1}(\Sigma^1) = k$ by Corollary \ref{cor:cXuequalsk1} because $\Sigma^1$ is geometrically connected.

Taking the $\SL_n(F)$-invariants of this short exact sequence we obtain the exact sequence
\begin{align*}
1 &\rightarrow (K^\times / K^{\times p})^{\SL_n(F)} \rightarrow \left(\OO(\Sigma^1)^{\times} / \OO(\Sigma^1)^{\times p}\right)^{\SL_n(F)} \rightarrow (\OO(\Sigma^1)^{\times} / K^\times \OO(\Sigma^1)^{\times p})^{\SL_n(F)} \\ &\rightarrow \HH^1(\SL_n(F), K^\times / K^{\times p}).
\end{align*}
Noting that $\SL_n(F)$ acts trivially on $K$, the first non-trivial map is an isomorphism by \cite[Cor.\ 4.5]{TAY}, and $\HH^1(\SL_n(F), K^\times / K^{\times p}) = \Hom(\SL_n(F), K^\times / K^{\times p}) = 0$ because $\SL_n(F)$ is perfect.
\end{proof}

\begin{cor}\label{secondmapinjective}
The forgetful map $\PicCon^{\SL_{n}(F)}_{\cts}(\Sigma^1)[p] \rightarrow \Pic(\Sigma^1)[p]$ is injective.
\end{cor}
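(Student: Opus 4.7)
The plan is to identify the kernel of the forgetful map with the group $\Con^{\SL_n(F)}_{\cts}(\Sigma^1)[p]$, which is shown to vanish in Corollary \ref{congroupzero}. This should be essentially immediate from the definitions, so the main work is checking that no subtlety hides in the identification of the kernel.

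More precisely, I would first unpack the forgetful map. An element of $\PicCon^{\SL_n(F)}_{\cts}(\Sigma^1)[p]$ in the kernel is an isomorphism class $[\sL]$ where $\sL$ is an $\SL_n(F)$-equivariant line bundle with connection (with continuous equivariant structure) whose underlying $\OO_{\Sigma^1}$-module is trivial. Fixing such an isomorphism $\sL \simeq \OO_{\Sigma^1}$, the data of $\sL$ transports to the datum of an integrable connection and a continuous $\SL_n(F)$-equivariant structure on $\OO_{\Sigma^1}$, i.e.\ an element of $\Con^{\SL_n(F)}_{\cts}(\Sigma^1)$. This identification is well-defined on isomorphism classes because any two trivialisations differ by an element of $\OO(\Sigma^1)^\times$. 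The condition $\sL^{\otimes p} \cong \OO_{\Sigma^1}$ in $\PicCon^{\SL_n(F)}_{\cts}(\Sigma^1)$ then says exactly that the corresponding element lies in the $p$-torsion subgroup $\Con^{\SL_n(F)}_{\cts}(\Sigma^1)[p]$.

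Conversely, any element of $\Con^{\SL_n(F)}_{\cts}(\Sigma^1)[p]$ gives a $p$-torsion class in $\PicCon^{\SL_n(F)}_{\cts}(\Sigma^1)$ with trivial underlying line bundle, so the kernel of the forgetful map is exactly $\Con^{\SL_n(F)}_{\cts}(\Sigma^1)[p]$. By Corollary \ref{congroupzero} this group vanishes, and the forgetful map is therefore injective.

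The only delicate point is ensuring that the continuity condition transfers cleanly between the two descriptions of the kernel, but this should follow from the way $\PicCon^G_{\cts}$ and $\Con^G_{\cts}$ are set up in Definition \ref{def:piccts}, since both impose the same continuity on the equivariant structure and the latter is by construction the subgroup of the former with trivial underlying line bundle. No further input is required beyond Corollary \ref{congroupzero}.
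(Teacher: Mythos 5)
Your proposal is correct and is exactly the paper's (implicit) argument: since $\Con^{\SL_n(F)}_{\cts}(\Sigma^1)$ is by Definition \ref{def:piccts} the kernel of the forgetful map $\PicCon^{\SL_n(F)}_{\cts}(\Sigma^1) \rightarrow \Pic(\Sigma^1)$, the kernel on $p$-torsion is $\Con^{\SL_n(F)}_{\cts}(\Sigma^1)[p]$, which vanishes by Corollary \ref{congroupzero}. The explicit unwinding via trivialisations is harmless but unnecessary, as the identification of the kernel is built into the definition.
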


As an immediate consequence of Corollary \ref{secondmapinjective} and Corollary \ref{mainthmabelian}, we can deduce the main result of \cite{TAY}.

\begin{cor}[{\cite[Thm.\ 4.6]{TAY}}]\label{cor:earlierwork}
The group homomorphism
\[
	\widehat{\Gamma} \rightarrow \Pic(\Sigma^1)[p]
\]
is injective.
\end{cor}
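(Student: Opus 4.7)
The plan is to deduce the statement directly by composing the two cited results. Specifically, I would apply Corollary \ref{mainthmabelian} to the Galois covering $f \colon \Sigma^2 \to \Sigma^1$, taking $G = \SL_n(F)$ and $N = H = \Gamma$, over the base field $k = K$. The three hypotheses need to be checked. First, $\Gamma \cong (\bF_{q^n},+)$ is abelian of exponent $p$, and $K$ by assumption contains a primitive $p$th root of unity, so conditions (1) and (3) hold. For (2), the condition $c(\Sigma^2)^{\SL_n(F)} = K$, I would note that $\Sigma^2$ is geometrically connected (as explicitly recalled just before Corollary \ref{congroupzero}, citing \cite[Prop.\ 3.1]{TAY}), so Corollary \ref{cor:cXuequalsk1} already gives $c(\Sigma^2) = K$ without needing to take invariants. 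Thus Corollary \ref{mainthmabelian} produces an injective group homomorphism
\[
\widehat{\Gamma} \hookrightarrow \PicCon^{\SL_n(F)}(\Sigma^1)[p], \qquad \chi \mapsto \sL_\chi = e_\chi \cdot f_*\OO_{\Sigma^2}.
\]

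Next, to feed this into Corollary \ref{secondmapinjective}, I need to verify that the image in fact lands in the continuous subgroup $\PicCon^{\SL_n(F)}_{\cts}(\Sigma^1)[p]$. The action of $\SL_n(F)$ on $\Sigma^1$ (and on $\Sigma^2$) is continuous: this follows from Corollary \ref{cor:ctsactionG0N} once we note that $\SL_n(F)$ is a subgroup of $G^0$. The natural $\SL_n(F)$-equivariant structure on $f_*\OO_{\Sigma^2}$ is then continuous by construction, and since the projector $e_\chi \in K[\Gamma]$ is $\SL_n(F)$-central (the actions of the commuting factors $G = \SL_n(F)$ and $H = \Gamma$ are independent), the induced equivariant structure on the direct summand $\sL_\chi = e_\chi \cdot f_*\OO_{\Sigma^2}$ inherits this continuity.

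Finally, composing with the injective forgetful map $\PicCon^{\SL_n(F)}_{\cts}(\Sigma^1)[p] \hookrightarrow \Pic(\Sigma^1)[p]$ supplied by Corollary \ref{secondmapinjective} yields the desired injection $\widehat{\Gamma} \hookrightarrow \Pic(\Sigma^1)[p]$, which is the statement of \cite[Thm.\ 4.6]{TAY}.

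The main obstacle is the middle step: cleanly verifying that the equivariant structure on $\sL_\chi$ coming out of Corollary \ref{mainthmabelian} is continuous in the precise sense used by Corollary \ref{secondmapinjective}. Once the definitions from the appendix are unpacked, this should reduce to the observation that $f_*\OO_{\Sigma^2}$ is a coherent $\OO_{\Sigma^1}$-module carrying the pushforward of a continuous action on $\Sigma^2$, and that taking $\chi$-isotypic components under a finite group action commuting with $\SL_n(F)$ preserves continuity. Modulo this bookkeeping, the result is truly an immediate corollary of the two cited statements.
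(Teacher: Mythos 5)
Your proposal is correct and follows essentially the same route as the paper: the composition $\widehat{\Gamma} \to \PicCon^{\SL_n(F)}_{\cts}(\Sigma^1)[p] \to \Pic(\Sigma^1)[p]$, with injectivity of the two maps supplied by Corollary \ref{mainthmabelian} (using geometric connectedness of $\Sigma^2$) and Corollary \ref{secondmapinjective}. The "bookkeeping" you defer in the middle step is precisely Lemma \ref{lem:ctsimpliesfactors}, and the continuity of the $\SL_n(F)$-action on $\Sigma^1$ is obtained in the paper by lifting along $\Sigma^1 \to \sN$ via Proposition \ref{prop:continuousactionlifts} rather than by restricting from $\sN_1$ — a negligible difference.
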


\begin{proof}
The action of $G^0$ on $\sN$ continuous by Corollary \ref{cor:ctsactionG0N} and therefore so too is the action of the subgroup $\SL_n(F) \subset G^0$ on $\sN$. Consequently, the action of $\SL_n(F)$ on $\Sigma^1$ is continuous by Proposition \ref{prop:continuousactionlifts}, and thus the homomorphism of interest factors as the composition
	\[
		\widehat{\Gamma} \rightarrow \PicCon^{\SL_{n}(F)}_{\cts}(\Sigma^1)[p] \rightarrow \Pic(\Sigma^1)[p]
	\]
by Lemma \ref{lem:ctsimpliesfactors}. The first is injective by Corollary \ref{mainthmabelian}, as $\Sigma^2$ is geometrically connected so $c(\Sigma^2) = k$, and the second is injective by Corollary \ref{secondmapinjective}.
\end{proof}

\subsection{Equivariant Vector Bundles with Connection on Drinfeld Symmetric Spaces}\label{sect:equivariantDmodulesonDrinfeldSpaces}

From now on until the end of the paper, we will assume that $K$ contains $L$, the completion of the maximal unramified extension of $F$. In the remaining sections we will define and establish properties of the labelled functors of the diagram below, show it is commutative up to natural isomorphism, and use this to prove Theorem A, Theorem B and Theorem C. Vertical arrows below indicate the canonical forgetful maps.
\[\begin{tikzcd}[sep=4em]
	&& {\VectCon^{H \times \GL_n(F)}(\sM)} \\
	{\Rep_{\sm}^{\fd}(D^\times)} & {\VectCon^{\GL_n(F)}(\Omega)} & {\VectCon^{\GL_n(F)}(\sM)} \\
	{\Rep_{\sm}^{\fd}(\OO_D^\times)} & {\VectCon^{G^0}(\Omega)} \\
	{\Rep_{\sm}^{\fd}(\SL_1(D))} & {\VectCon(\Omega)}
	\arrow["\sim", from=1-3, to=2-2]
	\arrow[from=1-3, to=2-3]
	\arrow["{\Hom_{\OO_D^\times}^{D^\times}(-, \phi_* \OO_{\sM_{\infty}})}"{yshift = -0.4em}, from=2-1, to=1-3]
	\arrow["{\Hom_{D^\times}(-, f_* \OO_{\sM_{\infty}})}"'{yshift = -0.4em}, from=2-1, to=2-2]
	\arrow[from=2-1, to=3-1]
	\arrow["\sim", from=2-3, to=3-2]
	\arrow["{\Hom_{\OO_D^\times}(-, \phi_* \OO_{\sM_{\infty}})}"{xshift = -0.5em, yshift = -0.6em}, from=3-1, to=2-3]
	\arrow[from=2-2, to=3-2, crossing over]
	\arrow["{\Hom_{\OO_D^\times}(-, f_* \OO_{\sN_{\infty}})}"'{yshift = -0.5em}, from=3-1, to=3-2]
	\arrow[from=3-1, to=4-1]
	\arrow[from=3-2, to=4-2]
	\arrow["{\Hom_{\SL_1(D)}(-, f_* \OO_{\Sigma^{\infty}})}"'{yshift = -0.5em}, from=4-1, to=4-2]
\end{tikzcd}\]

\subsection{Notation}
Recall that we write $H = D^\times / \OO_D^\times$. For $m \geq 1$ we also write
\begin{itemize}
	\item $\OO_{D}^{(m)} \coloneqq \OO_D^\times / (1 + \Pi^m \OO_D)$,
	\item $D^{(m)} \coloneqq D^\times / (1 + \Pi^m \OO_D)$,
	\item $\phi_m \colon \sM_m \rightarrow \sM$ for the finite Galois covering map,
	\item $f_m \colon \sM_m \rightarrow \Omega$ for the composition of $\phi_m$ with $\pi_{\text{GM}}$.
\end{itemize}

\subsection{\texorpdfstring{Representations of $\OO_D^{\times}$}{Representations of O_D^x}}

The functors
\begin{align*}
	\Hom_{\OO_D^\times}(-, f_* \OO_{\sN_{\infty}}) &\colon \Rep_{\sm}^{\fd}(\OO_D^\times) \rightarrow \VectCon^{G^0}(\Omega), \\
	\Hom_{\OO_D^\times}(-, \phi_* \OO_{\sM_{\infty}}) &\colon \Rep_{\sm}^{\fd}(\OO_D^\times) \rightarrow \VectCon^{\GL_n(F)}(\sM)
\end{align*}
are defined as the direct limits of the respective functors 
\begin{align*}
	\Hom_{\OO_{D}^{(m)}}(-, f_{m,*} \OO_{\sN_{m}}) &\colon \Rep^{\fd}(\OO_{D}^{(m)}) \rightarrow \VectCon^{G^0}(\Omega), \\
	\Hom_{\OO_{D}^{(m)}}(-, \phi_{m,*} \OO_{\sM_{m}}) &\colon \Rep^{\fd}(\OO_{D}^{(m)}) \rightarrow \VectCon^{\GL_n(F)}(\sM),
\end{align*}
which are well-defined by Lemma \ref{compatiblelemma}. These are compatible in the following sense.

\begin{lemma}
	For $V \in \Rep_{\sm}^{\fd}(\OO_D^\times)$, there is a natural isomorphism
	\[
		\Hom_{\OO_D^\times}(V, f_* \OO_{\sN_{\infty}}) \cong \pi_{\GM,*} \left( \Hom_{\OO_D^\times}(V, \phi_* \OO_{\sM_{\infty}})|_{\sN} \right)
	\]
	in $\VectCon^{G^0}(\Omega)$.
\end{lemma}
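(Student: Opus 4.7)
The plan is to establish the identification at each finite level $m \geq 1$ and then pass to the direct limit. Fix $V \in \Rep^{\fd}(\OO_D^{(m)})$. The essential geometric input is that $\sN_m = \phi_m^{-1}(\sN)$ by definition, the restriction $\phi_m|_{\sN_m} \colon \sN_m \to \sN$ is a Galois covering with group $\OO_D^{(m)}$, and $f_m|_{\sN_m} \colon \sN_m \to \Omega$ factors as $\phi_m|_{\sN_m}$ followed by the $G^0$-equivariant isomorphism $\pi_{\GM}|_{\sN} \colon \sN \xrightarrow{\sim} \Omega$. Consequently, as sheaves on $\sN$,
\[
(\phi_{m,*}\OO_{\sM_m})|_{\sN} \;=\; (\phi_m|_{\sN_m})_* \OO_{\sN_m},
\]
since for any admissible open $U \subset \sN$ both sides evaluate to $\OO_{\sN_m}(\phi_m^{-1}(U))$.

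First I would verify, by a direct computation on sections, that for any admissible open $W \subset \Omega$ both sides of the claimed isomorphism evaluate canonically to $\Hom_{\OO_D^{(m)}}(V, \OO_{\sN_m}((f_m|_{\sN_m})^{-1}(W)))$, giving a canonical identification of the underlying sheaves of $\OO_\Omega$-modules. I would then check that this identification respects the $G^0$-equivariant structure and the $\sD_\Omega$-module structure; this is essentially formal, as on both sides these structures are induced functorially from the common $G^0 \times \OO_D^{(m)}$-equivariant $\sD_{\sN_m}$-module structure on $\OO_{\sN_m}$ (inherited by restriction from $\OO_{\sM_m}$), and the three operations involved — taking $\OO_D^{(m)}$-invariants in an internal Hom, restriction to an admissible open, and direct image along an isomorphism — all preserve such structures.

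Finally, to obtain the statement for the infinite-level functor, I would take the direct limit over $m$ of the level-wise isomorphisms. This is routine: the pushforward $\pi_{\GM,*}$ along the isomorphism $\pi_{\GM}|_{\sN}$ and the restriction to the open subspace $\sN$ are both exact and commute with filtered colimits of sheaves, so the level-wise natural isomorphisms assemble into a natural isomorphism of the limits in $\VectCon^{G^0}(\Omega)$.

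There is no substantial obstacle; the statement is essentially a tautology reflecting the fact that $\sN_m$ is cut out inside $\sM_m$ by the single connected component $\sN$ of $\sM$, so that the argument reduces to an equality of sections together with a functorial check of compatibilities.
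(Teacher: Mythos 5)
Your argument is correct and is essentially the paper's own proof, which simply observes that the identification holds by definition because $f_m$ is the composition of $\phi_m$ with $\pi_{\GM}$ and $\sN_m = \phi_m^{-1}(\sN)$; your section-level computation and the formal compatibility checks are just a more explicit spelling-out of the same observation.
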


\begin{proof}
	This follows essentially by definition, as $f_m$ is the composition of $\phi_m$ with $\pi_{\GM}$.
\end{proof}

\subsection{\texorpdfstring{Representations of $D^\times$}{Representations of D^x}}

The functors
\begin{align*}
	\Hom_{D^\times}(-, f_* \OO_{\sM_{\infty}}) &\colon \Rep_{\sm}^{\fd}(D^\times) \rightarrow \VectCon^{\GL_n(F)}(\Omega), \\
	\Hom_{\OO_D^\times}^{D^\times}(-, \phi_* \OO_{\sM_{\infty}}) &\colon \Rep_{\sm}^{\fd}(D^\times) \rightarrow \VectCon^{H \times \GL_n(F)}(\sM),
\end{align*}
are defined as the direct limits of the respective functors 
\begin{align*}
	\Hom_{D^{(m)}}(-, f_{m,*} \OO_{\sM_{m}}) &\colon \Rep^{\fd}(D^{(m)}) \rightarrow \VectCon^{\GL_n(F)}(\Omega), \\
	\Hom_{\OO_D^{(m)}}^{D^\times}(-, \phi_{m,*} \OO_{\sM_{m}}) &\colon \Rep^{\fd}(D^{(m)}) \rightarrow \VectCon^{H \times \GL_n(F)}(\sM),
\end{align*}
which are well-defined by Lemma \ref{compatiblelemma}. Here each finite level functor is defined just as described at the start of Section \ref{sect:DmodGalExt}, where the superscript $D^\times$ on the second functor is there purely to differentiate it from the similarly denoted functor of the previous section. Denoting the equivalence of Example \ref{eg:quotients} by
\[
(-)^H \colon \VectCon^{H \times \GL_n(F)}(\sM) \xrightarrow{\sim} \VectCon^{\GL_n(F)}(\sM / H),
\]
and writing $\pi_{\text{GM}} \colon \sM / H \xrightarrow{\sim} \Omega$, these are compatible in the following sense.

\begin{lemma}\label{lem:Dtimescompatible}
	For $V \in \Rep_{\sm}^{\fd}(D^\times)$, there is a natural isomorphism 
	\[
		\Hom_{D^\times}(V, f_* \OO_{\sM_{\infty}}) \cong \pi_{\GM,*}\Hom_{\OO_D^\times}^{D^\times}(V, \phi_* \OO_{\sM_{\infty}})^H
	\]
	in $\VectCon^{\GL_n(F)}(\Omega)$.
\end{lemma}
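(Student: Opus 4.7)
The plan is to reduce to finite level and then carry out a concrete sheaf-theoretic identification of $H$-invariants with $D^{(m)}$-linear maps. Both functors in the statement are defined as direct limits over $m \geq 1$ of functors $\Hom_{D^{(m)}}(-, f_{m,*}\OO_{\sM_m})$ and $\Hom_{D^{(m)}}(-, \phi_{m,*}\OO_{\sM_m})$ on $\Rep^{\fd}(D^{(m)})$, and the asserted isomorphism will be visibly compatible with the transition maps. It therefore suffices, for each fixed $m$ and $V \in \Rep^{\fd}(D^{(m)})$, to exhibit a natural $\GL_n(F)\text{-}\sD_{\Omega}$-linear isomorphism
\[
\Hom_{D^{(m)}}(V, f_{m,*}\OO_{\sM_m}) \;\xrightarrow{\sim}\; \pi_{\GM,*}\Hom_{D^{(m)}}(V, \phi_{m,*}\OO_{\sM_m})^H
\]
of sheaves on $\Omega$.

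First I would record the elementary geometric input: since $\pi_{\GM}$ is $H$-invariant, for any admissible open $U \subset \Omega$ the preimage $\pi_{\GM}^{-1}(U) \subset \sM$ is $H$-stable, and hence $f_m^{-1}(U) = \phi_m^{-1}(\pi_{\GM}^{-1}(U)) \subset \sM_m$ is $D^{(m)}$-stable. In particular the sections $\OO_{\sM_m}(f_m^{-1}(U))$ carry a genuine $D^{(m)}$-action extending the $\OO_D^{(m)}$-Galois action, and there is a canonical identification $(\phi_{m,*}\OO_{\sM_m})(\pi_{\GM}^{-1}(U)) = f_{m,*}\OO_{\sM_m}(U)$ of $D^{(m)}$-modules.

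The main step is then to unpack the $H$-equivariant structure on $\Hom_{D^{(m)}}(V, \phi_{m,*}\OO_{\sM_m}) \in \VectCon^{\GL_n(F) \times H}(\sM)$. Following Section \ref{sect:DmodGalExt}, this is the sheaf $\Hom_{\OO_D^{(m)}}(V, \phi_{m,*}\OO_{\sM_m})$ equipped with the $H$-equivariant structure built from the $H$-action on $V$ through $D^{(m)} \twoheadrightarrow H$ and the $H$-equivariant structure of $\phi_{m,*}\OO_{\sM_m}$. Over the $H$-stable open $\pi_{\GM}^{-1}(U)$ this structure yields an honest $H$-action on sections which sends an $\OO_D^{(m)}$-linear map $\phi$ to $\tilde h \circ \phi \circ \tilde h^{-1}$ for any lift $\tilde h \in D^{(m)}$ of $h \in H$; independence of the lift is a direct consequence of the $\OO_D^{(m)}$-linearity of $\phi$. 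An $\OO_D^{(m)}$-linear $\phi$ is therefore $H$-invariant if and only if it commutes with every lift, i.e., is $D^{(m)}$-linear, yielding on sections
\[
\pi_{\GM,*}\Hom_{D^{(m)}}(V, \phi_{m,*}\OO_{\sM_m})^H(U) \;=\; \Hom_{D^{(m)}}(V, \OO_{\sM_m}(f_m^{-1}(U))) \;=\; \Hom_{D^{(m)}}(V, f_{m,*}\OO_{\sM_m})(U),
\]
functorially in $U$ and $V$.

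The main obstacle is essentially bookkeeping: one must confirm that this sectionwise identification is $\GL_n(F)$-equivariant and $\sD_{\Omega}$-linear. Both checks are local and essentially tautological. The $\GL_n(F)$-equivariance is inherited from that of $\phi_{m,*}\OO_{\sM_m}$ and $f_{m,*}\OO_{\sM_m}$, while the $\sD_{\Omega}$-linearity reduces, via the explicit description of the $\sD$-module pushforward recalled in Section \ref{section:inversedirectimage}, to the fact that the $D^{(m)}$-action on $\OO_{\sM_m}$ commutes with derivations lifted from $\Omega$, which is Lemma \ref{dercommuteG} applied to the Galois covering $\phi_m$ together with the compatibility with the geometric $D^{(m)}$-action on $\sM_m$. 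Passing to the direct limit over $m$ then yields the lemma.
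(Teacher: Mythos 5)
Your proposal is correct and follows essentially the same route as the paper: reduce to finite level $m$, use that $f_m = \pi_{\GM} \circ \phi_m$ so that sections over $\pi_{\GM}^{-1}(U)$ compute the pushforward, and identify the $H$-invariants of $\Hom_{\OO_D^{(m)}}(V,-)$ with $\Hom_{D^{(m)}}(V,-)$ via the observation that an $\OO_D^{(m)}$-linear map is $H$-invariant exactly when it is $D^{(m)}$-linear. The paper records this as a four-line chain of functorial equalities (commuting $p_*$ and $(-)^H$ past the $\Hom$), whereas you unpack the same identifications on sections; the content is identical.
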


\begin{proof}
	The natural isomorphism is defined at each finite level $m \geq 1$ by
	\begin{align*}
		\pi_{\text{GM}, *}\Hom_{\OO_D^\times}^{D^\times} (V, \phi_{m,*}\OO_{\sM_m})^{H} &= \pi_{\text{GM}, *}(p_*(\Hom_{\OO_D^\times}^{D^\times} (V, \phi_{m,*}\OO_{\sM_m}))^{H}),\\
		&=\pi_{\text{GM}, *}(\Hom_{\OO_D^\times}^{D^\times}(V, p_*\phi_{m,*}\OO_{\sM_m})^{H}), \\
		&= \pi_{\text{GM}, *}\Hom_{D^\times}(V, p_*\phi_{m,*}\OO_{\sM_m}), \\
		&= \Hom_{D^\times}(V, f_{m,*}\OO_{\sM_m}). \qedhere
	\end{align*}
\end{proof}

In order to see the commutativity of the upper half of the diagram of Section \ref{sect:equivariantDmodulesonDrinfeldSpaces}, note that
\[\begin{tikzcd}
	{\Hom_{\OO_D^\times}(-, \phi_*\OO_{\sM_{\infty}})} \hspace{-1.1em} & \hspace{-1em} {\Rep_{\sm}^{\fd}(D^\times)} & {\VectCon^{H \times \GL_n(F)}(\sM)} \\
	{\Hom_{\OO_D^\times}(-, \phi_*\OO_{\sM_{\infty}})} \hspace{-1.1em} & \hspace{-1em} {\Rep_{\sm}^{\fd}(\OO_D^\times)} & {\VectCon^{\GL_n(F)}(\sM)}
	\arrow["\colon"{description}, draw=none, from=1-1, to=1-2]
	\arrow[from=1-2, to=1-3]
	\arrow[from=1-2, to=2-2]
	\arrow[from=1-3, to=2-3]
	\arrow["\colon"{description}, draw=none, from=2-1, to=2-2]
	\arrow[from=2-2, to=2-3]
\end{tikzcd}\]
commutes by construction. We can use this to see that the square
\[\begin{tikzcd}
	{\Hom_{D^\times}(-, f_*\OO_{\sM_{\infty}})} \hspace{-1em} & \hspace{-1em} {\Rep_{\sm}^{\fd}(D^\times)} & {\VectCon^{\GL_n(F)}(\Omega)} \\
	{\Hom_{\OO_D^\times}(-, f_*\OO_{\sN_{\infty}})} \hspace{-1.1em} & \hspace{-1em} {\Rep_{\sm}^{\fd}(\OO_D^\times)} & {\VectCon^{G^0}(\Omega)}
	\arrow["\colon"{description}, draw=none, from=1-1, to=1-2]
	\arrow[from=1-2, to=1-3]
	\arrow[from=1-2, to=2-2]
	\arrow[from=1-3, to=2-3]
	\arrow["\colon"{description}, draw=none, from=2-1, to=2-2]
	\arrow[from=2-2, to=2-3]
\end{tikzcd}\]
commutes, as a consequence of Lemma \ref{lem:Dtimescompatible} and the following lemma. 

\begin{lemma}
	The diagram
\[\begin{tikzcd}
	{\VectCon^{\GL_n(F)}(\Omega)} & {\VectCon^{H \times \GL_n(F)}(\sM)} \\
	{\VectCon^{G^0}(\Omega)} & {\VectCon^{\GL_n(F)}(\sM)}
	\arrow[from=1-1, to=2-1]
	\arrow["\sim"', from=1-2, to=1-1]
	\arrow[from=1-2, to=2-2]
	\arrow["\sim"', from=2-2, to=2-1]
\end{tikzcd}\]
commutes.
\end{lemma}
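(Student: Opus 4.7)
The plan is to construct an explicit natural isomorphism between the two composite functors and then verify compatibility with the $\sD_\Omega$-action and the $G^0$-equivariant structure. Fix $\sV \in \VectCon^{H \times \GL_n(F)}(\sM)$. Tracing through the definitions, the top-then-left composite produces the $\sD_\Omega$-module $\pi_{\GM,*}(\sV^H)$ on $\Omega$, viewed as a $G^0$-equivariant object by restriction, whereas the right-then-bottom composite produces $(\pi_{\GM}|_{\sN})_{*}(\sV|_{\sN})$, using Example \ref{eg:conncompequiv} applied to the $\GL_n(F)$-decomposition $\sM = \bigsqcup_{g \in \GL_n(F)/G^0} g(\sN)$ together with the $G^0$-equivariant isomorphism $\pi_{\GM}|_{\sN} \colon \sN \xrightarrow{\sim} \Omega$.

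The key geometric input is that $H = D^\times/\OO_D^\times$, generated by the class of a uniformiser $\Pi$, acts simply transitively on $\pi_0(\sM) \cong \bZ$. Consequently $\sM = \bigsqcup_{h \in H} h(\sN)$, and for any admissible open $U \subset \Omega$, setting $U_0 \coloneqq (\pi_{\GM}|_{\sN})^{-1}(U) \subset \sN$, we have $\pi_{\GM}^{-1}(U) = \bigsqcup_{h \in H} h(U_0)$, so that
\[
    \sV^H(U) = \Big(\prod_{h \in H} \sV(h(U_0))\Big)^H \xrightarrow{\sim} \sV(U_0),
\]
given by projection onto the factor indexed by $h = 1$, with inverse $v \mapsto (h^{\sV}(v))_{h \in H}$. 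Assembling these identifications over $U$ yields the desired natural isomorphism of underlying sheaves of $\OO_\Omega$-modules.

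It remains to verify compatibility with the $G^0$-equivariant and $\sD_\Omega$-module structures. The $G^0$-equivariance is immediate: $G^0$ preserves $\sN$ and commutes with the $H$-action, so it acts naturally on both sides through its action on sections over $U_0$. For $\sD_\Omega$-linearity, a local derivation $\partial \in \Theta_\Omega(U)$ lifts by Lemma \ref{extendderivations} to a unique derivation $\psi(\partial)$ on $\OO_\sM(\pi_{\GM}^{-1}(U)) = \prod_h \OO_\sM(h(U_0))$; since $\psi(\partial)$ annihilates the idempotents cutting out this decomposition, it preserves each factor, and its restriction to $\OO_\sN(U_0)$ coincides, by the uniqueness clause of Lemma \ref{extendderivations}, with the lift of $\partial$ along the \'{e}tale extension $\OO_\Omega(U) \to \OO_\sN(U_0)$. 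Hence the two $\sD_\Omega$-actions agree under the isomorphism. The main obstacle, insofar as there is one, is this last piece of bookkeeping; no genuinely new ideas beyond Sections \ref{sect:background}--\ref{sect:DmodGalExt} are required.
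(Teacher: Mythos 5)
Your argument is correct and is essentially the paper's own proof: both reduce to the decomposition $\sM = \bigsqcup_{h \in H} h(\sN)$ and identify $H$-invariant sections over $\pi_{\GM}^{-1}(U)$ with sections over the component in $\sN$ via restriction/projection onto the identity factor. You are somewhat more explicit about checking $\sD_\Omega$-linearity via the uniqueness clause of Lemma \ref{extendderivations}, which the paper leaves implicit, but the route is the same.
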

\begin{proof}
The lower equivalence is defined as the composition of equivalences,
\[
\VectCon^{\GL_n(F)}(\sM) \xrightarrow{\sim} \VectCon^{G^0}(\sN) \xrightarrow{\sim} \VectCon^{G^0}(\Omega)
\]
where the first is restriction (cf. Example \ref{eg:conncompequiv}), and the second is induced by the $G^0$-equivariant isomorphism
\[
\sN \xhookrightarrow{\iota} \sM \xrightarrow{p} \sM / H \xrightarrow{\pi_{\text{GM}}}\Omega, 
\] 
and therefore it is sufficient to show the commutativity of
\[\begin{tikzcd}
	{\VectCon^{\GL_n(F)}(\sM /H )} & {\VectCon^{H \times \GL_n(F)}(\sM)} \\
	{\VectCon^{G^0}(\sM / H)} & {\VectCon^{\GL_n(F)}(\sM)} \\
	{\VectCon^{G^0}(\sN)}
	\arrow[from=1-1, to=2-1]
	\arrow["\sim"', from=1-2, to=1-1]
	\arrow[from=1-2, to=2-2]
	\arrow["\sim"', from=2-1, to=3-1]
	\arrow["\sim", from=2-2, to=3-1]
\end{tikzcd}\]
But this follows directly from the fact that any $\sF \in \VectCon^{H \times \GL_n(F)}(\sM)$ satisfies
\[
\sF|_{\sN} \cong (p \circ \iota)^* \sF^{H}
\]
as elements of $\VectCon^{G^0}(\sN)$. Indeed, the isomorphism is given on any admissible open subset $U \subset \sN$ by
\begin{align*}
((p \circ \iota)^*\sF^{H})(U) &= \sF(p^{-1}(p \circ \iota(U)))^{H}, \\
&= \sF (H \cdot U)^{H}, \\
&\xrightarrow{\sim} \sF(U),
\end{align*}
with the last isomorphism that induced by sheaf restriction.
\end{proof}

\subsection{\texorpdfstring{Representations of $\SL_1(D)$}{Representations of SL_1(D)}}\label{sect:forgettingequivstructure}

Suppose in this section (Section \ref{sect:forgettingequivstructure}) that $K$ contains $L_m$ for all $m \geq 1$, or equivalently (as $K$ is already assumed to contain the maximal unramified extension of $F$) that $K$ contains $F^{\ab}$, the maximal abelian extension of $F$. Under this assumption, by Theorem \ref{thm:geomconncompdescDT} each space $\sN_m$ is the disjoint union of copies of isomorphic geometrically connected components $\Sigma^m$. We can choose these components in such a way that the image of $\Sigma^{m+1}$ is contained inside $\Sigma^{m}$, and we thus have a tower of geometrically connected rigid spaces,
\[
	\sN \leftarrow \Sigma^1 \leftarrow \Sigma^2 \leftarrow \cdots .
\]
\begin{defn}
Let $m \geq 1$. We write
\begin{itemize}
	\item $\SL_1(D) \coloneqq \ker(\Nrd \colon D^\times \rightarrow F^\times)$,
	\item $\SL_1^m(D) \coloneqq \SL_1(D) \cap (1 + \Pi^m \OO_D)$ for any $m \geq 1$,
	\item $\OO_{F}^{[m]} \coloneqq 1 + \pi^{\lceil\frac{m}{n} \rceil}\OO_F$.
\end{itemize}
\end{defn}

By Lemma \ref{lemma:conncompgaloiscovering}, each morphism $f_m \colon \Sigma^m \rightarrow \sN$ is Galois, with Galois group $\Stab(\Sigma^m) \subset \Gal(\sN_m / \sN)$. By \cite[Prop.\ 3.1]{TAY} this stabiliser is equal to the kernel of the reduced norm map
\[
	\Nrd_m \colon \OO_D^{(m)} = \OO_D^{\times} / (1 + \Pi^m \OO_D) \rightarrow \OO_F^\times / \OO_{F}^{[m]}.
\]
Therefore, using Lemma \ref{lem:grouptheorylemma} below together with the fact that $\Nrd(1+\Pi^m \OO_D) = \OO_{F}^{[m]}$ \cite[Lem.\ 5]{RIEHM}, we obtain an isomorphism
\[
	\SL_1(D) / \SL_1^m(D) \xrightarrow{\sim} \Gal(\Sigma^m / \sN) \hookrightarrow \Gal(\sN_m / \sN) = \OO_D^\times / (1 + \Pi^m \OO_D).
\]
\begin{lemma}\label{lem:grouptheorylemma}
Suppose that $\phi \colon H_1 \rightarrow H_2$ is a surjective group homomorphism and
\begin{align*}
H_1 &\geq H_{1,1} \geq H_{1,2} \geq \cdots, \\
H_2 &\geq H_{2,1} \geq H_{2,2} \geq \cdots
\end{align*}
are chains of normal subgroups with $\phi(H_{1,m}) = H_{2,m}$ for all $m \geq 1$. Then for all $m \geq 1$, $\phi$ induces an isomorphism
\[
	\ker(\phi) / (\ker(\phi) \cap H_{1,m}) \xrightarrow{\sim} \ker(\phi_m)
\]
where $\phi_m$ is the induced map
\[
	\phi_m \colon H_1 / H_{1,m} \rightarrow H_2 / H_{2,m}.
\]
\end{lemma}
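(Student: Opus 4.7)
The plan is to prove the lemma by a direct group-theoretic argument, verifying explicitly that the naturally induced map from $\ker(\phi)$ to $\ker(\phi_m)$ is surjective with kernel $\ker(\phi) \cap H_{1,m}$.

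First I would observe that $\phi_m$ is well-defined as a group homomorphism: since $\phi(H_{1,m}) = H_{2,m}$ we have in particular $\phi(H_{1,m}) \subset H_{2,m}$, so $\phi$ descends to the quotients. Next, I would define the candidate map
\[
\psi_m \colon \ker(\phi) \to \ker(\phi_m), \qquad x \mapsto x H_{1,m},
\]
which lands in $\ker(\phi_m)$ since $\phi_m(xH_{1,m}) = \phi(x)H_{2,m} = H_{2,m}$. By construction $\ker(\psi_m) = \ker(\phi) \cap H_{1,m}$, so $\psi_m$ descends to an injective homomorphism $\overline{\psi}_m \colon \ker(\phi)/(\ker(\phi) \cap H_{1,m}) \hookrightarrow \ker(\phi_m)$.

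The only nontrivial step is surjectivity of $\overline{\psi}_m$, which is where the hypothesis $\phi(H_{1,m}) = H_{2,m}$ (rather than just $\phi(H_{1,m}) \subset H_{2,m}$) plays the essential role. Given $xH_{1,m} \in \ker(\phi_m)$, we have $\phi(x) \in H_{2,m}$, so by the surjectivity hypothesis on restrictions we may pick $h \in H_{1,m}$ with $\phi(h) = \phi(x)$. Then $y \coloneqq xh^{-1}$ lies in $\ker(\phi)$ and satisfies $yH_{1,m} = xH_{1,m}$, giving the required preimage.

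There is no real obstacle here; the argument is a one-line diagram chase once the hypotheses are unpacked, and can be written as a direct verification. The one subtlety worth flagging in the write-up is that one should not confuse the hypothesis $\phi(H_{1,m}) = H_{2,m}$ with the weaker $\phi(H_{1,m}) \subset H_{2,m}$; without the equality, surjectivity of $\overline{\psi}_m$ can fail.
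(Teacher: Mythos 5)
Your proof is correct, and it is essentially the argument the paper gives: the paper assembles the same data into a $3\times 3$ commutative diagram and invokes the Nine Lemma, whereas you carry out the underlying diagram chase by hand (the only nontrivial step in either version being surjectivity onto $\ker(\phi_m)$, which both proofs extract from the hypothesis $\phi(H_{1,m}) = H_{2,m}$). Nothing is missing.
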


\begin{proof}
Consider the commutative diagram,
\[\begin{tikzcd}
	& 1 & 1 & 1 \\
	& {\ker(\phi) \cap H_{1,m}} & {\ker(\phi)} & {\ker(\phi_m)} \\
	1 & {H_{1,m}} & {H_1} & {H_1 / H_{1,m}} & 1 \\
	1 & {H_{2,m}} & {H_2} & {H_2 / H_{2,m}} & 1 \\
	& 1 & 1 & 1
	\arrow[from=3-2, to=3-3]
	\arrow[from=3-3, to=3-4]
	\arrow[from=3-4, to=3-5]
	\arrow[from=3-1, to=3-2]
	\arrow[from=4-4, to=4-5]
	\arrow[from=4-3, to=4-4]
	\arrow[from=4-1, to=4-2]
	\arrow[from=4-2, to=4-3]
	\arrow[from=4-2, to=5-2]
	\arrow[from=4-3, to=5-3]
	\arrow[from=4-4, to=5-4]
	\arrow["\phi", from=3-3, to=4-3]
	\arrow["{\phi_m}", from=3-4, to=4-4]
	\arrow["{\phi|_{H_{1,m}}}", from=3-2, to=4-2]
	\arrow[from=2-3, to=3-3]
	\arrow[from=2-4, to=3-4]
	\arrow[from=1-4, to=2-4]
	\arrow[from=1-3, to=2-3]
	\arrow[from=1-2, to=2-2]
	\arrow[from=2-2, to=3-2]
	\arrow[from=2-3, to=2-4]
	\arrow[from=2-2, to=2-3]
\end{tikzcd}\]
The right two columns are exact, as $\phi$ is surjective. The first column is surjective by the assumption that $\phi(H_{1,m}) = H_{2,m}$. Then the result follows by the Nine Lemma.
\end{proof}

The functor
\[
\Hom_{\SL_1(D)}(-, f_{*}\OO_{\Sigma^{\infty}}) \colon \Rep_{\sm}^{\fd}(\SL_1(D)) \rightarrow \VectCon(\Omega)
\]
is defined as the direct limit of the functors 
\[
	\Hom_{\SL_1(D)}(-, f_{m,*}\OO_{\Sigma^{m}}) \colon \Rep^{\fd}(\SL_1(D) / \SL_1^m(D) ) \rightarrow \VectCon(\Omega)
\]
which is well-defined by Lemma \ref{compatiblelemma}. This is compatible with restriction from $\Rep_{\sm}^{\fd}(\OO_D^\times)$ in the sense that the diagram
\[\begin{tikzcd}
	{\Hom_{\OO_D^\times}(-, f_{*}\OO_{\sN_{\infty}}) } \hspace{-2em} & \hspace{-1em} {\Rep_{\sm}^{\fd}(\OO_D^\times)} & {\VectCon^{G^0}(\Omega)} \\
	{\Hom_{\SL_1(D)}(-, f_{*}\OO_{\Sigma^{\infty}}) } \hspace{-1.1em} & \hspace{-1em} {\Rep_{\sm}^{\fd}(\SL_1(D))} & {\VectCon(\Omega)}
	\arrow["\colon"{description}, draw=none, from=1-1, to=1-2]
	\arrow[from=1-2, to=1-3]
	\arrow[from=1-2, to=2-2]
	\arrow[from=1-3, to=2-3]
	\arrow["\colon"{description}, draw=none, from=2-1, to=2-2]
	\arrow[from=2-2, to=2-3]
\end{tikzcd}\]
commutes, which follows directly from Lemma \ref{lem:conncompgaloisDmodule}.

\subsection{Main Theorem}

As a result of the work of the previous sections, together with Corollary \ref{cor:drinfeldinvconstantfunctions} and Theorem \ref{mainthm1} we have the following. Note that whenever we consider the functor from $\SL_1(D)$-representations there is an implicit assumption that $K$ contains $F^{\ab}$ in order for the spaces $\Sigma^m$ to be defined for all $m \geq 1$.

\begin{thm}\label{mainthmdrinfeld}
	Each labelled functor of the commutative diagram
\[\begin{tikzcd}
	{\Hom_{D^\times}(-, f_* \OO_{\sM_{\infty}})} \hspace{-2.1em} & \hspace{-1.5em} {\Rep_{\sm}^{\fd}(D^\times)} & {\VectCon^{\GL_n(F)}(\Omega)} \\
	{\Hom_{\OO_D^\times}(-, f_* \OO_{\sN_{\infty}})} \hspace{-2.3em} & \hspace{-1.5em} {\Rep_{\sm}^{\fd}(\OO_D^\times)} & {\VectCon^{G^0}(\Omega)} \\
	{\Hom_{\SL_1(D)}(-, f_* \OO_{\Sigma^{\infty}})} \hspace{-1.1em} & \hspace{-1em} {\Rep_{\sm}^{\fd}(\SL_1(D))} & {\VectCon(\Omega)}
	\arrow["\colon"{description}, draw=none, from=1-1, to=1-2]
	\arrow[from=1-2, to=1-3]
	\arrow[from=1-2, to=2-2]
	\arrow[from=1-3, to=2-3]
	\arrow["\colon"{description}, draw=none, from=2-1, to=2-2]
	\arrow[from=2-2, to=2-3]
	\arrow[from=2-2, to=3-2]
	\arrow[from=2-3, to=3-3]
	\arrow["\colon"{description}, draw=none, from=3-1, to=3-2]
	\arrow[from=3-2, to=3-3]
\end{tikzcd}\]
	is exact, monoidal, fully faithful, and has its essential image closed under sub-quotients. 
\end{thm}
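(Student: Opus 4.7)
The plan is to deduce the theorem from Theorem \ref{mainthm1} applied at each finite level $m \geq 1$ of the relevant tower, and then to pass to the direct limit. Commutativity of the diagram has already been established in the discussion preceding the theorem, so only the four stated properties need to be verified for each of the three labelled functors.

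First I would verify the geometric hypothesis $c_X(X)^G = K$ of Theorem \ref{mainthm1} in each case. For $\Hom_{\OO_D^\times}(-, f_* \OO_{\sN_\infty})$, apply Theorem \ref{mainthm1} at level $m$ to the finite \'etale Galois covering $f_m \colon \sN_m \to \sN \cong \Omega$ with $H = N = \OO_D^{(m)}$ and commuting ambient group $G = G^0$; the hypothesis is precisely Corollary \ref{cor:drinfeldinvconstantfunctions}. For $\Hom_{D^\times}(-, f_* \OO_{\sM_\infty})$, I would work first on $\sM$ via Lemma \ref{lem:Dtimescompatible}: apply Theorem \ref{mainthm1} to $\phi_m \colon \sM_m \to \sM$, which is Galois with group $\OO_D^{(m)}$ sitting as a normal subgroup of $H = D^{(m)}$, and take $G = \GL_n(F)$; again Corollary \ref{cor:drinfeldinvconstantfunctions} supplies $c(\sM_m)^{\GL_n(F)} = K$. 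The $H$-invariants equivalence of Example \ref{eg:quotients} then transports the conclusion into $\VectCon^{\GL_n(F)}(\Omega)$. For $\Hom_{\SL_1(D)}(-, f_* \OO_{\Sigma^\infty})$, apply Theorem \ref{mainthm1} to $\Sigma^m \to \sN \cong \Omega$ with Galois group $\SL_1(D)/\SL_1^m(D)$ (identified in Section \ref{sect:forgettingequivstructure}) and trivial $G$; since $\Sigma^m$ is geometrically connected, $c(\Sigma^m) = K$ by Corollary \ref{cor:cXuequalsk1}.

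Next I would transfer the finite-level conclusions across the direct limit. For any finite-dimensional smooth $V$, smoothness guarantees that $V$ factors through some $D^{(m)}$ (resp.\ $\OO_D^{(m)}$, resp.\ $\SL_1(D)/\SL_1^m(D)$), and any equivariant morphism $V \to f_* \OO_{\sM_\infty}$ lands automatically in the finite-level piece $f_{m,*} \OO_{\sM_m}$, so the functor applied to $V$ is computed at a single finite level; Lemma \ref{compatiblelemma} ensures consistency as $m$ grows. Exactness follows since filtered colimits are exact and each level is exact. Monoidality follows because, for $V, W$ both at level $m$, the tensor product $V \otimes W$ still factors through level $m$ and the level-$m$ functor is monoidal by Theorem \ref{mainthm1}(2). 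Full faithfulness reduces likewise: for $V, W$ at a common level, the Hom-set on both sides is computed at that level. For closure under sub-quotients, given $\sM \subset \sN$ in the relevant category with $\sN$ the image of some $V$ at level $m$, $\sN$ lies in the essential image of the level-$m$ functor; Theorem \ref{mainthm1}(5) then exhibits $\sM$ as coming from a sub-representation of $V$, which lifts to the full group via inflation.

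The main obstacle is largely organisational: keeping precise track of which equivariant category one is working in, and which equivalence (Example \ref{eg:conncompequiv}, Example \ref{eg:quotients}, or Proposition \ref{FunctorEquivalence}) is being used to compare data on $\sM_m$, $\sN_m$, or $\Sigma^m$ to data on $\Omega$. Once this dictionary is in place at every finite level, each of the four desired properties of each of the three functors follows mechanically from the corresponding part of Theorem \ref{mainthm1} together with the standard behaviour of exactness, monoidality, full faithfulness and closure under sub-quotients under filtered colimits.
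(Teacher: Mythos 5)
Your proposal is correct and is essentially the paper's own argument: the paper states Theorem \ref{mainthmdrinfeld} as a direct consequence of the finite-level constructions of Sections \ref{sect:equivariantDmodulesonDrinfeldSpaces}--\ref{sect:forgettingequivstructure} (with well-definedness of the direct limits guaranteed by Lemma \ref{compatiblelemma}), Corollary \ref{cor:drinfeldinvconstantfunctions} supplying the hypothesis $c_X(X)^G = K$ of Theorem \ref{mainthm1} for the $\OO_D^\times$ and $D^\times$ cases, and geometric connectedness of the $\Sigma^m$ handling the $\SL_1(D)$ case. Your identification of the groups $(G, H, N)$ at each level and the passage through Example \ref{eg:quotients} for the $D^\times$ functor match the paper's treatment exactly.
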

This proves Theorem C, and all but the essential image parts of Theorem A and Theorem B.

\begin{remark}\label{rem:dimrank}
	For each functor of Theorem \ref{mainthmdrinfeld}, the dimension of a representation is equal to the rank of the corresponding vector bundle. This follows from part $(3)$ of Theorem \ref{mainthm1} and the construction of each functor. Similarly, each functor preserves duals, symmetric powers, exterior powers and determinants, by Theorem \ref{mainthm1}(2).
\end{remark}

\begin{remark}
	Suppose that $G$ is any subgroup of $\GL_n(F)$ which stabilises the geometrically connected sub-tower $(\Sigma_m)_{m \geq 1}$, such as $\SL_n(F)$ (in fact the subgroups $G$ which stabilise this sub-tower are exactly the subgroups of $\SL_n(F)$ by Theorem \ref{thm:geomconncompdescDT}). Then in both Theorem \ref{mainthmdrinfeld} and throughout the previous Section \ref{sect:forgettingequivstructure} we may replace $\VectCon(\Omega)$ by $\VectCon^G(\Omega)$ everywhere and all statements remain true. We have chosen to state the theorem with $\VectCon(\Omega)$, as the fully faithfulness and statement that the essential image is closed under sub-quotients for $\VectCon(\Omega)$ directly imply the corresponding statements for $\VectCon^G(\Omega)$.
\end{remark}

\begin{remark}
	Similarly, if $G$ is any group with $\GL_n(\OO_F) \leq G \leq G^0$, we may replace $G^0$ with $G$ in Theorem \ref{mainthmdrinfeld} and all statements remain true. They key point is that for such a group $G$, $c(\sN_m)^G = K$, which follows from the same proof as given in Corollary \ref{cor:drinfeldinvconstantfunctions} and was the only property we used of $G^0$. However, we will use $G^0$ in an essential way in the next section, and in our description of the essential image it is no longer true that $G^0$ can be replaced with any such $G$, which is why we choose to state the above theorem with $G^0$.
\end{remark}

\begin{eg}\label{eg:objectsinimage}
	For $m \geq 1$,
	\[
		\Hom_{\OO_D^\times}(K[\OO_D^{(m)}], f_* \OO_{\sN_{\infty}}) = f_{m,*} \OO_{\sN_{m}} \in \VectCon^{G^0}(\Omega).
	\]
	The representation $V_m \coloneqq K[\OO_D^{(m)}]$ of $\OO_D^\times$ admits many extensions to a representation of $D^\times$, and each corresponds to an extension of $f_{m,*} \OO_{\sN_{m}}$ to a $\GL_n(F)$-equivariant vector bundle with connection. For example, for any choice of uniformiser $\Pi$ of $\OO_D^\times$ (or equivalently any choice of section $s \colon H \rightarrow D^\times$), $V_m$ extends to a $D^\times$-representation $V_m^s$ where $\Pi$ acts by 
	\[
		\Pi * n = \Pi n \Pi^{-1}, \qquad n \in \OO_D^{(m)},
	\]
	as in Section \ref{sect:imregrep}. These extensions are typically non-isomorphic for different choices of $\Pi$. From Proposition \ref{prop:regrepn} we have that 
	\[
	\Hom_{\OO_D^\times}(V_m^s, \phi_* \OO_{\sM_{\infty}}) = \phi_{m,*} \OO_{\sM_{m}} \in \VectCon^{H \times \GL_n(F)}(\Omega),
	\]
	where $H$ acts on $\phi_{m,*} \OO_{\sM_{m}}$ through the section $s \colon H \rightarrow D^\times$. In particular, by Lemma \ref{lem:Dtimescompatible},
	\[
		\Hom_{D^\times}(V_m^s, f_* \OO_{\sM_{\infty}}) = \pi_{\GM, *}(\phi_{m,*} \OO_{\sM_{m}})^{H} = \pi_{\GM, *}(\phi_{m,*} \OO_{\sM_{m}})^{\Pi} \in \VectCon^{\GL_n(F)}(\Omega).
	\]
	For each choice of $\Pi$, this can directly be seen to extend $f_{m,*} \OO_{\sN_{m}}$, as there is an isomorphism in $\VectCon^{G^0}(\Omega)$, given on an admissible open subset $U \subset \Omega$ by
	\[
		\pi_{\GM, *}(\phi_{m,*} \OO_{\sM_{m}})^{\Pi}(U) = \OO_{\sM_m}(f_m^{-1}(U))^{\Pi} \xrightarrow{\sim} \OO_{\sM_m}(f_m^{-1}(U) \cap \sN_m) = (f_*\OO_{\sN_m})(U).
	\]
\end{eg}

\subsection{The Essential Image}\label{sect:essimage}

It is possible to give a description of the essential image of each of the functors we have defined, by using part (6) of Theorem \ref{mainthm1}. However this is not an intrinsic description that depends only on the space $\Omega$.

In this section we give an intrinsic description of the essential image of the functor
\[
\Hom_{\OO_D^\times}(-, f_{*}\OO_{\sN_{\infty}}) \colon \Rep_{\sm}^{\fd}(\OO_D^\times) \rightarrow \VectCon^{G^0}(\Omega).
\]

In the following we view $\VectCon^{G^0}(\Omega)$ as a neutral Tannakian category, as in Section \ref{sect:finiteVB}. To verify the assumptions of that section, recall that the set of $K$-rational points of $\Omega$ is $\bP^{n-1}(K)$ with the $F$-rational hyperplanes removed, and so $\Omega$ has a $K$-rational point, $K$ being a proper extension of $F$. Furthermore $c_{\Omega}(\Omega)^{G^0} = K$, for $\Omega$ is geometrically connected \cite[Thm.\ 2.4]{KOH} and thus $c_{\Omega}(\Omega) = K$ by Corollary \ref{cor:cXuequalsk1}.
\begin{thm}\label{mainthmEI}
	The essential image of 
	\[
		\Hom_{\OO_D^\times}(-, f_{*}\OO_{\sN_{\infty}}) \colon \Rep_{\sm}^{\fd}(\OO_D^\times) \rightarrow \VectCon^{G^0}(\Omega)
	\]
	is the full subcategory $\VectCon^{G^0}(\Omega)_{\fin}$ of finite equivariant vector bundles with connection.
\end{thm}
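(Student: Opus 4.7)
The plan has two directions. For containment of the essential image in $\VectCon^{G^0}(\Omega)_{\fin}$, any $\sV$ in the essential image is isomorphic to $\Hom_{\OO_D^{(m)}}(V, f_{m,*}\OO_{\sN_m})$ for some $m \geq 1$ and some $V \in \Rep^{\fd}(\OO_D^{(m)})$, by the construction of the functor as a direct limit together with Lemma \ref{compatiblelemma}. Corollary \ref{cor:drinfeldinvconstantfunctions} gives $c(\sN_m)^{G^0} = K$, so Theorem \ref{mainthm1}(6) applied to the $G^0$-equivariant Galois covering $f_m$ with $H = N = \OO_D^{(m)}$ ensures that $\sV \in \VectCon^{G^0}(\Omega)_{\fin}$.

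For the reverse inclusion, let $\sV \in \VectCon^{G^0}(\Omega)_{\fin}$. By Theorem \ref{mainthm1}(7), it suffices to exhibit, for some $m, n \geq 1$, a $G^0$-$\sD_\Omega$-linear embedding $\sV \hookrightarrow (f_{m,*}\OO_{\sN_m})^{\oplus n}$. The strategy is as follows. First, I would apply Proposition \ref{prop:finitecomesfromcovering}---after base change to a completed algebraic closure $\widehat{\overline{K}}$, and then descending via Galois descent using the base-change compatibility recorded in Remark \ref{rem:dimrank} and Theorem \ref{mainthm1}(2)---to produce a $G^0$-equivariant finite \'{e}tale Galois covering $g \colon Z \to \Omega$, with Galois action commuting with $G^0$, such that $\sV$ is a direct summand of $g_*\OO_Z$ in $\VectCon^{G^0}(\Omega)$. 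The next step is to dominate $Z$ by some $\sN_m$, i.e.\ to construct a $G^0$-equivariant morphism $\pi \colon \sN_m \to Z$ over $\Omega$ for some $m \geq 1$. This is where the factorisation theorem of Scholze--Weinstein \cite[Thm.\ 7.3.1]{SW} enters: the Drinfeld tower is universal among $G^0$-equivariant finite \'{e}tale coverings of $\Omega \cong \sN$, and continuity of the $G^0$-action (Corollary \ref{cor:ctsactionG0N}) together with the finiteness of the Galois group of $Z \to \Omega$ allow one to descend from infinite level to some finite level $m$. Once $\pi$ is in hand, the canonical $G^0$-$\sD_Z$-linear inclusion $\OO_Z \hookrightarrow \pi_*\OO_{\sN_m}$ (which is $\sD_Z$-linear by the uniqueness characterisation of Lemma \ref{extendderivations}) induces a $G^0$-$\sD_\Omega$-linear embedding $g_*\OO_Z \hookrightarrow f_{m,*}\OO_{\sN_m}$, and composing with the summand inclusion $\sV \hookrightarrow g_*\OO_Z$ yields the desired embedding into $(f_{m,*}\OO_{\sN_m})^{\oplus n}$ (taking $n=1$ suffices).

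The main obstacle is the Scholze--Weinstein step: the literature-standard formulation of \cite[Thm.\ 7.3.1]{SW} concerns the infinite-level tower, so one must carefully package it into a statement about \emph{finite} $G^0$-equivariant \'{e}tale coverings and verify that the factorisation produced is $G^0$-equivariant. A secondary technicality is the Galois descent from $\widehat{\overline{K}}$ to $K$ needed to apply Proposition \ref{prop:finitecomesfromcovering} over an algebraically closed base, though this ought to be routine given the functoriality of all constructions and the fact that the essential image is characterised by the numerical equality $\dim_k \Hom_{(G\times H)\text{-}\sD_X}(\OO_X, \sM) = \rank(\sM)$, a condition insensitive to base change.
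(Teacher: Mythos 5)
Your overall architecture matches the paper's: Theorem \ref{mainthm1}(6) for the forward inclusion, and Proposition \ref{prop:finitecomesfromcovering} plus Scholze--Weinstein plus closure of the essential image under sub-objects for the reverse. However, the two steps you flag as "obstacles" are precisely where the real work lies, and neither is resolved in your proposal.

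First, the Scholze--Weinstein step. The factorisation theorem applies to the full tower $(\sM_m)_m$ over $\Omega$ with its $\GL_n(F)$-action, not to the sub-tower $(\sN_m)_m$ with the $G^0$-action, so there is no direct "universal property" producing $\pi \colon \sN_m \to Z$ from a $G^0$-equivariant covering $Z \to \Omega$. The paper bridges this gap by an induction construction: after reducing to the case where $G^0$ acts transitively on $\pi_0(Z)$, one lets $\pi^{\bZ}I_n$ act trivially to view $Z$ as $G^n$-equivariant, forms the $\GL_n(F)$-equivariant covering $Y = Z \times_{G^n}\GL_n(F) \to \Omega$, applies Scholze--Weinstein to get $\phi \colon \sM_m \to Y$ over $\Omega$, and then extracts a connected component $\sM_m^i$ with $\phi(\sM_m^i) \subset \iota(Z_0)$. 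One still needs an injectivity argument ($\iota_*\OO_{Z_0} \hookrightarrow \phi_*\OO_{\sM_m^i}$ via normality of $Z_0$ and \cite[Prop.\ A.5]{KOH} --- the map $\phi$ need not be surjective a priori) and a translation by $\Pi^i$ to identify $f_{m,*}\OO_{\sM_m^i}$ with $f_{m,*}\OO_{\sN_m}$. Your proposed "canonical inclusion $\OO_Z \hookrightarrow \pi_*\OO_{\sN_m}$" silently assumes both the existence of $\pi$ and its dominance.

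Second, the descent from algebraically closed $K$ is not routine Galois descent of the covering $Z$: Proposition \ref{prop:finitecomesfromcovering} genuinely requires the base field to be algebraically closed (the Tannakian group scheme must be constant), and the paper does \emph{not} descend $Z$. Instead it works at the level of representations: over $C$ one finds $V$ with $\sH_C(V) \cong \sV_C$, observes that $V$ is inflated from a finite quotient and hence defined over a finite extension $L/K$, and then embeds $\sV \hookrightarrow (\sH_L(W))|_K \hookrightarrow \bigoplus_{i=1}^{[L:K]}\sH_K(W|_K)$, concluding by closure under sub-objects. Your alternative suggestion --- that the numerical criterion of Theorem \ref{mainthm1}(4) is "insensitive to base change" --- would require knowing that $\Hom_{G^0\text{-}\sD}(\OO, f_m^*\sV)$ commutes with the (infinite, completed) extension $C/K$, which is not established in the paper and is not obvious.
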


\begin{proof}
	Suppose first that $K$ is algebraically closed, and $\sV \in \VectCon^{G^0}(\Omega)_{\fin}$ is finite. By Proposition \ref{prop:finitecomesfromcovering}, $\sV$ is a sub-object of $f_* \OO_Z$ for some $G^0$-equivariant finite \'{e}tale Galois covering $f \colon Z \rightarrow \Omega$, and thus by Theorem \ref{mainthmdrinfeld} it is sufficient for us to show that $f_* \OO_Z$ is in the essential image. If we write $Z = Z_1 \sqcup \cdots \sqcup Z_r$ as a disjoint union of $G^0$-orbits of connected components, then we have a decomposition
	\[
		f_*\OO_Z = f_* \OO_{Z_1} \oplus \cdots \oplus f_* \OO_{Z_r}
	\]
	in $\VectCon^{G^0}(\Omega)$ and thus we may assume that $G^0$ acts transitively on the connected components of $Z$. Furthermore, writing
	\[
	G^n \coloneqq \{g \in \GL_{n}(F) \mid \nu(\det(g)) \in n\bZ\} = G^0 \times \pi^{\bZ} I_n,
	\]
	then as $\pi^{\bZ} I_n$ acts trivially on $\Omega$, we may view $f \colon Z \rightarrow \Omega$ as $G^n$-equivariant by letting $\pi^{\bZ} I_n$ act trivially on $Z$. For any rigid space $X$ with an action of $G^n$ we can form the rigid space
	\[
	X \times_{G^n} \GL_n(F) \coloneqq (X \times \GL_n(F) ) / G^n,
	\]
	where $G^n$ acts diagonally through its (right) action on $X$ and its left multiplication action on $\GL_n(F)$. This naturally has a (right) action of $\GL_n(F)$ by right multiplication. If additionally $X$ has an action of $\GL_n(F)$, then there is a natural $\GL_n(F)$-equivariant morphism
	\[
	X  \times_{G^n} \GL_n(F) \rightarrow X, \qquad (x,g) \mapsto xg,
	\]
	which is finite \'{e}tale because $G^n$ is of finite index in $\GL_n(F)$. Furthermore, there is a natural $G^n$ equivariant open embedding
	\[
	\iota \colon X \hookrightarrow X \times_{G^n} \GL_n(F), \qquad x \mapsto (x,1).
	\]
	In particular, for $f \colon Z \rightarrow \Omega$, we obtain a $\GL_n(F)$-equivariant finite \'{e}tale covering
	\[
	h \colon Y \coloneqq Z \times_{G^n} \GL_n(F) \rightarrow \Omega \times_{G^n} \GL_n(F)  \rightarrow \Omega,
	\]
	for which $h \circ \iota = f$, where $\iota \colon Z \hookrightarrow Y$ as above. As $K$ is algebraically closed, we may apply the factorisation theorem of Scholze-Weinstein \cite[Thm.\ 7.3.1]{SW} to obtain a commutative diagram of $\GL_n(F)$-equivariant morphisms of rigid spaces
	\[\begin{tikzcd}
		{\sM_m} & Y \\
		& \Omega
		\arrow["\phi", from=1-1, to=1-2]
		\arrow["{f_m}"', from=1-1, to=2-2]
		\arrow["h", from=1-2, to=2-2]
	\end{tikzcd}\]
	Let $X_0$ be any connected component of $\sM_m$. As $Y$ is the disjoint union of copies of $\iota(Z)$ which are transitively permuted by $\GL_n(F)$, there is some $g \in \GL_n(F)$ with $\phi(X_0) \subset g(\iota(Z))$. Because $\phi$ is $\GL_n(F)$-equivariant, the connected component $X \coloneqq g^{-1}(X_0)$ has $\phi(X) \subset \iota(Z)$. The connected component $X$ is a connected component of $\sM_m^i$ for some $i \in \bZ$, and therefore $\phi(\sM_m^i) \subset \iota(Z)$ because the connected components of $\sM_m^i$ are permuted transitively by $G^0$ and $\iota(Z)$ is stable under the action of $G^0$. We would now like to show that $\iota_* \OO_{Z} \rightarrow \phi_* \OO_{\sM_m^i}$ is injective. To this end, first note that as $X$ is connected, $\phi(X)$ is contained in $\iota(Z_0)$ for some connected component $Z_0$ of $Z$. Because $f \colon Z_0 \rightarrow \Omega$ is finite \'{e}tale, $Z_0$ is smooth and therefore normal, and therefore applying \cite[Prop.\ A.5]{KOH} to the finite \'{e}tale $\phi \colon X \rightarrow \iota(Z_0)$ we deduce that $\iota_* \OO_{Z_0} \hookrightarrow \phi_* \OO_{X}$. As $X$ is a connected component of $\sM_m^i$, $\phi_* \OO_{X} \hookrightarrow \phi_* \OO_{\sM_m^i}$ and therefore $\iota_* \OO_{Z_0} \hookrightarrow \phi_* \OO_{\sM_m^i}$. Then because $\phi \colon \sM_m^i \rightarrow Z$ is $G^0$-equivariant and $G^0$ acts transitively on the connected components of $Z$, $\iota_* \OO_{Z} \rightarrow \phi_* \OO_{\sM_m^i}$ is injective. In particular, as $h_*$ is left-exact, 
	\[
	f_* \OO_Z = h_* \iota_* \OO_Z \hookrightarrow h_* \phi_* \OO_{\sM_{m}^i} = f_{m,*}\OO_{\sM_m^i}.
	\]
	Now, as $f_m$ is $\Pi$-equivariant with respect to the trivial action of $\Pi$ on $\Omega$ and $\Pi^i(\sM_m^i) = \sN_m$,
	\begin{align*}
		f_{m,*} \OO_{\sM_m^i} &= f_{m,*} \Pi^{i}_* \OO_{\sM_m^i}, \\
		&= f_{m,*}\OO_{\sN_m}
	\end{align*}
	and therefore, as $f_{m,*}\OO_{\sN_m}$ is in the essential image (Example \ref{eg:objectsinimage}), we are done because the essential image is closed under sub-objects by Theorem \ref{mainthmdrinfeld}.
	
	Suppose now that $K$ is general, let $C$ be a completion of an algebraic closure of $K$, and for any complete extension $L$ of $K$ with norm extending the norm on $K$ let $\sH_L$ denote the functor defined over $L$. Suppose that $\sV \in \VectCon^{G^0}(\Omega)$ is finite. We may then consider $\sV_C$, as defined in Appendix \ref{sect:basechange}, which is finite in $\VectCon^{G^0}(\Omega_C)$ and so there is some $V \in \Rep^{\fd}_{\sm, C}(\OO_D^\times)$ with $\sH_C(V) \cong \sV_C$ because $C$ is algebraically closed. Because $V$ is inflated from a representation of a finite group, there is some finite extension $L$ of $K$ and $W \in \Rep^{\fd}_{\sm, L}(\OO_D^\times)$ such that $V \cong W_C$ and all irreducible constituents $W_1, ... ,W_r$ of $W$ are absolutely irreducible, meaning that each $\End_{L[\OO_D^{\times}]}(W_m) = L$.
	
	We first ``descend'' the isomorphism $\sH_C(V) \cong \sV_C$ to an isomorphism $\sH_L(W) \cong \sV_L$. Write
	\[
		W = \bigoplus_{m = 1}^r W_m^{\oplus n_m}
	\]
	as an isotypical decomposition into irreducible $L$-representations. Because each $W_m$ is absolutely irreducible, setting $V_m \coloneqq W_{m, L}$,
	\[
		V = \bigoplus_{m = 1}^r V_m^{\oplus n_m}
	\]
	is an irreducible decomposition of $V$ and each $\End_{C[\OO_D^{\times}]}(V_m) = C$. From Proposition \ref{prop:basechangecommuteswithhoms},
	\begin{align*}
		\dim_L \Hom_{G^0 \text{-}\sD_{\Omega_L}}(\sH_L(W_m^{\oplus n_m}), \sV_L)  &= \dim_C \Hom_{G^0 \text{-}\sD_{\Omega_C}}(\sH_L(W_m^{\oplus n_m})_C, \sV_C), \\
		&= \dim_C \Hom_{G^0 \text{-}\sD_{\Omega_C}}(\sH_C(V_m^{\oplus n_m}), \sH_C(V)), \\
		&= \dim_C \Hom_{C[\OO_D^{\times}]}(V_m^{\oplus n_m}, V), \\
		&= n_m^2,
	\end{align*}
	using that $\sH_C$ is fully faithful and that $\sH_L(W_m)_C = \sH_C(V_m)$ by Lemma \ref{lem:basechangecompat}. Because $\sV_L$ is finite, $\sV_L$ is semisimple, which can be seen from the proof of Proposition \ref{prop:finitecomesfromcovering}. We can thus write
	\[
		\sV_L = \sV_1 \oplus \cdots \oplus \sV_s
	\]
	as a direct sum of simple objects $\sV_i \in \VectCon^{G^0}(\Omega_L)$. Because $\sH_L$ is fully faithful and preserves irreducibility, $\sH_L(W_m)$ is simple and 
	\[
		\End_{G^0 \text{-}\sD_{\Omega_L}}(\sH_L(W_m)) \cong \End_{L[\OO_D^{\times}]}(W_m) = L,
	\]
	and therefore among the $\sV_i$ there are $n_m$ copies of $\sH_L(W_m)$. In particular, there is an inclusion
	\[
		\sH_L(W) = \bigoplus_{m = 1}^r \sH_L(W_m^{\oplus n_m}) \hookrightarrow \sV_L.
	\]
	This inclusion is actually an isomorphism, as $\sH_L(W)$ is a direct summand of $\sV_L$ and $\sH_L(W)$ and $\sV_L$ have the same rank. We therefore have that $\sH_L(W) \cong \sV_L$.
	
	This shown, $\sV$ is itself a sub-object of an object in the essential image of $\sH_K$, as
	\[
		\sV \hookrightarrow (\sV_L)|_K \xrightarrow{\sim} \sH_L(W)|_K \hookrightarrow \sH_L((W|_K)_L)|_K \xrightarrow{\sim} (\sH_K(W|_K)_L)|_K = \bigoplus_{i = 1}^{[L\colon K]} \sH_K(W|_K),
	\]	
	using that $W \hookrightarrow (W|_K)_L$ \cite[Prop.\ 9.18(b)]{ISA} and $\sH_L((W|_K)_L) \cong \sH_K(W|_K)_L$ by Lemma \ref{lem:basechangecompat}. Then $\sV$ is itself in the essential image of $\sH_K$ by Theorem \ref{mainthmdrinfeld}.
\end{proof}
This finishes the proof of Theorem B. We now use this to finish the proof of Theorem A.
\begin{cor}\label{cor:thmAessim}
	The essential image of 
	\[
	\Hom_{D^\times}(-, f_* \OO_{\sM_{\infty}}) \colon \Rep_{\sm}^{\fd}(D^\times) \rightarrow \VectCon^{\GL_n(F)}(\Omega)
	\]
	is the full subcategory $\VectCon^{\GL_n(F)}(\Omega)_{G^0\text{-}\fin}$ with objects those that are finite when viewed as $G^0$-equivariant vector bundles with connection.
\end{cor}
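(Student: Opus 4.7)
The forward inclusion is immediate from the commutativity of the upper-right square in Section \ref{sect:equivariantDmodulesonDrinfeldSpaces} together with Theorem \ref{mainthmEI}: if $\sV \cong \Hom_{D^\times}(V, f_*\OO_{\sM_\infty})$, then $\sV|_{G^0} \cong \Hom_{\OO_D^\times}(V|_{\OO_D^\times}, f_*\OO_{\sN_\infty})$ lies in $\VectCon^{G^0}(\Omega)_{\fin}$.

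For the reverse inclusion, let $\sV \in \VectCon^{\GL_n(F)}(\Omega)$ with $\sV|_{G^0} \in \VectCon^{G^0}(\Omega)_{\fin}$. By Theorem \ref{mainthmEI} there exists $W \in \Rep_{\sm}^{\fd}(\OO_D^\times)$ with $\Hom_{\OO_D^\times}(W, f_*\OO_{\sN_\infty}) \cong \sV|_{G^0}$ in $\VectCon^{G^0}(\Omega)$. The plan is to use the $\GL_n(F)$-equivariant structure on $\sV$, via the equivalences established in Section \ref{sect:equivariantDmodulesonDrinfeldSpaces}, to extend $W$ to a $D^\times$-representation $V$ satisfying $\Hom_{D^\times}(V, f_*\OO_{\sM_\infty}) \cong \sV$.

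Concretely, lift $\sV$ via the equivalence $\VectCon^{\GL_n(F)}(\Omega) \cong \VectCon^{H \times \GL_n(F)}(\sM)$ to an object $\widetilde{\sV}$ on $\sM$. Under the parallel equivalence $\VectCon^{\GL_n(F)}(\sM) \cong \VectCon^{G^0}(\Omega)$, $\widetilde{\sV}$ with its $H$-equivariance forgotten corresponds to $\sV|_{G^0}$, and so to $\Hom_{\OO_D^\times}(W, \phi_*\OO_{\sM_\infty})$. Fixing a uniformiser $\Pi$ with image $h \in H$, the $H$-equivariance of $\widetilde{\sV}$ provides an isomorphism $\Hom_{\OO_D^\times}(W, \phi_*\OO_{\sM_\infty}) \xrightarrow{\sim} h^{-1}\Hom_{\OO_D^\times}(W, \phi_*\OO_{\sM_\infty})$ in $\VectCon^{\GL_n(F)}(\sM)$. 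The key calculation will identify the target with $\Hom_{\OO_D^\times}(W^\Pi, \phi_*\OO_{\sM_\infty})$, where $W^\Pi$ denotes $W$ with $\OO_D^\times$-action twisted via $\Pi$-conjugation (this twist being well-defined since $\OO_D^\times$ is normal in $D^\times$), using the $D^\times$-equivariance of $\phi_*\OO_{\sM_\infty}$. The fully faithfulness of $\Hom_{\OO_D^\times}(-, \phi_*\OO_{\sM_\infty})$, inherited from Theorem \ref{mainthmdrinfeld} via the equivalence, then yields an $\OO_D^\times$-linear isomorphism $W \xrightarrow{\sim} W^\Pi$. By the semidirect decomposition $D^\times = \OO_D^\times \rtimes \langle \Pi \rangle$, this datum is precisely a $D^\times$-extension of $W$ to a smooth finite-dimensional $D^\times$-representation $V$.

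It remains to verify that $\Hom_{D^\times}(V, f_*\OO_{\sM_\infty}) \cong \sV$ in $\VectCon^{\GL_n(F)}(\Omega)$, which will follow by comparing the induced $H$-equivariant structures on both sides under the equivalence back to $\VectCon^{H \times \GL_n(F)}(\sM)$. The principal technical obstacle will be the careful identification of $h^{-1}\Hom_{\OO_D^\times}(W, \phi_*\OO_{\sM_\infty})$ with $\Hom_{\OO_D^\times}(W^\Pi, \phi_*\OO_{\sM_\infty})$ naturally in $W$, which requires unwinding the conventions governing how the $\OO_D^\times$-Galois action and the $\Pi$-action on $\phi_*\OO_{\sM_\infty}$ interact.
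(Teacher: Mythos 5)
Your proposal is correct and follows essentially the same route as the paper's proof: reduce via the equivalence to $\VectCon^{H\times\GL_n(F)}(\sM)$, use Theorem \ref{mainthmEI} to produce $W$ with $\Hom_{\OO_D^\times}(W,\phi_*\OO_{\sM_\infty})\cong\sV$ forgetting the $\Pi$-action, identify $\Pi^{-1}\Hom_{\OO_D^\times}(W,\phi_*\OO_{\sM_\infty})$ with $\Hom_{\OO_D^\times}({}^{\Pi}W,\phi_*\OO_{\sM_\infty})$, and invoke fully faithfulness to transport the $\Pi$-equivariance isomorphism to an $\OO_D^\times$-linear map $W\to{}^{\Pi}W$ extending $W$ to $D^\times$. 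The ``key calculation'' you defer is exactly the isomorphism $\Pi^{f_*\OO_{\sM}}\circ(-)$ that the paper also states without further unwinding, so there is no gap relative to the paper's level of detail.
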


\begin{proof}
	First, from Definition \ref{defn:finiteobject} we see that the monoidal equivalence
	\[
		\VectCon^{\GL_n(F)}(\sM) \xrightarrow{\sim} \VectCon^{G^0}(\Omega)
	\]
	restricts to an equivalence
	\[
	\VectCon^{\GL_n(F)}(\sM)_{\fin} \xrightarrow{\sim} \VectCon^{G^0}(\Omega)_{\fin}
	\]
	between the full subcategories of finite objects. In particular, by Theorem \ref{mainthmdrinfeld}, Theorem \ref{mainthmEI}, and the commutativity of the diagram of Section \ref{sect:equivariantDmodulesonDrinfeldSpaces}, we have that the functor
	\[
	\Hom_{\OO_D^\times}(-, \phi_*\OO_{\sM_{\infty}}) \colon \Rep_{\sm}^{\fd}(\OO_D^\times) \rightarrow \VectCon^{\GL_n(F)}(\sM)_{\fin}
	\]
	is an equivalence. Let us fix a uniformiser $\Pi$ of $\OO_D^\times$, and identify $H$ with $\Pi^{\bZ}$. By the commutativity of the diagram of Section \ref{sect:equivariantDmodulesonDrinfeldSpaces}, to show the claim it is equivalent to show that the top functor of
\[\begin{tikzcd}
	{\Hom_{\OO_D^\times}^{D^\times}(-, \phi_*\OO_{\sM_{\infty}})} \hspace{-1.2em} & \hspace{-1em} {\Rep_{\sm}^{\fd}(D^\times)} & {\VectCon^{\Pi^{\bZ} \times \GL_n(F)}(\sM)} \\
	{\Hom_{\OO_D^\times}(-, \phi_*\OO_{\sM_{\infty}})} \hspace{-1.2em} & \hspace{-1em} {\Rep_{\sm}^{\fd}(\OO_D^\times)} & {\VectCon^{\GL_n(F)}(\sM)}
	\arrow["\colon"{description}, draw=none, from=1-1, to=1-2]
	\arrow[from=1-2, to=1-3]
	\arrow[from=1-2, to=2-2]
	\arrow[from=1-3, to=2-3]
	\arrow["\colon"{description}, draw=none, from=2-1, to=2-2]
	\arrow[from=2-2, to=2-3]
\end{tikzcd}\]
	has essential image equal to the full subcategory
	\[
	\VectCon^{\Pi^{\bZ} \times \GL_n(F)}(\sM)_{\GL_n(F)\text{-}\fin}
	\]
	whose objects are those objects which restrict to finite objects of $\VectCon^{\GL_n(F)}(\sM)$. By commutativity of the above square, the essential image is contained in this full subcategory, so let us suppose conversely that
	\[
	\sV \in \VectCon^{\Pi^{\bZ} \times \GL_n(F)}(\sM)
	\]
	is finite when viewed as a $\GL_n(F)$-equivariant vector bundle with connection. Then there is some $V \in \Rep_{\sm}^{\fd}(\OO_D^\times)$ of level $m$ and an isomorphism
	\[
		\Phi \colon \sV \xrightarrow{\sim} \Hom_{\OO_D^\times}(V, \phi_* \OO_{\sM_{m}})
	\]
	in $\VectCon^{\GL_n(F)}(\sM)$. Because the action of $\Pi$ commutes with the action of $\GL_n(F)$, the inverse image functor extends to an endofunctor
	\[
		\Pi^{-1} \colon \VectCon^{\GL_n(F)}(\sM) \rightarrow \VectCon^{\GL_n(F)}(\sM),
	\]
	where for $\sW \in \VectCon^{\GL_n(F)}(\sM)$, $\Pi^{-1} \sW$ has action of $\sD_{\sM}$ via $\Pi^{\sD_{\sM}} \colon \sD_{\sM} \rightarrow \Pi^{-1}(\sD_{\sM})$ and $\GL_n(F)$-equivariant structure
	\[
	g^{\Pi^{-1}\sW} \coloneqq \Pi^{-1}g^{\sW} \colon \Pi^{-1}\sW \rightarrow \Pi^{-1}(g^{-1}\sW) = g^{-1}(\Pi^{-1}\sW),
	\]
	for any $g \in \GL_n(F)$. For example, with respect to this structure 
	\[
		\Pi^{\sV} \colon \sV \rightarrow \Pi^{-1} \sV,
	\]
	is a morphism in $\VectCon^{\GL_n(F)}(\sM)$. Furthermore,
	\[
		\Pi^{\phi_*\OO_{\sM_m}} \circ (-) \colon \Hom_{\OO_D^\times}({}^{\Pi} V, \phi_* \OO_{\sM_{m}}) \xrightarrow{\sim} \Pi^{-1} \Hom_{\OO_D^\times}(V, \phi_* \OO_{\sM_{m}})
	\]
	is an isomorphism in $\VectCon^{\GL_n(F)}(\sM)$, where ${}^{\Pi}V$ is $V$ viewed with action of $x \in \OO_D^\times$ by $\Pi x \Pi^{-1}$. We therefore have an isomorphism
	\[
	(\Pi^{\phi_*\OO_{\sM_m}} \circ (-))^{-1} \circ \Pi^{-1}\Phi \circ \Pi^{\sV} \circ \Phi^{-1} \colon \Hom_{\OO_D^\times}(V, \phi_* \OO_{\sM_{m}}) \xrightarrow{\sim} \Hom_{\OO_D^\times}({}^{\Pi} V, \phi_* \OO_{\sM_{m}})
	\]
	in $\VectCon^{\GL_n(F)}(\sM)$ and by the fully faithfulness of Theorem \ref{mainthmdrinfeld}, there is a unique $\OO_D^\times$-linear isomorphism $\varphi \colon {}^{\Pi} V \rightarrow V$ for which this equals the functorially induced morphism
	\[
		((-) \circ \varphi) \colon \Hom_{\OO_D^\times}(V, \phi_* \OO_{\sM_{m}}) \xrightarrow{\sim} \Hom_{\OO_D^\times}({}^{\Pi} V, \phi_* \OO_{\sM_{m}}).
	\]
	Because $\varphi \colon {}^{\Pi} V \rightarrow V$ is $\OO_D^\times$-linear we may extend $V$ to a representation of $D^\times$ by setting $\Pi^{-1}$ to act by $\varphi$, and obtain
	\[
	\sF \coloneqq \Hom_{\OO_D^\times}^{D^\times}(V, \phi_* \OO_{\sM_m}) \in \VectCon^{\Pi^{\bZ} \times \GL_n(F)}(\sM)
	\]
	with action (by definition of the functor) of $\Pi$ by
	\[
		\Pi^{\sF} \coloneqq \Pi^{\phi_* \OO_{\sM_m}} \circ - \circ \Pi^{-1} \colon \sF \xrightarrow{\sim} \Pi^{-1} \sF.
	\]
	Therefore, as $\Pi^{-1}$ acts on $V$ by $\varphi$,
	\begin{align*}
		\Pi^{\sF} &= \Pi^{f_* \OO_X} \circ (-) \circ \varphi, \\
		&= (\Pi^{\phi_* \OO_{\sM_m}} \circ (-) ) \circ  ((-) \circ \varphi), \\
		&= (\Pi^{\phi_* \OO_{\sM_m}} \circ (-) )  \circ (\Pi^{\phi_* \OO_{\sM_m}} \circ (-))^{-1} \circ \Pi^{-1}\Phi \circ \Pi^{\sV} \circ \Phi^{-1}, \\
		&= \Pi^{-1}\Phi \circ \Pi^{\sV} \circ \Phi^{-1},
	\end{align*}
	and therefore $\Phi$ defines an isomorphism $\Phi \colon \sV \rightarrow \sF$ in $\VectCon^{\Pi^{\bZ} \times \GL_n(F)}(\sM)$.
\end{proof}

\begin{remark}\label{rem:GvG0}
	It is natural to ask if we can replace $G^0$ with $G$ in Theorem A, i.e.\ if
	\[
		\VectCon^{\GL_n(F)}(\Omega)_{\fin} \subset \VectCon^{\GL_n(F)}(\Omega)_{G^0\text{-}\fin} 
	\]
	is actually an equality. In the case when $n = 1$, the functor of Theorem A is
	\[
		\Hom_{F^{\times}}(-, K \otimes_L L_{\infty}) \colon \Rep^{\fd}_{\sm, K}(F^\times) \xrightarrow{\sim} \Rep^{\fd}_{K}(F^\times)_{\OO_F^\times\text{-}\fin}
	\]
	where $L_{\infty}$ is the union of the Lubin-Tate extensions $L_m / L$, and the inclusion above is
	\[
		\Rep^{\fd}_{K}(F^\times)_{\fin} \subset \Rep^{\fd}_{K}(F^\times)_{\OO_F^\times\text{-}\fin}.
	\]
	That this is strict follows from Corollary \ref{cor:inflationfinite}, exhibited by any representation of $F^\times$ inflated along $F^\times \rightarrow F^\times / \OO_F^\times \cong \bZ$ that isn't inflated from a finite quotient of $\bZ$.
\end{remark}

\appendix

\section{Continuous Actions and Finite \'{E}tale Covers}\label{sect:generalities}

In this appendix we show that the property that a group action on a rigid space is continuous lifts along finite \'{e}tale covers. Throughout, let $K$ be a characteristic $0$ complete non-archimedean field, and write $\sR$ for the ring of integers of $K$. 

We first recall the notion of a continuous group action on a rigid space $X$, as defined in \cite[Def. 3.1.8]{ARD}. Let $X$ be a qcqs rigid space over $K$. Then using a formal model for $X$, \cite[Thm.\ 3.1.5]{ARD} defines a hausdorff topology $\tau_X$ on $\Aut_K(X,\OO_X)$, which is independent of the formal model chosen for $X$.

\begin{defn}\label{ctsactiondef}
Let $G$ be a topological group, $X$ a rigid analytic variety over $K$. Then \emph{$G$ acts continuously on $X$}, if there is a group homomorphism $\rho : G \rightarrow \Aut_K(X ,\OO_X)$, such that for all qcqs admissible open subsets $U \subset X$,
\begin{itemize}
\item
$G_U = \rho^{-1}(\Stab(U))$ is open in $G$,
\item
The induced group homomorphism $\rho_U : G_U \rightarrow \Aut_K(U,\OO_U)$ is continuous with respect to the topology $\tau_U$ on $\Aut_K(U,\OO_U)$ described above, and the subspace topology of $G_U$.
\end{itemize}
\end{defn}

Here $\Stab(U) \leq \Aut_K(X ,\OO_X)$ are those automorphisms for which the underlying map of sets $f$ has $f(U) = U$.

\begin{prop}\label{prop:continuousactionlifts}
	Suppose that $f \colon X \rightarrow Y$ is a finite \'{e}tale Galois cover of quasi-Stein rigid spaces with Galois group $H$. Let $G$ be a topological group with an open topologically finitely generated profinite subgroup acting on $X$ and $Y$ such that $f \colon X \rightarrow Y$ is $G$-equivariant, and $G$ acts continuously on $Y$. Then $G$ acts continuously on $X$.
\end{prop}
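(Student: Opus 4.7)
The plan is to verify the two conditions of Definition \ref{ctsactiondef} for the action of $G$ on $X$. First I would fix an admissible quasi-Stein covering $Y = \bigcup_{n \geq 1} Y_n$ by affinoids, and set $X_n \coloneqq f^{-1}(Y_n)$. Since $f$ is finite, each $X_n$ is affinoid and $X = \bigcup_n X_n$ is a quasi-Stein covering. The $G$-equivariance and surjectivity of $f$ give $G_{X_n} = G_{Y_n}$, which is open in $G$ by hypothesis. Any qcqs admissible open $U \subset X$ satisfies $U \subset X_n$ for some $n$ by quasi-compactness, so both the openness of $G_U$ and the continuity of the induced homomorphism on $G_U$ can be checked inside the open subgroup $G_{X_n}$.

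For $U \subset X_n$ a qcqs admissible open, I would consider $H \cdot U = \bigcup_{h \in H} h(U)$, which is $H$-stable and satisfies $f^{-1}(f(U)) = H \cdot U$; Galois descent along $f$ then shows that $f(U)$ is qcqs admissible open in $Y_n$. The open subgroup $G_{f(U)} = G_{H \cdot U}$ acts on the finite set $\{h(U) : h \in H\}$ with $G_U$ the stabilizer of $U$. To deduce that $G_U$ is open, I would restrict to the open topologically finitely generated profinite subgroup $G_0 \cap G_{f(U)}$ and invoke Nikolov--Segal: any abstract homomorphism from a topologically finitely generated profinite group to a finite group is continuous, so the point stabilizers for any abstract action on a finite set are open.

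The main step is continuity of $G_{X_n} \to \Aut_K(X_n, \OO_{X_n})$ given continuity of $G_{Y_n} = G_{X_n} \to \Aut_K(Y_n, \OO_{Y_n})$. Since $X_n \to Y_n$ is finite \'{e}tale Galois with group $H$, the image of the former lies in the subgroup of $K$-automorphisms of $X_n$ that descend along $f$, and this subgroup admits a natural restriction map to $\Aut_K(Y_n, \OO_{Y_n})$ with kernel $H$, recovering the latter homomorphism by composition. Choosing compatible formal models $\mathfrak{X}_n \to \mathfrak{Y}_n$ with the structure morphism finite \'{e}tale (possible because $f$ is finite \'{e}tale), the Hausdorff topology on $\Aut_K(X_n, \OO_{X_n})$ exhibits $H$ as a discrete subgroup; automorphisms of $\mathfrak{Y}_n$ sufficiently close to the identity then lift uniquely to automorphisms of $\mathfrak{X}_n$ by rigidity of finite \'{e}tale covers, which makes the restriction map a local homeomorphism near the identity. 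Continuity at $1$, hence everywhere by translation, would follow.

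The hard part will be arranging the formal-model compatibilities: producing a single formal model $\mathfrak{X}_n$ that simultaneously computes the Ardakov topology on $\Aut_K(X_n, \OO_{X_n})$ in the sense of \cite[Thm.\ 3.1.5]{ARD} \emph{and} sits finite \'{e}tale over a formal model of $Y_n$ computing the corresponding topology on $\Aut_K(Y_n, \OO_{Y_n})$. Once this is in place, translating ``close to the identity'' back into an open neighbourhood in $G_0$ uses Nikolov--Segal a second time, since the residual action of $G_0$ on the reductions of $\mathfrak{X}_n$ modulo powers of a uniformiser factors through a finite quotient whose kernel is then automatically open.
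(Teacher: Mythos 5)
Your overall reduction (quasi-Stein cover of $Y$, pull back to affinoids $X_n = f^{-1}(Y_n)$, reduce everything to the affinoid pieces) matches the paper's, and your use of Nikolov--Segal to convert finite-index kernels into open subgroups is exactly the right tool at the right moments. But the core of your ``main step'' rests on a claim that is false in general: that one can choose formal models $\mathfrak{X}_n \to \mathfrak{Y}_n$ with the structure morphism \emph{finite \'etale}, ``possible because $f$ is finite \'etale.'' A finite \'etale cover of the generic fibre of a formal scheme need not extend to a finite \'etale cover of any formal model: the generic-fibre functor on finite \'etale covers is fully faithful but far from essentially surjective. The Drinfeld coverings $\sN_m \to \sN$ to which this proposition is applied are the archetypal counterexample (their integral models are deliberately non-\'etale; e.g.\ any finite \'etale formal cover of a model of the closed disc would have trivial generic fibre since $\bA^1_{\overline{\bF}_p}$ is simply connected). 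Without such a model, the ``rigidity of finite \'etale covers'' lifting argument, which is what was supposed to turn $\sigma(g)\equiv\id \pmod{p^M}$ into $\rho(g)\equiv\id\pmod{p^N}$ up to the finite ambiguity $\Aut_{Y_n}(X_n)$, has nothing to act on. The paper's Lemma \ref{lem:affinoidctsaction} is built precisely to avoid this: it takes a standard \'etale presentation $B = A\langle T\rangle/(f_1,\dots,f_n)$ on the \emph{generic} fibre, uses a non-\'etale formal model $\sB=\iota(\sA\langle T\rangle)$, proves the quantitative claim that a derivation sending $\sA$ into $p^M\sA$ extends to one sending $\sB$ into $p^N\sB$ (the loss $M_0$ coming from $\det\Delta^{-1}$), and then compares $\rho(g)$ with $\exp(v_g)$, invoking finiteness of $\Aut_A(B)$ and Nikolov--Segal. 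That quantitative derivation-lifting estimate is the content your sketch is missing, and it cannot be replaced by formal \'etaleness.

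A secondary problem: your argument for openness of $G_U$ for a general qcqs $U\subset X_n$ is not sound as stated. An element $g\in G_{H\cdot U}$ need not send $U$ to some $h(U)$ (we only know $g(U)\subset H\cdot U$), so $G_{f(U)}$ does not act on the finite set $\{h(U)\}_{h\in H}$ and $G_U$ is not a point stabiliser of such an action. This step is unnecessary anyway: once continuity of the $G_{X_n}$-action on the affinoid $X_n$ is established, openness of $G_U$ and continuity of $G_U\to\Aut_K(U,\OO_U)$ for every qcqs $U\subset X_n$ follow from \cite[Thm.\ 3.1.10]{ARD}, which is how the paper handles it.
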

\begin{proof}
Let $(U_m)_{m \geq 0}$ be a quasi-Stein rigid cover of $Y$, and let $(V_m := f^{-1}(U_m))_{m \geq 0}$ be the corresponding quasi-Stein cover of $X$. In order to show that $G$ acts continuously on $X$, it is sufficient to show that $G_{m} \coloneqq G_{V_m}$ is open in $G$, and acts continuously on $V_m$ for all $m \geq 0$. Indeed, if so, then for an arbitrary qcqs admissible open $V \subset X$, $V = \cup_{m \geq 0}(V \cap V_m)$, hence because $V$ is quasi-compact, $V \subset V_m$ for some $m \geq 0$. Then $(G_m)_V \subset G_m$ is open (hence open in $G$), and thus $G_V$ is open in $G$. Furthermore, $(G_m)_V \rightarrow \Aut_K(V, \OO_V)$ is continuous by \cite[Thm.\ 3.1.10]{ARD}, hence $G_V \rightarrow \Aut_K(V, \OO_V)$ is continuous.

Fix $m \geq 0$. In order to show that $G_m$ is open and $G_m$ acts continuously on $V_m$, because $V_m$ is affinoid, hence qcqs, by \cite[Thm.\ 3.1.10]{ARD} it is sufficient to show that $G_m$ is open and $\rho : G_m \rightarrow \Aut_K(V_m, \OO_{V_m})$ is continuous. First, because the action of $G$ on $Y$ is continuous, $G_{U_m}$ is open, and so because $G_{U_m} \subset G_{m}$, $G_m$ is open in $G$. The morphism $V_m \rightarrow U_m$ is Galois with Galois group $H$, and thus the group $\Aut_{\OO(U_m)}(\OO(V_m))$ is finite, which follows from \cite[Thm.\ 3.5]{CHR} and Lemma \ref{lemma:conncompgaloiscovering}. We are then done after applying the following Lemma \ref{lem:affinoidctsaction}.
\end{proof}

\begin{lemma}\label{lem:affinoidctsaction}
	Suppose that $A \rightarrow B$ is an \'{e}tale morphism of affinoid algebras over $k$, which is $G$-equivariant with respect to actions $\sigma \colon G \rightarrow \Aut_k(A)$, $\rho \colon G \rightarrow \Aut_k(B)$ of some topological group $G$. Suppose that $\Aut_A(B)$ is finite, $G$ has an open topologically finitely generated profinite subgroup, and $\sigma$ is continuous. Then $\rho$ is continuous.
\end{lemma}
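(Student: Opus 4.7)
The plan is to check continuity of $\rho$ at the identity (sufficient for a group homomorphism) and, since the open subgroup of $G$ which is topologically finitely generated profinite contains a neighbourhood of the identity, to reduce to the case where $G$ itself is topologically finitely generated profinite. Write $\mathfrak{m} \subset \sR$ for the maximal ideal with uniformiser $\pi$.

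Next I would produce compatible formal models. Using the continuity of $\sigma$ together with \cite[Thm.\ 3.1.10]{ARD} (or the analogous result of \emph{loc.\ cit.}\ for affinoids), the compact group $G$ preserves some admissible formal $\sR$-model $\mathfrak{A}$ of $\Sp(A)$, whose topology on $\Aut(\mathfrak{A})$ is given by the descending chain $N_n := \ker(\Aut(\mathfrak{A}) \to \Aut(\mathfrak{A}/\pi^n))$, and this topology coincides with $\tau_{\Sp A}$ on the image of $G$. Since $A \to B$ is finite \'{e}tale (this is how the lemma is used in Proposition \ref{prop:continuousactionlifts}), the functor from finite \'{e}tale $\mathfrak{A}$-algebras to finite \'{e}tale $A$-algebras (taking the generic fibre) is an equivalence, so $B$ lifts uniquely to a finite \'{e}tale $\mathfrak{A}$-algebra $\mathfrak{B}$. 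For any $g \in G$, applying the equivalence to the $A$-algebra isomorphism $\rho(g) \colon B \to B^{\sigma(g)}$ (where the target denotes $B$ with $A$-structure twisted by $\sigma(g)$) produces an $\mathfrak{A}$-algebra isomorphism $\mathfrak{B} \to \mathfrak{B}^{\sigma(g)}$ whose generic fibre is $\rho(g)$; in other words $\rho(g)$ preserves $\mathfrak{B}$. Thus $\rho$ factors through $\Aut(\mathfrak{B})$, and the neighbourhood basis at the identity for $\tau_{\Sp B}$ is given by the subgroups $K_n := \ker(\Aut(\mathfrak{B}) \to \Aut(\mathfrak{B}/\pi^n))$.

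For the continuity, fix $n \geq 1$ and set $U_n := \sigma^{-1}(N_n)$, which is open in $G$ by continuity of $\sigma$. If $g \in U_n$ then $\sigma(g)$ acts trivially on $\mathfrak{A}/\pi^n$, so $\rho(g)$ reduces to an $\mathfrak{A}/\pi^n$-algebra automorphism of $\mathfrak{B}/\pi^n$. By rigidity of finite \'{e}tale morphisms the natural maps
\[
\Aut_A(B) \xleftarrow{\sim} \Aut_{\mathfrak{A}}(\mathfrak{B}) \xrightarrow{\sim} \Aut_{\mathfrak{A}/\pi^n}(\mathfrak{B}/\pi^n)
\]
are isomorphisms, so the induced homomorphism $U_n \to \Aut(\mathfrak{B}/\pi^n)$ takes values in the finite group $\Aut_A(B)$. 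The group $U_n$ is an open subgroup of the topologically finitely generated profinite group $G$, hence is itself topologically finitely generated profinite. By the theorem of Nikolov--Segal every finite-index subgroup of a topologically finitely generated profinite group is open, so the kernel of $U_n \to \Aut_A(B)$ is open in $U_n$. This kernel is exactly $\rho^{-1}(K_n) \cap U_n$, which is therefore open in $G$; as $K_n$ was an arbitrary basic neighbourhood of the identity, $\rho$ is continuous at the identity, hence everywhere.

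The main obstacle will be producing the formal model $\mathfrak{A}$ preserved by $G$ together with its finite \'{e}tale lift $\mathfrak{B}$ and checking that the topology on $\Aut(\mathfrak{B})$ coincides with the restriction of $\tau_{\Sp B}$; this is essentially an application of the machinery of \cite{ARD} combined with the standard equivalence for finite \'{e}tale algebras over an admissible formal model, but care is needed to match up the two topologies. The invocation of Nikolov--Segal is strictly necessary here: without it, one only obtains a homomorphism to a finite discrete group whose kernel has finite index but is not a priori open.
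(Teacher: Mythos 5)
Your overall strategy --- push the image of a small open subgroup of $G$ into a finite automorphism group and then invoke Nikolov--Segal --- has the same shape as the paper's proof, and you are right that Nikolov--Segal is the essential last step. But the mechanism you use to produce the finite group is broken. You assert that the generic-fibre functor from finite \'{e}tale $\mathfrak{A}$-algebras to finite \'{e}tale $A$-algebras is an equivalence, so that $B$ lifts to a finite \'{e}tale formal model $\mathfrak{B}$ over $\mathfrak{A}$. This functor is fully faithful but very far from essentially surjective: a finite \'{e}tale cover of the rigid generic fibre does not in general extend to a finite \'{e}tale cover of an admissible formal model. (Already for $\mathfrak{A} = \sR\langle T\rangle$, finite \'{e}tale covers of $\Spf(\mathfrak{A})$ are equivalent to finite \'{e}tale covers of $\bA^1$ over the residue field, whereas the closed unit disc over $K$ has vastly more finite \'{e}tale covers; and the Drinfeld coverings $\sN_m \rightarrow \sN$ to which this lemma is applied in Proposition \ref{prop:continuousactionlifts} are themselves standard examples of covers that do not come from finite \'{e}tale maps of formal models.) Without a finite \'{e}tale $\mathfrak{B}$, both arrows in your rigidity chain $\Aut_A(B) \xleftarrow{\sim} \Aut_{\mathfrak{A}}(\mathfrak{B}) \xrightarrow{\sim} \Aut_{\mathfrak{A}/\pi^n}(\mathfrak{B}/\pi^n)$ fail, the group $\Aut_{\mathfrak{A}/\pi^n}(\mathfrak{B}/\pi^n)$ need not be finite, and there is no a priori reason that $\rho(g)$ should preserve a chosen integral model of $B$, let alone reduce to an $\mathfrak{A}/\pi^n$-algebra automorphism when $\sigma(g)\equiv 1 \bmod \pi^n$. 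This is precisely the hard point of the lemma, not a routine application of the machinery.

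The paper's proof is built to circumvent exactly this. It takes the (non-\'{e}tale) integral model $\sB = \iota(\sA\langle T_1,\dots,T_n\rangle)$ coming from a standard \'{e}tale presentation and proves a quantitative claim: a derivation of $\sA$ with values in $p^M\sA$ extends to a derivation of $B$ carrying $\sB$ into $p^{M-M_0}\sB$, where $M_0$ is controlled by the inverse Jacobian $\Delta^{-1}$ and measures the failure of $\sB/\sA$ to be \'{e}tale (in your picture this constant would be $0$). Exponentiating these extended derivations yields a canonical continuous homomorphism $\tau$ lifting $\sigma$, and the discrepancy $\lambda(g) = \rho(g)\tau(g)^{-1}$ lies in $\Aut_A(B)$, which is finite by \emph{hypothesis} rather than by rigidity over a formal model; only then is Nikolov--Segal applied to $r_n\circ\rho = r_n\circ\lambda$. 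To repair your argument you would need to replace the \'{e}tale lifting step by some such comparison of $\rho$ against a continuous reference lift of $\sigma$. (A secondary point: the lemma as stated assumes only that $A\to B$ is \'{e}tale, not finite \'{e}tale; your reduction to the finite case is harmless for the application but does not prove the statement as written.)
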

	
\begin{proof}
	Let $\sA$ be a formal model for $A$. Because $\sigma$ is continuous, by \cite[Lem.\ 3.2.4]{ARD}, we may after possibly enlarging $\sA$ assume that $\sA$ is $G$-equivariant. Any \'{e}tale morphism of affinoid algebras is \emph{standard \'{e}tale} \cite[Observation 3.1.2]{PJ}, which means that there is a presentation of the morphism $A \rightarrow B$ by 
	\[
		\iota \colon A \rightarrow B = A\langle T_1, ... , T_n \rangle / (f_1, ... , f_n), \text{ where } \det \Delta \in B^\times, \text{ for } \Delta \coloneqq \left( \frac{\partial f_i}{\partial T_j} \right)_{ij}.
	\]
	We take our formal model of $B$ to be
	\[
		\sB \coloneqq \iota(\sA \langle T_1, ... , T_n \rangle) \subset B.
	\]
	First consider an $\sR$-linear derivation $\tilde{\partial} \colon \sA \rightarrow \sA$. This has a unique extension to a $K$-linear derivation $\tilde{\partial} \colon A \rightarrow A$. Then because $A \rightarrow B$ is \'{e}tale, by Lemma \ref{extendderivations} there is a unique $\partial \colon B \rightarrow B$ such that for any $a \in A$,
	\[
		\partial(\iota(a)) = \iota(\tilde{\partial}(a)).
	\]
	\emph{We first show the following claim:} for any $N > 0$, there is some $M > 0$ such that for any $\tilde{\partial} \colon \sA \rightarrow \sA$ with $\tilde{\partial}(\sA) \subset p^M \sA$, the extension $\partial$ satisfies
	\[
		\partial(\sB) \subset p^N \sB.
	\]
	Fix $N > 0$. We first consider the elements $T_1, ... , T_n \in \sB$. Write each
	\[
		f_i = \sum_{\alpha} c^i_{\alpha} T^\alpha,
	\]
	for some $c^i_{\alpha} \in \iota(\sA)$. Suppose now that $\tilde{\partial} \colon \sA \rightarrow \sA$ with $\tilde{\partial}(\sA) \subset p^M \sA$, for some $M > 0$ to be chosen later. Then because any $k$-derivation of $B$ is continuous \cite[Satz. 2.1.5]{BKKN},
	\begin{align*}
		0 &= \partial(f_i), \\
		&= \partial \left( \sum_{\alpha} c^i_{\alpha} T^\alpha \right), \\
		&= \sum_{\alpha} \partial(c^i_{\alpha} T^\alpha), \\
		&= \sum_{\alpha} \partial(c^i_{\alpha}) T^\alpha + c^i_{\alpha} \partial(T^\alpha).
	\end{align*}
	We can simplify the right-hand term to
	\begin{align*}
		\sum_{\alpha} c^i_{\alpha} \partial(T^\alpha) &= \sum_{\alpha} \sum_{j = 1}^n c^i_{\alpha} T^{\alpha_1} \cdots \partial(T^{\alpha_j}) \cdots T^{\alpha_n}, \\
		&= \sum_{i = 1}^n \frac{\partial f_i}{\partial T_j} \partial(T_j).
	\end{align*}
	For the left-hand term, each $c^i_{\alpha} \in \iota(\sA)$ and thus $c^i_{\alpha} = \iota(a^i_{\alpha})$ for some $a^i_{\alpha} \in \sA$, hence
	\begin{align*}
		\sum_{\alpha} \partial(c^i_{\alpha}) T^\alpha &= \sum_{\alpha} \iota(\tilde{\partial}(a^i_{\alpha})) T^\alpha \in p^M \sB,
	\end{align*}
	because $\tilde{\partial}(a^i_{\alpha}) \in p^M \sA$, so $\iota(\tilde{\partial}(a^i_{\alpha}))T^{\alpha} \in p^M \sB$, and $p^M \sB$ is closed. Combined, this shows that,
	\[
		\Delta \begin{pmatrix} \partial(T_1) \\ \vdots \\ \partial(T_n) \end{pmatrix} \in (p^M \sB)^n.
	\]
	Now because $\det(\Delta) \in B^\times$, $\Delta$ is invertible, and we set $M_0 \coloneqq \min \{r \geq 0 \mid p^r \Delta^{-1} \in M_n(\sB)\}$. Then for $k = 1, ... ,n$,
	\[
		\partial(T_k) \in p^{M-M_0} \sB.
	\]
	Let us now fix $M \coloneqq N + M_0$, and consider a general element of $\sB$, which will be of the form,
	\[
		b = \sum_{\alpha} \iota(a_{\alpha}) T^{\alpha}
	\]
	for some $a_{\alpha} \in \sA$. When we apply $\partial$ we obtain, similarly to above,
	\begin{align*}
		\partial(b) &= \sum_{\alpha} \iota(\tilde{\partial}(a_{\alpha})) T^{\alpha} + \iota(a_{\alpha}) \partial(T^{\alpha}) \in p^N \sB.
	\end{align*}
	For each $\alpha$, the left-hand term is in $p^N \sB$, and the right-hand term is too, as 
	\begin{align}
		\partial(T^{\alpha}) = \sum_{k = 1}^{n} \alpha_k T_1^{\alpha_1} \cdots T_k^{\alpha_k - 1} \cdots T_n^{\alpha_n} \cdot \partial(T_k).
	\end{align}
	Therefore, as $p^N \sB$ is closed, $\partial(b) \in p^N \sB$ and we have shown the claim.

	Now we want to show that the morphism $\rho \colon G \rightarrow \End_k(B)$ is continuous. For any $g \in G$ with $(\sigma(g) - 1)(\sA) \subset p^{2}\sA$ we can consider the logarithm $u_g \coloneqq \log(\sigma(g)) \in p^2 \Der_R(\sA)$ of $\sigma(g)$ (see \cite[Lem.\ 3.2.5]{ARD} for background on the logarithm and exponential in this context). As described above, this $\sR$-linear derivation $u_g$ has a unique extension to a $K$-linear derivation of $A$ (which we also denote by $u_g$) and by Lemma \ref{extendderivations} there is a unique $v_g \in \Der_k(B)$ which extends $u_g$. 
	
	Because $\Aut_A(B)$ is finite, by \cite[Lem.\ 3.2.4]{ARD} we may enlarge $\sB$ to obtain an $\Aut_A(B)$-stable formal model $\sB'$ of $B$ which contains $\sB$. To show that $\rho \colon G \rightarrow \End_k(B)$ is continuous we need to show that for any $n \geq 0$, $\rho^{-1}(\Aut_k^n(B))$ is open in $G$, where
	\[
		\Aut_k^n(B) \coloneqq \{ \phi \in \Aut_k(B) \mid (\phi - 1)(\sB') \subset p^n \sB'\}.
	\]
	To this end, fix $n \geq 2$. Because $\sB$ and $\sB'$ are both formal models of $B$, the argument of the proof of \cite[Thm.\ 3.1.5 (b)]{ARD}, the claim established above and the fact that $\sigma$ is continuous together show the existence of an open subgroup $G_0$ of $G$ for which $v_g(\sB') \subset p^n \sB'$ for any $g \in G_0$.
	
	Define $\tau \colon G_0 \rightarrow \Aut_k^n(B)$ by $\tau(g) = \exp(v_g)$. We would like to show that, like $\tau$, $\rho(g) \in \Aut_k^n(B)$ for any $g \in G_0$. Firstly, we note that $\tau$ is a group homomorphism. Indeed, $\sigma(g)$ is a group homomorphism, and $\sigma(g) = \exp(\log(\sigma(g))) = \exp(u_g)$, so for any $g, h \in G_0$
	\[
		\exp(u_g) \circ \exp(u_h) = \exp(u_{gh}).
	\]
	Taking the logarithm,
	\[
		\log(\exp(u_g) \circ \exp(u_h)) = u_{gh}.
	\]
	The	derivation $v_{gh}$ is an extension of $u_{gh}$ to an $\sR$-derivation of $\sB'$, and	$\log(\exp(v_g) \circ \exp(v_h))$ is an extension of $\log(\exp(u_g) \circ \exp(u_h))$ to $\sB'$, and therefore by the uniqueness of Lemma \ref{extendderivations}
	\[
		\log(\exp(v_g) \circ \exp(v_h)) = v_{gh}.
	\]
	Applying the exponential, we see that $\tau(g) \circ \tau(h) = \tau(gh)$.
	
	Consider the difference of $\rho$ and $\tau$, denoted by $\lambda \colon G_0 \rightarrow \Aut_A(B)$, defined by
	\[
		\lambda (g) = \rho(g) \tau(g)^{-1} \in \Aut_A(B).
	\]
	This lies in $\Aut_A(B)$ because both $\rho(g)$ and $\tau(g)$ restrict to $\sigma(g)$ on $A$. Consider also the restriction homomorphism
	\[
		r_n \colon \Aut_{\sR}(\sB') \rightarrow \Aut_{\sR / p^n \sR} (\sB' / p^n \sB'),
	\]
	the kernel of which is $\Aut_k^n(B)$. Because $\tau(g)$ and $\lambda(g)$ both stabilise $\sB'$, $\rho(g)$ does too. Therefore, we may apply $r_n$ to $\rho(g)$, and note that as $\tau(g) \in \Aut_k^n(B) = \ker(r_n)$,
	\[
		r_n(\rho(g)) = r_n(\lambda(g)) \in r_n(\Aut_A(B)).
	\]
	Now $r_n \circ \rho \colon G_0 \rightarrow r_n(\Aut_A(B))$ is a homomorphism from $G_0$ to the finite group $r_n(\Aut_A(B))$, and thus because finite index subgroups of $G$ are open \cite{NIK}, $\ker(r_n \circ \rho) \leq G_0$ is open. Therefore, $\ker(r_n \circ \rho) \subset \rho^{-1}(\Aut_k^n(B))$ is an open subgroup, and we're done.
\end{proof}

We also will make use of the following lemma which allows us to extend Proposition \ref{prop:continuousactionlifts} to spaces which are disjoint unions of quasi-Stein spaces. For example, we use it together with Proposition \ref{prop:continuousactionlifts} to show that the action of $\GL_n(F)$ on the Drinfeld tower is continuous (Corollary \ref{cor:ctsactionGLnM}).

\begin{lemma}\label{lem:ctsactionconncomp}
Suppose that $X$ is a rigid space with an action of a topological group $G$, and $H$ is an open subgroup of $G$ which stabilises and acts continuously on each connected component of $X$. Then the action of $G$ on $X$ is continuous.
\end{lemma}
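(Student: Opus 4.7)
The plan is to verify the two conditions of Definition \ref{ctsactiondef} for an arbitrary qcqs admissible open subset $U \subset X$. First I would use quasi-compactness of $U$ together with admissibility of the covering of $X$ by its connected components $\{X_i\}_{i \in I}$ to show that $U$ meets only finitely many components, say $X_{i_1}, \ldots, X_{i_k}$, and decomposes as a finite disjoint union of admissible opens $U = V_1 \sqcup \cdots \sqcup V_k$, where $V_j := U \cap X_{i_j}$ is a qcqs admissible open of $X_{i_j}$.

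Next I would introduce the candidate open subgroup $G_0 := \bigcap_{j=1}^{k} H_{V_j} \leq H$. Each $H_{V_j}$ is open in $H$ by the continuity hypothesis applied to the $H$-action on the connected component $X_{i_j}$, so $G_0$ is open in $H$, and therefore open in $G$ since $H$ itself is open in $G$. Because $H$ stabilises each connected component, any $h \in H$ satisfies $h \cdot V_j \subseteq X_{i_j}$, so $h \in H_{V_j}$ exactly when $h \cdot V_j = V_j$. Consequently $G_0$ stabilises each $V_j$, and hence stabilises $U = \bigsqcup_j V_j$. This shows $G_0 \leq G_U$, so $G_U$ is a union of (right) cosets of the open subgroup $G_0$, and is therefore open in $G$.

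For the continuity of the homomorphism $\rho_U : G_U \to \Aut_K(U, \OO_U)$ with respect to $\tau_U$, it suffices to verify continuity at the identity, and hence on the open subgroup $G_0 \leq G_U$. Here I would use that a formal model of $U$ can be chosen as the disjoint union $\mathfrak{U} = \mathfrak{V}_1 \sqcup \cdots \sqcup \mathfrak{V}_k$ of formal models of the $V_j$, which identifies the subgroup $\{\phi \in \Aut_K(U, \OO_U) : \phi(V_j) = V_j \text{ for all } j\}$ with $\prod_j \Aut_K(V_j, \OO_{V_j})$ as topological groups (with the product topology), embedded as an open subgroup of $\Aut_K(U, \OO_U)$; this follows from the construction of $\tau_U$ in \cite[Thm.\ 3.1.5]{ARD} using the congruence subgroups $\{\phi : (\phi - 1)(\mathfrak{U}) \subset p^n \mathfrak{U}\}$, which decompose as products of the corresponding congruence subgroups for the $\mathfrak{V}_j$. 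The map $\rho_U|_{G_0}$ factors through this product subgroup, and each component map $G_0 \to \Aut_K(V_j, \OO_{V_j})$ is the restriction of the continuous homomorphism $H_{V_j} \to \Aut_K(V_j, \OO_{V_j})$ supplied by the continuity of the $H$-action on $X_{i_j}$. Hence $\rho_U|_{G_0}$ is continuous, as required.

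I expect the main (largely bookkeeping) obstacle to be a careful verification that the topology $\tau_U$ on $\Aut_K(U, \OO_U)$ restricted to the stabiliser of the decomposition $U = V_1 \sqcup \cdots \sqcup V_k$ genuinely agrees with the product of the topologies $\tau_{V_j}$; once this compatibility is established via disjoint formal models, the remaining assembly from the hypotheses on $H$ is essentially formal.
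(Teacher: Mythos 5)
Your proposal is correct and follows essentially the same route as the paper: openness of $G_U$ via the open subgroup $H_U = \bigcap_j H_{V_j}$ of $H$, and continuity of $\rho_U$ by restricting to $H_U$ and factoring through $\prod_j \Aut_K(V_j,\OO_{V_j}) \hookrightarrow \Aut_K(U,\OO_U)$. The paper's proof is just a terser version of the same argument (it leaves the finiteness of the set of components meeting $U$ and the compatibility of $\tau_U$ with the product topology implicit), so your extra bookkeeping is a faithful expansion rather than a different approach.
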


\begin{proof}
	Suppose that $U \subset X$ is a qcqs admissible open subset of $X$. Then the stabiliser $G_U \subset G$ is open, as $G_U$ contains the open subgroup $H_U$ of $G$. Furthermore, the morphism of topological groups $G_U \rightarrow \Aut(U)$ is continuous, as the restriction to the subgroup $H_U$ is continuous, which follows from the commutative diagram
\[\begin{tikzcd}
	{G_U} & {\Aut(U)} \\
	{H_U} & {\prod_i \Aut(U_i)}
	\arrow[from=1-1, to=1-2]
	\arrow[hook, from=2-1, to=1-1]
	\arrow[from=2-1, to=2-2]
	\arrow[hook, from=2-2, to=1-2]
\end{tikzcd}\]
where the product is over the connected components $X_i$, and $U_i \coloneqq X_i \cap U$.
\end{proof}

\subsection{Continuous Equivariant Line Bundles with Connection}

The above results also allow us to show that any torsion equivariant line bundle with connection is continuous in the sense of \cite{AW2}, which we make use of in Section \ref{sect:linebundles}.

Suppose henceforth that $G$ is a topological group which acts on $Y$ continuously, where $Y$ is a rigid space over $K$. We now want to define a subgroup of $\PicCon^G(Y)$ of those $G$-equivariant line bundles with connection which satisfy an appropriate continuity condition.

For any $K$-Banach space $V$, it is shown in \cite{AW2} during the preamble to Definition 3.2.3 that the unit group $\sB(V)^\times$ of $\sB(V)$, the $K$-algebra of bounded $K$-linear endomorphisms of $V$, forms a topological group such that the congruence subgroups
\[
	\Gamma_n(\sV) \coloneqq \{\gamma \in \sB(V)^\times \mid (1 - \gamma)(\sV) \subset \pi^n \sV \}
\]
form a system of open neighbourhoods of the identity, where $\sV$ is a unit ball of $V$.

Suppose now that $\sM \in \VectCon^G(Y)$. Then \emph{loc. cit.} also shows that for any affinoid subdomain $U$ of $X$ the action map induces
\[
	G_U \rightarrow \sB(\sM(U))^\times,
\]
where $\sM(U)$ has its canonical $K$-Banach space structure as a finitely generated $\OO_Y(U)$-module.

\begin{defn}\label{def:piccts}
We define $\PicCon^G_{\cts}(Y) \subset \PicCon^G(Y)$ to be the subgroup consisting of those $\sL \in \PicCon^G(Y)$ such that for any affinoid open subset $U$ of $Y$ the map
\[
	G_U \rightarrow \sB(\sL(U))^\times
\]
is continuous. We also define
\[
	\Con^G_{\cts}(Y) \coloneqq \ker(\PicCon^G_{\cts}(Y) \rightarrow \Pic(Y)).
\]
\end{defn}

\begin{remark}
The group we denote by $\PicCon_{\cts}^G(Y)$ is written as $\PicCon^G(Y)$ in \cite{AW2}, and similarly what we call $\Con_{\cts}^G(Y)$ is written as $\Con^G(Y)$.
\end{remark}
The key result we want to use, derived from \cite{AW2}, is the following.
\begin{prop}\label{AWProp}
	Suppose $Y$ is quasi-Stein and geometrically connected, and that $G$ acts continuously on $Y$. Then for any $d \geq 1$ there is an exact sequence,
	\[
		0 \rightarrow \Hom(G, \mu_d(K)) \rightarrow \Con^G_{\cts}(Y)[d] \rightarrow (\OO(Y)^\times / K^\times \OO(Y)^{\times d})^G.
	\]
\end{prop}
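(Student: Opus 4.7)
The strategy is to rigidify elements of $\Con_{\cts}^G(X)$ via trivializations. Because $\sL \in \Con_{\cts}^G(X)$ has trivial underlying line bundle and $X$ is geometrically connected, we may pick a global generator $s$ of $\sL$. The $\sD_X$-module structure on $\sL$ is then recorded by a closed $1$-form $\omega \in \Omega^1(X)$ via $\nabla(s) = \omega \otimes s$, and the $G$-equivariant structure by a $1$-cocycle $c \colon G \to \OO(X)^\times$ via $g^{\sL}(s) = c(g) \cdot s$. Replacing $s$ by $h \cdot s$ for $h \in \OO(X)^\times$ transforms the pair $(\omega, c)$ into $(\omega + \mathrm{d}h/h,\; c(g) \cdot g(h)/h)$.

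I would construct both maps using this description. The first sends $\chi \in \Hom(G, \mu_d(K))$ to the line bundle $\sL_\chi$ given by $\OO_X$ equipped with its standard connection and the equivariant structure $g^{\sL_\chi}(s) = \chi(g) \cdot g^{\OO_X}(s)$; since $\chi^d = 1$ this lies in $\Con_{\cts}^G(X)[d]$. The second sends $\sL$ to the class of $f := \phi(s^{\otimes d}) \in \OO(X)^\times$, where $\phi \colon \sL^{\otimes d} \xrightarrow{\sim} \OO_X$ is any isomorphism of $G$-equivariant $\sD_X$-modules. Because $X$ is geometrically connected, Corollary \ref{cor:cXuequalsk1} together with Lemma \ref{altdesccX} gives $\End_{\sD_X}(\OO_X) = c_X(X) = K$, so $\phi$ is unique up to $K^\times$; rescaling $s$ changes $f$ by $\OO(X)^{\times d}$; and the $G$-equivariance of $\phi$ gives $g(f) = c(g)^d \cdot f$, placing the class in the $G$-invariants of $\OO(X)^\times / K^\times \OO(X)^{\times d}$.

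Exactness at $\Con_{\cts}^G(X)[d]$ is the key step. Suppose $f = \lambda h^d$ with $\lambda \in K^\times$ and $h \in \OO(X)^\times$; replacing $s$ by $h^{-1}s$ and $\phi$ by $\lambda^{-1}\phi$ reduces to $f = 1$. The $\sD_X$-linearity of $\phi$ then reads $d \cdot \omega = \mathrm{d}(f)/f = 0$, so $\omega = 0$ (since the integer $d$ is nonzero in $K$). The $G$-equivariance of $\phi$ forces $c(g)^d = g(f)/f = 1$; applying any derivation $\partial$ of $\OO(X)$ to the identity $c(g)^d = 1$ then yields $\partial(c(g)) = 0$, so $c(g) \in c_X(X)^\times = K^\times$ and hence $c(g) \in \mu_d(K)$. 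The cocycle identity $c(gh) = c(g) \cdot g(c(h))$ simplifies to the homomorphism condition because $G$ acts trivially on $K$, giving the desired element of $\Hom(G, \mu_d(K))$.

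The main obstacle will be the bookkeeping across gauge transformations, together with the interaction of the construction with the continuity condition of Definition \ref{def:piccts}. In particular, the map $\chi \mapsto \sL_\chi$ lands in $\Con_{\cts}^G(X)[d]$ only when $\chi$ is continuous as a map to the discrete group $\mu_d(K)$; however, in the applications the proposition is used with perfect groups such as $\SL_n(F)$, so that $\Hom(G, \mu_d(K))$ vanishes and this subtlety does not affect the conclusions one draws from the exact sequence.
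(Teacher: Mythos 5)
Your proof is correct, but it takes a genuinely different route from the paper's. The paper's argument is essentially formal: it quotes \cite[Prop.\ 3.2.14]{AW2} for the exactness of $0 \to \Hom(G, K^\times) \to \PicCon^G_{\cts}(X) \to \PicCon_{\cts}(X)^G$, restricts to the $\Con$ subgroups by a diagram chase, applies left-exactness of $d$-torsion, and then identifies $\Con_{\cts}(X)[d]$ with $\OO(X)^\times/K^\times\OO(X)^{\times d}$ via the $G$-equivariant isomorphism $\theta_d$ of \cite[Prop.\ 3.1.16]{AW2}. You instead reprove the content of both citations in one stroke by trivialising $\sL$ and tracking the pair $(\omega, c)$ under gauge transformations; your second map is precisely $\theta_d$ composed with the forgetful map, and your observation that $c(g)^d = 1$ forces $\partial(c(g)) = 0$, hence $c(g) \in c_X(X) = K$, is exactly where geometric connectedness enters. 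Your route is self-contained and makes visible why the three terms are what they are; the paper's is shorter and inherits the continuity bookkeeping from the fact that \cite{AW2} formulates everything for $\PicCon_{\cts}$ from the outset. The continuity caveat you flag for $\chi \mapsto \sL_\chi$ is real, but it concerns the proposition as stated (and its source) just as much as your construction: an abstract character with non-open kernel cannot yield a continuous equivariant structure on $\OO_X$, so $\Hom(G,\mu_d(K))$ should be read as locally constant homomorphisms. Since your kernel computation produces a character $c$ for which $\sL_c \cong \sL$ is continuous --- which forces $c$ to be locally constant --- exactness at the middle term, which is all that Corollary \ref{congroupzero} uses, holds on either reading.
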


\begin{proof}
We have the commutative diagram,
\[\begin{tikzcd}
	0 & {\Hom(G, K^\times)} & {\Con^G_{\cts}(Y)} & {\Con_{\cts}(Y)^G} \\
	0 & {\Hom(G, K^\times)} & {\PicCon^G_{\cts}(Y)} & {\PicCon_{\cts}(Y)^G}
	\arrow[from=1-1, to=1-2]
	\arrow[from=1-2, to=1-3]
	\arrow["{=}"', from=1-2, to=2-2]
	\arrow[from=1-3, to=1-4]
	\arrow[from=1-3, to=2-3]
	\arrow[from=1-4, to=2-4]
	\arrow[from=2-1, to=2-2]
	\arrow[from=2-2, to=2-3]
	\arrow[from=2-3, to=2-4]
\end{tikzcd}\]
where the second row is exact by \cite[Prop.\ 3.2.14]{AW2}. Because the second and third vertical maps are injective, a simple diagram chase shows that the first row is also exact. Now because $d$-torsion is left exact, this first row remains exact after applying $(-)[d]$. Finally, we obtain the required exact sequence using \cite[Prop.\ 3.1.16]{AW2}, which exhibits a $G$-equivariant isomorphism
\[
	\theta_d \colon \Con_{\cts}(Y)[d] \xrightarrow{\sim} \OO_Y(Y)^\times / K^\times \OO_Y(Y)^{\times d}. \qedhere
\]
\end{proof}

This allows us to say more about the image of the homomorphism of Corollary \ref{mainthmabelian} in the presence of a continuous group action.

\begin{lemma}\label{lem:ctsimpliesfactors}
Suppose that $f \colon X \rightarrow Y$ are as in Proposition \ref{prop:continuousactionlifts}, and that the action of $G$ commutes with the action of $H$. Then the image of the homomorphism
\[
\widehat{H} \rightarrow \PicCon^G(Y)[e]
\]
of Corollary \ref{mainthmabelian} is contained inside the subgroup $\PicCon_{\cts}^G(Y)[e]$.
\end{lemma}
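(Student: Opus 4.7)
The plan is to reduce the statement to a routine check by first transferring continuity from $Y$ to $X$ and then restricting to an invariant direct summand. By hypothesis $f \colon X \to Y$ is as in Proposition \ref{prop:continuousactionlifts}, so that proposition immediately gives that the action of $G$ on $X$ is continuous. Fix $\chi \in \widehat{H}$ and the associated line bundle $\sL_\chi = e_\chi \cdot f_*\OO_X \in \PicCon^G(Y)[e]$. To show $\sL_\chi \in \PicCon^G_{\cts}(Y)$, I must verify that for any affinoid subdomain $U \subset Y$ the map $G_U \to \sB(\sL_\chi(U))^\times$ is continuous.

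Since $f$ is finite, $V \coloneqq f^{-1}(U)$ is an affinoid subdomain of $X$, and the $G$-equivariance of $f$ gives $G_V = G_U$. Continuity of the $G$-action on $X$ therefore yields that $G_U \to \sB(\OO_X(V))^\times$ is continuous. Now $\sL_\chi(U) = e_\chi \cdot \OO_X(V)$ is a $G_U$-stable closed $K$-subspace of $\OO_X(V)$: stability is because the action of $G$ commutes with the action of $H$, hence with the idempotent $e_\chi \in K[H]$. Both $\sL_\chi(U)$ and $\OO_X(V)$ are finitely generated $\OO_Y(U)$-modules and the inclusion is $\OO_Y(U)$-linear, so the open mapping theorem for Banach modules over affinoid algebras identifies the canonical Banach topology on $\sL_\chi(U)$ with the subspace topology from $\OO_X(V)$.

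It then remains to show the elementary topological fact that restriction to a closed invariant subspace $N \subset M$ of Banach spaces induces a continuous map of topological groups $\{\gamma \in \sB(M)^\times : \gamma(N) \subset N\} \to \sB(N)^\times$. Given a lattice $\sV \subset M$, the intersection $\sV \cap N$ is a lattice in $N$, and because $N$ is a $K$-vector space one has $p^n \sV \cap N = p^n(\sV \cap N)$: the inclusion $\supseteq$ is immediate, and for $x \in p^n \sV \cap N$ the element $x/p^n \in \sV$ already lies in $N$, giving the reverse inclusion. Thus if $(1-\gamma)\sV \subset p^n \sV$ and $\gamma(N) \subset N$, then $(1-\gamma)(\sV \cap N) \subset p^n(\sV \cap N)$, so the restriction map sends $\Gamma_n(\sV)$ into $\Gamma_n(\sV \cap N)$ and is continuous. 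Composing with the continuous homomorphism $G_U \to \sB(\OO_X(V))^\times$ (whose image lies in the stabilizer of $\sL_\chi(U)$) gives continuity of $G_U \to \sB(\sL_\chi(U))^\times$, completing the argument.

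There is no serious obstacle: the whole argument is a direct bookkeeping exercise once Proposition \ref{prop:continuousactionlifts} is invoked. The one small point that requires attention is the identification of the canonical Banach topology on $\sL_\chi(U)$ with the subspace topology, which is handled by the open mapping theorem applied to the inclusion of finitely generated modules over the affinoid $\OO_Y(U)$.
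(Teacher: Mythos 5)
Your proposal is correct and follows essentially the same route as the paper: invoke Proposition \ref{prop:continuousactionlifts} to get continuity of the $G$-action on $X$, deduce continuity of $G_U \to \sB(\OO_X(f^{-1}(U)))^\times$, and then pass to the summand $\sL_\chi(U)$. The only cosmetic difference is in the last step, where the paper factors through the product $\prod_{\psi}\sB(\sL_\psi(U))^\times$ and projects, whereas you restrict to the closed invariant summand and verify continuity of restriction directly via lattices; both are equivalent bookkeeping.
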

 
\begin{proof}
	Let $\chi \in \widehat{H}$ and recall that this homomorphism sends $\chi$ to $\sL_{\chi} = e_{\chi} \cdot f_* \OO_X$. We want to show that for any affinoid open subset $U \subset Y$, that the natural map
	\[
		G_U \rightarrow \sB(\sL_{\chi}(U))^\times
	\]
	is continuous. By Proposition \ref{prop:continuousactionlifts} the action of $G$ on $X$ is continuous, and therefore by \cite[Lemma 3.2.4]{AW2} we have that $\OO_X \in \PicCon_{\cts}^G(X)$, and thus
	\[
		G_{V} \rightarrow \sB(\OO_X(V))^\times
	\] 
	is continuous, where $V = f^{-1}(U)$. Now because $f \colon X \rightarrow Y$ is $G$-equivariant, $G_U \subset G_V$, and thus
	\[
		G_U \rightarrow \sB(\OO_X(V))^\times = \sB((f_*\OO_X)(U))^\times 
	\]
	is continuous. By Corollary \ref{maincor2}, there is a decomposition
	\[
		(f_*\OO_X)(U) = \bigoplus_{\rho \in \Irr(H)} e_{\rho} \cdot (f_* \OO_X)(U),
	\]
	which is preserved $G_U$ because the actions of $H$ and $G$ commute. The image of $G_U$ is thus contained in the diagonal subgroup
	\[
		\prod_{\rho\in \Irr(H)} \sB(e_{\rho} \cdot (f_* \OO_X)(U))^\times \hookrightarrow \sB((f_*\OO_X)(U))^\times,
	\]
	and this inclusion is a homeomorphism onto its image, which can be seen directly from the topology on the groups $\sB(-)^\times$. In particular, 
	\[
		G_U \rightarrow \prod_{\rho\in \Irr(H)} \sB(e_{\rho} \cdot (f_* \OO_X)(U))^\times
	\]
	is continuous, and therefore the projection to $\sB(\sL_{\chi}(U))^\times$ is too.
\end{proof}

\section{Base Change for $\VectCon^G(X)$ on Quasi-Stein Spaces}\label{sect:basechange}

Let $K$ be a complete non-archimedean field, and let $L$ be a complete non-archimedean field extension of $K$ such that the norm on $L$ restricts to the norm on $K$. Let $X$ be a smooth quasi-separated rigid space over $K$, with an action of an abstract group $G$.

As $X$ is quasi-separated, one may consider the base change $X_L$ of $X$ to $L$ (see Remark \ref{rem:geomconnsubtle}). In this appendix we consider the base change functor
\[
	(-)_L \colon \VectCon^G(X) \rightarrow \VectCon^G(X_L).
\]
The main result is a compatibility statement that base change commutes with taking homomorphisms of objects, Proposition \ref{prop:basechangecommuteswithhoms}, which we use in the proof of Theorem \ref{mainthmEI} to extend Theorem \ref{mainthmEI} to fields which may not be algebraically closed.

\subsection{Base Change Functors}

In order to define base change for an arbitrary complete field extension $L$ of $K$, we need the following lemma.

\begin{lemma}\label{lem:dercommfieldext}
	Suppose that $A$ is an affinoid algebra over $K$ such that $\Omega_{A/K}$ is a projective $A$-module. Then the natural map
	\[
		L \hspace{0.1em} \widehat{\otimes}_K \Der_K(A) \rightarrow \Der_L(L \hspace{0.1em} \widehat{\otimes}_K A)
	\]
	is an isomorphism.
\end{lemma}

\begin{proof}
	Let $K\langle z_1, ... ,z_r \rangle / (f_1, ... , f_s)$ be a presentation of the $K$-affinoid algebra $A$, and let $A_L \coloneqq L \hspace{0.1em} \widehat{\otimes}_K A$. By \cite[Prop.\ 6.1.1(12)]{BGR}, $A_L = L \langle z_1, ... , z_r \rangle / (f_1, ... , f_s)$, and
	\begin{align*}
	\Omega_{A_L / L} = (A_L \cdot dz_1 \oplus \cdots \oplus A_L \cdot d_{z_r} ) / \langle df_1, ... ,df_s \rangle_{A_L}.
	\end{align*}
	Similarly, there is an exact sequence
	\[
		0 \rightarrow \langle df_1, ... ,df_s \rangle_{A} \rightarrow A \cdot dz_1 \oplus \cdots \oplus A \cdot d_{z_r} \rightarrow \Omega_{A/K} \rightarrow 0,
	\]
	and hence by the exactness of $L \hspace{0.1em} \widehat{\otimes}_K (-)$ (see \cite[\S 2.1.3 Fact (3)]{CDN2}) there is an exact sequence
	\[
		0 \rightarrow \langle df_1, ... ,df_s \rangle_{A_L} \rightarrow A_L \cdot dz_1 \oplus \cdots \oplus A_L \cdot d_{z_r} \rightarrow L \hspace{0.1em} \widehat{\otimes}_K \Omega_{A/K} \rightarrow 0,
	\]
	and thus a canonical isomorphism $\Omega_{A_L / L} \xrightarrow{\sim} L \hspace{0.1em} \widehat{\otimes}_K \Omega_{A/K}$. Then the natural map $L \hspace{0.1em} \widehat{\otimes}_K \Der_K(A) \rightarrow \Der_L(A_L)$ factorises as the composition of isomorphisms
	\begin{align*}
		L \hspace{0.1em} \widehat{\otimes}_K \Der_K(A) &\xrightarrow{\sim} A_L \otimes_A \Hom_A(\Omega_{A/K}, A), \\
		&\xrightarrow{\sim} \Hom_{A_L}(A_L \otimes_A \Omega_{A/K}, A_L \otimes_A A), \\
		&\xrightarrow{\sim} \Hom_{A_L}(\Omega_{A_L/L}, A_L), \\
		&\xrightarrow{\sim} \Der_L(A_L).
	\end{align*}
	Here we have used that $\Omega_{A/K}$ is projective to apply \cite[Chapter II, \S 5, Prop. 7]{BOUR}, and used the basic properties \cite[Prop.\ 2.1.7(7)]{BGR} and \cite[Prop.\ 3.7.2(6)]{BGR} of the completed tensor product.
\end{proof}

Suppose now that $\sV \in \VectCon^G(X)$. The base change $X_L$ is defined by gluing together the affinoid spaces $U_L = \Sp(L \hspace{0.1em} \widehat{\otimes}_K \hspace{0.1em} \OO_X(U))$ for $U \in \sB$, and as an $\OO_{X_L}$-module $\sV_L$ is defined above $U_{L}$ to correspond to the $L \hspace{0.1em} \widehat{\otimes}_K \hspace{0.1em} \OO_X(U)$-module
\[
	\sV_L(U_{L}) \coloneqq L \hspace{0.1em} \widehat{\otimes}_K \sV(U).
\]
For each $U \in \sB$ there is a natural map
\begin{align*}
L \hspace{0.1em} \widehat{\otimes}_K \hspace{0.1em} \sT_X(U) &\rightarrow \End_K(L \hspace{0.1em} \widehat{\otimes}_K \sV(U))
\end{align*}
which defines an action of $\sD_{X_L}(U_L)$ using Lemma \ref{lem:dercommfieldext}. These glue to give $\sV_L$ the structure of a $\sD_{X_L}$-module, which extends to a $G\text{-}\sD_{X_L}$-module structure, where the $G$-equivariant structure on $\sV_L$ is that induced from the $G$-equivariant structure of $\sV$.

\subsection{Compatibility with Galois Covering Functors}\label{sect:basechangecompatible}

For a group $H$, we can consider the base change functor
\[
	(-)_L \colon \Mod_{K[H]}^{\fd} \rightarrow \Mod_{L[H]}^{\fd}.
\]
The following lemma shows that base change is compatible with the functors of Section \ref{sect:DmodGalExt}.
\begin{lemma}\label{lem:basechangecompat}
	Suppose that $X$ and $Y$ are rigid spaces over $K$, and $(f \colon X \rightarrow Y, G, H)$ are as described in Section \ref{sect:DmodGalExt} with $H = N$. Then for $V \in \Mod_{K[H]}^{\fd}$ the natural map
	\begin{equation}\label{eqn:basechangeisomforGaloisfunctors}
		\Hom_{K[H]}(V, f_*\OO_X)_L \rightarrow \Hom_{L[H]}(V_L, f_*\OO_{X_L})
	\end{equation}
	is an isomorphism.
\end{lemma}

\begin{proof}
	To see that (\ref{eqn:affinebasechangelocal}) is an isomorphism, it suffices to show the same is true above any affinoid open subset $U \in \sB$. To ease notation, set $A \coloneqq \OO_Y(U)$ and $B \coloneqq \OO_X(f^{-1}(U))$. The map (\ref{eqn:basechangeisomforGaloisfunctors}) on sections above $U$ is the map
	\begin{equation}\label{eqn:affinebasechangelocal}
	L \hspace{0.1em} \widehat{\otimes}_K \Hom_{K[H]}(V, B)_L \rightarrow \Hom_{L[H]}(L \otimes_K V, L \hspace{0.1em} \widehat{\otimes}_K B).
	\end{equation}
	Because $H = N$, $H$ is a finite group, and therefore any $V \in \Mod_{K[H]}^{\fd}$ can be written as a direct sum of certain simple modules $V_1, ... , V_r$. In particular, we can write $V = \bigoplus_m V_m^{\oplus n_m}$ for some $n_m \geq 1$, and therefore, because $K[H]_L = L[H]$, the natural inclusion  
	\begin{equation}\label{eqn:denseimage}
		L \otimes_K B \rightarrow L \hspace{0.1em} \widehat{\otimes}_K B
	\end{equation}
	factors as the sum of maps
	\[
	\bigoplus_m \left(L \otimes_K \Hom_{K[H]}(V_m, B)\right)^{\oplus n_m} \rightarrow \bigoplus_m \Hom_{L[H]}(L \otimes_K V_m, L \hspace{0.1em} \widehat{\otimes}_K B)^{\oplus n_m}. 
	\]
	Because (\ref{eqn:denseimage}) extends uniquely to an isomorphism from the completion $L \hspace{0.1em} \widehat{\otimes}_K B$, the same is true of each map
	\[
	L \otimes_K \Hom_{K[H]}(V_m, B) \rightarrow \Hom_{L[H]}(L \otimes_K V_m, L \hspace{0.1em} \widehat{\otimes}_K B)
	\]
	and therefore each extension
	\[
		L \hspace{0.1em} \widehat{\otimes}_K \Hom_{K[H]}(V_m, B) \rightarrow \Hom_{L[H]}(L \otimes_K V_m, L \hspace{0.1em} \widehat{\otimes}_K B)
	\]
	is an isomorphism. As $V$ is a direct sum of the $V_m$, we see that (\ref{eqn:affinebasechangelocal}) is also an isomorphism.
\end{proof}

\subsection{Compatibility with Homomorphisms}

We first need some basic lemmas.

\begin{lemma}\label{lem:quasiSteincover}
	Suppose that $X$ is a connected quasi-Stein rigid space over $K$. Then there exists a quasi-Stein covering
	\[
		U_1 \subset U_2 \subset \cdots
	\]
	of $X$ by connected admissible open affinoid subsets. If $X$ is smooth and geometrically connected, then each $U_m$ can also be taken to also be smooth and geometrically connected.
\end{lemma}

\begin{proof}
	Let $V_1 \subset V_2 \subset \cdots$ be any fixed quasi-Stein open covering of $X$. Let $U_1$ be a connected component of $V_1$, and for $m \geq 1$ inductively define $U_{m+1}$ to be the connected component of $V_{m+1}$ which contains $U_m$. We first show that 
	\[
		X = \bigcup_{m \geq 1} U_m.
	\]
	Using property ($G_1$) of the rigid space $X$ \cite[Def. 9.3.1(4)]{BGR}, both
	\[
	U \coloneqq \bigcup_{m \geq 1} U_{m} \ \text{ and } \ U' \coloneqq \bigcup_{m \geq 1} V_m \setminus U_m
	\]
	are admissible open subsets of $X$ which form a disjoint admissible open covering of $X$ by property ($G_2$) of $X$ \cite[Def. 9.3.1(4)]{BGR}. In particular, because $U$ is non-empty and $X$ is connected, $U = X$. Secondly, to verify the quasi-Stein property we need to show that for any $m \geq 1$, $\OO(U_{m+1}) \rightarrow \OO(U_m)$ has dense image. We have a commutative diagram,
\[\begin{tikzcd}
	{\OO(V_m)} & {\OO(V_{m+1})} \\
	{\OO(U_m)} & {\OO(U_{m+1})}
	\arrow["{p_m}", two heads, from=1-1, to=2-1]
	\arrow["f"', from=1-2, to=1-1]
	\arrow["{p_{m+1}}", two heads, from=1-2, to=2-2]
	\arrow["h", from=2-2, to=2-1]
\end{tikzcd}\]
	and thus
	\[
		h(\OO(U_{m+1})) = h(p_{m+1}(\OO(V_{m+1}))) = p_m(f(\OO(V_{m+1}))).
	\]
	When we take the closure in $\OO(U_m)$, because $p_m$ is continuous,
	\begin{align*}
		\overline{h(\OO(U_{m+1}))} &= \overline{p_m(f(\OO(V_{m+1})))}, \\
		&= \overline{p_m(\overline{f(\OO(V_{m+1}))})}, \\
		&= \overline{p_m(\OO(V_m))}, \\
		&= \OO(U_m).
	\end{align*}
	Now suppose that $X$ is smooth and geometrically connected. Because $X$ is smooth, we may consider the sheaf $c_X$, which because $X$ is geometrically connected has $c_X(X) = K$ by Corollary \ref{cor:cXuequalsk1}. Therefore, because $c_X$ is a sheaf, $K = c_X(X)$ is the inverse limit of the inverse system
	\[
		c_X(U_1) \leftarrow c_X(U_2) \leftarrow \cdots .
	\]
	Each $c_X(U_m)$ is a finite field extension of $K$, which follows from Lemma \ref{lem:finiteextension} and the fact that each $U_m$ is connected. In particular, this inverse system has injective transition maps and the inverse limit $K$ is the intersection of the images $c_X(U_m)$ in $c_X(U_1)$. Because $c_X(U_1)$ is finite dimensional, there must be some $m_0 \geq 1$ such that $c_X(U_m) = K$ for $m \geq m_0$. In particular, $U_m$ is geometrically connected for all $m \geq m_0$ by Corollary \ref{cor:cXuequalsk2}, and we obtain the required quasi-Stein covering by including only those $U_m$ with $m \geq m_0$.
\end{proof}

\begin{lemma}\label{lem:Dmodhatotimescomm}
	Suppose that $A$ is an affinoid algebra over $K$, $\Omega_{A/K}$ is a projective $A$-module, and $M$ and $N$ are $\sD(A)$-modules which are finitely generated projective as $A$-modules. Then the natural map
	\begin{equation}\label{eqn:localbasechangeDmodules}
	L \hspace{0.1em} \widehat{\otimes}_K \Hom_{\sD(A)}(M,N) \rightarrow \Hom_{\sD(A_L)} (L \hspace{0.1em} \widehat{\otimes}_K M, L \hspace{0.1em} \widehat{\otimes}_K N)
	\end{equation}
	is injective, and an isomorphism whenever $L$ is a finite extension of $K$.
\end{lemma}

\begin{proof}
	First note that the natural map 
	\begin{equation}\label{eqn:basechangeaffineversion}
		L \hspace{0.1em} \widehat{\otimes}_K \Hom_{A}(M,N) \rightarrow \Hom_{A_L} (L \hspace{0.1em} \widehat{\otimes}_K M, L \hspace{0.1em} \widehat{\otimes}_K N)
	\end{equation}
	is canonically identified with the natural map
	\[
		A_L \otimes_A \Hom_A(M,N) \rightarrow \Hom_{A_L}(A_L \otimes_A M, A_L \otimes_A N)
	\]
	by \cite[Prop.\ 2.1.7(7)]{BGR} and \cite[Prop.\ 3.7.2(6)]{BGR}, which is an isomorphism by \cite[Chapter II, \S 5, Prop. 7]{BOUR} because $M$ and $N$ are finitely generated projective as $A$-modules. To ease notation, write $\sH \coloneqq \Hom_A(M,N)$ and $\sH^L \coloneqq \Hom_{A_L} (L \hspace{0.1em} \widehat{\otimes}_K M, L \hspace{0.1em} \widehat{\otimes}_K N)$. There is a natural $\sD(A)$-module structure on $\sH$ as described at the end of Section \ref{sect:generaleqDmodules}, and therefore $L \hspace{0.1em} \widehat{\otimes}_K \sH$ has a $\sD(A_L)$-module structure through the functor $(-)_L$. Similarly, $\sH^L$ has a natural structure as a $\sD(A_L)$-module, and the map (\ref{eqn:basechangeaffineversion}) is an isomorphism of $\sD(A_L)$-modules with respect to these structures.
	
	From the definition of the action of $\Der_L(A_L)$ on $\sH^L$, the subset $\Hom_{\sD(A_L)} (L \hspace{0.1em} \widehat{\otimes}_K M, L \hspace{0.1em} \widehat{\otimes}_K N)$ consists of those elements which are sent to $0$ by all $\partial \in \Der_L(A_L)$. Similarly for $\sH$ and $\Der_K(A)$.
	
	Let $\partial_1, ... ,\partial_r$ be generators of $\Der_K(A)$ as an $A$-module, which also under the isomorphism of Lemma \ref{lem:dercommfieldext} correspond to generators $\partial_1^L, ... ,\partial_r^L$ of $\Der_L(A_L)$ as an $A_L$-module. We therefore have exact sequences
	\begin{align*}
		0 \rightarrow \Hom_{\sD(A)}(M,N) \rightarrow \sH \xrightarrow{[\partial_1, ... , \partial_r]} \prod_{i = 1}^r \sH
	\end{align*}
	and
	\begin{align*}
		0 \rightarrow \Hom_{\sD(A_L)} (L \hspace{0.1em} \widehat{\otimes}_K M, L \hspace{0.1em} \widehat{\otimes}_K N) \rightarrow 
		\sH^L \xrightarrow{[\partial_1^L, ... , \partial_r^L]}
		\prod_{i = 1}^r \sH^L.
	\end{align*}
	By construction, these have the property that $L \hspace{0.1em} \widehat{\otimes}_K [\partial_1, ... , \partial_r]$ becomes identified under the map (\ref{eqn:basechangeaffineversion}) with $[\partial_1^L, ... , \partial_r^L]$, and so the map (\ref{eqn:basechangeaffineversion}) restricts to an identification
	\[
		\ker(L \hspace{0.1em} \widehat{\otimes}_K [\partial_1, ... , \partial_r]) \xrightarrow{\sim} \Hom_{\sD(A_L)} (L \hspace{0.1em} \widehat{\otimes}_K M, L \hspace{0.1em} \widehat{\otimes}_K N).
	\]
	Therefore the map (\ref{eqn:localbasechangeDmodules}) is injective, because the natural map
	\[
		L \hspace{0.1em} \widehat{\otimes}_K \Hom_{\sD(A)}(M,N) \rightarrow \ker(L \hspace{0.1em} \widehat{\otimes}_K [\partial_1, ... , \partial_r])
	\]
	is injective, and (\ref{eqn:localbasechangeDmodules}) is an isomorphism if and only if this map is an isomorphism. This is where one needs the hypothesis that $L$ is finite over $K$, for in this case there is a canonical identification $L \otimes_K (-) \xrightarrow{\sim} L \hspace{0.1em} \widehat{\otimes}_K (-)$ and one can use the exactness of $L \otimes_K (-)$. In general one cannot use the exactness of $L \hspace{0.1em} \widehat{\otimes}_K (-)$ here, because $\Der_K(A)$ need not act by strict morphisms in general.
\end{proof}

We can now prove the main result of this section. We expect that not all assumptions are necessary (cf.\ Remark \ref{rem:removeassumptions}), however the statement we give here is sufficient for our purposes.

\begin{prop}\label{prop:basechangecommuteswithhoms}
Suppose that $X$ is a smooth quasi-Stein rigid space, $G$ is a group that acts on $X$, and $L$ is a complete field extension of $K$ with norm extending the norm on $K$. Suppose that $c_X(X) = K$ and $X$ has a $K$-rational point.

Then for $\sV, \sW \in \VectCon^G(X)$, the natural map
\begin{equation}\label{eqn:mainbasechangemap}
L \hspace{0.1em} \widehat{\otimes}_K \Hom_{G\text{-}\sD_X}(\sV, \sW) \rightarrow \Hom_{G\text{-}\sD_{X_L}}(\sV_L, \sW_L)
\end{equation}
is an isomorphism if either $L$ if finite over $K$ or $\sV$ and $\sW$ are finite as objects of $\VectCon(X)$.
\end{prop}

\begin{proof}
	We first claim that it is sufficient to show that the natural map 
	\begin{equation}\label{eqn:naturalmapforDX}
		L \hspace{0.1em} \widehat{\otimes}_K \Hom_{\sD_X}(\sV, \sW) \rightarrow \Hom_{\sD_{X_L}}(\sV_L, \sW_L)
	\end{equation}
	is an isomorphism. Suppose then that this is the case. Because $c_X(X) = K$ and $X$ contains a $K$-rational point we may consider $\VectCon(X)$ as a neutral Tannakian category as in Section \ref{sect:finiteVB}, and therefore $\Hom_{\sD_X}(\sV, \sW)$ is finite dimensional over $K$. In particular, the natural map
	\[
	L \otimes_K \Hom_{\sD_X}(\sV, \sW) \rightarrow L \hspace{0.1em} \widehat{\otimes}_K \Hom_{\sD_X}(\sV, \sW) \rightarrow \Hom_{\sD_{X_L}}(\sV_L, \sW_L)
	\]
	is an isomorphism \cite[Prop.\ 3.7.2(6)]{BGR}. This is furthermore $G$-equivariant, where $G$ acts trivially on $L$ and through the natural conjugation action on $\Hom_{\sD_X}(\sV, \sW)$ and $\Hom_{\sD_{X_L}}(\sV_L, \sW_L)$ (cf.\ Remark \ref{rem:homspaceequivsheaf}). Then because the action is trivial on $L$, the inclusion induces an isomorphism
	\[
		L \otimes_K \Hom_{\sD_X}(\sV, \sW)^G \xrightarrow{\sim} (L \otimes_K \Hom_{\sD_X}(\sV, \sW))^G \xrightarrow{\sim} \Hom_{\sD_{X_L}}(\sV_L, \sW_L)^G
	\]
	which is exactly the map
	\[
	L \otimes_K \Hom_{G\text{-}\sD_X}(\sV, \sW) \xrightarrow{\sim} L \hspace{0.1em} \widehat{\otimes}_K \Hom_{G\text{-}\sD_X}(\sV, \sW) \rightarrow \Hom_{G\text{-}\sD_{X_L}}(\sV_L, \sW_L)
	\]
	and so the claim is shown.

	Towards showing that (\ref{eqn:naturalmapforDX}) is an isomorphism, let $U_1 \subset U_2 \subset \cdots$ be a quasi-Stein presentation of $X$ by geometrically connected $U_m$, which exists because $c_X(X) = K$ by Lemma \ref{lem:quasiSteincover}. The $K$-rational point of $X$ is contained in some $U_m$, and so after removing some elements from the covering we may assume that $U_1$, and hence each $U_m$, contains a $K$-rational point. Set 
	\[
	\sF \coloneqq \underline{\Hom}_{\sD_X}(\sV, \sW) \hookrightarrow \sG \coloneqq \underline{\Hom}_{\OO_X}(\sV, \sW),
	\]
	and
	\[
	\sF^L \coloneqq \underline{\Hom}_{\sD_{X_L}}(\sV_L, \sW_L) \hookrightarrow \sG^L \coloneqq \underline{\Hom}_{\OO_{X_L}}(\sV_L, \sW_L).
	\]
	Both $\sG$ and $\sG^L$ are vector bundles with connection on $X$ and $X_L$ respectively. The restriction map $\sG(U_{m+1}) \rightarrow \sG(U_m)$ factors as a composition of isomorphisms
	\[
		\sG(U_{m+1}) \xrightarrow{\sim} \OO(U_{m+1}) \otimes_{\OO(U_{m+1})} \sG(U_{m+1}) \xrightarrow{\sim} \OO(U_{m}) \otimes_{\OO(U_{m+1})} \sG(U_{m+1}) \xrightarrow{\sim} \sG(U_m)
	\]
	because $\sG(U_m)$ is flat over $\OO(U_m)$ (as $\sG$ is a vector bundle) and each restriction map $\OO(U_{m+1}) \rightarrow \OO(U_m)$ is injective \cite[Prop.\ 4.2]{ABB}. In particular, the system 
	\[
		\sG(U_1) \leftarrow \sG(U_2) \leftarrow \sG(U_3) \leftarrow \cdots	
	\]
	has injective transition maps, and therefore the same is true of the system
	\[
		\sF(U_1) \leftarrow \sF(U_2) \leftarrow \sF(U_3) \leftarrow \cdots
	\]
	For each $m \geq 1$, because $c_X(U_m) = K$ and $U_m$ contains a $K$-rational point we may consider $\VectCon(U_i)$ as a neutral Tannakian category as in Section \ref{sect:finiteVB}, and in particular $\sF(U_m) = \Hom_{\sD_{U_m}}(\sV|_{U_i}, \sW|_{U_i})$ is finite dimensional over $K$. In particular, from this and the injectivity of the transition maps, $\sF(U_{m+1}) \rightarrow \sF(U_m)$ are isomorphisms for sufficiently large $m$.
	
	Now suppose that $L$ is finite over $K$. Then we may apply Lemma \ref{lem:Dmodhatotimescomm} to see that each 
	\[
		L \hspace{0.1em} \widehat{\otimes}_K \sF(U_m) \rightarrow \sF^L(U_{m, L})
	\]
	is an isomorphism, and therefore we have that the map (\ref{eqn:naturalmapforDX}) factorises as the composition
	\begin{align*}
		L \hspace{0.1em} \widehat{\otimes}_K \sF(X) &\xrightarrow{\sim} \varprojlim_{m \geq 1} L \hspace{0.1em} \widehat{\otimes}_K \sF(U_m)
		\xrightarrow{\sim} \varprojlim_{m \geq 1} \sF^L(U_{m, L})
		\xrightarrow{\sim} \sF^L(X_L)
	\end{align*}
	of isomorphisms, and is therefore an isomorphism. Here we have used that the transition maps of $(\sF(U_m))_m$ are eventually isomorphisms to see that $L \hspace{0.1em} \widehat{\otimes}_K (-)$ commutes with the inverse limit.

	Now suppose $L$ is arbitrary, but that $\sV$ and $\sW$ are finite as objects of $\VectCon(X)$. Then $\sV \oplus \sW$ is finite by Corollary \ref{cor:finiteclosure}, and hence by Proposition \ref{prop:finitecomesfromcovering}, $\sV \oplus \sW$ is a direct summand of $f_*\OO_Z$ for some finite \'{e}tale Galois covering $f \colon Z \rightarrow X$ with Galois group $H$. There is some finite extension $F$ of $K$ such that each connected component of $Z_F$ is geometrically connected by Corollary \ref{cor:cXuequalsk2}, and therefore, as we have established (\ref{eqn:naturalmapforDX}) is an isomorphism for finite extensions, we may assume without loss of generality that each connected component of $Z$ is geometrically connected. From the pullback equivalence
	\[
		f^* \colon \VectCon(X) \xrightarrow{\sim} \VectCon^{H}(Z),
	\]
	of Proposition \ref{FunctorEquivalence}, as $f^{*}\OO_X = \OO_Z$ we see that
	\[
		c_Z(Z)^{H} \cong \Hom_{H\text{-}\sD_Z}(\OO_Z, \OO_Z) \cong \Hom_{\sD_X}(\OO_X, \OO_X) \cong c_X(X) = K 
	\]
	by Lemma \ref{altdesccX}. Fix a connected component $Z_0$ of $Z$, which by our assumption that $Z_0$ is geometrically connected has $c_Z(Z_0) = K$ by Corollary \ref{cor:cXuequalsk1}. We may argue just as in the proof of Lemma \ref{lem:OXirrGDX} to see that $H$ must act transitively on the connected components of $Z$, and therefore by Example \ref{eg:conncompequiv} restriction defines an equivalence
	\[
		\VectCon^{H}(Z) \rightarrow \VectCon^{H_0}(Z_0).
	\]
	We may therefore apply Theorem \ref{mainthm1} to the extension $f \colon Z_0 \rightarrow X$ because $c_Z(Z_0) = K$, and therefore using Lemma \ref{lem:conncompgaloisDmodule} we have a commutative diagram
\[\begin{tikzcd}
	{\Mod_{K[H]}^{\fd}} && {\Mod_{K[H_0]}^{\fd}} \\
	{\VectCon^H(Z)} && {\VectCon^{H_0}(Z_0)} \\
	& {\VectCon(X)}
	\arrow[from=1-1, to=1-3]
	\arrow[from=1-1, to=2-1]
	\arrow["\sim", from=1-3, to=2-3]
	\arrow["\sim", from=2-1, to=2-3]
	\arrow["\sim"', from=2-1, to=3-2]
	\arrow["\sim", from=2-3, to=3-2]
\end{tikzcd}\]
	In particular, $\sV$ and $\sW$ are in the essential image of the functor 
	\[
		\Phi \coloneqq \Hom_{K[H_0]}(-, f_* \OO_{Z_0}) \colon \Mod_{K[H_0]}^{\fd} \rightarrow \VectCon(X)
	\]
	so there are $V, W \in \Mod_{K[H_0]}^{\fd}$ and isomorphisms $\Phi(V) \xrightarrow{\sim} \sV$ and $\Psi(W) \xrightarrow{\sim} \sW$. We may also consider the functor
	\[
		\Phi_L \coloneqq \Hom_{L[H_0]}(-, f_* \OO_{Z_{0,L}}) \colon \Mod_{L[H_0]}^{\fd} \rightarrow \VectCon(X_L)
	\]
	which satisfies $\sV_L \xrightarrow{\sim} \Phi(V)_L \xrightarrow{\sim} \Phi_L(V_L)$ and $\sW_L \xrightarrow{\sim} \Phi(W)_L \xrightarrow{\sim} \Phi_L(W_L)$ by Lemma \ref{lem:basechangecompat}. We therefore have a commutative diagram
\[\begin{tikzcd}
	{L \otimes_K \Hom_{K[H_0]}(V,W)} && {L \otimes_K \Hom_{\sD_X}(\sV,\sW)} \\
	{\Hom_{L[H_0]}(V_L, W_L)} & {\Hom_{\sD_{X_L}}(\Phi_L(V_L), \Phi_L(W_L))} & {\Hom_{\sD_{X_L}}(\sV_L, \sW_L)}
	\arrow["\sim", from=1-1, to=1-3]
	\arrow[from=1-1, to=2-1]
	\arrow[from=1-3, to=2-3]
	\arrow["\sim", from=2-1, to=2-2]
	\arrow["\sim", from=2-2, to=2-3]
\end{tikzcd}\]
Because $V$ and $W$ are finite dimensional, the map
\[
L \otimes_K \Hom_{K[H_0]}(V,W) \rightarrow \Hom_{L[H_0]}(V_L, W_L)
\]
is an isomorphism, and therefore the right-hand map is too. Then (\ref{eqn:naturalmapforDX}) is an isomorphism, for
\[
L \otimes_K \Hom_{\sD_X}(\sV,\sW) \rightarrow L \hspace{0.1em} \widehat{\otimes}_K \Hom_{\sD_X}(\sV,\sW)
\]
is an isomorphism, $\Hom_{\sD_X}(\sV,\sW)$ being finite dimensional over $K$.
\end{proof}

\begin{remark}\label{rem:removeassumptions}
	Taking $G = 1$, $\sV = \OO_X$, and $\sW = \OO_X$ in Proposition \ref{prop:basechangecommuteswithhoms}, then using Lemma \ref{altdesccX} the conclusion of Proposition \ref{prop:basechangecommuteswithhoms} is equivalent to the statement that the natural map
	\[
	L \hspace{0.1em} \widehat{\otimes}_K c_X(X) \rightarrow c_{X_L}(X_L)
	\]
	is an isomorphism. In this case the assumptions of Proposition \ref{prop:basechangecommuteswithhoms} can be relaxed: one can show that this also holds for any quasi-separated rigid space $K$ using the results of Section \ref{sect:constantsheaf} and \cite[Thm.\ 3.2.1]{CON}.
\end{remark}

\bibliography{biblio}{}
\bibliographystyle{plain}

\vspace{2em}
\end{document}